   \definecolor{cites}{rgb}{0.75 , 0.00 , 0.00}  
   \definecolor{urls} {rgb}{0.00 , 0.00 , 1.00}  
   \definecolor{links}{rgb}{0.00 , 0.00 , 0.5}   
  \definecolor{gray}{rgb}{0.5,0.5,.5}
\definecolor{amcol}{rgb}{0.8,0,0}
\definecolor{escol}{rgb}{0,0,0.8}
\definecolor{estcol}{rgb}{0,0.6,0}
\definecolor{cwcol}{rgb}{0.5,0,0.5}
\definecolor{cwstcol}{rgb}{0,0.6,0.6}
\newcommand{\C}{\mathbb{C}}
\newcommand{\N}{\mathbb{N}}
\newcommand{\R}{\mathbb{R}}
\newcommand{\T}{\mathbb{T}}
\newcommand{\Z}{\mathbb{Z}}
\newcommand{\rd}{\mathrm{d}}
\newcommand{\pdiff}[2]{\frac{\partial #1}{\partial #2}}
\newcommand{\half}{\frac{1}{2}}
\newcommand{\tand}{\text{ and }}
\newcommand{\nxy}{|x-y|}
\newcommand{\LtG}{{L^2(\Gamma)}}
\newcommand{\Tc}{\mathcal{T}}
\newcommand{\cD}{\mathscr{D}}
\newcommand{\cA}{\mathcal{A}}
\newcommand{\cS}{\mathscr{S}}
\newcommand{\cG}{\mathcal{G}}
\newcommand{\cF}{\mathcal{F}}
\newcommand{\cC}{\mathcal{C}}
\newcommand{\cL}{\mathcal{L}}
\newcommand{\cR}{\mathcal{R}}
\newcommand{\fF}{\mathfrak{F}}
\newcommand{\fG}{\mathfrak{G}}
\newcommand{\fI}{\mathfrak{I}}
\newcommand{\fK}{\mathfrak{K}}
\newcommand\toH{
   \unitlength0.1ex
   \begin{picture}(30,15)
   \put(13,16){\makebox(0,0)[]{\tiny\rm H}}
   \put(15,5){\makebox(0,0)[]{$\to$}}
   \end{picture}
}
\DeclareMathOperator{\spec}{\sigma}
\DeclareMathOperator{\ess}{ess}
\DeclareMathOperator{\supp}{supp}
\DeclareMathOperator{\arsinh}{arsinh}
\DeclareMathOperator*{\esssup}{ess \, sup}
\DeclareMathOperator*{\conv}{conv}
\DeclareMathOperator*{\dist}{dist}
\DeclareMathOperator{\Real}{Re}
\renewcommand{\Re}{\Real}
\DeclareMathOperator{\Imag}{Im}
\renewcommand{\Im}{\Imag}
\newcommand{\from}{\colon}
\providecommand{\scpr}[2]{\left\langle #1, #2 \right\rangle}
\renewcommand{\sp}{\scpr}
\providecommand{\abs}[1]{\left\lvert#1\right\rvert}
\providecommand{\norm}[1]{\left\lVert#1\right\rVert}
\providecommand{\set}[1]{\left\{ #1\right\}}
\newtheorem{thm}{Theorem}[section]
\newtheorem*{thm*}{Theorem}
\newtheorem{lem}[thm]{Lemma}
\newtheorem{prop}[thm]{Proposition}
\newtheorem{cor}[thm]{Corollary}
\newtheorem*{cor*}{Corollary}
\theoremstyle{definition}
\newtheorem{defn}[thm]{Definition}
\newtheorem{conject}[thm]{Conjecture}
\newtheorem{quest}[thm]{Question}
\theoremstyle{remark}
\newtheorem{rem}[thm]{Remark}
\numberwithin{equation}{section}
\renewcommand{\epsilon}{\varepsilon}
\definecolor{scnew}{rgb}{0,0.6,0}
\begin{document}

\title{On the spectrum of the double-layer operator on locally-dilation-invariant Lipschitz domains}\author{Simon N. Chandler-Wilde\thanks{Department of Mathematics and Statistics, University of Reading, Whiteknights, PO Box 220, Reading RG6 6AX, UK, {\tt s.n.chandler-wilde@reading.ac.uk, j.a.virtanen@reading.ac.uk}}, Raffael Hagger\thanks{Mathematisches Seminar, Christian-Albrechts-Universit\"at zu Kiel, Heinrich-Hecht-Platz 6, 24118 Kiel, Germany,
{\tt hagger@math.uni-kiel.de}}, Karl-Mikael Perfekt\thanks{Department of Mathematical Sciences, Norwegian University of Science and Technology (NTNU),
NO-7491 Trondheim, Norway, {\tt karl-mikael.perfekt@ntnu.no}}, Jani A. Virtanen\footnotemark[1]}

\maketitle

\begin{abstract}
We say that $\Gamma$, the boundary of a bounded Lipschitz domain, is locally dilation invariant if, at each $x\in \Gamma$, $\Gamma$ is either locally $C^1$ or locally coincides (in some coordinate system centred at $x$) with a Lipschitz graph $\Gamma_x$ such that $\Gamma_x=\alpha_x\Gamma_x$, for some $\alpha_x\in (0,1)$. In this paper we study, for such $\Gamma$,
the  essential spectrum of $D_\Gamma$, the double-layer (or Neumann-Poincar\'e) operator of potential theory, on $L^2(\Gamma)$.
We show, via localisation and Floquet-Bloch-type arguments, that this essential spectrum
is the union of the spectra of related continuous families of operators $K_t$, for $t\in [-\pi,\pi]$; moreover, each $K_t$ is compact if $\Gamma$ is $C^1$ except at finitely many points. For the 2D case where, additionally, $\Gamma$ is piecewise analytic, we construct convergent sequences of approximations to the essential spectrum of $D_\Gamma$; each approximation is the union of the eigenvalues of finitely many finite matrices arising from Nystr\"om-method approximations to the operators $K_t$.  Through error estimates with explicit constants, we also construct functionals that determine whether any particular locally-dilation-invariant piecewise-analytic $\Gamma$ satisfies the well-known
spectral radius conjecture, that the essential spectral radius of $D_\Gamma$ on $L^2(\Gamma)$ is $<1/2$ for all Lipschitz $\Gamma$.  We illustrate this theory with examples; for each we show that the essential spectral radius {\em is} $<1/2$, providing additional support for the conjecture. We also, via new results on the invariance of the essential spectral radius under  locally-conformal $C^{1,\beta}$ diffeomorphisms, show that the spectral radius conjecture holds for all Lipschitz curvilinear polyhedra.
\end{abstract}

\section{Introduction} \label{sec:intro}

Given a bounded Lipschitz domain\footnote{For us, as, e.g., in \cite{MitreaD:97}, ``domain'' will just mean ``open set''; a domain need not be connected.} $\Omega_- \subset \R^d, d \geq 2$, with boundary $\Gamma$ and outward-pointing unit normal vector $n$, the interior and exterior Dirichlet and Neumann problems for Laplace's equation (posed in $\Omega_-$ and in $\Omega_+:=\R^d\setminus \overline{\Omega_-}$, respectively), can be reformulated as boundary integral equations involving the operators
\begin{equation}\label{eq:secondkindBIEs}
D_\Gamma\pm \textstyle{\half} I \quad\tand\quad D'_\Gamma \pm \textstyle{\half} I
\end{equation}
(see, e.g., \cite{Ver84}, \cite[\S5.9, \S5.15.1]{Me:18}), where the \emph{double-layer (or Neumann or Neumann-Poincar\'e) operator} $D_{\Gamma}$ and the \emph{adjoint double-layer operator} $D_{\Gamma}'$ are defined by
\begin{equation}\label{eq:DD'}
D_{\Gamma} \phi(x) = \int_\Gamma \pdiff{\Phi(x,y)}{n(y)} \phi(y)\, \rd s(y) \quad \tand\quad
D_{\Gamma}' \phi(x) = \int_\Gamma \pdiff{\Phi(x,y)}{n(x)} \phi(y)\, \rd s(y),
\end{equation}
for  $\phi\in \LtG$ and (almost all) $x \in \Gamma$, with the integrals understood, in general, as Cauchy principal values. Here $\Phi(x,y)$ is the fundamental solution for Laplace's equation, defined by\footnote{Our sign convention and normalisation are those of many authors (e.g.\ \cite{St:08,Kr:14}), but other authors (e.g.\ \cite{Ken}), use a fundamental solution that is the negative of ours.}
\begin{equation}\label{eq:fund}
\Phi(x,y):= \frac{1}{2\pi} \log \left(\frac{1}{\nxy}\right), \;\; d= 2, \;\;\quad := \frac{1}{(d-2)c_d}\frac{1}{\nxy^{d-2}}, \;\; d \geq 3,
\end{equation}
where $c_d$ is the surface measure of the unit sphere in $\R^d$. Explicitly,
\begin{equation}\label{eq:DD'2}
D_{\Gamma} \phi(x) = \frac{1}{c_d}\int_\Gamma\frac{(x-y) \cdot n(y)}{|x-y|^d} \phi(y)\, \rd s(y) \quad \tand\quad
D_{\Gamma}' \phi(x) = \frac{1}{c_d}\int_\Gamma\frac{(y-x) \cdot n(x)}{|x-y|^d} \phi(y)\, \rd s(y),
\end{equation}
for  $\phi\in \LtG$ and (almost all) $x \in \Gamma$.

Complementing \eqref{eq:secondkindBIEs},
\begin{equation} \label{eq:trans}
 D_{\Gamma}'-\lambda I,
\end{equation}
with $\lambda\in \C\setminus \{-\half,\half\}$, arises as the operator in the boundary integral equation reformulation of transmission problems in electrostatics, where the Laplace equation $\Delta u = 0$ holds in $\Omega_-$ and $\Omega_+$ and the trace of $u$ or its normal derivative jumps across $\Gamma$ (see, e.g., \cite[\S5.12]{Me:18}). In this context the case $|\lambda|\geq 1/2$, especially with $\lambda$ real, is classically of interest (e.g., \cite[\S5.12]{Me:18}); more recently the case where $\lambda$ is complex with $|\lambda|< 1/2$ has been studied intensively as a model of quasi-static electromagnetic plasmonic problems (e.g., \cite{AmDeMi:16,HePe:17,Schnitzer:20,LeCoPer:22}).

Motivated by these physical applications, and by questions in harmonic analysis, there has been long-standing interest in the computation of the spectrum and essential spectrum\footnote{Given a Banach space $Y$ and a bounded linear operator $T\from Y\to Y$ we denote the spectrum of $T$, the set of $\lambda\in \C$ for which $T-\lambda I$ is not invertible, by $\sigma(T;Y)$, and the essential spectrum, the set of $\lambda$ for which $T-\lambda T$ is not Fredholm, by $\sigma_{\ess}(T;Y)$, abbreviating these by $\sigma(T)$ and $\sigma_{\ess}(T)$ where the Banach space $Y$ is clear from the context.} of $D_\Gamma$ as an operator on a variety of function spaces, especially for non-smooth domains (e.g., \cite{Kral65,FaSaSe:92,Ken,Mi:99,ChLe08,QiNi:12,AmDeMi:16,HePe:17,Pe:19}). The largest part of this literature is concerned specifically with the 2D/3D cases where $\Gamma$ is a (curvilinear) polygon (e.g., \cite{Sh:69,Ch:84,Sh:91,Mi:02,MiOtTu:17}) or polyhedron (e.g., \cite{Ra:92,El:92,Mi:99,GM13,HP13,MiOtTu:17,HePe:17,LeCoPer:22}). In this paper we will study and compute the essential spectrum of $D_\Gamma$ as an operator on $L^2(\Gamma)$ for a substantially larger class of boundaries, namely for the case where the boundary $\Gamma$ is {\em locally dilation invariant} in the sense of Definition \ref{defn:ldi} below. In 2D (3D) this class includes polygons (polyhedra) but it also admits much wilder boundary behaviour  (e.g., Figure \ref{fig:exLDI}) as we discuss next in \S\ref{sec:src_main}.

\subsection{The spectral radius conjecture and the main question we address} \label{sec:src_main}
Given a bounded linear operator $T\from Y\to Y$ on a Banach space $Y$, we define
its {\em spectral radius}, $\rho(T;Y)$, and its {\em essential spectral radius}, $\rho_{\ess}(T;Y)$, by
\begin{equation} \label{eq:srdef}
\rho(T;Y) := \sup_{\lambda \in \sigma(T;Y)} |\lambda| \quad \mbox{and} \quad \rho_{\ess}(T;Y) := \sup_{\lambda \in \sigma_{\ess}(T;Y)} |\lambda|,
\end{equation}
abbreviating $\rho(T;Y)$ and $\rho_{\ess}(T;Y)$ by $\rho(T)$ and $\rho_{\ess}(T)$, respectively, when the space $Y$ is clear from the context. The analysis and computation we will carry out are motivated by the so-called {\em spectral radius conjecture}.

This conjecture, in the explicit 1994 formulation of Kenig \cite{Ken}, is as follows, where $L^2_0(\Gamma):= \{\phi\in L^2(\Gamma):\int_\Gamma\phi \,\rd s=0\}$.

\begin{conject} \label{con:kenig} If $\Gamma$ is the boundary of a bounded Lipschitz domain $\Omega_-$  and is connected, the spectral radius of $D_{\Gamma}'$ on $L^2_0(\Gamma)$ is $<\frac{1}{2}$, i.e.\ $\rho(D'_\Gamma;L^2_0(\Gamma)) < \half$.
\end{conject}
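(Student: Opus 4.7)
Since Conjecture~\ref{con:kenig} remains open in general, what I can offer is a strategy — one compatible with the localisation/Floquet--Bloch machinery developed later in this paper — rather than a complete proof.

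First I would reduce the conjecture to a purely essential-spectrum statement. If $\lambda$ is an isolated eigenvalue of $D_\Gamma'$ on $L^2_0(\Gamma)$ with $|\lambda|\ge\half$ and $\lambda\neq\pm\half$, then the corresponding eigenfunction, used as a density for the single-layer potential, produces a nonzero harmonic function on $\R^d\setminus\Gamma$ whose interior and exterior Dirichlet and Neumann data both vanish; Verchota's $L^2$ uniqueness for the Dirichlet and Neumann problems on bounded Lipschitz domains excludes this. The value $\lambda=\half$ is ruled out on $L^2_0(\Gamma)$ because its eigenspace is spanned by the constant function, and $\lambda=-\half$ because $\Omega_-$ is bounded and connected, so the only mean-zero $L^2$ solution of $(D_\Gamma'+\half I)\phi=0$ is zero. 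It therefore suffices to prove
\begin{equation*}
\rho_{\ess}(D_\Gamma';\LtG) < \half.
\end{equation*}

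Next I would localise. Using a finite partition of unity $\{\chi_j\}$ subordinate to a Lipschitz atlas of $\Gamma$, together with the fact that the commutators $[D_\Gamma',\chi_j]$ have weakly singular kernels and are therefore compact on $\LtG$, the essential spectrum should split as $\essspec(D_\Gamma';\LtG)=\bigcup_{x\in\Gamma}\essspec(D_{\Gamma_x}';L^2(\Gamma_x))$, where $\Gamma_x$ is the tangent Lipschitz cone at $x$. At $C^1$ points the tangent cone is a hyperplane and contributes only $\{0\}$ by Calder\'on's theorem; the work is concentrated at the singular points, exactly the setting addressed by the operators $K_t$ of this paper in the locally dilation-invariant case.

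The final and hardest step is the cone estimate: for every Lipschitz cone $\Gamma_x\subset\R^d$, prove $\rho(D_{\Gamma_x}';L^2(\Gamma_x))<\half$. On dilation-invariant cones the Floquet--Bloch reduction of the paper turns this into the uniform bound $\sup_{t\in[-\pi,\pi]}\rho(K_t)<\half$, for which I would attempt a quantitative symmetrisation argument: introduce the single-layer inner product $\sp{\phi}{\psi}_S:=\sp{S_{\Gamma_x}\phi}{\psi}_{L^2(\Gamma_x)}$, under which $D_{\Gamma_x}'$ becomes self-adjoint (the Plemelj--Calder\'on symmetrisation principle), and then establish the coercivity bound $\|D_{\Gamma_x}'\|_S<\half$ directly, for instance via a Rellich identity applied to the layer potentials of an eigenfunction. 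This is where the conjecture truly bites, and where I expect the main obstacle: for $C^1$ and Lipschitz curvilinear polyhedral cones such coercivity is known (and is precisely what the final section of the paper extends to curvilinear polyhedra via conformal invariance), but for a general Lipschitz cone no one has yet shown that the symmetrised norm stays strictly below $\half$ independently of the Lipschitz constant. Controlling that best constant — uniformly in the shape of the cone and in the Floquet parameter $t$ — is the step I would expect to resist any routine technique.
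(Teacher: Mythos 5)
You are right not to claim a proof: Conjecture \ref{con:kenig} is open, and this paper does not prove it --- it proves only the equivalence with Conjecture \ref{con:kenigmod} (Lemma \ref{lem:equiv}), establishes the conjecture for special classes (e.g.\ Lipschitz curvilinear polyhedra, Corollary \ref{cor:CP}), and supplies computational tests and numerical evidence for $\Gamma\in\mathscr{D}_\cA$. Your first reduction step is essentially Lemma \ref{lem:equiv}: the paper obtains the confinement of eigenvalues to $[-\half,\half]$ from Theorem \ref{thm:Kell} and invertibility at $\pm\half$ from Verchota, exactly the ingredients you name. Your stated mechanism, however, is not how the exclusion works: an eigenfunction with $|\lambda|>\half$ does not produce a potential with vanishing interior and exterior Cauchy data; rather, the energy identity $\lambda=\tfrac{1}{2}(E_--E_+)/(E_-+E_+)$ for the single-layer potential of an eigenfunction forces $\lambda\in[-\half,\half]$ because the two Dirichlet energies $E_\pm$ are nonnegative.

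The remaining two steps contain genuine gaps even as a strategy. First, a general Lipschitz boundary has no tangent cone: blow-ups at a boundary point of a Lipschitz graph need not converge to anything, which is precisely why the localisation result here (Theorem \ref{thm:localization}) is restricted to the class $\mathscr{D}$ of locally dilation-invariant boundaries, and why the sharp-cutoff Lemma \ref{lem:sharp_cutoff} requires local $C^1$ regularity along the cut. The identity $\essspec(D_\Gamma')=\bigcup_{x}\essspec(D_{\Gamma_x}')$ therefore has no meaning for general Lipschitz $\Gamma$; making sense of ``the local model at $x$'' in general is itself part of the open problem. Second, the Plemelj--Calder\'on symmetrisation makes $D_{\Gamma_x}'$ self-adjoint with respect to the single-layer form, but that form induces the $H^{-1/2}$ (energy-space) topology, not $L^2$; the resulting real spectrum in $[-\half,\half]$ is the energy-space spectrum (cf.\ the result of \cite{HeKaLi17} cited in \S\ref{sec:num_ex}), whereas the $L^2$ essential spectrum is genuinely complex --- the spectra computed in this paper are lemniscate-like curves in $\C$. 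Consequently a bound $\|D_{\Gamma_x}'\|_S<\half$ would not control $\rho(K_t)$ on $L^2(\Gamma_0)$, and the passage from the energy space to $L^2$ is exactly where the conjecture bites.
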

\noindent In Section~\ref{sec:specrad} we will
discuss the following alternative formulation
of the conjecture which makes sense regardless of the connectivity of $\Gamma$, and show  its equivalence with Conjecture \ref{con:kenig}.

\begin{conject} \label{con:kenigmod} If $\Gamma$ is the boundary of a bounded Lipschitz domain $\Omega_-$, the essential spectral radius of $D_{\Gamma}$ on $L^2(\Gamma)$ is $<\frac{1}{2}$, i.e.\ $\rho_{\ess}(D_\Gamma;L^2(\Gamma)) < \half$.
\end{conject}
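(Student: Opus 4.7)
The plan is to attack Conjecture \ref{con:kenigmod} by a localization-plus-scaling strategy that reduces the essential spectrum of $D_\Gamma$ to spectra of explicit model operators, for which the bound $<1/2$ can then be certified. First I would localize: using that $\sigma_{\ess}$ is invariant under compact perturbations, a partition of unity subordinate to a cover of $\Gamma$ by small neighbourhoods splits $D_\Gamma$ into pieces whose essential spectra are supported at individual non-$C^1$ points $x\in\Gamma$. At each such $x$, the local operator should depend only on the tangent structure, i.e.\ on an infinite Lipschitz graph $\Gamma_x$; one should show that $\sigma_{\ess}(D_\Gamma) = \bigcup_{x\in\Gamma} \sigma_{\ess}(D_{\Gamma_x})$, reducing the conjecture to the analogous statement for each model graph.

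For $\Gamma$ locally dilation invariant, the graph $\Gamma_x$ satisfies $\Gamma_x = \alpha_x\Gamma_x$, so the dilation $\phi(\cdot)\mapsto \alpha_x^{(d-1)/2}\phi(\alpha_x\cdot)$ is a unitary on $L^2(\Gamma_x)$ that commutes with $D_{\Gamma_x}$. A Floquet--Bloch/Mellin decomposition then diagonalises this group action: one writes $L^2(\Gamma_x)$ as a direct integral over the dual parameter $t\in[-\pi,\pi]$ of $L^2$-spaces on a fundamental annular domain $\{y\in\Gamma_x: \alpha_x\le |y|\le 1\}$, and $D_{\Gamma_x}$ becomes the corresponding direct integral of operators $K_t$. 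Consequently $\sigma(D_{\Gamma_x}) = \bigcup_t \sigma(K_t)$, and if $\Gamma$ is $C^1$ away from finitely many points each $K_t$ is compact (the singular diagonal of the kernel lives on a compact boundary piece). Combining with the previous paragraph, to prove Conjecture \ref{con:kenigmod} for this class it suffices to show that $\|\lambda\| < 1/2$ for every $\lambda\in\sigma(K_t)$ and every $t\in[-\pi,\pi]$ and every relevant $x$.

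The hard part is bounding the spectra of the $K_t$. In full generality this bound is essentially what the spectral radius conjecture asserts, so one does not expect a closed-form argument. For piecewise-analytic $\Gamma$ in 2D I would proceed semi-numerically: replace $K_t$ by its Nystr\"om approximation $K_t^{(N)}$ on a graded mesh adapted to the corner singularities and estimate $\|K_t-K_t^{(N)}\|$ with explicit constants via a weighted-H\"older analysis of the kernel. Continuity of $t\mapsto K_t$ in operator norm (with an explicit modulus) reduces the $t$-sweep to a finite grid, and one certifies the conjecture for a given $\Gamma$ by verifying, over that grid, that $\rho(K_t^{(N)}) + \text{(error bound)} < 1/2$.

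For the deterministic part of Conjecture \ref{con:kenigmod}, namely its validity for all Lipschitz curvilinear polyhedra, I would establish invariance of $\rho_{\ess}(D_\Gamma;L^2(\Gamma))$ under locally conformal $C^{1,\beta}$ diffeomorphisms (in 2D, holomorphic boundary maps; in 3D, M\"obius transformations applied in neighbourhoods of each vertex/edge patch). Such a diffeomorphism changes $D_\Gamma$ by a compact (in fact smoothing) perturbation, preserving $\sigma_{\ess}$, and flattens the curvilinear faces so that the image is a straight Lipschitz polyhedron. The latter is locally dilation invariant, so the Floquet--Bloch analysis above applies; the resulting $K_t$ have enough homogeneous structure that an \emph{a priori} bound $<1/2$ can be extracted, yielding the conjecture on the full class of Lipschitz curvilinear polyhedra without any numerical computation.
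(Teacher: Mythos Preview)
First, a framing point: the statement is a \emph{conjecture}, and the paper does not prove it in general. What the paper does prove, and what your outline largely tracks, is (a) the localisation formula $\sigma_{\ess}(D_\Gamma)=\bigcup_x\sigma_{\ess}(D_{\Gamma_x})$ for $\Gamma\in\mathscr D$ (Theorem~\ref{thm:localization}), (b) the Floquet--Bloch decomposition $\sigma(D_{\Gamma_x})=\bigcup_t\sigma(K_t)$ with $K_t$ compact when $\Gamma_x$ is $C^1$ away from~$0$ (Theorem~\ref{cor:norm_and_spec}, Corollary~\ref{cor:K_t_compact}), (c) a certified Nystr\"om scheme for 2D piecewise-analytic $\Gamma$, and (d) the conjecture for Lipschitz curvilinear polyhedra (Corollary~\ref{cor:CP}). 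On (a)--(c) your plan is essentially the paper's, though the paper does not use a graded mesh: it makes the substitution $s\mapsto\alpha^s$ so that the transformed kernels $\tilde K_t(\cdot,\cdot)$ are $1$-periodic and analytic in a strip, and then applies the midpoint rule, getting exponential convergence with explicit constants (Propositions~\ref{prop:k_t_estimate general}--\ref{prop:norm_estimate}).

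Your final paragraph contains a genuine gap. You assert that a locally conformal $C^{1,\beta}$ diffeomorphism changes $D_\Gamma$ by a ``compact (in fact smoothing) perturbation, preserving $\sigma_{\ess}$''. This is not what happens. In the paper's decomposition \eqref{eq:V3} only the term $V_1$ is weakly singular and hence compact; the terms $V_2$ and $V_3$ are of the form (multiplication)$\cdot D_\Gamma$ and $D_\Gamma(z,y)\cdot\widetilde Q(z,y)$ respectively, and neither is compact in general. The paper controls $V_3$ only under the extra hypothesis $D_\Gamma\in\mathscr A_2$ (that $|D_\Gamma(\cdot,\cdot)|$ defines a bounded operator), and even then obtains only that the localised perturbation has \emph{small operator norm} as the cut-off shrinks (see \eqref{eq:tildeV}--\eqref{eq:locdiff_limit}), not that it is compact. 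Consequently the paper proves invariance of the essential spectral \emph{radius} (Theorem~\ref{thm:essradlocal}), not of the full essential spectrum, and leaves removing the $\mathscr A_2$ hypothesis as an open question (Question~\ref{q:1}). Your argument as written would need both to justify the $\mathscr A_2$ condition and to weaken the conclusion accordingly. Also, for curvilinear polyhedra the paper does not ``extract an a~priori bound $<1/2$'' from the $K_t$; it invokes Elschner's result \cite{El:92} for straight polyhedra and transports it via Theorem~\ref{thm:essradlocal}.
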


The spectral radius conjecture is very well studied, owing to its intrinsic interest in harmonic analysis, its possible relevance for computation\footnote{Notably, if the spectral radius conjecture holds then the Neumann series for $(D'_\Gamma\pm \half I)\phi=g$, equivalently the Neumann iteration $\pm \half \phi^{(n)} =g- D'_\Gamma\phi^{(n-1)}$, $n\in \N$, converges in $L^2_0(\Gamma)$.}, and its immediate role within electrostatics, as well as in interpretations of electrodynamical problems \cite{HP13}. It
originated in the setting of continuous functions in $\overline{\Omega_{-}}$, dating all the way back to C.~Neumann in the late 1800s, who treated convex domains, and to Radon \cite{Rad19}, who famously analyzed curves of bounded rotation. In a tour de force, Kr\'al \cite{Kral65} completely characterized when the essential norm of $D_\Gamma$
is $< 1/2$. This result was extended into higher dimensions by Burago and Maz'ya \cite{BG67} and Netuka \cite{Net74}. For polyhedra in 3D, the essential norm can be $ > 1/2$. Nevertheless, Rathsfeld \cite{Ra:92} (and see \cite{Ra:95}) and Grachev and Maz'ya \cite{GM13} independently
proved the spectral radius conjecture in the continuous setting holds for general polyhedra. These results were extended to locally conformal deformations of polyhedra by Medkov\'a \cite{Med97}. Even when specialised to the continuous setting, the history is vast, and we refer to \cite{We:09} for an in-depth survey.

As modern harmonic analysis developed, the natural setting for the double-layer potential shifted toward $L^2(\Gamma)$. We make particular mention of the demonstrations of $L^2$-boundedness of the Cauchy integral due to Calder\'on \cite{Cald77} and Coifman, Mcintosh, and Meyer \cite{CMM82}, and Verchota's \cite{Ver84} application of these results to study invertibility of $D'_\Gamma\pm \half I$ on $L^2(\Gamma)$ and $L^2_0(\Gamma)$ when $\Gamma$ is connected. Since then, a flurry of activity and findings in this area have provided support for Kenig's conjecture, though a complete proof has proved elusive.

Indeed, to the best of our knowledge, Conjecture \ref{con:kenig} has been established (only) in the following cases: a) $\Omega_-$ is convex \cite{FaSaSe:92} (and see \cite{ChLe08} for extensions to locally convex domains); b) $\Omega_-$ has small Lipschitz character\footnote{See, e.g., \cite[Definition 3.1]{ChaSpe:21} for the definition of the Lipschitz character of a Lipschitz domain.} \cite{Mi:99}, a case which includes all $C^1$ domains \cite{FaJoRi:78}; c) $\varepsilon$-regular Semmes--Kenig--Toro domains $\Omega_-$ for sufficiently small $\varepsilon > 0$ \cite{HMT10}, including in particular all domains whose gradient has vanishing mean oscillation \cite{Hof94}; d) $\Omega_-$ is a polygon or curvilinear polygon in 2D \cite{Sh:69,Sh:91}, or a Lipschitz polyhedron in 3D \cite{El:92}.

\begin{figure}[ht]
\centering
\begin{subfigure}[t]{.47\textwidth}
  \centering
\includegraphics[scale=0.5, trim = 0.5cm 1.9cm 0cm 1.5cm, clip]{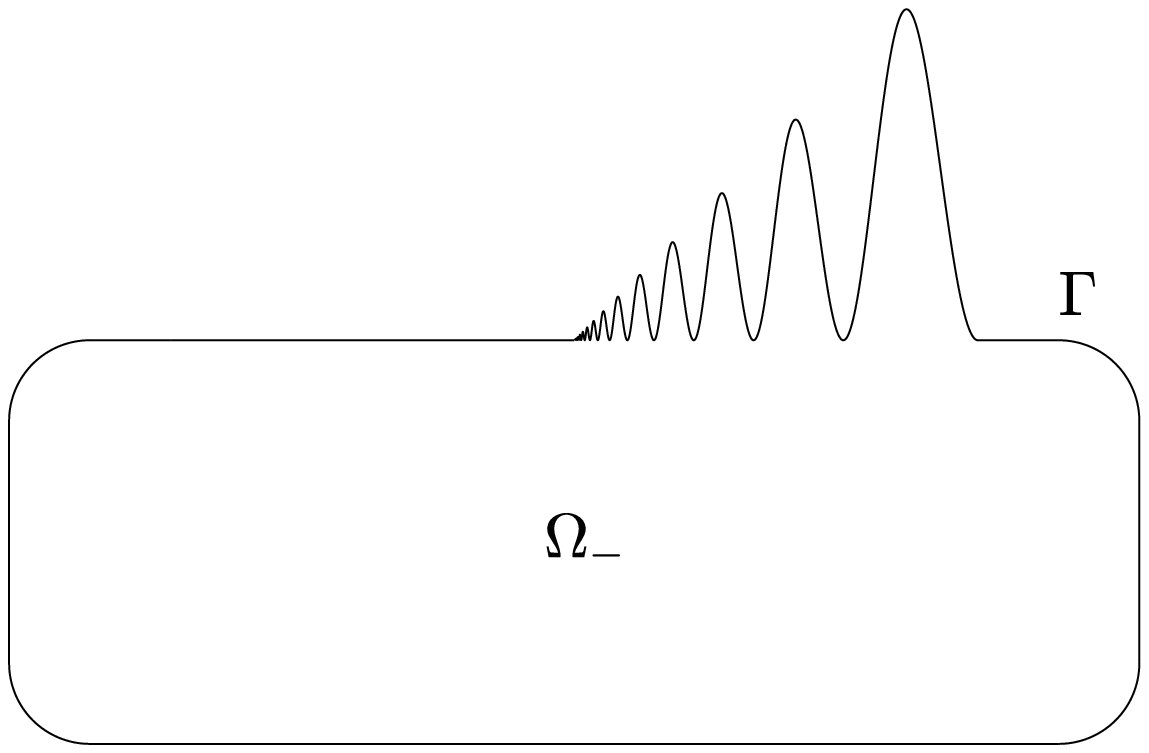}
\subcaption{}
\end{subfigure}
\begin{subfigure}[t]{.47\textwidth}
  \centering
\includegraphics[scale=0.5, trim = 0.5cm 1.9cm 0cm 1.5cm, clip]{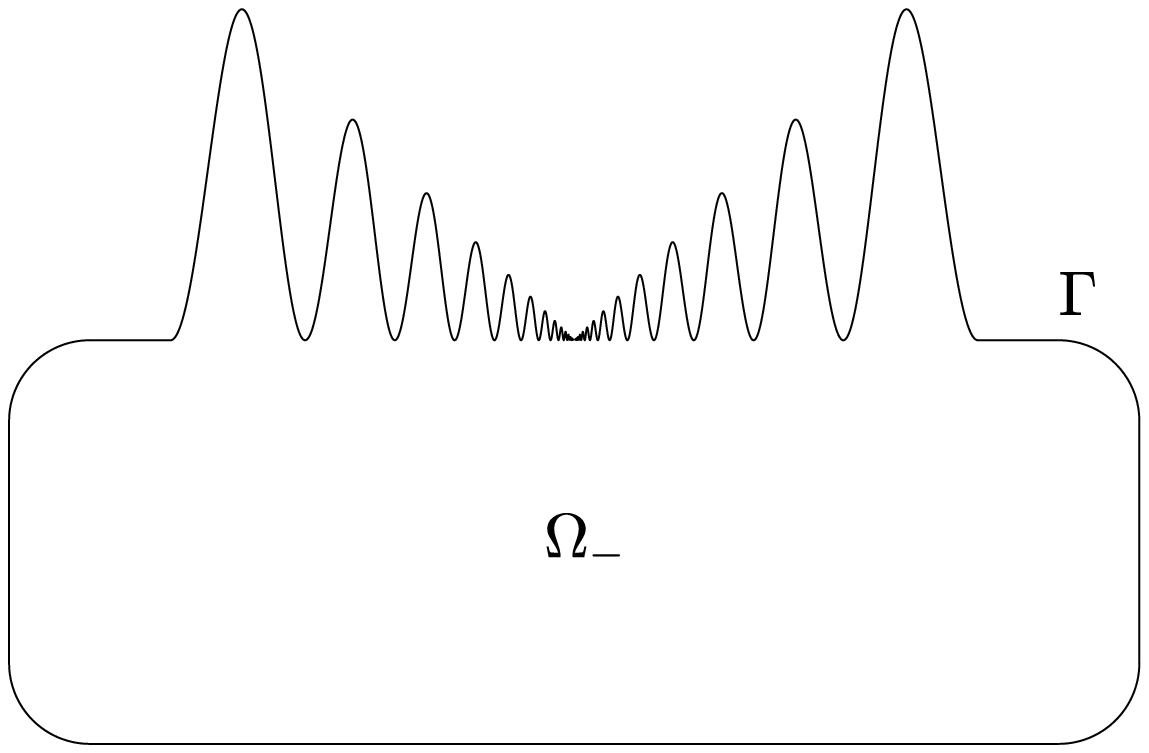}
\subcaption{}
\end{subfigure}
\caption{Examples of bounded Lipschitz domains $\Omega_-$ with boundaries $\Gamma\in \mathscr{D}_\cA \subset \mathscr{D}$ that are piecewise analytic locally dilation invariant.} \label{fig:exLDI}
\end{figure}

Note that polygonal and polyhedral boundaries $\Gamma$ are locally invariant under all dilations: at every point $x\in \Gamma$, $\Gamma$ locally coincides with a graph $\Gamma_x$ such that (in some local coordinate system centred at $x$) $\Gamma_x = \alpha \Gamma_x$ {\em for all} $\alpha>0$. In this paper we will investigate domains where, locally, the dilation invariance $\Gamma_x = \alpha_x \Gamma_x$ only holds {\em for one} $\alpha_x \in (0,1)$  (in which case we say that $\Gamma$ is {\em locally dilation invariant at $x$}).
Precisely our focus will be on the following class of boundaries.
\begin{defn}[\bf{\em Locally dilation invariant}] \label{defn:ldi}
Given $\Gamma$, the boundary of a Lipschitz domain $\Omega_-$, we say that $\Gamma \in \mathscr{D}$, the set of {\em locally-dilation-invariant} boundaries if, at every $x \in \Gamma$, $\Gamma$ is either locally $C^1$ or locally coincides (in some coordinate system centred at $x$)  with a Lipschitz graph $\Gamma_x$ which is dilation invariant with respect to some $\alpha_x \in (0,1)$, i.e.\ $\Gamma_x = \alpha_x \Gamma_x$.
\end{defn}
Note that, already in 2D, $\mathscr{D}$ is a hugely larger class of domains than that of the curvilinear polygons.
Indeed, for a Jordan curve $\Gamma \subset \mathbb{R}^2$ and $x \in \Gamma$, let
\[
v_\Gamma(x) = \int_{0}^{2\pi} |\{x + re^{i\theta} \in \Gamma \, : \, r > 0 \}| \, \frac{d\theta}{2\pi},
\]
where $\abs{\cdot}$ in this equation denotes the counting measure of a set. Then $D_\Gamma \phi$ is uniformly continuous on $\Omega_-$ for every $\phi \in C(\Gamma)$ if and only if
\begin{equation} \label{eq:contsetbdd}
\sup_{x\in \Gamma} v_\Gamma(x) < \infty,
\end{equation}
see \cite{BGS66, Kral62, Kral64}.  If $\Gamma$ is Lipschitz and locally dilation invariant at $x$, then it is clear that $v_\Gamma(x) = \infty$, unless $\Gamma$ coincides with two line segments around $x$. It follows that the only curves $\Gamma \in \mathscr{D}$ satisfying \eqref{eq:contsetbdd} are curvilinear polygons. That is, except for curvilinear polygons, our curves exhibit such wild boundary behaviour that it is not  possible to consider $D_\Gamma$ in the setting of continuous boundary data.

In the context of studying the double-layer and related operators, the class of  domains $\cD$ seems to have been first considered in \cite{ChaSpe:21}, where the essential numerical range\footnote{Recall that, for a bounded linear operator $T\from H \to H$ on a Hilbert space $H$, the numerical range of $T$ is $W(T) := \set{\sp{T\phi}{\phi} : \norm{\phi} = 1}$ and its essential numerical range is $W_{\ess}(T) := \bigcap\limits_{K \text{ compact}} \overline{W(T+K)}$.}  of $D_\Gamma$ was studied and, in 2D and 3D, examples of boundaries $\Gamma\in \cD$ with large Lipschitz character were constructed such that
$D_\Gamma \colon L^2(\Gamma) \to L^2(\Gamma)$ has arbitrarily large {\em essential numerical radius},
$$
w_{\ess}(D_\Gamma) := \sup\limits_{z \in W_{\ess}(D_\Gamma)} |z|,
$$
and so also arbitrarily large {\em essential norm},
$$
\|D_\Gamma\|_{\ess} := \inf_{K \mbox{ \small compact}}\|D_\Gamma-K\|,
$$ since, for any bounded operator $T$ on a Hilbert space
\cite[\S1.3]{GuRa:97},
\begin{equation} \label{eq:NormBounds}
\textstyle{\half} \|T\|_{\ess} \leq w_{\ess}(T) \leq \|T\|_{\ess}.
\end{equation}

 The 2D examples in \cite{ChaSpe:21} (see, e.g., \cite[Fig.~3]{ChaSpe:21} and cf.~Figure \ref{fig:exLDI}) with arbitrarily large $w_{\ess}(D_\Gamma)$ are necessarily examples for which \eqref{eq:contsetbdd} fails to hold, since for curvilinear polygons on $L^2(\Gamma)$ it is well-known that
$$
\|D_\Gamma\|_{\ess}= w_{\ess}(D_\Gamma)=\rho(D_\Gamma;L^2(\Gamma))<\textstyle{\half}
$$
(see \cite{Sh:69,Ch:84,Sh:91,Mi:02} for $\|D_\Gamma\|_{\ess}$ and $\rho(D_\Gamma;L^2(\Gamma))$; equality for $w_{\ess}(D_\Gamma)$ follows by \eqref{eq:NormBounds} and since $W_{\ess}(D_\Gamma)\supset \sigma_{\ess}(D_\Gamma;L^2(\Gamma))$ \cite{bonsall1973numerical}).

A natural question, prompted by the examples from \cite{ChaSpe:21} with $w_{\ess}(D_\Gamma)$ arbitrarily large and the spectral radius conjecture, is the following:
\begin{quotation}
\noindent Given that there exist $\Gamma\in \cD$ with $w_{\ess}(D_\Gamma)\gg \half$, in particular such examples in 2D, is there a $\Gamma\in \cD$, in particular an example in 2D, with $\rho_{\ess}(D_\Gamma;L^2(\Gamma))\geq \half$?
\end{quotation}
Of course, a positive answer would provide a counterexample to Conjecture \ref{con:kenigmod} and hence to the original spectral radius conjecture, Conjecture \ref{con:kenig}. The aim of this paper is to address this question, through mathematical analysis and 
computational methods supported by numerical analysis error estimates where constants are made explicit. These will enable us to estimate $\rho_{\ess}(D_\Gamma;L^2(\Gamma))$ for a large class of $\Gamma\in \cD$
sufficiently accurately to determine whether or not $\rho_{\ess}(D_\Gamma;L^2(\Gamma))\geq \half$.

\subsection{Our main results and their significance} \label{sec:main_sig}
The first step in our analysis is Theorem \ref{thm:localization} in \S\ref{sec:localwithout}, the localisation result (cf.~\cite{El:92,Mi:99,LeCoPer:22,ChaSpe:21}) that, for $\Gamma\in \cD$ (and for any dimension $d\geq 2$), there exists a finite set $F\subset \Gamma$ such that
\begin{equation} \label{eq:local}
\sigma_{\ess}(D_\Gamma;L^2(\Gamma)) = \bigcup_{x\in F} \sigma_{\ess}(D_{\Gamma_x};L^2({\Gamma_x})).
\end{equation}
In the above formula $\Gamma_x$ is defined as above if $\Gamma$ is locally dilation invariant at $x$, while if $\Gamma$ is locally $C^1$ at $x$ then $\Gamma_x$ is the graph of a $C^1$ compactly supported function so that \cite{FaJoRi:78} $D_{\Gamma_x}$ is compact and $\sigma_{\ess}(D_{\Gamma_x};L^2({\Gamma_x}))=\{0\}$.

The localisation \eqref{eq:local} reduces the computation of $\sigma_{\ess}(D_\Gamma;L^2(\Gamma))$ to that of $\sigma_{\ess}(D_{\Gamma_x};L^2({\Gamma_x}))$ for finitely many $x$ for which $\alpha_x\Gamma_x = \Gamma_x$, for some $\alpha_x\in (0,1)$. Computation of $\sigma_{\ess}(D_{\Gamma_x};L^2({\Gamma_x}))$ for such $x$ is our focus in \S\ref{sec:dilation_invariant} where we study and compute spectral properties of $D_\Gamma$ in the case that
 $\Gamma \subset \mathbb{R}^d$ is a {\em dilation invariant} Lipschitz graph, meaning that there exists an $\alpha \in (0,1)$ such that $\alpha \Gamma = \Gamma$ (an example is Figure \ref{fig:ex2}). If $\Gamma_0\subset \Gamma\setminus\{0\}$ is a particular relatively closed and bounded Lipschitz subgraph of $\Gamma$ such that
\begin{equation} \label{eq:discr}
\Gamma = \bigcup_{j \in \mathbb{Z}} \alpha^j \Gamma_0,
\end{equation}
and such that $(\alpha^j\Gamma_0)\cap \Gamma_0$ has zero surface measure for $j\neq 0$, we show that $D_\Gamma$ can be written as a discrete $\ell_1$ convolution operator whose entries are bounded linear operators on $L^2(\Gamma_0)$ related to the discretization \eqref{eq:discr} of $\Gamma$. This allows us to decompose $D_\Gamma$, by a Floquet-Bloch transform, into a continuous family $(K_t)_{t \in [-\pi, \pi]}$ of operators $K_t \colon L^2(\Gamma_0) \to L^2(\Gamma_0)$, and, by standard results for such convolutions (see \cite[Thm.~2.3.25]{RaRoSi04}), to characterise the essential spectrum of $D_\Gamma$ as (Theorem \ref{cor:norm_and_spec})
\begin{equation} \label{eq:specKt}
\sigma_{\ess}(D_\Gamma;L^2(\Gamma)) = \sigma(D_\Gamma;L^2(\Gamma)) = \bigcup_{t\in [-\pi,\pi]} \sigma(K_t;L^2(\Gamma_0)).
\end{equation}

This characterisation is particularly useful when, apart from a singularity at 0, $\Gamma$ is $C^1$ (see, e.g., Figure \ref{fig:ex2}), for then (Corollary \ref{cor:K_t_compact}) each $K_t$ is compact, and so has a more easily computed discrete spectrum. The characterisation \eqref{eq:specKt} holds for every Lipschitz dilation invariant graph in any dimension; in \S\ref{sec:two-sided} we specialise to the 2D case where $\Gamma$, except at $0$, is the graph of a real analytic function (see, e.g., Figure \ref{fig:ex2}); we denote by $\cA$ this subset of the 2D dilation invariant graphs. We show, for each $t\in [-\pi,\pi]$, that $K_t$ is unitarily equivalent to $\tilde K_t$, a $2\times 2$ matrix of integral operators on $L^2(0,1)$ that have real analytic kernels. Further, the range of each of these integral operators is a space of $1$-quasi-periodic real analytic functions. As a consequence,
$$
\sigma(K_t;L^2(\Gamma_0)) = \sigma(\tilde K_t;(C[0,1])^2),
$$
and this latter spectrum can be computed by approximating $\tilde K_t$ by a $2N\times 2N$ matrix $\tilde A_{t,N}$ obtained by  a simple midpoint-rule based Nystr\"om discretization. As is well-known (see \cite{Kr:14,TrWe14}, Theorem \ref{thm:Kress}), the midpoint rule is exponentially convergent for periodic analytic functions, so that it follows from Nystr\"om-method spectral estimates for integral operators with continuous kernels \cite{At:75} that, for each $t$, the eigenvalues of $A_{t,N}$ converge at an exponential rate to those of $\tilde K_t$ as $N\to\infty$.

This leads (see Theorem \ref{thm:Hausdorff_convergence_2}) to a Nystr\"om approximation, $\sigma^N(D_\Gamma)$, for $\sigma(D_\Gamma)=\sigma_{\ess}(D_\Gamma)$, which is $\{0\}$ plus the union of the eigenvalues of finitely many $2N\times 2N$ matrices. Our first main result is to show, as Theorem \ref{thm:Hausdorff_convergence_2}, that $\sigma^N(D_\Gamma)\to \sigma(D_\Gamma)$ in the Hausdorff metric as $N\to \infty$; that this convergence is achievable is somewhat surprising given that $D_\Gamma$ is neither compact nor self-adjoint. Our second, and more substantial result (Theorem \ref{thm:spectral_radius2}) is that we develop a fully discrete algorithm to test whether,
as an operator on $L^2(\Gamma)$, $\rho_{\ess}(D_\Gamma)<\half$. Precisely, we construct (see Remark \ref{rem:final}), for each $c>0$, a nonlinear functional $S_{c}:\cA\times \N\times \N\times \N \to \R$  with the properties that: a) the functional can be computed in finitely many arithmetic operations and finitely many evaluations of elementary functions, given finitely many sampled values of $f$ and its first and second derivatives for real arguments, plus bounds on the analytic continuation of $f$ to a neighbourhood of the real line that depends on $c$; b) $\rho_{\ess}(D_\Gamma)<\half$ if, for some $c>0$ and $m,M,N\in \N$, it holds that:
\begin{enumerate}
\item[i)] $\rho(A^M_{t_k,N})< \half$, for $k=1,\ldots,m$, where $t_k:= (k-1/2)\pi/m$ and $A^M_{t_k,N}$ is a specific approximation to $A_{t_k,N}$ depending on the parameter $M$;
\item[ii)] $S_{c}(\Gamma,m,M,N) < 0$.
\end{enumerate}
Conversely, if $\rho_{\ess}(D_\Gamma)<\half$, then, for all sufficiently small $c>0$, i) and ii) hold for all sufficiently large $N,M,m\in \N$.

In \S\ref{sec:num_ex} we bring these results together to address our question at the end of \S\ref{sec:src_main}.
We restrict attention to the following 2D class of domains; this class includes polygons and piecewise-analytic curvilinear polygons, but also wilder boundary behaviour as illustrated in Figure \ref{fig:exLDI}.

\begin{defn}[\bf{\em Piecewise analytic locally dilation invariant}] \label{defn:ldiA}
In the case $d=2$ we say that $\Gamma \in \mathscr{D}_\cA$, the set of {\em piecewise-analytic locally-dilation-invariant} boundaries, if $\Gamma\in \mathscr{D}$ and $\Gamma$ is locally analytic (i.e., is locally the graph of a real-analytic function) at all but finitely many $x\in \Gamma$.
\end{defn}

\noindent Given $\Gamma\in \mathscr{D}_\cA$, if $F\subset \Gamma$ is the finite set of points at which $\Gamma$ is not locally analytic and is locally dilation invariant, it is easy to see that $\Gamma_x\in \cA$ for $x\in F$ and that \eqref{eq:local} holds for this set $F$. Thus (see Theorem \ref{thm:synth})
\begin{equation} \label{eq:spradius}
\Sigma^N(D_\Gamma) := \bigcup_{x\in F} \sigma^N(D_{\Gamma_x}) \to \sigma_{\ess}(D_\Gamma)
\end{equation}
in the Hausdorff metric as $N\to \infty$, and note that $\Sigma^N(D_\Gamma)$ is the union of the eigenvalues of finitely many $2N\times 2N$ matrices. Further, as we discuss in \S\ref{sec:num_ex}, it follows that $\rho_{\ess}(D_\Gamma)<\half$ if these eigenvalues lie within the disc of radius $\half$, i.e.\ if
\begin{equation} \label{eq:RN}
R_N(D_\Gamma):=\max \{|z|: z\in \Sigma^N(D_\Gamma)\} < \textstyle{\half},
\end{equation}
and if also, for some $c>0$,
\begin{equation} \label{eq:scrS}
\mathscr{S}_{c}(\Gamma, N) := \max_{x\in F} S_{c}(\Gamma_x,N,N,N) < 0,
\end{equation}
where $\mathscr{S}_{c}(\Gamma, N)$ can be computed in finitely many arithmetic operations plus finitely many evaluations of elementary functions, given inputs describing each $\Gamma_x$ as discussed above. Conversely, if $\rho_{\ess}(D_\Gamma)<\half$ then, for all sufficiently small $c>0$ and all sufficiently large $N$, $R_N(D_\Gamma)<\half$ and $\mathscr{S}_{c}(\Gamma, N)<0$. Thus, given inputs describing $\Gamma$, our fully discrete algorithms enable us to test, for individual $\Gamma\in \mathscr{D}_\cA$, the validity of Conjecture \ref{con:kenigmod}, i.e.\ whether or not $\rho_{\ess}(D_\Gamma)<\half$, through computation of the eigenvalues of finitely many finite matrices, plus finitely many additional arithmetic operations.

In \S\ref{sec:LD} we prove the localisation result \eqref{eq:local}. We also, in the spirit of Medkov\'a's study in the continuous setting \cite{Med90,Med97}, consider the stability of Conjecture~\ref{con:kenigmod} under locally conformal deformations. We prove that the spectral radius conjecture is independent of such deformations, under the assumption that the absolute value of the kernel of the double-layer potential also defines a bounded operator on $L^2(\Gamma)$. While this additional hypothesis may fail to include the wildest of boundaries, it applies to many domains from $\mathscr{D}$. For example, as we discuss in \S\ref{sec:deform}, it is satisfied by any dilation invariant Lipschitz graph $\Gamma$ whose generating set $\Gamma_0$ is polygonal or polyhedral, in particular it holds if $\Gamma$ is a polyhedron and, in 2D, if $\Gamma\in \mathscr{D}_\cA$. As one consequence (Corollary \ref{cor:CP}), Conjecture~\ref{con:kenigmod} holds for Lipschitz curvilinear polyhedra, because \cite{El:92} it holds for polyhedra; as another (Corollary \ref{cor:final}), if it holds for $\Gamma\in \mathscr{D}_\cA$, then it holds for any locally conformal deformation of $\Gamma$.

To illustrate the above results, and use them to test Conjecture \ref{con:kenigmod} and address our main question from \S\ref{sec:src_main}, we include a range of numerical examples in \S\ref{sec:num_ex}, for piecewise-analytic Lipschitz graphs $\Gamma\in \cA$, and for piecewise-analytic locally-dilation-invariant $\Gamma\in \cD_\cA$ that are the boundaries of bounded Lipschitz domains. For each example we plot an approximation ($\sigma^N(D_\Gamma)$ or $\Sigma^N(D_\Gamma)$, as appropriate) to $\sigma_{\ess}(D_\Gamma)$. Moreover, we employ the algorithms described above to provide convincing numerical evidence that $\rho_{\ess}(D_\Gamma;L^2(\Gamma))<\half$ in every case we examine, including cases where $w_{\ess}(D_\Gamma)$ is significantly $>\half$. These results are evidence that the spectral radius conjecture holds for the class of 2D domains  $\cD_\cA$; we emphasise again that this conjecture has not been studied previously for boundaries in this class, except for the special case of curvilinear polygons.

Let us briefly summarise the remainder of the paper. In \S\ref{sec:specrad} we prove the equivalence of Conjectures \ref{con:kenig} and \ref{con:kenigmod}. In \S\ref{sec:Nystrom}, as a key step to our main results, we derive bounds for the spectral radii of general compact operators in \S\ref{sec:cc} (e.g., Corollary \ref{cor:GspecRMfinal}), specialising to the case of integral operators approximated by the Nystr\"om method in \S\ref{sec:Ny} (e.g., Theorem \ref{thm:GspecRMfinal}). In \S\ref{sec:dilation_invariant} we prove the results noted above on the essential spectrum and essential spectral radius of $D_\Gamma$ in the case when $\Gamma$ is a dilation-invariant Lipschitz graph, with particular focus (\S\ref{sec:oneside}--\S\ref{sec:num_range}) on the 2D piecewise-analytic case. In \S\ref{sec:LD} we prove our localisation results. In \S\ref{sec:num_ex} we bring the earlier results together, in particular to study the spectral radius conjecture for $\Gamma\in \cD_\cA$, and we illustrate our theory by numerical examples.

\section{Formulations of the spectral radius conjecture} \label{sec:specrad}

Our results are related to the spectral radius conjecture of Kenig \cite[Problem 3.2.12]{Ken}, that $\rho(D_{\Gamma}';L^2_0(\Gamma))<\half$
if $\Gamma$ is the boundary of a bounded Lipschitz domain. Conjecture \ref{con:kenig}, stated in the introduction, is a version of this conjecture that avoids difficulties when $\Gamma$ is not connected\footnote{By results of D.~Mitrea \cite[Theorem 4.1]{MitreaD:97}, if $\Omega_-$ or $\Omega_+$ are not connected, $\{\frac{1}{2},-\frac{1}{2}\}\cap \sigma(D_{\Gamma}';L^2_0(\Gamma))\neq \emptyset$. Indeed, $D_{\Gamma}' \pm \frac{1}{2}I$ are Fredholm of index zero on $L^2(\Gamma)$ by part (2) of \cite[Theorem 4.1]{MitreaD:97}, but by (4) of the same theorem, and since $D_{\Gamma}(1)=-\half$ so that $D_{\Gamma}'(L^2_0(\Gamma))\subset L^2_0(\Gamma)$ (see \eqref{eq:mapping} below), the codimension of $D_{\Gamma}' - \frac{1}{2}I$ on $L^2_0(\Gamma)$ is $\geq 1$ if $\Omega_+$ is not connected; that of  $D_{\Gamma}' + \frac{1}{2}I$ is $\geq 1$ if $\Omega_-$ is not connected.}. In this section we show the equivalence between Conjecture \ref{con:kenig} and Conjecture \ref{con:kenigmod}, also stated in the introduction. Conjecture \ref{con:kenigmod} concerns the essential spectrum rather than the spectrum and makes sense whatever the connectedness of $\Gamma$, $\Omega_-$, and $\Omega_+$.

Of course, because $D_{\Gamma}'$ is the adjoint of $D_{\Gamma}$ (as an operator on $L^2(\Gamma)$), and so shares the same essential spectrum, a statement equivalent to Conjecture \ref{con:kenigmod} is that $\rho_{\ess}(D_{\Gamma}';L^2(\Gamma))<\frac{1}{2}$. Further, as bounded Lipschitz domains have only finitely many boundary components, it  is clear
that Conjecture \ref{con:kenigmod} is true if it is true whenever $\Gamma$ is connected\footnote{Note that if $\Gamma_1$ and $\Gamma_2$ are separate components of $\Gamma$, so that there is a positive distance between $\Gamma_1$ and $\Gamma_2$, the double-layer operator from $L^2(\Gamma_1)$ to $L^2(\Gamma_2)$ has a kernel that is bounded, so is a Hilbert-Schmidt operator and hence compact.}.  Thus the equivalence of Conjectures \ref{con:kenig} and \ref{con:kenigmod}, i.e.\ that Conjecture \ref{con:kenig} holds (as claimed, whenever $\Gamma$ is connected) if and only if Conjecture \ref{con:kenigmod} holds (as claimed, whatever the topology of $\Gamma$), is implied by the following lemma.

\begin{lem} \label{lem:equiv} Assume that $\Gamma$ is the boundary of a bounded Lipschitz domain $\Omega_-$ and is connected. Then $\rho(D_{\Gamma}';L^2_0(\Gamma))<\half$ if and only if $\rho_{\ess}(D_{\Gamma}';L^2(\Gamma))<\half$.
\end{lem}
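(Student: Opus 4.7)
The plan is to exploit the orthogonal decomposition $L^2(\Gamma) = L^2_0(\Gamma) \oplus \C\cdot 1$ together with a classical single-layer energy argument. Since $D_\Gamma(1) = -\half$, for every $\phi\in L^2_0(\Gamma)$
\[
\int_\Gamma D'_\Gamma\phi \, ds = \int_\Gamma \phi\cdot D_\Gamma(1) \, ds = 0,
\]
so $L^2_0(\Gamma)$ is invariant under $D'_\Gamma$; write $A := D'_\Gamma|_{L^2_0(\Gamma)}$. Since the mean of $D'_\Gamma 1$ equals $|\Gamma|^{-1}\int_\Gamma D_\Gamma(1)\, ds = -\half$, relative to the above decomposition $D'_\Gamma$ has the block form
\[
D'_\Gamma = \begin{pmatrix} A & B \\ 0 & -\half \end{pmatrix},
\]
with $B\from\C\to L^2_0(\Gamma)$ of rank one. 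Since $L^2_0(\Gamma)$ has codimension one in $L^2(\Gamma)$, standard Fredholm theory yields $\sigma_{\ess}(D'_\Gamma;L^2(\Gamma)) = \sigma_{\ess}(A;L^2_0(\Gamma))$.

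The forward implication is then immediate from $\rho_{\ess}(D'_\Gamma;L^2(\Gamma)) = \rho_{\ess}(A) \leq \rho(A) = \rho(D'_\Gamma;L^2_0(\Gamma))$. For the converse, assume $\rho_{\ess}(A)<\half$ and suppose for contradiction that some $\lambda\in\sigma(A)$ satisfies $|\lambda|\geq \half$. Since $|\lambda|\geq \half>\rho_{\ess}(A)$, $\lambda$ lies in the unbounded component of the Fredholm resolvent of $A$, where the Fredholm index is zero; hence $\lambda$ is an isolated eigenvalue of finite multiplicity, and there exists $\phi\in L^2_0(\Gamma)\setminus\{0\}$ with $D'_\Gamma\phi=\lambda\phi$. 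Following the classical single-layer strategy of Verchota \cite{Ver84}, set $u := S_\Gamma\phi$. By Verchota's $L^2$ jump relations on Lipschitz boundaries, $u$ is continuous across $\Gamma$ and its non-tangential normal-derivative traces satisfy $(\partial_n u)^\pm = (D'_\Gamma \mp \half I)\phi = (\lambda\mp \half)\phi$ on $\Gamma$, with $n$ outward from $\Omega_-$. Since $\int_\Gamma \phi\, ds = 0$, the leading far-field term of $u$ (logarithmic if $d=2$, $\sim|x|^{-(d-2)}$ if $d\geq 3$) vanishes, so $u$ and $\nabla u$ decay sufficiently to justify Green's first identity in $\Omega_\pm$:
\[
\int_{\Omega_-}|\nabla u|^2\, dx = (\lambda + \half)J, \qquad \int_{\Omega_+}|\nabla u|^2\, dx = (\half - \lambda)J, \qquad J := \int_\Gamma \overline{u}\,\phi\, ds.
\]

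The left-hand sides are non-negative reals summing to $J$, whence $J\geq 0$ and both $(\lambda+\half)$ and $(\half-\lambda)$ are non-negative real multiples of $J$. If $J>0$, this forces $\lambda\in\R$ with $-\half\leq \lambda\leq \half$, so the only values with $|\lambda|\geq \half$ consistent with $J>0$ are $\lambda=\pm\half$; and if $J = 0$, both energies vanish. In each surviving case, connectedness of $\Gamma$---which forces both $\Omega_\pm$ to be connected---combined with decay of $u$ at infinity, continuity of $u$ across $\Gamma$, and uniqueness for the interior and exterior Dirichlet problems yields $u\equiv 0$ in $\R^d$ (for $\lambda=-\half$ one additionally invokes $\phi\in L^2_0(\Gamma)$ together with the positivity of the capacity of $\Gamma$ to exclude a nonzero constant interior value). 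From $u\equiv 0$ the jump relations give $\phi=0$, a contradiction. Hence $\sigma(A)\subset \{|\lambda|<\half\}$, and compactness of the spectrum yields $\rho(D'_\Gamma;L^2_0(\Gamma))<\half$.

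The main obstacle I anticipate is the single-layer/energy step: verifying Verchota's $L^2$ jump relations and Green's first identity under only the assumption $\phi\in L^2_0(\Gamma)$ (to guarantee adequate decay of $u,\nabla u$ at infinity), and rigorously exhausting the boundary cases $\lambda = \pm\half$, where the energy identity alone admits nontrivial candidates until the connectedness of $\Omega_\pm$ and the mean-zero condition on $\phi$ are brought to bear.
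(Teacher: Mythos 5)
Your proof is correct and its skeleton coincides with the paper's: the invariance of $L^2_0(\Gamma)$ under $D'_\Gamma$, the transfer of Fredholmness between $L^2(\Gamma)$ and $L^2_0(\Gamma)$ via the codimension-one splitting, and the reduction of any spectral point $\lambda$ with $|\lambda|\geq\half$ to an eigenvalue by index-zero Fredholmness. The only real difference is in how the eigenvalues are excluded: the paper cites Theorem~\ref{thm:Kell} (Fabes--Sand--Seo, i.e.\ Kellogg's argument) to rule out $\lambda\notin[-\half,\half]$ and Verchota's invertibility of $D'_\Gamma\pm\half I$ on $L^2_0(\Gamma)$ to rule out $\lambda=\pm\half$, whereas you re-derive both inline via the single-layer energy identity, using $\int_\Gamma\phi\,\rd s=0$ to secure decay at infinity and to kill the quantity $J$ in the $\lambda=-\half$ case, and connectedness of $\Omega_\pm$ (which does follow from connectedness of $\Gamma$ for a Lipschitz domain) to conclude $u\equiv 0$. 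This is exactly the classical proof of the results the paper cites, so nothing is gained or lost mathematically; your version is self-contained but still leans on the same $L^2$ machinery (nontangential jump relations and Green's identities for $L^2$ densities on Lipschitz boundaries from \cite{Ver84}) that you correctly identify as the only nontrivial input.
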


The main step in the proof of this lemma is the following theorem stated in \cite{FaSaSe:92}\footnote{In \cite[Theorem 1.1]{FaSaSe:92} slightly more is claimed, that the eigenvalues of $D_{\Gamma}'$ lie in $[-\half,\half)$. This is more than is claimed in \cite[Chapter IX, \S11]{Kellogg29}, and in fact $\half$ is an eigenvalue if $\Omega_+$ is not connected; see \cite[Theorem 4.1]{MitreaD:97}.}. As noted in \cite{FaSaSe:92}, the proof of this result is that in Kellogg's classical book for the case of smooth boundaries (see \cite[Chapter IX, \S11]{Kellogg29}), which carries over to the Lipschitz case.

\begin{thm}[Theorem 1.1 in  \cite{FaSaSe:92}] \label{thm:Kell} If $\Gamma$ is the boundary of a bounded Lipschitz domain $\Omega_-$, the eigenvalues of $D_{\Gamma}'$, as an operator on $L^2(\Gamma)$, are real and lie in $[-\half,\half]$.
\end{thm}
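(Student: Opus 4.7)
The plan is to adapt Kellogg's classical energy-method argument \cite[Chap.~IX, \S11]{Kellogg29} to the Lipschitz setting by invoking Verchota's $L^2$ theory of layer potentials \cite{Ver84}. I take $\lambda \in \C$ to be an eigenvalue of $D_\Gamma'$ on $L^2(\Gamma)$ with eigenfunction $\phi \in L^2(\Gamma) \setminus \{0\}$, and form the single-layer potential $u := S_\Gamma \phi$, defined by $u(x) = \int_\Gamma \Phi(x,y)\phi(y)\,\rd s(y)$. Verchota's theory provides that $u$ is harmonic in $\R^d \setminus \Gamma$, that $u$ has a (continuous across $\Gamma$) non-tangential $L^2$ boundary trace, and that the non-tangential normal derivative limits satisfy the jump relations $n \cdot \nabla u|_\pm = (\pm \half I + D_\Gamma')\phi$, with $n$ the unit normal outward from $\Omega_-$. (The correct sign pairing can be verified by testing on a sphere against the paper's convention $D_\Gamma(1) = -\half$.)

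Next I would apply Green's identity in $\Omega_-$ and $\Omega_+$ separately, which is rigorously justified in the Lipschitz $L^2$ setting by Verchota's non-tangential maximal function bounds, to obtain
$$E_\pm := \int_{\Omega_\pm} |\nabla u|^2 \, \rd x = \bigl(\half \pm \lambda\bigr)\, I,$$
where $I := \langle \phi, S_\Gamma \phi\rangle_{L^2(\Gamma)}$; the signs differ because the unit normal pointing outward from $\Omega_+$ is $-n$. The exterior identity requires $u$ to decay at infinity. For $d \geq 3$ this is automatic from the decay of $\Phi$. For $d = 2$, $u$ has a priori logarithmic growth at infinity unless $\int_\Gamma \phi\,\rd s = 0$; to secure the latter I would pair the eigenvalue relation with the constant $1$ and use the identity $D_\Gamma(1) = -\half$, getting $\lambda \int_\Gamma \phi\,\rd s = \langle D_\Gamma'\phi,1\rangle = \langle \phi, D_\Gamma 1\rangle = -\half \int_\Gamma \phi\,\rd s$, which forces either $\int_\Gamma \phi\,\rd s = 0$ (and the argument proceeds unchanged) or $\lambda = -\half$, a value already in $[-\half,\half]$.

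The conclusion then follows by inspecting $E_\pm = (\half \pm \lambda)I$. Since both energies are real and non-negative, and $E_- + E_+ = I$, I conclude $I \in \R$ with $I \geq 0$. If $I > 0$, dividing through yields $\half + \lambda \geq 0$ and $\half - \lambda \geq 0$, both real, so $\lambda \in [-\half,\half] \subset \R$. If instead $I = 0$, then $E_- = E_+ = 0$, whence $\nabla u \equiv 0$ on $\R^d \setminus \Gamma$; combined with the decay of $u$ at infinity, this forces $u \equiv 0$ in $\Omega_+$, and then continuity of the trace across $\Gamma$ forces $u \equiv 0$ in $\Omega_-$ as well. But then $\phi$, which equals the jump $n\cdot\nabla u|_- - n\cdot\nabla u|_+$, must vanish, contradicting that $\phi$ is an eigenfunction.

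The principal obstacle in executing this plan is the rigorous justification of Green's identity for $u = S_\Gamma\phi$ on the Lipschitz domains $\Omega_\pm$ when only $\phi \in L^2(\Gamma)$ is assumed: this is precisely what Verchota's non-tangential maximal function estimates are designed to supply, ensuring $|\nabla u| \in L^2(\Omega_\pm)$ and that boundary traces are attained non-tangentially in $L^2(\Gamma)$, so that the classical integration by parts remains valid. A secondary technicality is the 2D exterior decay issue, which is settled by the dichotomy on $\int_\Gamma \phi\,\rd s$ noted above.
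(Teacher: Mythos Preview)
Your proposal is correct and follows precisely the approach the paper indicates: the paper does not give its own proof but states that Kellogg's classical energy argument \cite[Chap.~IX, \S11]{Kellogg29} carries over to the Lipschitz setting, and your plan is exactly to execute that argument using Verchota's $L^2$ layer-potential theory to justify the jump relations and Green's identity. One minor point: with the paper's sign convention (so that $-\Delta\Phi=\delta$ and $D_\Gamma(1)=-\tfrac12$) the jump relations read $\partial_n u|_{-}=(\tfrac12+D'_\Gamma)\phi$ and $\partial_n u|_{+}=(-\tfrac12+D'_\Gamma)\phi$, i.e.\ the opposite of what you wrote; this swaps the roles of $E_-$ and $E_+$ in your displayed identities but leaves $E_-+E_+=I$ and the conclusion unchanged.
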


\begin{proof}[Proof of Lemma \ref{lem:equiv}]
Let $\phi\in L^2(\Gamma)$ and let $\langle\cdot,\cdot\rangle$ denote the inner product on $L^2(\Gamma)$. We first observe that, since $D_{\Gamma}(1)=-\half$, we have
\begin{equation} \label{eq:mapping}
\langle D_{\Gamma}'\phi,1\rangle = \langle\phi,D_{\Gamma}(1)\rangle = -\tfrac{1}{2} \langle\phi,1\rangle,
\end{equation}
so $D_{\Gamma}'(L^2_0(\Gamma)) \subseteq L^2_0(\Gamma)$.

Now assume that $\lambda\in\C$ and $D_{\Gamma}' - \lambda I$ is invertible as an operator on $L^2_0(\Gamma)$. Let $P$ be orthogonal projection from $L^2(\Gamma)$ onto the constants, so that $Q:=I-P$ is projection onto $L^2_0(\Gamma)$. Then $(D_{\Gamma}' - \lambda)Q+P$ is invertible as an operator on $L^2(\Gamma)$. It follows that
\begin{equation} \label{eq:PQ}
D_{\Gamma}' - \lambda I = ((D_{\Gamma}' - \lambda)Q+P) + ((D_{\Gamma}' - \lambda)P-P)
\end{equation}
is Fredholm as an operator on $L^2(\Gamma)$. This implies that
$\sigma_{\ess}(D_{\Gamma}';L^2(\Gamma)) \subset \sigma(D_{\Gamma}';L^2_0(\Gamma))$,
which settles one direction.

Conversely, assume that $\rho_{\ess}(D_{\Gamma}';L^2(\Gamma))<\frac{1}{2}$.
Let $\lambda \in \C$ with $|\lambda| \geq \half$. Then  $D_{\Gamma}' - \lambda I$ is Fredholm of index 0 on $L^2(\Gamma)$, so (see \eqref{eq:PQ}) $(D_{\Gamma}' - \lambda )Q +P$ is Fredholm of index zero on $L^2(\Gamma)$, so that $D_{\Gamma}' - \lambda I$ is Fredholm of index zero on $L^2_0(\Gamma)$. Thus $\lambda \in \sigma(D_{\Gamma}';L^2_0(\Gamma))$ if and only if $\lambda$ is an eigenvalue of $D_{\Gamma}'$. Hence, if $\lambda \notin [-\half,\half]$, $D_{\Gamma}' - \lambda I$ is invertible on both $L^2(\Gamma)$ and $L^2_0(\Gamma)$ by Theorem \ref{thm:Kell} (as $L^2_0(\Gamma) \subseteq L^2(\Gamma)$, every eigenvalue on $L^2_0(\Gamma)$ is also an eigenvalue on $L^2(\Gamma)$). But also, if $\lambda = \pm\half$, since $\Gamma$ is connected, Verchota's results \cite{Ver84} show that $D_{\Gamma}' \pm \half I$ is invertible on $L^2_0(\Gamma)$. Thus $D_{\Gamma}' - \lambda I$ is invertible on $L^2_0(\Gamma)$ for $\abs{\lambda} \geq \half$, so that, since the spectrum is closed, $\rho(D_{\Gamma}';L^2_0(\Gamma))<\half$.
\end{proof}

\section{Approximation of the spectral radius for compact operators} \label{sec:Nystrom}

In this section we recall in \S\ref{sec:cc} results from operator approximation theory in Banach spaces related to
the spectra of compact operators, and derive what appear to be new general criteria for $\rho(T)<\rho_0$ when $T$ is compact and $\rho_0>0$ (Lemma \ref{lem:GspecR}, Corollary \ref{cor:GspecRMfinal}). This leads, in \S\ref{sec:Ny}, to results relating to the approximation of integral operators with continuous kernels by the Nystr\"om method that will be key for the arguments in \S\ref{sec:oneside} and \S\ref{sec:two-sided}.  Notably, Theorem \ref{thm:GspecRMfinal} provides criteria for $\rho(T)<\rho_0$ when $T$ is an integral operator with a continuous kernel that requires the computation only of the spectral radius of a finite matrix plus the norms of finitely many finite matrix resolvents.

\subsection{Operator approximation results} \label{sec:cc}

We recall first two standard results on the approximation of operators in $\cL(Y)$, the space of bounded linear operators on a Banach space $Y$. The first is the basic perturbation estimate that, if $S,T\in \cL(Y)$ and $S$ is invertible, then $T$ is invertible if $\norm{S-T}\norm{S^{-1}} < 1$, with
\begin{equation} \label{eq:PerBas}
\norm{T^{-1}}^{-1} \geq \norm{S^{-1}}^{-1} - \norm{S-T}.
\end{equation}
If $S\in \cL(Y)$ is invertible, then $\|S^{-1}\|^{-1}=\inf_{\|\phi\| = 1} \|S\phi\|$, this sometimes called the {\em lower norm} of $S$ (see, e.g., \cite[Lemma 2.35]{Li:06}). The second estimate is as follows:

\begin{lem}[Theorem 4.7.7 of \cite{Ha:95}] \label{thm:Hackbusch}
Let $Y$ be a Banach space, $S \in \mathcal{L}(Y)$ and $\lambda \notin \spec(S) \cup \{0\}$. If $T \in \mathcal{L}(Y)$ is a compact operator that satisfies
\[\norm{(T-S)T} < \abs{\lambda}\norm{(S-\lambda I)^{-1}}^{-1},\]
then $T - \lambda I$ is invertible.
\end{lem}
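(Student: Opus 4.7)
The plan is to combine Riesz--Schauder theory with a single algebraic manipulation. Compactness of $T$ together with $\lambda \neq 0$ already makes $T - \lambda I$ Fredholm of index zero, so invertibility reduces to injectivity, and injectivity will follow by applying the resolvent of $S - \lambda I$ to a cleverly rewritten form of any null vector.

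First, I would invoke the Riesz--Schauder theorem: since $T \in \mathcal{L}(Y)$ is compact and $\lambda \neq 0$, the operator $T - \lambda I$ is Fredholm of index zero. Hence it suffices to prove that $T - \lambda I$ is injective, for then the index-zero Fredholm property forces surjectivity and the open mapping theorem delivers the bounded inverse.

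Second, suppose $\phi \in Y$ satisfies $(T - \lambda I)\phi = 0$, i.e.\ $T\phi = \lambda\phi$, equivalently $\phi = \lambda^{-1} T\phi$. The key step is to note that $(S - \lambda I)\phi = S\phi - \lambda\phi = S\phi - T\phi = (S-T)\phi$, and then to substitute the identity $\phi = \lambda^{-1} T\phi$ on the right, obtaining $(S - \lambda I)\phi = \lambda^{-1}(S - T) T \phi$. This is what brings the composition $(S-T)T$ appearing in the hypothesis into play; without this substitution one would only see $\|S - T\|$, which is not assumed small. Since $\lambda \notin \spec(S)$, the resolvent $(S - \lambda I)^{-1}$ exists as a bounded operator, so the lower-norm bound $\norm{\phi} \leq \norm{(S - \lambda I)^{-1}} \, \norm{(S - \lambda I)\phi}$ combined with the displayed identity gives $\norm{\phi} \leq |\lambda|^{-1} \norm{(S - \lambda I)^{-1}} \, \norm{(S-T)T} \, \norm{\phi}$. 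The hypothesis is exactly that the factor on the right is strictly less than $1$, so $\phi = 0$, proving injectivity.

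The proof really has only one idea, and that is the substitution $\phi = \lambda^{-1} T\phi$ inside $(S - T)\phi$ producing the composition $(S-T)T$ matched by the hypothesis; everything else is routine Fredholm theory and one application of the standard resolvent lower-norm bound. No genuine obstacle is anticipated; the main task is simply to present these three lines cleanly, indicating clearly where compactness (Riesz--Schauder) and where $\lambda \notin \spec(S)$ (existence of the resolvent) are each used.
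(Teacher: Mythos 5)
Your proof is correct and complete: the identity $(S-\lambda I)\phi=\lambda^{-1}(S-T)T\phi$ for any $\phi\in\ker(T-\lambda I)$, combined with the resolvent bound and the Riesz--Schauder index-zero property of $T-\lambda I$, is exactly the standard argument. The paper itself gives no proof (it simply cites Theorem 4.7.7 of Hackbusch), and your argument matches the one found there in all essentials, merely omitting the explicit bound on $\|(T-\lambda I)^{-1}\|$ that the reference also provides but the statement does not require.
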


The following result
is a consequence of the above estimates and the maximum principle applied to the resolvent.
Here $\T=\{z:|z|=1\}$ is the unit circle in the complex plane, so that $\rho_0\T$ is the circle of radius $\rho_0$.

\begin{lem} \label{lem:GspecR} Let $Y$ be a Banach space, $S,\widehat S,T \in \mathcal{L}(Y)$, $\rho_0>0$, $F\subset \rho_0\T$, and suppose that: $T$ is compact; $\rho(\widehat S)<\rho_0$; for every $\lambda\in \rho_0\T$ there exists $\mu\in F$ such that
\begin{equation} \label{eq:keybound}
\norm{(T-S)T} < \rho_0\left( \norm{(\widehat S-\mu I)^{-1}}^{-1} - \norm{S-\widehat S} - |\lambda-\mu|\right).
\end{equation}
Then $\rho(T)<\rho_0$.
\end{lem}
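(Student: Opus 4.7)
The plan is to show invertibility of $T - \lambda I$ for \emph{every} $\lambda$ with $\abs{\lambda} \geq \rho_0$ by propagating the hypothesis, which is stated only at points of $\rho_0 \T$, outward via the maximum principle for the resolvent of $\widehat S$. Once this is achieved, compactness of $\sigma(T)$ immediately yields $\rho(T) < \rho_0$.

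First, I would rewrite the hypothesis in a form where $\mu$ no longer appears. Using \eqref{eq:PerBas} the map $\mu \mapsto \norm{(\widehat S - \mu I)^{-1}}^{-1}$ is easily seen to be $1$-Lipschitz on $\rho(\widehat S)$, so for each $\lambda \in \rho_0 \T$ the $\mu \in F$ supplied by the hypothesis satisfies $\norm{(\widehat S - \mu I)^{-1}}^{-1} - \abs{\lambda - \mu} \leq \norm{(\widehat S - \lambda I)^{-1}}^{-1}$, yielding
\[
c := \frac{\norm{(T-S)T}}{\rho_0} + \norm{S - \widehat S} < \norm{(\widehat S - \lambda I)^{-1}}^{-1}, \qquad \lambda \in \rho_0 \T.
\]

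The main step — and where the maximum principle enters — is to extend this bound to all $\abs{\lambda} \geq \rho_0$. Since $\rho(\widehat S) < \rho_0$, the operator-valued function $\lambda \mapsto (\widehat S - \lambda I)^{-1}$ is holomorphic on the closed exterior region $\{\abs{\lambda} \geq \rho_0\}$, so $\log \norm{(\widehat S - \lambda I)^{-1}}$ is subharmonic there (as the $\log$-norm of a Banach-space-valued holomorphic function), and hence $\log \norm{(\widehat S - \lambda I)^{-1}}^{-1}$ is superharmonic. On any annulus $\{\rho_0 \leq \abs{\lambda} \leq R\}$ the minimum principle forces the minimum to be attained on the boundary; since the Neumann series gives $\norm{(\widehat S - \lambda I)^{-1}}^{-1} \geq \abs{\lambda} - \norm{\widehat S}$ for large $\abs{\lambda}$, the minimum sits on $\rho_0 \T$ once $R$ is large enough, and letting $R \to \infty$ yields $\norm{(\widehat S - \lambda I)^{-1}}^{-1} > c$ for every $\abs{\lambda} \geq \rho_0$.

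Finally, I would apply \eqref{eq:PerBas} to the pair $(\widehat S - \lambda I, S - \lambda I)$ with perturbation $\norm{S - \widehat S} < c$ to obtain invertibility of $S - \lambda I$ with
\[
\norm{(S - \lambda I)^{-1}}^{-1} \geq \norm{(\widehat S - \lambda I)^{-1}}^{-1} - \norm{S - \widehat S} > \frac{\norm{(T-S)T}}{\rho_0},
\]
and therefore $\abs{\lambda} \norm{(S - \lambda I)^{-1}}^{-1} > \norm{(T-S)T}$ for every $\abs{\lambda} \geq \rho_0$. Lemma \ref{thm:Hackbusch}, which uses compactness of $T$, then delivers invertibility of $T - \lambda I$ for all such $\lambda$, so $\sigma(T) \subset \{\abs{\lambda} < \rho_0\}$ and compactness of $\sigma(T)$ gives $\rho(T) < \rho_0$. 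The hard part is precisely the superharmonicity step: all else is a mechanical combination of \eqref{eq:PerBas} and Lemma \ref{thm:Hackbusch}, but that step is what justifies the phrase ``the maximum principle applied to the resolvent'' and is what converts a circle-wise bound into the region-wise bound needed to conclude.
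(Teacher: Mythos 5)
Your proof is correct and follows essentially the same route as the paper's: transfer the condition from $F$ to all of $\rho_0\T$ via the Lipschitz estimate \eqref{eq:PerBas}, extend to $\abs{\lambda}\geq\rho_0$ by the maximum principle for the resolvent (your annulus-plus-Neumann-series argument is just a more explicit justification of the paper's one-line invocation), and conclude with \eqref{eq:PerBas} and Lemma \ref{thm:Hackbusch}.
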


\noindent The idea is to choose $F$ to be a finite set\footnote{We will also apply this lemma later in the case that $F=\rho_0\T$, when \eqref{eq:keybound} reduces to the condition that, for every $\lambda\in \rho_0\T$, $\|(T-S)T\| < \rho_0( \|(\widehat S-\lambda I)^{-1}\|^{-1} - \|S-\widehat S\|)$.}, $S$ a finite rank approximation to $T$, and $\widehat S$ a numerical approximation to $S$, in which case one can show $\rho(T)<\rho_0$ by computing $\rho(\widehat S)$ and $\|(\widehat S-\mu I)^{-1}\|$ for finitely many $\mu$. Taking $F=\rho_0\T_n$, where $\T_n$ is the $n$th roots of unity, we obtain:
\begin{cor} \label{cor:uni_space}
Let $Y$ be a Banach space, $S,\widehat S,T \in \mathcal{L}(Y)$, $\rho_0>0$, $n\in \N$, and suppose that $T$ is compact, $\rho(\widehat S)<\rho_0$, and
\begin{equation} \label{eq:keybound2}
\norm{(T-S)T} < \rho_0\left( \|(\widehat S-\mu I)^{-1}\|^{-1} - \|S-\widehat S\| - 2\rho_0\sin\left(\frac{\pi}{2n}\right)\right), \qquad \mu\in \rho_0 \T_n.
\end{equation}
Then $\rho(T)<\rho_0$.
\end{cor}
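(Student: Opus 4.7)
The plan is to obtain Corollary \ref{cor:uni_space} as a direct specialisation of Lemma \ref{lem:GspecR} with the finite set $F = \rho_0 \T_n$. All the real content is already in the lemma; the corollary only needs a short geometric lemma to guarantee that \eqref{eq:keybound2} implies \eqref{eq:keybound}.

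First I would record the elementary covering estimate: for every $\lambda \in \rho_0\T$ there exists $\mu \in \rho_0\T_n$ with
\[
|\lambda - \mu| \le 2\rho_0 \sin\!\left(\frac{\pi}{2n}\right).
\]
This is because the points of $\rho_0 \T_n$ partition $\rho_0\T$ into $n$ arcs of equal angular width $2\pi/n$, so any $\lambda \in \rho_0\T$ lies at angular distance at most $\pi/n$ from the nearest $\mu \in \rho_0\T_n$; the standard chord-length identity $|e^{i\alpha}-e^{i\beta}| = 2|\sin((\alpha-\beta)/2)|$, scaled by $\rho_0$, then gives the bound.

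Next I would check that the hypothesis \eqref{eq:keybound} of Lemma \ref{lem:GspecR} is satisfied. Given $\lambda \in \rho_0\T$, pick $\mu \in \rho_0\T_n$ as in the covering estimate. Since \eqref{eq:keybound2} holds for every $\mu \in \rho_0\T_n$, in particular for this choice,
\[
\norm{(T-S)T} < \rho_0\!\left(\norm{(\widehat S-\mu I)^{-1}}^{-1} - \norm{S-\widehat S} - 2\rho_0\sin\!\left(\tfrac{\pi}{2n}\right)\right) \le \rho_0\!\left(\norm{(\widehat S-\mu I)^{-1}}^{-1} - \norm{S-\widehat S} - |\lambda-\mu|\right),
\]
which is precisely \eqref{eq:keybound}. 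Together with the standing assumptions that $T$ is compact and $\rho(\widehat S) < \rho_0$, Lemma \ref{lem:GspecR} applies and yields $\rho(T) < \rho_0$.

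There is no substantive obstacle here, since the hard analytic work (the perturbation argument combining \eqref{eq:PerBas}, Lemma \ref{thm:Hackbusch}, and the maximum principle for the resolvent) is encapsulated in Lemma \ref{lem:GspecR}. The only thing to verify is the chord-length bound, which is a textbook computation; the content of the corollary is simply the observation that this bound lets one replace the uncountable condition ``\eqref{eq:keybound} for all $\lambda \in \rho_0\T$'' by the finite set of conditions ``\eqref{eq:keybound2} for all $\mu \in \rho_0\T_n$'', which is what makes the criterion effectively checkable by computing finitely many resolvent norms.
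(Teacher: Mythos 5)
Your proof is correct and follows exactly the route the paper intends: the corollary is obtained from Lemma \ref{lem:GspecR} by taking $F=\rho_0\T_n$, with the chord-length bound $|\lambda-\mu|\leq 2\rho_0\sin(\pi/(2n))$ showing that \eqref{eq:keybound2} implies \eqref{eq:keybound}. The paper leaves this specialisation implicit, and your write-up supplies the (elementary) missing detail correctly.
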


\begin{proof}[Proof of Lemma \ref{lem:GspecR}] Suppose that the conditions of the lemma are satisfied. Then $\rho(\widehat S)< \rho_0$ and, for every $\lambda \in \rho_0\T$, there exists $\mu\in F$ such that \eqref{eq:keybound} holds.
It follows from \eqref{eq:PerBas} that
\begin{equation} \label{eq:2ndbound}
\norm{(T-S)T} < \rho_0\left( \|(\widehat S-\lambda I)^{-1}\|^{-1} - \|S-\widehat S\|\right), \qquad \lambda \in \rho_0\T.
\end{equation}
The resolvent map $\lambda \mapsto (\widehat S-\lambda I)^{-1}$ is analytic on $|\lambda|>\rho(\widehat S)$, which set contains all $\lambda$ with $|\lambda|\geq \rho_0$. Thus, by the maximum principle, $\|(\widehat S-\lambda I)^{-1}\|$ attains its maximum in $|\lambda| \geq \rho_0$ on $\rho_0\T$. Thus \eqref{eq:2ndbound} in fact holds for all $\lambda$ with $|\lambda|\geq \rho_0$, so that $S-\lambda I$ is invertible for all such $\lambda$ and, by \eqref{eq:PerBas},
\begin{equation} \label{eq:3ndbound}
\norm{(T-S)T} < \rho_0 \norm{(S-\lambda I)^{-1}}^{-1}, \qquad \mbox{if } |\lambda| \geq \rho_0.
\end{equation}
Since $T$ is compact, the result follows from Lemma \ref{thm:Hackbusch}.
\end{proof}

When Lemma \ref{lem:GspecR} is used for computation with $F$ finite, it is desirable to minimise the cardinality of $F$ since $\|(\widehat S-\mu I)^{-1}\|$ has to be computed for every $\mu\in F$. One can choose $F=\rho_0\T_n$, with points uniformly distributed on $\rho_0\T$, as in Corollary \ref{cor:uni_space}, but $n$ needs to be at least large enough so that $\|(\widehat S-\mu I)^{-1}\|^{-1} > 2\rho_0\sin(\pi/(2n))$, for every $\mu\in F$. In many applications, including in \S\ref{sec:oneside} and \S\ref{sec:two-sided}, $\|(\widehat S-\mu I)^{-1}\|$ varies significantly as $\mu$ moves around $\rho_0\T$ and it is more efficient to vary the spacing of the points in $F$ approximately in proportion to $\|(\widehat S-\mu I)^{-1}\|^{-1}$. The adaptive algorithm described in the following lemma, which we will see implemented in Figure \ref{fig:plot2}(b) below, approximately achieves this.

\begin{lem} \label{lem:R*def} Let $Y$ be a Banach space, $\widehat S\in \mathcal{L}(Y)$, and $\rho_0>0$. Suppose that $\rho(\widehat S)< \rho_0$, and recursively define $\mu_\ell$, for $\ell\in \N$, by $\mu_{1} := \rho_0$, and by
\begin{equation} \label{eq:mu_ell1}
\nu_{\ell} := \norm{(\widehat S - \mu_{\ell} I)^{-1}}^{-1} \quad \text{and} \quad \mu_{\ell+1} := \mu_{\ell}e^{i\frac{\nu_{\ell}}{2\rho_0}}, \quad \mbox{for } \ell\in \N.
\end{equation}
Further, set $n_* \in \N$ to be the smallest integer such that $\sum_{\ell = 1}^{n_*} \nu_{\ell}\geq 4\pi \rho_0$, and set $F:= \{\mu_1,\mu_2,\ldots,\mu_{n_*+1}\}$. Then, for every $\lambda\in \rho_0\T$ there exists $\mu\in F$ such that
\begin{equation} \label{eq:R*def}
\norm{(\widehat S - \lambda I)^{-1}}^{-1} \geq \norm{(\widehat S - \mu I)^{-1}}^{-1}-|\lambda-\mu| \geq R_* := \min_{\ell=1,\ldots,n_*} \left(\frac{\nu_\ell}{4}+\frac{\nu_{\ell+1}}{2}\right).
\end{equation}
\end{lem}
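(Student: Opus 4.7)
The plan is to treat the two inequalities separately: the first is an immediate consequence of the basic perturbation bound \eqref{eq:PerBas}, while the second captures the geometric content that the adaptive nodes $\mu_\ell$ cover $\rho_0\T$ with arcs whose lengths are proportional to the local lower norms $\nu_\ell$. For the first inequality, since $\rho(\widehat S)<\rho_0$ both $\widehat S-\lambda I$ and $\widehat S-\mu I$ are invertible for every $\lambda,\mu\in \rho_0\T$, so applying \eqref{eq:PerBas} with $S\leftarrow \widehat S-\mu I$ and $T\leftarrow \widehat S-\lambda I$ directly yields $\|(\widehat S-\lambda I)^{-1}\|^{-1}\geq \|(\widehat S-\mu I)^{-1}\|^{-1}-|\lambda-\mu|$, which is vacuous when the right-hand side is non-positive.

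For the second inequality, I would first check that $n_*$ is finite: the resolvent $\lambda\mapsto (\widehat S-\lambda I)^{-1}$ is continuous on the compact set $\rho_0\T$ (since $\rho(\widehat S)<\rho_0$), so its norm is bounded there and $\nu_\ell$ is bounded below by a positive constant, forcing $\sum \nu_\ell$ to diverge. The geometric key is that the arc on $\rho_0\T$ from $\mu_\ell$ to $\mu_{\ell+1}$ has length $\rho_0\cdot \nu_\ell/(2\rho_0)=\nu_\ell/2$, so by the definition of $n_*$ the concatenated arcs for $\ell=1,\ldots,n_*$ have total length at least $2\pi\rho_0$ and therefore cover the entire circle. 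Consequently every $\lambda\in \rho_0\T$ lies on at least one arc $[\mu_\ell,\mu_{\ell+1}]$.

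For such $\lambda$, let $s$ and $t=\nu_\ell/2-s$ denote the arc distances from $\lambda$ to $\mu_\ell$ and $\mu_{\ell+1}$ respectively. Using chord $\leq$ arc, the first inequality applied at each of the two endpoints gives both $\|(\widehat S-\lambda I)^{-1}\|^{-1}\geq \nu_\ell-s$ and $\|(\widehat S-\lambda I)^{-1}\|^{-1}\geq \nu_{\ell+1}-t$; combining these via the trivial inequality $\max(a,b)\geq (a+b)/2$ then yields
$$\|(\widehat S-\lambda I)^{-1}\|^{-1}\geq \tfrac{1}{2}\bigl((\nu_\ell-s)+(\nu_{\ell+1}-t)\bigr)=\tfrac{\nu_\ell}{4}+\tfrac{\nu_{\ell+1}}{2}\geq R_*,$$
and selecting $\mu\in\{\mu_\ell,\mu_{\ell+1}\}$ that realises the maximum supplies the required $\mu\in F$. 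The only mildly subtle step is confirming that $n_*$ exists; beyond that, the argument is an elementary pairing of circle geometry with \eqref{eq:PerBas}.
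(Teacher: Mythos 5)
Your proof is correct and follows essentially the same route as the paper's: finiteness of $n_*$ via a uniform positive lower bound for $\norm{(\widehat S-\lambda I)^{-1}}^{-1}$ on $\rho_0\T$, covering of the circle by consecutive arcs of length $\nu_\ell/2$, and the two perturbation bounds from \eqref{eq:PerBas} at the endpoints $\mu_\ell,\mu_{\ell+1}$. The only (cosmetic) difference is the final combination step: the paper locates the crossing point $s^*$ of the two linear lower bounds and argues by cases, whereas you average them via $\max(a,b)\geq (a+b)/2$ together with $s+t=\nu_\ell/2$, a slightly slicker derivation of the identical constant $\nu_\ell/4+\nu_{\ell+1}/2$.
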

\begin{proof} Arguing as in the proof of Lemma \ref{lem:GspecR}, $\|(\widehat S -\lambda I)^{-1}\|^{-1}$ is bounded below by some $c>0$ on $\rho_0\T$, so that $n_*$ is well-defined with $n_*<1+4\pi \rho_0/c$. For $\ell=1,\ldots,n_*+1$, $\mu_\ell = \rho_0e^{i\theta_\ell}$, with $\theta_1=0$, $\theta_{n_*+1}\geq 2\pi$, and $\theta_{\ell+1}-\theta_\ell = \nu_\ell/(2\rho_0)$. Thus, if $\lambda\in \rho_0\T$, then
$$
\lambda = \mu_\ell \exp(is\nu_\ell/(2\rho_0)) = \mu_{\ell+1}\exp(i(s-1)\nu_\ell/(2\rho_0)),
$$
for some $\ell\in\{1,\ldots,n_*\}$ and some $s\in [0,1]$. Since $|e^{it}-1|=|\int_0^t e^{iu}\, \rd u|\leq |t|$, for $t\in \R$, it follows, using \eqref{eq:PerBas}, that
\begin{eqnarray*}
\|(\widehat S - \lambda I)^{-1}\|^{-1} &\geq & \nu_\ell - |\lambda - \mu_\ell| \geq \nu_\ell - \frac{s}{2}\nu_\ell = \left(1-\frac{s}{2}\right)\nu_\ell \quad \mbox{and}\\
\|(\widehat S - \lambda I)^{-1}\|^{-1} &\geq & \nu_{\ell+1} - |\lambda - \mu_{\ell+1}| \geq \nu_{\ell+1} - \frac{1-s}{2}\nu_\ell.
\end{eqnarray*}
In particular, $\nu_{\ell+1} \geq \nu_\ell/2$ and $\nu_\ell \geq  \nu_{\ell+1} - \nu_\ell/2$, so that $s^*:= 3/2-\nu_{\ell+1}/\nu_\ell \in [0,1]$. But, for $0\leq s\leq s*$, $(1-s/2)\nu_\ell \geq (1-s^*/2)\nu_\ell =\nu_\ell/4+\nu_{\ell+1}/2$, while, for $s^*\leq s\leq 1$, $\nu_{\ell+1} - (1-s)\nu_\ell/2 \geq \nu_{\ell+1} - (1-s^*)\nu_\ell/2 = \nu_\ell/4+\nu_{\ell+1}/2$, and the bound \eqref{eq:R*def} follows.
\end{proof}

The following corollary is immediate from the above lemma and Lemma \ref{lem:GspecR}.
\begin{cor}
\label{cor:GspecRMfinal}
Let $Y$ be a Banach space, $S,\widehat S,T \in \mathcal{L}(Y)$, $\rho_0>0$, and suppose that $T$ is compact, $\rho(\widehat S)<\rho_0$, and
\begin{equation} \label{eq:keyb}
\norm{(T-S)T} < \rho_0\left( R_* - \|S-\widehat S\|\right),
\end{equation}
where $R_*$ is as defined in Lemma \ref{lem:R*def}. Then $\rho(T)<\rho_0$.
\end{cor}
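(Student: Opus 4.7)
The plan is to recognise this corollary as nothing more than the result of feeding the adaptive grid $F = \{\mu_1,\ldots,\mu_{n_*+1}\} \subset \rho_0\T$ produced by Lemma \ref{lem:R*def} into the abstract criterion of Lemma \ref{lem:GspecR}. Concretely, I want to show that the (single) scalar hypothesis in \eqref{eq:keyb} implies the $\lambda$-by-$\lambda$ family of inequalities \eqref{eq:keybound} when $F$ is chosen in this way.

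Given the hypothesis $\rho(\widehat S) < \rho_0$, the recursion in \eqref{eq:mu_ell1} is well defined (the lower norms $\nu_\ell$ are uniformly bounded below on $\rho_0\T$ by the maximum principle argument already used in the proof of Lemma \ref{lem:R*def}), so $n_*$ is finite and the set $F$ together with the constant $R_*$ is legitimately produced. Then for any $\lambda \in \rho_0\T$, Lemma \ref{lem:R*def} supplies a $\mu \in F$ with
\[
\norm{(\widehat S - \mu I)^{-1}}^{-1} - |\lambda - \mu| \geq R_*.
\]
Combining this with the corollary's assumption $\norm{(T-S)T} < \rho_0(R_* - \norm{S-\widehat S})$ gives
\[
\norm{(T-S)T} < \rho_0\bigl(R_* - \norm{S-\widehat S}\bigr) \leq \rho_0\bigl(\norm{(\widehat S - \mu I)^{-1}}^{-1} - |\lambda - \mu| - \norm{S-\widehat S}\bigr),
\]
which is precisely the inequality \eqref{eq:keybound} of Lemma \ref{lem:GspecR}. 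Since $T$ is compact and $\rho(\widehat S) < \rho_0$ are also part of the hypotheses, Lemma \ref{lem:GspecR} applies and yields $\rho(T) < \rho_0$.

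There is essentially no obstacle: the corollary is a clean synthesis of the two preceding lemmas, the whole point being that Lemma \ref{lem:R*def} has already arranged for the uniform lower bound $R_*$ to be attainable by a \emph{finite} checkable set $F$, so that a single numerical condition replaces the continuum of conditions in \eqref{eq:keybound}. The only point worth monitoring is that the strict inequality is preserved through the chain above, which is automatic because Lemma \ref{lem:R*def} supplies a non-strict lower bound by $R_*$ while the corollary's scalar hypothesis is strict.
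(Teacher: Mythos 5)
Your proof is correct and is exactly the argument the paper intends: the paper states that the corollary is ``immediate from the above lemma and Lemma \ref{lem:GspecR}'', and your write-up simply supplies the (routine) verification that the bound of Lemma \ref{lem:R*def} turns the single scalar hypothesis \eqref{eq:keyb} into the family of inequalities \eqref{eq:keybound}. No issues.
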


We will apply the above results
in the case when $S=T_N$, where $(T_N)_{N\in \N}$ is a {\em collectively compact}\footnote{Recall, e.g., \cite{An:71}, \cite[\S10.3]{Kr:14}, that a set $S\subset \cL(Y)$ is said to be {\em collectively compact} if $\{T\phi:T\in S, \phi\in Y, \|\phi\|\leq 1\}$ is relatively compact.} sequence of operators converging strongly to $T$ (we write $T_N\to T$ for strong convergence). A standard, simple but important result (e.g., \cite[Cor.~1.9]{An:71}, \cite[Thm.~10.10]{Kr:14}) is that
\begin{equation} \label{eq:pointwise}
(T_N)_{N\in \N} \mbox{ collectively compact}, \quad T_N\to T \quad \Rightarrow \quad \|(T-T_N)T\|\to 0.
\end{equation}
A consequence of \eqref{eq:pointwise} is Theorem \ref{thm:convergence_estimate} below, which follows from \cite[Thm~4.8]{An:71} and \cite[Thm.~4.16]{An:71} (or see \cite{At:75}). This gives conditions on operators $T_N$ and $T$ that ensure convergence of $\sigma(T_N)$ to $\sigma(T)$ in the standard Hausdorff metric $d_H(\cdot,\cdot)$ (see, e.g., \cite[\S3.1.2]{HaRoSi01})
on the set $\C^C$ of compact subsets of $\C$.
Given a sequence $(A_N)_{N\in \N}\subset \C^C$ and $A\in \C^C$ we will write $A_N\toH A$ if $d_H(A,B)\to 0$, i.e.\ if $A_N$ converges to $A$ in the Hausdorff metric. We recall (e.g., \cite[Prop.~3.6]{HaRoSi01}) that $A_N\toH A$ if and only if $(A_N)_{N\in\N}$ is uniformly bounded and $\liminf A_N = \limsup A_N = A$, where $\liminf A_N$ is the set of limits of sequences $(z_N)$ such that $z_N\in A_N$ for each $N$, while $\limsup A_N$ is the set of partial limits of such sequences.
\begin{thm}[\cite{An:71}]\label{thm:convergence_estimate}
Let $Y$ be a Banach space and $T\in \cL(Y)$ a compact operator. Further, assume that $(T_N)_{N \in \N}\subset \cL(Y)$ is collectively compact and converging strongly to $T$. Then $\sigma(T_N)\toH \sigma(T)$.
\end{thm}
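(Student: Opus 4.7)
The plan is to verify the Hausdorff-convergence characterisation recalled just before the theorem: that $(\sigma(T_N))_{N\in\N}$ is uniformly bounded and that $\limsup \sigma(T_N) \subset \sigma(T) \subset \liminf \sigma(T_N)$. Uniform boundedness is immediate, since the collectively compact family $\{T_N\}$ is norm-bounded by some $M$, whence $\sigma(T_N) \subset \{|z|\leq M\}$ for all $N$. Observe also that each $T_N$ is itself compact, as a member of the collectively compact family.

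For the upper inclusion, I would argue by contradiction: suppose $\lambda_N \in \sigma(T_N)$ with $\lambda_N \to \lambda$, but $\lambda \notin \sigma(T)$. Since the resolvent set is open and the resolvent norm is continuous there, $\|(T - \lambda_N I)^{-1}\|^{-1} \geq c > 0$ for large $N$ and some $c>0$. A strengthening of \eqref{eq:pointwise} shows that $\|(T - T_N)T_N\| \to 0$ as well: the set $K := \overline{\{T_N\phi : N\in\N,\, \|\phi\|\leq 1\}}$ is compact by collective compactness, and strong convergence $T_N - T \to 0$ is automatically uniform on the compact set $K$, yielding the claim. Lemma \ref{thm:Hackbusch}, applied with $S := T$, with the ``compact operator'' there taken to be $T_N$ and with spectral parameter $\lambda_N$, then produces the invertibility of $T_N - \lambda_N I$ for all sufficiently large $N$, contradicting $\lambda_N \in \sigma(T_N)$. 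The edge case $\lambda = 0$ is handled separately: in finite dimensions the statement is classical continuity of eigenvalues, while in infinite dimensions $0 \in \sigma(T)$ automatically since $T$ is compact.

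For the lower inclusion, take $\lambda \in \sigma(T)$. If $\lambda = 0$ and $Y$ is infinite-dimensional, then $0 \in \sigma(T_N)$ for each $N$, so we may assume $\lambda \neq 0$. The Riesz theory for the compact operator $T$ gives that $\lambda$ is an isolated eigenvalue of finite algebraic multiplicity; for all sufficiently small $r > 0$, the contour $\gamma_r := \{\mu \in \C : |\mu - \lambda| = r\}$ lies in the resolvent set of $T$, and the Riesz projection
\[ P := \frac{1}{2\pi i}\oint_{\gamma_r}(T - \mu I)^{-1}\,\rd\mu \]
has positive finite rank. The upper-inclusion argument, applied uniformly in $\mu \in \gamma_r$, shows that $(T_N - \mu I)^{-1}$ exists and is uniformly bounded on $\gamma_r$ for large $N$, with $(T_N - \mu I)^{-1} \to (T - \mu I)^{-1}$ pointwise on $\gamma_r$. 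Consequently the projections $P_N := \frac{1}{2\pi i}\oint_{\gamma_r}(T_N - \mu I)^{-1}\,\rd\mu$ are well defined for large $N$ and $P_N \to P$ strongly.

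The main obstacle is to upgrade this strong convergence of $P_N$ to $P$ into the assertion that $\mathrm{rank}\,P_N = \mathrm{rank}\,P$ for all sufficiently large $N$; once this is known, $\gamma_r$ necessarily encloses at least one element of $\sigma(T_N)$, giving $\sigma(T_N)\cap D(\lambda, r)\neq\emptyset$. This rank stability is a genuine feature of the collectively compact framework rather than of mere strong convergence; a natural route is to establish $\|(P - P_N)P\| \to 0$ (again via the compact-set uniform-convergence argument used above), from which $P_N|_{\ran P}$ becomes injective, and hence an isomorphism onto $\ran P_N$, for large $N$, so the ranks coincide. Letting $r \downarrow 0$ then yields $\lambda \in \liminf\sigma(T_N)$, which together with the upper semi-continuity and uniform boundedness established earlier gives $\sigma(T_N)\toH\sigma(T)$.
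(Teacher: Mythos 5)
The paper does not prove this theorem itself: it is quoted from Anselone \cite{An:71} (Theorems 4.8 and 4.16 there; see also \cite{At:75}). Your reconstruction follows essentially the same route as that source — upper semicontinuity of the spectra via the norm convergence $\norm{(T-T_N)T_N}\to 0$ together with the resolvent perturbation Lemma \ref{thm:Hackbusch}, and lower semicontinuity via Riesz spectral projections and a rank argument — and it is correct. Two small remarks. First, for the lower inclusion you only need $\ran P_N\neq\set{0}$, i.e.\ $\mathrm{rank}\,P_N\geq \mathrm{rank}\,P\geq 1$; your injectivity argument for $P_N|_{\ran P}$ delivers exactly this inequality, whereas the asserted \emph{equality} of ranks would also require the reverse inequality, which injectivity alone does not give (it is true, but it is not needed here). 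Second, Lemma \ref{thm:Hackbusch} as stated yields only invertibility of $T_N-\mu I$; the uniform bound on $\norm{(T_N-\mu I)^{-1}}$ over $\mu\in\gamma_r$ that you invoke to pass to the limit in the contour integrals and to run the compact-set argument for $\norm{(P-P_N)P}\to 0$ requires the quantitative form of that lemma (which its proof supplies, cf.\ \eqref{eq:PerBas} and \cite[Thm.~1.6]{An:71}), so this should be stated explicitly rather than folded into the invertibility claim.
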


\subsection{The Nystr\"om method} \label{sec:Ny}

In this section, with a view to applications in \S\ref{sec:oneside} and \S\ref{sec:two-sided}, we apply the results of \S\ref{sec:cc} to the case where $Y=C(X)$, for some compact $X\subset \R^{d-1}$, and where the operators are integral operators that we approximate by the Nystr\"om method. So suppose $d\geq 2$, let $X \subset \R^{d-1}$ be a compact set of positive ($(d-1)$-dimensional) Lebesgue measure, and $K\in \cL(C(X))$ an integral operator with continuous kernel $K(\cdot,\cdot)$, so that $K$ is compact. Thus, for $\phi\in C(X)$ and $x\in X$,
\begin{equation} \label{eq:Kdef}
K\phi(x) = \int_X K(x,y)\phi(y)\, dy = J(K(x,\cdot)\phi), \quad \mbox{where}\quad J(\psi) := \int_X \psi(y) \, dy, \quad \mbox{for } \psi\in C(X).
\end{equation}
In the Nystr\"om method we approximate $K$ by replacing the integration functional $J:C(X)\to \C$ by a sequence of numerical quadrature rules.  For each $N \in \N$ we choose points $x_{q,N}\in X$ and weights\footnote{We assume, for simplicity, that the weights $\omega_{q,N}$ are positive, but the theory below applies, with minor changes, to the case of general real or complex weights.}
$\omega_{q,N}\geq 0$, for $q=1,\ldots, N$, and define $J_N:C(X)\to \C$ by
\begin{equation} \label{JNdef}
J_N(\psi) := \sum_{q=1}^N \omega_{q,N}\psi(x_{q,N}), \qquad \psi\in C(X),
\end{equation}
 and a Nystr\"om approximation $K_N\in \cL(C(X))$ to $K$ by
\begin{equation} \label{eq:Nyst}
K_N \phi(x) := J_N(K(x,\cdot)\phi)) = \sum_{q=1}^N \omega_{q,N}K(x,x_{q,N}) \phi(x_{q,N}), \qquad x \in X, \;\; \phi\in C(X).
\end{equation}
We will assume that the sequence of quadrature rules is convergent, by which we mean that
\begin{equation} \label{eq:JNtoJ}
J_N\to J, \quad \mbox{i.e.} \quad J_N\psi \to J\psi, \quad \mbox{for all } \psi\in C(X).
\end{equation}
This implies (e.g., \cite[Prop.~2.1, 2.2]{An:71}) that $K_N \to K$ and that the sequence $(K_N)_{N\in \N}\subset \cL(C(X))$ is collectively compact, so that \eqref{eq:pointwise} holds and Theorem \ref{thm:convergence_estimate} is applicable. We will also assume that, for each $N$, $J_N \phi = J \phi$ if $\phi\in C(X)$ is constant, i.e. that
\begin{equation} \label{eq:Xsum}
\sum_{q=1}^N \omega_{q,N} = |X|,
\end{equation}
where $|X|$ denotes the Lebesgue measure of $X$.

Define $\widehat K_N:\C^N\to C(X)$ by
\[\widehat K_N v(x) := \sum_{q=1}^N \omega_{q,N}K(x,x_{q,N})v_q,\]
for $x \in X$ and $v=(v_1,...,v_N)\in \C^N$.
Moreover, let $P_N \from C(X)\to \C^N$ be defined by
$P_N\phi(q) := \phi(x_{q,N})$, for $q=1,...,N$, $\phi\in C(X)$, and
 define $A_N \from \C^N \to \C^N$ by $A_N := P_N\hat{K}_N$,
so that the matrix entries of $A_N$ are given by
\begin{equation} \label{eq:A_{t,N}}
A_N(p,q) = \omega_{q,N}K(x_{p,N},x_{q,N}), \qquad p,q=1,\ldots,N.
\end{equation}
In the following we will use $\|\cdot\|_\infty$ to denote all of: i) the standard supremum norm on $C(X)$; ii) the standard infinity norm on $\C^N$; iii) the induced operator norm of an operator on $C(X)$; iv) the infinity norm of a square matrix.
\begin{lem} \label{lem:reduction_to_matrices}
The following inequalities and equalities hold for all $N \in \N$:
\begin{itemize}
	\item[(i)] \[\max_{1\leq p\leq N} \sum_{q=1}^N \omega_{q,N}|K(x_{p,N},x_{q,N})| = \norm{A_N}_\infty \leq \norm{K_N}_\infty = \max_{x\in X} \sum_{q=1}^N \omega_{q,N}|K(x,x_{q,N})|;\]
	\item[(ii)] $\spec(K_N) = \{0\} \cup \spec(A_N)$;
	\item[(iii)] for $\lambda \in \C \setminus \spec(K_N)$,
\begin{align*}
\max\left(|\lambda|^{-1},\norm{(A_N - \lambda I)^{-1}}_\infty\right) \leq \norm{(K_N - \lambda I)^{-1}}_\infty
&\leq |\lambda|^{-1}\left(1+\norm{K_N}_\infty \norm{(A_N - \lambda I)^{-1}}_\infty\right).
\end{align*}
\end{itemize}
\end{lem}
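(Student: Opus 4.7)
My strategy is to exploit the factorisations $K_N = \hat{K}_N P_N$ on $C(X)$ and $A_N = P_N \hat{K}_N$ on $\C^N$, both immediate from the definitions \eqref{eq:Nyst} and \eqref{eq:A_{t,N}}, together with the norm-preserving extension: any $v \in \C^N$ admits a continuous lift $\phi \in C(X)$ with $P_N\phi = v$ and $\|\phi\|_\infty = \|v\|_\infty$, constructed as $\phi = \sum_q v_q \chi_q$ for a partition of unity $\{\chi_q\}$ with $0 \leq \chi_q \leq 1$ and $\chi_q(x_{q',N}) = \delta_{qq'}$ (after combining any coincident quadrature points). Everything else reduces to row-sum bookkeeping and a standard spectral identity for operator products.

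For (i), the first equality is the definition of the max-row-sum matrix norm. The last equality follows from the pointwise bound $|K_N\phi(x)| \leq \|\phi\|_\infty \sum_q \omega_{q,N}|K(x,x_{q,N})|$ coming from \eqref{eq:Nyst}; sharpness is obtained by choosing $x_* \in X$ that attains the row-sum maximum and taking $\phi$ to be the partition-of-unity interpolant of the complex signs of $K(x_*,x_{q,N})$ at the quadrature points. The middle inequality is then transparent, since the maximum defining $\|A_N\|_\infty$ ranges only over $x \in \{x_{p,N}\}_{p=1}^N \subseteq X$.

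For (ii), I would invoke the classical identity that $\spec(AB)\setminus\{0\} = \spec(BA)\setminus\{0\}$ for bounded operators $A\colon Y_1 \to Y_2$, $B\colon Y_2 \to Y_1$ between Banach spaces, applied with $A = \hat{K}_N$ and $B = P_N$; this gives $\spec(K_N)\setminus\{0\} = \spec(A_N)\setminus\{0\}$. Since $K_N$ has rank at most $N$ while $C(X)$ is infinite-dimensional (as $X$ has positive Lebesgue measure, hence uncountably many points), $K_N$ is not injective, so $0 \in \spec(K_N)$, and the claim follows.

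For (iii), note that $\lambda \neq 0$ by (ii), and that $\lambda \notin \spec(A_N)$. The bound $|\lambda|^{-1} \leq \|(K_N - \lambda I)^{-1}\|_\infty$ is immediate from $0 \in \spec(K_N)$ and the distance-to-spectrum estimate. For $\|(A_N - \lambda I)^{-1}\|_\infty \leq \|(K_N - \lambda I)^{-1}\|_\infty$, given $v \in \C^N$, lift $v$ to $\phi \in C(X)$ with $P_N\phi = v$ and $\|\phi\|_\infty = \|v\|_\infty$, set $\psi := (K_N - \lambda I)^{-1}\phi$, and apply $P_N$ to $(K_N - \lambda I)\psi = \phi$, using $P_N K_N = A_N P_N$, to obtain $(A_N - \lambda I)P_N\psi = v$; then
\[
\|(A_N - \lambda I)^{-1}v\|_\infty = \|P_N\psi\|_\infty \leq \|\psi\|_\infty \leq \|(K_N - \lambda I)^{-1}\|_\infty \|v\|_\infty.
\]
For the upper bound, given $\phi \in C(X)$, rearrange $(K_N - \lambda I)\psi = \phi$ to $\psi = \lambda^{-1}(\hat{K}_N P_N \psi - \phi)$, substitute $P_N\psi = (A_N - \lambda I)^{-1}P_N\phi$, and use the row-sum bound $\|\hat{K}_N w\|_\infty \leq \|K_N\|_\infty \|w\|_\infty$ (same estimate as in (i), now with $w \in \C^N$) to conclude
\[
\|\psi\|_\infty \leq |\lambda|^{-1}\bigl(1 + \|K_N\|_\infty\|(A_N - \lambda I)^{-1}\|_\infty\bigr)\|\phi\|_\infty.
\]
The only step that requires any care is the norm-preserving extension $\phi$ of $v$, which the partition-of-unity construction handles cleanly; the rest is direct calculation from the factorisation.
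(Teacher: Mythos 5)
Your proof is correct and follows essentially the same route as the paper, which simply cites the standard references (Hackbusch, Lemmas 4.7.17--4.7.18, and Anselone, Prop.~2.3) for precisely the factorisation $K_N=\widehat K_N P_N$, $A_N=P_N\widehat K_N$, the identity $\sigma(AB)\setminus\{0\}=\sigma(BA)\setminus\{0\}$, and the lifting/resolvent estimates you write out, together with $0\in\sigma(K_N)$ (finite rank on an infinite-dimensional space) and the bound $\|(K_N-\lambda I)^{-1}\|\geq\dist(\lambda,\sigma(K_N))^{-1}$. The only difference is that you supply the details the paper delegates to the literature; your norm-preserving lift implicitly assumes the quadrature points are distinct, which holds for the midpoint rule used throughout the paper.
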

\begin{proof}

The first equality in (i) is the standard explicit formula for the infinity norm of a matrix. The last equality is proved similarly, and then (i) is clear (or see \cite[Lemma 4.7.17]{Ha:95}). That the spectra of $K_N$ and $A_N$ coincide on $\C\setminus \{0\}$ is standard (e.g., \cite[Lemma 4.7.18]{Ha:95}), and $0\in \spec(K_N)$ since $K_N$ is compact and $C(X)$ is infinite-dimensional, so (ii) holds. Part (iii) is a combination of
\cite[Lemma 4.7.18]{Ha:95} and \cite[Proposition 2.3]{An:71}, plus the facts that $0\in \sigma(K_N)$ and (e.g., \cite[Th.~1.2.10]{BDav})  $\|(K_N - \lambda I)^{-1}\|\geq (\dist(\lambda,\sigma(K_N)))^{-1}$.
\end{proof}

\noindent Note that, by Lemma \ref{lem:reduction_to_matrices}(i) and \eqref{eq:Xsum},
\begin{equation} \label{eq:KNbound}
\|A_N\|_\infty \leq \|K_N\|_\infty \leq K_{\rm max} |X|, \quad \mbox{where} \quad K_{\rm max}:= \max_{x,y\in X}|K(x,y)|.
\end{equation}

Commonly, for computational efficiency or otherwise, and this is the case in \S\ref{sec:oneside} and \S\ref{sec:two-sided},  we approximate $K(\cdot,\cdot)$ by another continuous kernel $K^{\dag}(\cdot,\cdot)$ in \eqref{eq:Nyst} and \eqref{eq:A_{t,N}}. Let $K_N^{\dag}\in \cL(C(X))$ denote the operator defined by the right hand side of \eqref{eq:Nyst} with $K$ replaced by $K^{\dag}$, and similarly $A_N^{\dag}$ denote the matrix \eqref{eq:A_{t,N}} with $K$ replaced by $K^{\dag}$. Lemma \ref{lem:reduction_to_matrices}(i) and \eqref{eq:Xsum} imply that
\begin{equation} \label{eq:NormEst}
\|A_N-A_N^{\dag}\|_\infty \leq \|K_N-K_N^{\dag}\|_\infty \leq e^{\dag} |X|, \quad \mbox{where} \quad e^{\dag}:= \max_{x,y\in X}|K(x,y)-K^{\dag}(x,y)|.
\end{equation}
The following theorem (cf.~Corollary \ref{cor:GspecRMfinal}) follows in large part from Lemma \ref{lem:GspecR},  Lemma \ref{lem:R*def}, and Lemma \ref{lem:reduction_to_matrices}.

\begin{thm} \label{thm:GspecRMfinal} Suppose that $\rho_0 > 0$, $N\in \N$, and $\rho(A_N^{\dag})< \rho_0$, and recursively define $\mu_\ell$, for $\ell\in \N$, by $\mu_{1} := \rho_0$, and by
\begin{equation} \label{eq:mu_ell}
\nu_{\ell} := \norm{(A_{N}^{\dag} - \mu_{\ell} I)^{-1}}_{\infty}^{-1} \quad \text{and} \quad \mu_{\ell+1} := \mu_{\ell}e^{i\frac{\nu_{\ell}}{2\rho_0}}, \quad \mbox{for } \ell\in \N.
\end{equation}
Further, let $n_N \in \N$ be the smallest integer such that $\sum_{\ell = 1}^{n_N} \nu_{\ell}\geq 4\pi \rho_0$, and let
\begin{equation} \label{eq:RNdef}
R_N := \min_{\ell=1,\ldots,n_N} \left(\frac{\nu_\ell}{4}+\frac{\nu_{\ell+1}}{2}\right).
\end{equation}
If
\begin{equation} \label{eq:keyboundfinal}
\norm{(K-K_N)K}_\infty < \rho_0\left(\rho_0\left(1+\|K^{\dag}_N\|_\infty R_N^{-1}\right)^{-1} - \|K_N-K_N^{\dag}\|_\infty\right),
\end{equation}
or $\|A_N-A_N^{\dag}\|_\infty < R_N$ and
\begin{equation} \label{eq:keyboundfinalAlt}
\norm{(K-K_N)K}_\infty < \rho_0^2\left(1+\|K_N\|_\infty (R_N-\|A_N-A_N^{\dag}\|_\infty)^{-1}\right)^{-1},
\end{equation}
then $\rho(K)<\rho_0$. Conversely, if $\rho(K)<\rho_0$, provided $e^{\dag}$ defined by \eqref{eq:NormEst} is sufficiently small, \eqref{eq:keyboundfinal}, \eqref{eq:keyboundfinalAlt}, and $\rho(A_N^{\dag})<\rho_0$  hold for all sufficiently large $N$.
\end{thm}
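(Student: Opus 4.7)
The plan is to reduce both sufficient conditions \eqref{eq:keyboundfinal} and \eqref{eq:keyboundfinalAlt} to Lemma~\ref{lem:GspecR} on $Y = C(X)$ with $T = K$, using Lemma~\ref{lem:reduction_to_matrices}(iii) as the bridge between operator resolvents on $C(X)$ and matrix resolvents on $(\C^N, \|\cdot\|_\infty)$. The adaptive points $\mu_\ell$ and the quantity $R_N$ in \eqref{eq:mu_ell}--\eqref{eq:RNdef} are precisely those produced by Lemma~\ref{lem:R*def} applied with $\widehat{S} = A_N^\dag$ on $(\C^N, \|\cdot\|_\infty)$; together with \eqref{eq:PerBas}, that lemma supplies the uniform bound $\|(A_N^\dag - \lambda I)^{-1}\|_\infty \leq R_N^{-1}$ for all $\lambda \in \rho_0 \T$. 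Note also that $\rho(A_N^\dag) < \rho_0$ is equivalent to $\rho(K_N^\dag) < \rho_0$ by Lemma~\ref{lem:reduction_to_matrices}(ii), since $\rho_0 > 0$.

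For \eqref{eq:keyboundfinal} I would take $S = K_N$, $\widehat{S} = K_N^\dag$, and $F = \rho_0 \T$ in Lemma~\ref{lem:GspecR} (so the choice $\mu = \lambda$ kills the $|\lambda - \mu|$ term). Feeding the uniform matrix bound into Lemma~\ref{lem:reduction_to_matrices}(iii) applied to $K_N^\dag$ gives
\[
\|(K_N^\dag - \lambda I)^{-1}\|_\infty^{-1} \geq \rho_0 \bigl(1 + \|K_N^\dag\|_\infty R_N^{-1}\bigr)^{-1}, \qquad \lambda \in \rho_0 \T,
\]
and \eqref{eq:keyboundfinal} then matches the hypothesis of Lemma~\ref{lem:GspecR} exactly. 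For the alternative \eqref{eq:keyboundfinalAlt} I would instead take $S = \widehat{S} = K_N$ so that $\|S - \widehat{S}\| = 0$. Under the standing assumption $\|A_N - A_N^\dag\|_\infty < R_N$, the perturbation estimate \eqref{eq:PerBas} upgrades the matrix bound to $\|(A_N - \lambda I)^{-1}\|_\infty \leq (R_N - \|A_N - A_N^\dag\|_\infty)^{-1}$ on $\rho_0 \T$, and Lemma~\ref{lem:reduction_to_matrices}(iii) applied to $K_N$ then produces precisely the right-hand side of \eqref{eq:keyboundfinalAlt}.

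For the converse, suppose $\rho(K) < \rho_0$. By \eqref{eq:pointwise}, $\|(K - K_N)K\|_\infty \to 0$, and by \eqref{eq:NormEst}, $\|K_N - K_N^\dag\|_\infty$ and $\|A_N - A_N^\dag\|_\infty$ each bounded by $e^\dag |X|$ tend to zero with $e^\dag$; moreover $\|K_N\|_\infty$ and $\|K_N^\dag\|_\infty$ stay bounded by \eqref{eq:KNbound}. The only real work is to bound $R_N$ from below uniformly in $N$, equivalently to bound $\|(A_N^\dag - \lambda I)^{-1}\|_\infty$ from above uniformly on $\rho_0 \T$. Since $\|(K - \lambda I)^{-1}\|_\infty$ is continuous and hence bounded on the compact set $\rho_0 \T \subset \C \setminus \sigma(K)$, the collectively compact theory (cf.\ \cite[Prop.~2.3]{An:71}, combined with compactness of $\rho_0\T$ and \eqref{eq:PerBas} for uniformity in $\lambda$) transfers the bound to $\|(K_N - \lambda I)^{-1}\|_\infty$ for large $N$; \eqref{eq:PerBas} transfers it further to $\|(K_N^\dag - \lambda I)^{-1}\|_\infty$ for small $e^\dag$; and the first inequality in Lemma~\ref{lem:reduction_to_matrices}(iii) delivers the desired uniform bound on $\|(A_N^\dag - \lambda I)^{-1}\|_\infty$. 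This uniform lower bound on $R_N$, together with the vanishing of the left-hand sides of \eqref{eq:keyboundfinal} and \eqref{eq:keyboundfinalAlt}, shows that both inequalities hold eventually, while $\rho(A_N^\dag) < \rho_0$ for large $N$ and small $e^\dag$ follows from Theorem~\ref{thm:convergence_estimate} plus a small perturbation.

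The main obstacle will be the uniform resolvent bound on $\rho_0 \T$ in the converse: pointwise strong convergence is not enough and one must exploit the collectively compact machinery already underpinning \eqref{eq:pointwise} and Theorem~\ref{thm:convergence_estimate}, combined with a compactness argument on $\rho_0\T$. The forward direction, in contrast, is a direct chaining of Lemma~\ref{lem:reduction_to_matrices}(iii), Lemma~\ref{lem:R*def}, and Lemma~\ref{lem:GspecR}, with the two inequalities \eqref{eq:keyboundfinal} and \eqref{eq:keyboundfinalAlt} corresponding to the two natural choices $\widehat{S} = K_N^\dag$ versus $\widehat{S} = K_N$.
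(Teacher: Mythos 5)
Your proposal is correct and follows essentially the same route as the paper: the forward direction chains Lemma \ref{lem:R*def} (applied to $A_N^\dag$), Lemma \ref{lem:reduction_to_matrices}(iii), the perturbation bound \eqref{eq:PerBas}, and Lemma \ref{lem:GspecR} with exactly the two choices $(S,\widehat S)=(K_N,K_N^\dag)$ and $(S,\widehat S)=(K_N,K_N)$, while the converse rests on the uniform resolvent bounds for collectively compact approximations (the paper cites \cite[Thm.~4.7]{An:71} for the step you sketch via \cite[Prop.~2.3]{An:71} plus compactness of $\rho_0\T$) transferred to $K_N^\dag$ and $A_N^\dag$ by \eqref{eq:PerBas}, \eqref{eq:NormEst}, and Lemma \ref{lem:reduction_to_matrices}. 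The only differences are cosmetic (e.g.\ you obtain $\rho(A_N^\dag)<\rho_0$ in the converse via Theorem \ref{thm:convergence_estimate} plus perturbation rather than directly from the uniform resolvent bound on $|\lambda|\geq\rho_0$), and both are valid.
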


\begin{proof} If $\rho_0>0$ and $\rho(A_N^{\dag})< \rho_0$ then, by Lemma \ref{lem:R*def},
$\|(A_N^{\dag}-\lambda I)^{-1}\|_\infty\leq R_N^{-1}$ for $\lambda \in \rho_0\T$, so that $\|(K_N^{\dag}-\lambda I)^{-1}\|_\infty\leq r^{-1}_0(1+\|K_N^{\dag}\|_\infty R_N^{-1})$, by Lemma \ref{lem:reduction_to_matrices}(iii).
Thus  $\rho(K)<\rho_0$ if \eqref{eq:keyboundfinal} holds, by Lemma \ref{lem:GspecR} applied with $Y=C(X)$, $T=K$, $S=K_N$, $\widehat S = K_N^{\dag}$, and $F=\rho_0 \T$. Further, for $\lambda\in \rho_0 \T$, the first of the above bounds and \eqref{eq:PerBas} implies that $R_N-\|A_N-A_N^{\dag}\|_\infty \leq \|(A_N-\lambda I)^{-1}\|^{-1}_\infty$, so that, by Lemma \ref{lem:reduction_to_matrices}(iii), $\|(K_N-\lambda I)^{-1}\|_\infty\leq r^{-1}_0(1+\|K_N\|_\infty (R_N-\|A_N-A_N^{\dag}\|_\infty)^{-1})$.
Thus $\rho(K)<\rho_0$ if \eqref{eq:keyboundfinalAlt} holds, by
Lemma \ref{lem:GspecR} applied with $Y=C(X)$, $T=K$, $\widehat S =S=K_N$, and $F=\rho_0 \T$.

To see the converse, note that, by Theorem \ref{thm:convergence_estimate}, $\sigma(K_N)\toH \sigma(K)$. Indeed \cite[Thm.~4.7]{An:71}, there exists $N_0\in \N$ such that $\|(K_N-\lambda I)^{-1}\|_\infty$ is bounded uniformly in $\lambda$ and $N$ for $N\geq N_0$ and $|\lambda|\geq \rho_0$, which implies, by \eqref{eq:PerBas} and \eqref{eq:NormEst}, that the same holds for $\|(K^{\dag}_N-\lambda I)^{-1}\|_\infty$ if $e^{\dag}$ is sufficiently small. This implies, by Lemma \ref{lem:reduction_to_matrices}(iii), that the same holds for $\|(A^{\dag}_N-\lambda I)^{-1}\|_\infty$.  Noting Lemma \ref{lem:reduction_to_matrices}(ii), it follows that there exists $c>0$ such that, for all sufficiently large $N$, $\rho(A_N^{\dag})=\rho(K_N^{\dag})< \rho_0$ and $R_N\geq c$. Thus, and applying \eqref{eq:KNbound} and \eqref{eq:NormEst}, we see that, for some $c^*>0$, the right hand sides of \eqref{eq:keyboundfinal} and \eqref{eq:keyboundfinalAlt} are $\geq c^*$ for all sufficiently large $N$ if $e^{\dag}$ is sufficiently small. But also $\|(K-K_N)K\|\to 0$ by \eqref{eq:pointwise}, so that \eqref{eq:keyboundfinal} holds for all sufficiently large $N$.
\end{proof}

\begin{rem}[\em \bf Computational cost as $N$ increases] \label{rem:nNbounded}
The argument in the above proof makes clear that, if $\rho(K) < \rho_0$ and $e^{\dag}$ is sufficiently small, then, for some $N_0\in \N$, $\|(A_N^{\dag}-\lambda I)^{-1}\|_\infty$ is bounded uniformly in $\lambda$ and $N$ for $\lambda\in \rho_0 \T$ and $N\geq N_0$. This in turn implies that, for some $n_{\max}\in \N$, $n_N\leq n_{\max}$ for $N\geq N_0$. Thus the computational cost of evaluation of $R_N$ given by \eqref{eq:RNdef} is $O(N^3)$, the cost of inverting $n_N\leq n_{\max}$ order $N$ matrices by classical direct methods.
\end{rem}

\begin{rem}[\em\bf Comparison of \eqref{eq:keyboundfinal} and \eqref{eq:keyboundfinalAlt}] \label{rem:comparison}
Let $\mathrm{RHS}_1$ and $\mathrm{RHS}_2$ denote the right hand sides of \eqref{eq:keyboundfinal} and \eqref{eq:keyboundfinalAlt}, respectively. If $K_N^{\dag}=K_N$ (so $A_N^{\dag}=A_N$),  then $\mathrm{RHS}_2=\mathrm{RHS}_1$. If $K^{\dag}_N\neq K_N$ with $\|A_N-A_N^{\dag}\|_\infty <R_N$, then, where $\mathcal{D} :=
\rho_0^{-2}(R_N+\|K_N\|_\infty - \|A_N-A_N^\dag\|_\infty)(\mathrm{RHS}_2-\mathrm{RHS}_1)
$,
\begin{eqnarray*}
\mathcal{D} &=& \rho_0^{-1}\|K_N-K_N^{\dag}\|_\infty (\|K_N\|_\infty-\rho_0 + R_N-\|A_N-A_N^\dag\|_\infty) \, + \\
& & \; \frac{\|K_N^\dag\|_\infty(\|K_N-K_N^\dag\|_\infty-\|A_N-A_N^\dag\|_\infty)+R_N(\|K_N-K_N^\dag\|_\infty + \|K_N^\dag\|_\infty-\|K_N\|_\infty)}{R_N+\|K_N^\dag\|_\infty},
\end{eqnarray*}
so that $\mathrm{RHS}_2-\mathrm{RHS}_1> \rho_0\|K_N-K_N^{\dag}\|_\infty (\|K_N\|_\infty-\rho_0)/(R_N+\|K_N\|_\infty - \|A_N-A_N^{\dag}\|_\infty)$, recalling \eqref{eq:NormEst}. Thus \eqref{eq:keyboundfinal} implies \eqref{eq:keyboundfinalAlt} if $\|A_N-A_N^{\dag}\|_\infty <R_N$ and $\rho_0\leq \|K_N\|_\infty$. Note that \cite[Thm.~2.13]{An:71} $\|K_N\|_\infty\to \|K\|$ as $N\to \infty$ and it is $\rho_0<\|K\|_\infty$ for which Theorem \ref{thm:GspecRMfinal} is arguably of most interest, as $\rho(K) \leq \|K\|_\infty$, and we may be able to estimate $\|K\|_\infty$ sharply by other methods.
\end{rem}

\begin{rem}[\em \bf The matrix case] \label{rem:matrix} In \S\ref{sec:two-sided} we will apply the above results, in particular Theorem \ref{thm:GspecRMfinal}, in a case where $K$ is a $2\times 2$ matrix of integral operators on $C(X)$ with continuous kernels, and $K_N$ is its Nystr\"om method approximation defined by approximating each integral operator in the $2\times 2$ matrix as in \eqref{eq:Nyst}. The matrix $A_N$ is then a $2N\times 2N$ matrix consisting of four $N\times N$ blocks each defined as in \eqref{eq:A_{t,N}}. Parts (ii) and (iii) of Lemma \ref{lem:reduction_to_matrices} apply in this case, as does (i) in a straightforwardly modified form, in particular we still have that $\|A_N\|_\infty \leq \|K_N\|_\infty$. (Here $\|A_N\|_\infty$ is the usual infinity norm of the matrix $A_N$ and $\|K_N\|_\infty$ is the norm of $K_N$ as an operator on $(C(X))^2$, which we equip with the norm defined by $\|(\phi_1,\phi_2)\|_\infty := \max\{\|\phi_1\|_\infty,\|\phi_2\|_\infty\}$, for $(\phi_1,\phi_2)\in C(X)^2$). Thus Theorem \ref{thm:GspecRMfinal}, which depends on Lemma \ref{lem:reduction_to_matrices}(ii) and (iii) and the general Banach space results of \S\ref{sec:cc}, still applies. One way to see the validity of Lemma \ref{lem:reduction_to_matrices}(ii) and (iii) in this matrix case is to argue as follows. Choose $x^*\in \R^{d-1}$ such that $X^\prime := X+x^*$ does not intersect $X$. $K$ is equivalent, through an obvious isometric isomorphism, to a matrix operator $K^\prime$ on $C(X)\times C(X^\prime)$, which is in turn equivalent, through another obvious isometric isomorphism, to a single integral operator $\widetilde K$ on $C(X\cup X^\prime)$. Lemma  \ref{lem:reduction_to_matrices} applies to $\widetilde K$ and its Nystr\"om method approximation, so that (ii) and (iii) of this lemma (and (i) in modified form) apply to $K$.
\end{rem}

\section{The Double-Layer Operator on Dilation invariant graphs} \label{sec:dilation_invariant}

Let $A \subseteq \R^{d-1}$ be an open cone, $f \from A \to \R$ a Lipschitz continuous function, and consider the graph $\Gamma = \set{(x,f(x)) : x \in A}\subset \R^d$ in the case that  $\alpha\Gamma = \Gamma$, for some $\alpha \in (0,1)$; that is, in the case that
\begin{equation} \label{eq:dileq}
f(\alpha x) = \alpha f(x), \quad x\in A.
\end{equation}
We will term such graphs \emph{dilation invariant}.

In this section, the largest of the paper, we study the spectrum and essential spectrum of the double-layer (DL) operator $D_{\Gamma} \from L^2(\Gamma) \to L^2(\Gamma)$ given by \eqref{eq:DD'} on such graphs. (The case $A=\R^{d-1}$ is of particular interest for later applications.) In \S\ref{sec:FB} we show, for  general dimension $d\geq2$, that (as operators on $L^2(\Gamma)$) $\sigma(D_\Gamma)=\sigma_{\ess}(D_\Gamma)$ (and, similarly, that $W(D_\Gamma)=W_{\ess}(D_\Gamma)$) and, by Floquet-Bloch-transform arguments, that
$\sigma(D_\Gamma)$ is the union of the spectra of a family of operators $K_t:L^2(\Gamma_0)\to L^2(\Gamma_0)$, for $t\in [-\pi,\pi]$, where $\Gamma_0$ is a particular relatively closed and bounded subset of $\Gamma$. Moreover, helpful for the later computation of $\sigma(K_t)$, each $K_t$ is compact, and so has a discrete spectrum, if $f\in C^1(A\setminus\{0\})$ (Corollary \ref{cor:K_t_compact}).

In the remaining subsections, \S\ref{sec:oneside}--\S\ref{sec:num_range}, we focus on the 2D case, considering the Nystr\"om approximation of spectral properties of $K_t$, combining the general results of \S\ref{sec:Ny} with explicit estimates for the particular operators $K_t$.  The case $A=\R$, with $f$ real-analytic on $\R\setminus\{0\}$, is treated in \S\ref{sec:two-sided}. It is this case that is relevant, via, e.g., \eqref{eq:spradius}, to the computation of $\sigma(D_\Gamma)$ and the spectral radius conjecture when $\Gamma$ is the boundary of a bounded Lipschitz domain.  But this case is rather complex; the operator $K_t$ is studied by reducing it to a $2\times2$ operator matrix, corresponding to the split of $\R\setminus \{0\}$ into the two half-axes $(-\infty,0)$ and $(0,\infty)$. To get the main ideas across, and prove many of the results we need in a simpler setting, we first study, in \S\ref{sec:oneside}, the easier case $A=(0,\infty)$ with $f$ real-analytic. Subsections \S\ref{sec:oneside} and \S\ref{sec:two-sided} are concerned with computation of the spectrum and spectral radius of $D_\Gamma$. In \S\ref{sec:num_range}, related to the question at the end of \S\ref{sec:src_main}, we also compute lower bounds for $W_{\ess}(D_\Gamma)$, under the same assumptions on $\Gamma$ as in \S\ref{sec:oneside} and \S\ref{sec:two-sided}.

\subsection{Floquet-Bloch transform results} \label{sec:FB}
Let $V_{\alpha} \from L^2(\Gamma) \to L^2(\Gamma)$ be dilation by $\alpha$, that is,
\begin{equation} \label{eq:dilation}
V_{\alpha}\phi(x) = \alpha^{(d-1)/2}\phi(\alpha x), \quad x \in \Gamma.
\end{equation}
$V_{\alpha}$ is unitary and commutes with $D_{\Gamma}$: noting that $n(y)=n(\alpha y)$, for $y\in \Gamma$, we see that, for all $\phi\in L^2(\Gamma)$ and almost every $x \in \Gamma$,
\begin{align*}
D_{\Gamma}V_{\alpha}\phi(x) &= \frac{1}{c_d}\int_{\Gamma} \frac{(x-y) \cdot n(y)}{|x-y|^d} \alpha^{(d-1)/2}\phi(\alpha y) \, \mathrm{d}s(y)\\
&= \frac{\alpha^{(d-1)/2}}{c_d}\int_{\Gamma} \frac{(x-\alpha^{-1}y) \cdot n(\alpha^{-1}y)}{|x-\alpha^{-1}y|^d} \phi(y) \, \alpha^{-(d-1)}\mathrm{d}s(y)\\
&= \frac{\alpha^{(d-1)/2}}{c_d}\int_{\Gamma} \frac{(\alpha x-y) \cdot n(y)}{|\alpha x-y|^d} \phi(y) \, \mathrm{d}s(y)\\
&= V_{\alpha}D_{\Gamma}\phi(x).
\end{align*}

This already implies that the spectrum and the essential spectrum of $D_{\Gamma}$ coincide, as implied by the following simple proposition (cf.~\cite[Lemma 2.7]{ChaSpe:21}).

\begin{prop} \label{prop:spec_equals_ess_spec}
Let $H$ be a Hilbert space and $T\in \cL(H)$. If $T$ commutes with a sequence of unitary operators $(U_j)_{j \in \N}$ that converges weakly to $0$, then $\sigma(T) = \sigma_{\ess}(T)$.
\end{prop}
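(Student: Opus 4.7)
The inclusion $\sigma_{\ess}(T) \subseteq \sigma(T)$ is automatic, since any invertible operator is Fredholm. The work lies in the reverse inclusion, which I would prove by contrapositive: if $\lambda \notin \sigma_{\ess}(T)$, i.e.\ $T - \lambda I$ is Fredholm, then I claim $T - \lambda I$ is in fact invertible, so $\lambda \notin \sigma(T)$.

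The key observation is that commutation passes to kernels: since $T U_j = U_j T$, we have $(T-\lambda I) U_j = U_j (T-\lambda I)$, and hence $U_j$ leaves $N := \ker(T-\lambda I)$ invariant. By the Fredholm assumption, $N$ is finite-dimensional. Pick any $\phi \in N$. Since $U_j$ is unitary, $\norm{U_j \phi} = \norm{\phi}$; on the other hand, $U_j \phi \to 0$ weakly by hypothesis, and on the finite-dimensional subspace $N$ the weak and norm topologies coincide, so $U_j \phi \to 0$ in norm. Forcing $\norm{\phi} = 0$ gives $N = \{0\}$.

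To conclude injectivity suffices to imply invertibility, I would run the same argument on $T^*$: it commutes with $U_j^*$, which is also unitary, and a direct computation $\scpr{U_j^* \phi}{\psi} = \overline{\scpr{\psi}{U_j^*\phi}} = \overline{\scpr{U_j \psi}{\phi}} \to 0$ shows $U_j^* \to 0$ weakly as well. Since $T-\lambda I$ Fredholm implies $T^* - \overline{\lambda} I$ Fredholm with $\ker(T^*-\overline{\lambda}I)$ finite-dimensional, the same argument yields $\ker(T^*-\overline{\lambda}I) = \{0\}$. This gives dense range for $T-\lambda I$, which is already closed by Fredholmness, so $T - \lambda I$ is surjective. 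Combined with injectivity, $T - \lambda I$ is invertible by the bounded inverse theorem.

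\textbf{Main obstacle.} There is essentially no obstacle; the only subtle point is arguing about the adjoint. One could avoid it entirely by noting that for Fredholm operators of index zero, injectivity alone implies invertibility, but in general $T-\lambda I$ being Fredholm does not force index zero, so the adjoint argument (or equivalently an argument on the cokernel, which is also finite-dimensional and $U_j$-invariant once we observe that $U_j$ descends to the quotient $H/\ran(T-\lambda I)$ as a linear map of the same norm as its action on the orthogonal complement of the range) is the cleanest route.
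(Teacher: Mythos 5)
Your proof is correct and follows essentially the same route as the paper: the paper also observes that $U_j$ preserves $\ker(T-\lambda I)$, that a unitary orbit of a nonzero vector converging weakly to $0$ cannot live in a finite-dimensional space, and then runs the identical argument on the adjoint to kill the cokernel. The only cosmetic difference is that the paper phrases the kernel step as "trivial or infinite-dimensional" via the non-existence of convergent subsequences, whereas you invoke the equivalence of weak and norm convergence on finite-dimensional subspaces; these are the same observation.
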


\begin{proof}
Assume there exists  $\phi \in H \setminus \set{0}$ such that $T\phi = 0$. As $T$ and $U_j$ commute, also $TU_j\phi = 0$ for all $j \in \N$. In particular, $\set{U_j\phi : j \in \N} \subseteq \ker(T)$. As the operators $U_j$ are unitary and $U_j \to 0$ weakly, the sequence $(U_j\phi)_{j \in \N}$ cannot have a convergent subsequence. Hence, $\ker(T)$ is either trivial or infinite-dimensional. Similarly, $\ker(T')$ is either trivial or infinite-dimensional. Thus, if $T$ is Fredholm, it is invertible. Considering $T - \lambda I$ instead of $T$ yields the result.
\end{proof}

\begin{cor} \label{cor:ess_ne}
Let $\Gamma \subset \R^d$
be a dilation invariant graph. Then $\sigma(D_{\Gamma}) = \sigma_{\ess}(D_{\Gamma})$, $\overline{W(D_{\Gamma})} = W_{\ess}(D_{\Gamma})$, and $\norm{D_{\Gamma}}_{\ess} = \norm{D_{\Gamma}}$.
\end{cor}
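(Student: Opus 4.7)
The plan is to apply Proposition \ref{prop:spec_equals_ess_spec} with the sequence $U_j := V_{\alpha^j}$, $j \in \N$. Each $U_j$ is unitary (composition of unitaries), and iterating the commutation computation just established for $V_\alpha$ shows that $U_j$ commutes with $D_\Gamma$. So the only thing to verify in order to conclude $\sigma(D_\Gamma) = \sigma_{\ess}(D_\Gamma)$ is that $U_j \to 0$ weakly on $L^2(\Gamma)$ as $j \to \infty$.

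For the weak convergence, I would exploit that $\Gamma$ is unbounded (it is the graph over an open cone $A$) and that $\alpha \in (0,1)$. From the definition \eqref{eq:dilation}, $\mathrm{supp}(V_{\alpha^j}\phi) = \alpha^{-j}\mathrm{supp}(\phi)$. Take $\phi, \psi \in L^2(\Gamma)$ with compact supports bounded away from the origin, say contained in $\{(x,f(x)) : r_1 \leq |x| \leq r_2\}$ for some $0 < r_1 < r_2 < \infty$. Then $\mathrm{supp}(V_{\alpha^j}\phi)$ lies in $\{|x| \geq \alpha^{-j} r_1\}$, which for large $j$ is disjoint from $\mathrm{supp}(\psi)$; hence $\langle V_{\alpha^j}\phi, \psi\rangle = 0$ for all sufficiently large $j$. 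Such test functions are dense in $L^2(\Gamma)$ (the origin contributes no mass), and $\|V_{\alpha^j}\| = 1$ uniformly, so a standard $3\varepsilon$ argument gives $V_{\alpha^j} \to 0$ weakly, and Proposition \ref{prop:spec_equals_ess_spec} yields the spectral equality.

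For the remaining two statements, the same sequence $U_j$ does the job through a common computation. Given any compact $K \in \cL(L^2(\Gamma))$ and any unit vector $\phi$, using that $U_j$ is unitary and commutes with $D_\Gamma$,
$$
\langle (D_\Gamma + K)U_j\phi, U_j\phi\rangle = \langle D_\Gamma\phi, \phi\rangle + \langle KU_j\phi, U_j\phi\rangle,
$$
and $\|(D_\Gamma + K)U_j\phi\| \geq \|D_\Gamma\phi\| - \|KU_j\phi\|$. Since $U_j\phi \to 0$ weakly and $K$ is compact, $\|KU_j\phi\| \to 0$, so the last terms in both expressions vanish in the limit. As $U_j\phi$ remains a unit vector, the first identity gives $\langle D_\Gamma\phi, \phi\rangle \in \overline{W(D_\Gamma + K)}$, hence $\overline{W(D_\Gamma)} \subseteq W_{\ess}(D_\Gamma)$, while the second gives $\|D_\Gamma + K\| \geq \|D_\Gamma\phi\|$, hence $\|D_\Gamma + K\| \geq \|D_\Gamma\|$ and so $\|D_\Gamma\|_{\ess} \geq \|D_\Gamma\|$. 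The reverse inclusions/inequalities are trivial (take $K = 0$ and recall $W_{\ess}(T)$ is closed).

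The only real obstacle is the weak convergence step, which crucially depends on the unboundedness of $\Gamma$ and the strict contractivity $\alpha \in (0,1)$; everything else is a formal consequence of the commutation relation proved just above the corollary statement.
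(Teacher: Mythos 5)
Your proposal is correct and follows essentially the same route as the paper: the same unitaries $V_\alpha^j$, the same density argument (compactly supported functions vanishing near the origin) for the weak convergence $V_\alpha^j\to 0$, and then Proposition \ref{prop:spec_equals_ess_spec} for the spectrum. The only difference is that for the numerical range and norm the paper simply cites \cite[Lemma 2.7]{ChaSpe:21}, whereas you prove those statements directly — and your argument is precisely the standard one underlying that lemma, so nothing is missing.
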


\begin{proof}
Assume that $\phi,\psi \in L^2(\Gamma)$ have compact support and $0 \notin \supp\phi \cup \supp\psi$. Then
\[\sp{V_{\alpha}^j\phi}{\psi} = \int_{\Gamma} \alpha^{j(d-1)/2}\phi(\alpha^j x)\overline{\psi(x)} \, \mathrm{d}s(x).\]
If $\abs{j}$ is sufficiently large, the integrand vanishes. Because compactly supported functions are dense in $L^2(\Gamma)$, it follows that $V_{\alpha}^j\to 0$ weakly as $\abs{j} \to \infty$. The equality of spectrum and essential spectrum follows from Proposition \ref{prop:spec_equals_ess_spec}. The results for the numerical range and norm follow from \cite[Lemma 2.7]{ChaSpe:21}.
\end{proof}

To make use of standard Floquet-Bloch/Fourier transform results, it is convenient to view $D_\Gamma$ as a discrete convolution operator. For $j\in \Z$ let $\Gamma_j := \set{(\tilde x,f(\tilde x)) \in \Gamma : |\tilde x| \in [\alpha^{j+1},\alpha^j]}$. We can identify $L^2(\Gamma_j)$ with a closed subspace of $L^2(\Gamma)$ by extending by $0$. Let $P_j \from L^2(\Gamma) \to L^2(\Gamma_j)$ denote orthogonal projection. Clearly,
\[P_j\phi(x) = \begin{cases} \phi(x) & \text{if } x \in \Gamma_j, \\ 0 & \text{otherwise,} \end{cases}\]
and we note that $P_jV_{\alpha} = V_{\alpha}P_{j+1}$, so that also $P_jV_\alpha^k = V_\alpha^k P_{j+k}$, $k\in \Z$.
Let
\[D_j := V_{\alpha}^jP_jD_{\Gamma}|_{L^2(\Gamma_0)} : L^2(\Gamma_0) \to L^2(\Gamma_0),\]
so that
\begin{equation} \label{eq:DjDef}
D_j\phi(x) = \alpha^{j(d-1)/2} D_{\Gamma}\phi(\alpha^jx),  \quad x\in \Gamma_0, \;\; \phi\in L^2(\Gamma_0).
\end{equation}
Let $\cG:L^2(\Gamma)\to \ell^2(\Z, L^2(\Gamma_0))$ be the unitary operator\footnote{This is a discretization operator in the sense, e.g., of \cite[\S1.2.3]{Li:06}.} $\phi\mapsto (V_\alpha^jP_j\phi)_{j\in \Z}$, and let $\widetilde D_\Gamma := \cG D_\Gamma \cG^{-1}$. It is a straightforward calculation to see that the action of $\widetilde D_\Gamma$ is that of a discrete convolution: for $\psi=(\psi_n)_{n\in \Z}\in \ell^2(\Z, L^2(\Gamma_0))$, $\widetilde D_\Gamma\psi= ((\widetilde D_\Gamma\psi)_m)_{m\in \Z}$ where
\begin{equation} \label{eq:conv}
(\widetilde D_\Gamma\psi)_m = \sum_{n\in \Z} D_{m-n} \psi_n, \quad m\in \Z.
\end{equation}
The series in the above definition converges absolutely; indeed $\widetilde D_\Gamma$ is an operator in the so-called {\em Wiener algebra}, i.e.\ $\sum_{j\in \Z} \|D_j\|<\infty$ (e.g., \cite[Defn.~1.43]{Li:06}), by the following estimate.

\begin{prop} \label{prop:convergence}
The operators $D_j$ are Hilbert-Schmidt for $|j|\geq 2$ and satisfy the Hilbert-Schmidt norm estimate
\begin{equation} \label{eq:HS_estimates}
\norm{D_j}_{\mathrm{HS}} \leq \frac{1}{c_d} \frac{|\Gamma_0|}{(\alpha - \alpha^{|j|})^{d-1}}\,\alpha^{|j|(d-1)/2}, \quad |j| \geq 2,
\end{equation}
where $|\Gamma_0|$ denotes the surface measure
of $\Gamma_0$.
\end{prop}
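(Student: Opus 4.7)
The plan is to view $\|D_j\|_{\mathrm{HS}}$ as the Hilbert-Schmidt norm of a standard integral operator mapping $L^2(\Gamma_0)$ into $L^2(\Gamma_j)$, apply a crude pointwise bound for the double-layer kernel, and control the resulting double integral using the dilation invariance of $\Gamma$.

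First, since $V_\alpha^j$ restricts to a unitary isomorphism $L^2(\Gamma_j)\to L^2(\Gamma_0)$, the Hilbert-Schmidt norm is preserved, so
\[\|D_j\|_{\mathrm{HS}} = \|P_jD_\Gamma|_{L^2(\Gamma_0)}\|_{\mathrm{HS}} = \Bigl(\int_{\Gamma_j}\int_{\Gamma_0}|k(x,y)|^2\,ds(y)\,ds(x)\Bigr)^{1/2},\]
where $k(x,y) := c_d^{-1}(x-y)\cdot n(y)/|x-y|^d$ is the double-layer kernel. Cauchy-Schwarz gives the crude bound $|k(x,y)|\leq (c_d|x-y|^{d-1})^{-1}$, so it remains to estimate $\int_{\Gamma_j}\int_{\Gamma_0}|x-y|^{-2(d-1)}\,ds(y)\,ds(x)$.

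Two ingredients then come from the dilation invariance $\Gamma_j = \alpha^j\Gamma_0$: the $(d{-}1)$-dimensional surface measure scales as $|\Gamma_j|=\alpha^{j(d-1)}|\Gamma_0|$, and for $x=(\tilde x,f(\tilde x))\in\Gamma_j$ and $y=(\tilde y,f(\tilde y))\in\Gamma_0$ the first-coordinate norms $|\tilde x|,|\tilde y|$ lie in disjoint intervals whenever $|j|\geq 2$. Applying the reverse triangle inequality $|x-y|\geq |\tilde x-\tilde y|\geq||\tilde x|-|\tilde y||$ then yields $|x-y|\geq \alpha-\alpha^{|j|}$ when $j\geq 2$ (since $|\tilde x|\leq\alpha^{|j|}<\alpha\leq|\tilde y|$), and $|x-y|\geq \alpha^{1-|j|}-1$ when $j\leq-2$ (since $|\tilde x|\geq\alpha^{1-|j|}>1\geq|\tilde y|$).

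Combining these two ingredients, for $j\geq 2$ we directly obtain $\|D_j\|_{\mathrm{HS}}^2\leq \alpha^{j(d-1)}|\Gamma_0|^2/(c_d(\alpha-\alpha^{|j|})^{d-1})^2$, which is the square of the claimed bound. For $j\leq -2$ the analogous step gives $\|D_j\|_{\mathrm{HS}}^2\leq \alpha^{-|j|(d-1)}|\Gamma_0|^2/(c_d(\alpha^{1-|j|}-1)^{d-1})^2$, and a short algebraic manipulation using the identities $\alpha^{1-|j|}-1 = \alpha^{1-|j|}(1-\alpha^{|j|-1})$ and $\alpha-\alpha^{|j|} = \alpha(1-\alpha^{|j|-1})$ shows that this equals the same expression in the symmetric form stated in the proposition.

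The main (and quite mild) obstacle is spotting that for $j\leq -2$ one must use the tighter distance lower bound $\alpha^{1-|j|}-1$ rather than the naive gap $\alpha-\alpha^{|j|}$: this reflects the fact that $\Gamma_j$ lies \emph{far} from the origin when $j$ is very negative, and it precisely cancels the growth of $|\Gamma_j|=\alpha^{-|j|(d-1)}|\Gamma_0|$, restoring the $\alpha^{|j|(d-1)/2}$ decay. Everything else is routine estimation.
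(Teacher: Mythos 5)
Your proof is correct and follows essentially the same route as the paper's: unitary invariance of the Hilbert--Schmidt norm, the crude kernel bound $|k(x,y)|\leq (c_d|x-y|^{d-1})^{-1}$, the scaling $|\Gamma_j|=\alpha^{j(d-1)}|\Gamma_0|$, and the separation bounds $|x-y|\geq\alpha-\alpha^{j}$ for $j\geq 2$ and $|x-y|\geq\alpha^{j+1}-1$ for $j\leq -2$, followed by the same algebraic identity to reach the symmetric form. The only difference is that you spell out the reverse-triangle-inequality and rescaling steps slightly more explicitly than the paper does.
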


\begin{proof}
For $j \geq 2$ we have
\[\int_{\Gamma_j} \int_{\Gamma_0} \abs{\frac{(x-y) \cdot n(y)}{|x-y|^d}}^2 \, \mathrm{d}s(y) \, \mathrm{d}s(x) \leq \int_{\Gamma_j} \int_{\Gamma_0} \frac{1}{|x-y|^{2d-2}} \, \mathrm{d}s(y) \, \mathrm{d}s(x)\]
and $|x-y| \geq \alpha - \alpha^j$. Hence
\[\int_{\Gamma_j} \int_{\Gamma_0} \abs{\frac{(x-y) \cdot n(y)}{|x-y|^d}}^2 \, \mathrm{d}s(y) \, \mathrm{d}s(x) \leq \frac{|\Gamma_j||\Gamma_0|}{(\alpha - \alpha^j)^{2d-2}} = \frac{|\Gamma_0|^2\alpha^{j(d-1)}}{(\alpha - \alpha^j)^{2d-2}}.\]
Similarly, for $j \leq -2$,
\[\int_{\Gamma_j} \int_{\Gamma_0} \abs{\frac{(x-y) \cdot n(y)}{|x-y|^d}}^2 \, \mathrm{d}s(y) \, \mathrm{d}s(x) \leq \frac{|\Gamma_0|^2\alpha^{j(d-1)}}{(\alpha^{j+1} - 1)^{2d-2}} = \frac{|\Gamma_0|^2\alpha^{-j(d-1)}}{(\alpha - \alpha^{-j})^{2d-2}}.\]
Thus, recalling the standard characterisation of the Hilbert-Schmidt norm of integral operators
(e.g., \cite[Ex.~11.11]{Jo:82}),
$P_jD_{\Gamma}|_{L^2(\Gamma_0)}$ is Hilbert-Schmidt, with Hilbert-Schmidt norm bounded by the right hand side of \eqref{eq:HS_estimates}. As unitary operators preserve Hilbert-Schmidt norms, it follows that $D_j$ is Hilbert-Schmidt and that \eqref{eq:HS_estimates} holds.
\end{proof}

Given a Hilbert space $H$ let $\cF:\ell^2(\Z,H)\to L^2([-\pi,\pi],H)$ denote the operator, often termed in the general Hilbert space case, e.g., \cite{ArMaRo22}, a {\em Floquet-Bloch transform}, that constructs a Fourier series from coefficients, given by
$$
(\cF \psi)(t) = (2\pi)^{-1/2} \sum_{j\in \Z} e^{i jt} \psi_n, \quad t\in [-\pi,\pi],
$$
for $\psi=(\psi_j)_{j\in \Z}\in  \ell^2(\Z,H)$. It is standard that this is a unitary operator (e.g., \cite[Proof of Thm.~4.4.9]{BDav})
that diagonalises discrete convolutions (see, e.g., \cite[Thm.~4.49]{BDav} for the case when $H$ is finite-dimensional, \cite[Thm.~2.3.25]{RaRoSi04} for the general case).
In the case $H=L^2(\Gamma_0)$,
defining $\widehat D:L^2([-\pi,\pi],H)\to L^2([-\pi,\pi],H)$ by $\widehat D := \cF \widetilde D_\Gamma \cF^{-1}$, straightforward calculations yield that
\begin{equation} \label{eq:unitary}
\langle\widehat D \phi,\psi\rangle_{L^2([-\pi,\pi],L^2(\Gamma_0))} = \int_{-\pi}^\pi \langle K_t\phi(t),\psi(t)\rangle \, \rd t, \quad \phi,\psi\in L^2([-\pi,\pi],L^2(\Gamma_0)),
\end{equation}
where
\begin{equation} \label{eq:K_t}
K_t := \sum\limits_{j = -\infty}^{\infty} e^{ijt}D_j, \quad t\in \R.
\end{equation}
(The bounds of Proposition \ref{prop:norm_estimate} imply that $K_t$ is well-defined by \eqref{eq:K_t} and depends continuously on $t$; indeed the mapping $t\mapsto K_t$ is $C^\infty$.)
The following characterisation follows immediately from \eqref{eq:unitary}, the continuity of $t\mapsto K_t$, and Corollary \ref{cor:ess_ne}; note that $\conv$ denotes the closed convex hull.

\begin{thm} \label{cor:norm_and_spec}
We have
\begin{align*}
\|D_\Gamma\|_{\ess} &= \|D_{\Gamma}\| = \max\limits_{t \in [-\pi,\pi]} \norm{K_t},\\
W_{\ess}(D_\Gamma) &= \overline{W(D_{\Gamma})} = \conv\left(\bigcup\limits_{t \in [-\pi,\pi]} W(K_t)\right),\\
\sigma_{\ess}(D_\Gamma)&=\spec(D_{\Gamma}) = \bigcup\limits_{t \in [-\pi,\pi]} \spec(K_t).
\end{align*}
\end{thm}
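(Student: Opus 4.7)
The first equality in each of the three lines is exactly the content of Corollary \ref{cor:ess_ne}, so the substantive task is to compute $\|D_\Gamma\|$, $\overline{W(D_\Gamma)}$, and $\sigma(D_\Gamma)$ in terms of the fibre family $(K_t)_{t\in[-\pi,\pi]}$. Since $\cF\cG$ is unitary and norm, closed numerical range, and spectrum are all unitary invariants, it suffices to carry out the computation for $\widehat D := \cF\cG D_\Gamma\cG^{-1}\cF^{-1}$. Equation \eqref{eq:unitary} identifies $\widehat D$ as the multiplication (direct-integral) operator on $L^2([-\pi,\pi],L^2(\Gamma_0))$ with fibre $K_t$ at $t$, and the Wiener-algebra bound $\sum_{j\in\Z}\|D_j\|<\infty$ coming from Proposition \ref{prop:convergence} ensures that the series \eqref{eq:K_t} converges in operator norm uniformly in $t$, so $t\mapsto K_t$ is norm-continuous on the compact interval $[-\pi,\pi]$.

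\textbf{Norm and spectrum.} The inequality $\|\widehat D\|\le \max_t\|K_t\|$ is immediate from Cauchy--Schwarz applied to \eqref{eq:unitary}. For the reverse inequality, and for each pointwise inclusion $\sigma(K_{t_0})\subseteq\sigma(\widehat D)$, I would use the standard concentration device: given $\phi_0\in L^2(\Gamma_0)$ of unit norm, define $\phi_\epsilon\in L^2([-\pi,\pi],L^2(\Gamma_0))$ by $\phi_\epsilon(t)=(2\epsilon)^{-1/2}\phi_0$ on $[t_0-\epsilon,t_0+\epsilon]$ and zero elsewhere, so that $\|\phi_\epsilon\|=1$. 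Norm-continuity of $t\mapsto K_t$ then gives $\|\widehat D\phi_\epsilon\|\to\|K_{t_0}\phi_0\|$ as $\epsilon\to 0$, which yields the norm lower bound upon maximising over $\phi_0$ and $t_0$; and the same test function, built from an approximate null vector $\phi_0$ for $K_{t_0}-\lambda I$, is an approximate null sequence for $\widehat D-\lambda I$, forcing $\lambda\in\sigma(\widehat D)$. The reverse inclusion $\sigma(\widehat D)\subseteq\bigcup_t\sigma(K_t)$ follows from compactness of $[-\pi,\pi]$ and continuity of $t\mapsto K_t$: if $\lambda$ avoids every $\sigma(K_t)$, then $t\mapsto(K_t-\lambda I)^{-1}$ is norm-bounded on $[-\pi,\pi]$, and multiplication by this function inverts $\widehat D-\lambda I$. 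This is precisely the diagonalisation statement \cite[Thm.~2.3.25]{RaRoSi04} cited in the excerpt.

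\textbf{Numerical range.} Writing $g(t):=\|\phi(t)\|_{L^2(\Gamma_0)}^2$ and $e_t:=\phi(t)/\|\phi(t)\|_{L^2(\Gamma_0)}$ on the set where $\phi(t)\neq 0$, any $\phi$ with $\|\phi\|=1$ makes $g(t)\,\mathrm{d}t$ a probability measure on $[-\pi,\pi]$, and \eqref{eq:unitary} rewrites $\langle\widehat D\phi,\phi\rangle$ as the $g$-average of the numbers $\langle K_te_t,e_t\rangle\in W(K_t)$. Hence $W(\widehat D)\subseteq\conv\bigl(\bigcup_tW(K_t)\bigr)$, and this persists after closure because $\conv$ denotes the closed convex hull. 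Conversely, the Toeplitz--Hausdorff theorem makes $\overline{W(\widehat D)}$ a closed convex set, while the concentration functions $\phi_\epsilon$ from the previous paragraph show that $W(K_t)\subseteq\overline{W(\widehat D)}$ for every $t$; taking closed convex hulls then yields the matching inclusion.

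\textbf{Where the work sits.} Nothing in this argument is deep; the theorem is really a packaging of the Floquet--Bloch diagonalisation of \S\ref{sec:FB} together with Corollary \ref{cor:ess_ne}. The one technical ingredient that carries any weight is the uniform-in-$t$ invertibility of $K_t-\lambda I$ needed to invert $\widehat D-\lambda I$ away from $\bigcup_t\sigma(K_t)$, which is exactly what \cite[Thm.~2.3.25]{RaRoSi04} delivers and which is made tractable by the Wiener-algebra bound from Proposition \ref{prop:convergence}; the concentration test functions are what convert pointwise information about individual fibres $K_{t_0}$ into operator-level information about $\widehat D$ in the spectrum and numerical-range directions.
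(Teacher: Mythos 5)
Your proposal follows the same route as the paper: Corollary \ref{cor:ess_ne} supplies the first equality on each line, unitary equivalence reduces everything to $\widehat D=\cF\cG D_\Gamma\cG^{-1}\cF^{-1}$, and \eqref{eq:unitary} identifies $\widehat D$ as the fibred (direct-integral) operator with fibres $K_t$. The paper simply cites \cite[Thm.~2.3.25]{RaRoSi04} for the spectral identity and declares the norm and numerical-range cases ``similar''; you have written out the standard argument that this citation encapsulates, and your Cauchy--Schwarz bound, concentration test functions, and averaging argument for the numerical range are all correct (the norm-continuity of $t\mapsto K_t$ that you invoke does indeed follow from the Wiener-algebra bound of Proposition \ref{prop:convergence}).

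One step needs a small repair. In the inclusion $\sigma(K_{t_0})\subseteq\sigma(\widehat D)$ you build the concentration function from ``an approximate null vector $\phi_0$ for $K_{t_0}-\lambda I$'', but such a vector exists only when $\lambda$ lies in the approximate point spectrum of $K_{t_0}$. The fibres $K_t$ are neither compact nor normal for a general Lipschitz dilation-invariant graph, so $\sigma(K_{t_0})$ may contain points $\lambda$ at which $K_{t_0}-\lambda I$ is bounded below but fails to have dense range; for those $\lambda$ your test function does nothing. The standard fix is to note that in that case $K_{t_0}^{*}-\bar\lambda I$ is not injective, to run the identical concentration argument on $\widehat D^{\,*}$ (whose fibres are $K_t^{*}$, by taking adjoints in \eqref{eq:unitary}), and to use that $\lambda\in\sigma(\widehat D)$ if and only if $\bar\lambda\in\sigma(\widehat D^{\,*})$. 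With that one-line addition the argument is complete and matches what \cite[Thm.~2.3.25]{RaRoSi04} delivers.
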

\begin{proof} That the essential spectrum, numerical range, and norm coincide with their non-essential counterparts is Corollary \ref{cor:ess_ne}. Since $D_\Gamma$ and $\widehat D$ are unitarily equivalent, they have the same spectrum, numerical range, and norm. The result thus follows from \eqref{eq:unitary}; see \cite[Thm.~2.3.25]{RaRoSi04} for the case of the spectrum; the argument for the norm and numerical range are similar.
\end{proof}

\begin{rem}[{\em \bf Symmetry of $K_t$}] \label{rem:sym}
Where $K_t(\cdot,\cdot)$ denotes the kernel of $K_t$, $K_{-t}(\cdot,\cdot)=\overline{K_t(\cdot,\cdot)}$, for $t\in [-\pi,\pi]$. Thus $\|K_{-t}\|=\|K_{t}\|$, $W(K_{-t}) = \{\bar z:z\in W(K_t)\}$, and $\sigma(K_{-t})=\{\bar z:z\in \sigma(K_t)\}$, for $t\in [-\pi,\pi]$, so that,
where $w(K_t) := \sup_{z\in W(K_t)}|z|$ is the {\em numerical radius} of $K_t$,
$$
\|D_\Gamma\|_{\ess} = \max_{t\in [0,\pi]}\|K_t\|,\quad w_{\ess}(D_\Gamma) = \max_{t\in [0,\pi]} w(K_t), \quad \rho_{\ess}(D_\Gamma) = \max_{t\in [0,\pi]} \rho(K_t).
$$
\end{rem}

Our focus in the next subsections will be 2D cases where $f$ is analytic on $A\setminus\{0\}$. In such cases, indeed whenever $\Gamma$ is locally $C^1$ away from $0$, $K_t$ is compact for all $t$ as a consequence of standard results on the double-layer operator on $C^1$ domains \cite{FaJoRi:78} and Proposition \ref{prop:norm_estimate}.

\begin{cor} \label{cor:K_t_compact}
Suppose that $f\in C^1(A\setminus\{0\})$ so that $\Gamma$ is locally $C^1$ at each $x\in \Gamma\setminus\{0\}$. Then $K_t$ is compact for every $t \in \R$.
\end{cor}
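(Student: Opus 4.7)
The plan is to split the series defining $K_t$ into a ``near'' part consisting of the three operators $D_{-1}, D_0, D_1$, and a ``far'' part consisting of the sum over $|j|\geq 2$. The far part is immediately under control via Proposition \ref{prop:convergence}: since the Hilbert--Schmidt norm bound \eqref{eq:HS_estimates} decays geometrically in $|j|$, the series $\sum_{|j|\geq 2} e^{ijt}D_j$ converges in Hilbert--Schmidt norm, hence in operator norm, to a Hilbert--Schmidt (in particular compact) operator on $L^2(\Gamma_0)$. So the whole question reduces to showing that $D_{-1}, D_0, D_1$ are each compact, since the sum of compact operators is compact and since $e^{ijt}$ is just a unimodular scalar.

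For the near part, the strategy is to realise $D_{-1},D_0,D_1$ as restrictions of the double-layer operator on a genuine $C^1$ graph, to which the Fabes--Jodeit--Rivi\`ere \cite{FaJoRi:78} compactness theorem applies. Recall that $\Gamma_{-1}\cup \Gamma_0\cup \Gamma_1$ is a relatively closed bounded subset of $\Gamma$ whose projection to $\R^{d-1}$ is contained in the annulus $\{\tilde x: \alpha^2\leq |\tilde x|\leq \alpha^{-1}\}\subset A$, where by hypothesis $f$ is $C^1$. I would choose a compactly supported function $\tilde f\in C^1_c(\R^{d-1})$ that agrees with $f$ on an open neighbourhood of this annulus in $A$, and let $\tilde\Gamma$ be the graph of $\tilde f$. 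Then $\tilde \Gamma$ is the boundary of a bounded $C^1$ domain, so $D_{\tilde\Gamma}$ is compact on $L^2(\tilde\Gamma)$ by \cite{FaJoRi:78}.

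Now let $E_0\colon L^2(\Gamma_0)\to L^2(\tilde\Gamma)$ be extension by zero; this is a bounded (in fact isometric) embedding because $\Gamma_0\subset \tilde\Gamma$ as subsets of $\R^d$. For $\phi\in L^2(\Gamma_0)$ and $x$ in $\Gamma_{-1}\cup\Gamma_0\cup \Gamma_1$, the integrand in $D_\Gamma\phi(x)$ is supported on $\Gamma_0$, where $\Gamma$ and $\tilde\Gamma$ coincide (so the normal $n(y)$ agrees), and $x$ itself lies on $\tilde\Gamma$; hence $D_\Gamma \phi(x)=D_{\tilde\Gamma}(E_0\phi)(x)$ there. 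Consequently, for $j\in\{-1,0,1\}$,
\begin{equation*}
P_j D_\Gamma|_{L^2(\Gamma_0)} = P_j^{\tilde\Gamma}\,D_{\tilde\Gamma}\, E_0,
\end{equation*}
where $P_j^{\tilde\Gamma}\colon L^2(\tilde\Gamma)\to L^2(\Gamma_j)$ denotes the orthogonal projection (which is bounded). This exhibits $P_j D_\Gamma|_{L^2(\Gamma_0)}$ as a composition of a bounded operator, a compact operator, and a bounded operator, hence compact. Since $V_\alpha^j$ is unitary, $D_j=V_\alpha^j P_jD_\Gamma|_{L^2(\Gamma_0)}$ is compact for $j=-1,0,1$, which combined with the previous paragraph yields the compactness of $K_t$ for every $t$.

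The only subtle point is ensuring the factorisation through $D_{\tilde\Gamma}$ is valid \emph{pointwise almost everywhere}, which amounts to the elementary observation that the Cauchy-principal-value singularity at $y=x$ is handled identically on $\Gamma$ and on $\tilde\Gamma$ because $\Gamma$ and $\tilde\Gamma$ agree on an open neighbourhood (in $\Gamma$) of the relevant integration region. This matching of local geometry is what makes the use of \cite{FaJoRi:78} legitimate and is arguably the only step requiring care.
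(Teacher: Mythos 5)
Your proof is correct and follows essentially the same route as the paper: the far terms $|j|\geq 2$ are summable Hilbert--Schmidt operators by Proposition \ref{prop:convergence}, and the near terms $D_{-1},D_0,D_1$ are compact because they factor through the double-layer operator on the bounded $C^1$ piece $\Gamma_{-1}\cup\Gamma_0\cup\Gamma_1$, to which \cite{FaJoRi:78} applies. Your explicit extension to a compactly supported $C^1$ graph $\tilde\Gamma$ is just a slightly more careful packaging of the same step the paper performs with $Q=P_{-1}+P_0+P_1$.
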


\begin{proof}
Let $Q = P_{-1} + P_0 + P_1$. Then $QD_{\Gamma}|_{L^2(\Gamma_{-1} \cup \Gamma_0 \cup \Gamma_1)}$ is the DL operator on $\Gamma_{-1} \cup \Gamma_0 \cup \Gamma_1$. It thus follows from \cite[Theorem 1.2]{FaJoRi:78} that $QD_{\Gamma}|_{L^2(\Gamma_{-1} \cup \Gamma_0 \cup \Gamma_1)}$ is compact. As
\[D_j = V_{\alpha}^jP_jD_\Gamma|_{L^2(\Gamma_0)} = V_{\alpha}^jP_jQD_\Gamma|_{L^2(\Gamma_0)},\]
$D_j$ is compact for $j = -1,0,1$. The compactness of $K_t$ thus follows from Proposition \ref{prop:convergence}.
\end{proof}

\subsection{The 2D case: One-sided infinite graphs} \label{sec:oneside}

\begin{figure}[ht]
\centering
\includegraphics[scale=0.6, trim = 0.5cm 0.6cm 0cm 0.6cm, clip]{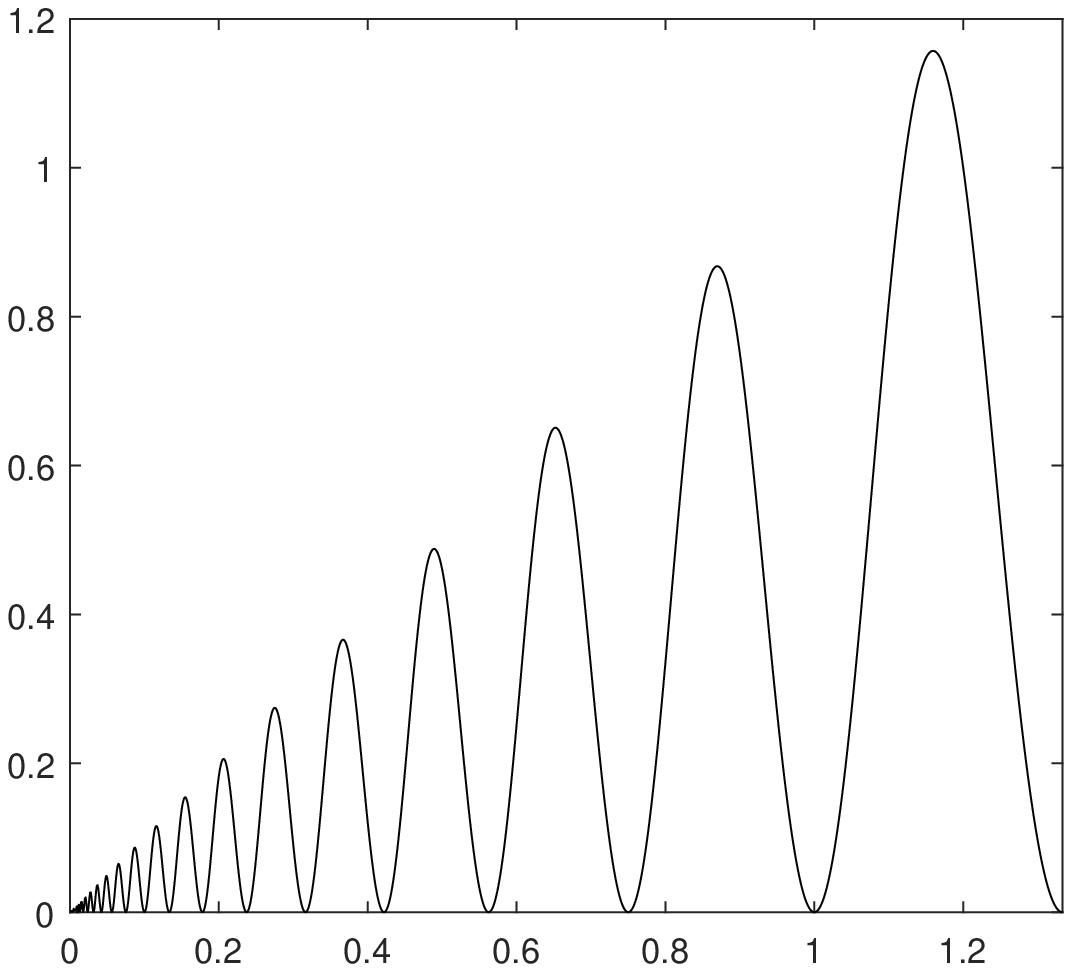}
\caption{Graph of $f \from \R_+ \to \R$, $f(x) := x\sin^2(\pi\log_{\alpha}(x))$, for $\alpha = \frac{3}{4}$} \label{fig:ex1}
\end{figure}
We continue to assume that $\Gamma$ is a  dilation invariant graph, as defined at the start of \S\ref{sec:dilation_invariant}, but specialise now to the case where $d=2$ and the cone is the half-axis $A=\R_+ := (0,\infty)$. Thus $\Gamma = \set{(x,f(x)): x \in \R_+}$ and, for some $\alpha \in (0,1)$,  $f(\alpha x) = \alpha f(x)$, $x\in \R_+$. Our starting point is Corollary \ref{cor:norm_and_spec} which expresses $\sigma(D_\Gamma)$ as the union of the spectra of the operators $K_t$, $t\in [-\pi,\pi]$. Recall that these operators are compact if $f\in C^1(\R_+)$.
Our goal is to apply the Nystr\"om method and the results of \S\ref{sec:Ny} to compute spectral properties of $K_t$ and hence of $D_\Gamma$ in the case that $f\in \cA(\R_+)$, the space of functions $\R_+\to \R$ that are real analytic (a prototypical example is Figure \ref{fig:ex1}). Our standing assumption through this subsection is that
\begin{equation} \label{eq:standing}
\Gamma = \set{(x,f(x)): x \in \R_+} \;\mbox{ where }\; f\in \cA(\R_+) \; \mbox{ and, for some $\alpha \in (0,1)$, } \; f(\alpha x) = \alpha f(x), \quad x\in \R_+.
\end{equation}

Notably, via approximations of the spectrum of $K_t$ for each $t$, we will obtain (see Theorem \ref{thm:Hausdorff_convergence}) a Nystr\"om approximation $\sigma^N(D_\Gamma)$, for $\sigma(D_\Gamma)=\sigma_{\ess}(D_\Gamma)$, which is $\{0\}$ plus the union over finitely many $t\in [-\pi,\pi]$ of the spectra of $N\times N$ matrices $A^M_{t,N}$, where each $A^M_{t,N}$ is obtained via Nystr\"om discretisation of a unitary transformation, $\tilde K_t$, of $K_t$. Our first main result, Theorem \ref{thm:Hausdorff_convergence}, is to show that $\sigma^N(D_\Gamma)\toH \sigma(D_\Gamma)$ as $N\to \infty$. Our other, more substantial result (Theorem \ref{thm:spectral_radius}) is to develop a fully discrete algorithm to test whether the spectral radius conjecture holds for $\Gamma$, i.e.\ to test whether $\sigma_{\ess}(D_\Gamma)=\sigma(D_\Gamma)<\half$. This algorithm, which derives from Theorem \ref{thm:GspecRMfinal}, requires the computation only of the spectral radii of finitely many finite matrices plus the norms of finitely many finite matrix resolvents.

Before we begin our analysis we note the following equivalences to \eqref{eq:standing}
that will play a key role in our calculations. Here, and throughout, the notations
\begin{equation} \label{eq:Sigmac}
\Sigma_c := \set{z \in \C : \Imag z \in (-c,c)}, \quad \Sigma_0 := \R,
\end{equation}
for $c>0$ will be convenient.

\begin{lem} \label{lem:f_anal} Given $f:\R_+\to \R$ and $\alpha \in (0,1)$, define $g:\R\to\R$ by
\begin{equation} \label{eq:gdef}
g(x) := \alpha^{-x}f(\alpha^x), \quad x\in \R, \quad \mbox{so that} \quad f(x) = xg(\log_\alpha x), \quad x\in \R_+.
\end{equation}
Then the following are equivalent:
\begin{enumerate}
\item[i)] $f$ is real analytic on $\R_+$ and $f(\alpha x) = \alpha f(x)$, $x>0$;
\item[ii)] $g:\R\to \R$ is real analytic and $g(x+1)=g(x)$, $x\in \R$;
\item[iii)] for some $c>0$, $g$ has an analytic extension to $\Sigma_c$ that satisfies  $g(z+1)=g(z)$, for $z\in \Sigma_c$, and $g$ and its derivatives $g^\prime$ and $g^{\prime\prime}$ are bounded in $\Sigma_c$.
\end{enumerate}
\end{lem}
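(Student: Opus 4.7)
My plan is to prove the cyclic chain i) $\Rightarrow$ ii) $\Rightarrow$ iii) $\Rightarrow$ ii) $\Rightarrow$ i), with the bulk of the work concentrated on producing the complex-analytic strip in iii). The two defining identities in \eqref{eq:gdef} are inverses of one another under the substitution $x = \alpha^t$, $t = \log_\alpha x$, so I will treat them as a bijective dictionary between $f$ on $\R_+$ and $g$ on $\R$ throughout.

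The equivalence of the functional equations is essentially a one-line computation: if $f(\alpha x)=\alpha f(x)$, then
\[
g(x+1) \;=\; \alpha^{-x-1}f(\alpha^{x+1}) \;=\; \alpha^{-x-1}\cdot \alpha\, f(\alpha^x) \;=\; g(x),
\]
and reversing this chain (using $f(x) = x\, g(\log_\alpha x)$ and $\log_\alpha(\alpha x) = \log_\alpha x + 1$) gives the converse. For the analyticity part of i) $\Leftrightarrow$ ii), I will note that $x\mapsto \alpha^x = e^{x\log\alpha}$ is entire with image $\R_+$, and $x\mapsto \log_\alpha x = (\log x)/\log\alpha$ is real analytic on $\R_+$; analyticity of $f$ on $\R_+$ (resp.\ of $g$ on $\R$) therefore transfers to $g$ (resp.\ $f$) via composition and multiplication by the analytic factor $\alpha^{\mp x}$, respectively $x$. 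This establishes i) $\Leftrightarrow$ ii).

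The substantive step is ii) $\Rightarrow$ iii). Assume $g\in\cA(\R)$ is $1$-periodic. For each $x\in [0,1]$, real analyticity provides a radius $r_x>0$ so that the Taylor series of $g$ at $x$ converges in the disc of radius $r_x$ (in $\C$) and gives a holomorphic extension there. By compactness of $[0,1]$ and Borel--Lebesgue, a finite sub-cover yields a uniform $c_0>0$ such that $g$ extends holomorphically to an open neighbourhood of $[0,1]$ containing the closed rectangle $\overline{R} = [0,1]\times [-c_0,c_0]\, i$. By the identity theorem, this extension is uniquely determined. Now I extend to the full strip by periodicity: for $z\in\Sigma_{c_0}$, write $z = n + z_0$ with $n\in\Z$ and $\Re z_0\in [0,1]$, and define $g(z) := g(z_0)$; this is well-defined, holomorphic on $\Sigma_{c_0}$, and agrees with $g$ on $\R$, so the functional equation $g(z+1)=g(z)$ holds throughout $\Sigma_{c_0}$ by analytic continuation. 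Finally, pick any $c \in (0,c_0)$; then $g$, $g'$ and $g''$ are continuous on the compact set $\overline{R'}=[0,1]\times[-c,c]\, i$, hence bounded there, and by the $1$-periodicity of $g$ (and therefore of its derivatives) they are bounded on all of $\Sigma_c$. This gives iii) with the $c$ just chosen.

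The implication iii) $\Rightarrow$ ii) is immediate by restriction to $\R$, which closes the cycle. The main obstacle, and the only step requiring genuine complex-analytic input, is the uniform strip of holomorphy around $[0,1]$ in ii) $\Rightarrow$ iii); everything else is either a direct substitution or a compactness/periodicity argument. I do not anticipate difficulty with the boundedness of $g'$ and $g''$ since they inherit holomorphy and periodicity from $g$ on $\Sigma_{c_0}$ and one merely shrinks the strip slightly to gain continuity up to the boundary.
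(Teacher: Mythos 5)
Your proof is correct; the paper states Lemma \ref{lem:f_anal} without proof, treating it as routine, and your argument (direct substitution for the equivalence of the functional equations, transfer of real analyticity through the mutually inverse analytic maps $x\mapsto\alpha^x$ and $x\mapsto\log_\alpha x$, and the compactness-plus-periodicity construction of a uniform strip of holomorphy for ii) $\Rightarrow$ iii)) is exactly the standard argument that justifies it. The only steps worth writing out in full are the two you compress: that the local holomorphic extensions on overlapping discs centred on $[0,1]$ agree on their (connected, $\R$-meeting) intersections by the identity theorem, and that the periodic extension is well-defined on the lines $\Re z\in\Z$ because $z\mapsto g(z)$ and $z\mapsto g(z+1)$ are both holomorphic near $0$ and agree on the real axis there.
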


Note that if $f$ satisfies our standing assumption \eqref{eq:standing}, then, defining $g$ by \eqref{eq:gdef},
\begin{equation} \label{eq:fpr}
f^\prime(x) = g(\log_\alpha x) + \frac{g^\prime(\log_\alpha x)}{\log \alpha}, \quad f^{\prime\prime}(x) = \frac{g^\prime(\log_\alpha x)}{x\log \alpha}+\frac{g^{\prime\prime}(\log_\alpha x)}{x\log^2 \alpha}, \quad x>0.
\end{equation}
It follows from \eqref{eq:gdef}, the first of \eqref{eq:fpr}, and the equivalence of i) and iii), that $f$ is Lipschitz continuous on $[0,\infty)$ if we set $f(0):=0$.

To make use of the results from \S\ref{sec:Ny} it is convenient to make a change of variables so that we work with integral operators on $[0,1]$ rather than $\Gamma_0$. Introducing the unitary transformation $U \from L^2(\Gamma_0) \to L^2(0,1)$ given by
\begin{equation} \label{eq:Udef}
U\phi(s) := \phi(\alpha^s,f(\alpha^s))(1+f'(\alpha^s)^2)^{1/4}\alpha^{s/2}\abs{\log\alpha}^{1/2}, \quad s \in [0,1], \;\; \phi\in L^2(\Gamma_0),
\end{equation}
define
\begin{equation} \label{eq:tilK}
\tilde{K}_t := UK_tU^{-1} = \sum_{j=-\infty}^\infty e^{ijt} UD_jU^{-1},  \quad t\in \R.
\end{equation}
Straightforward computations, starting from \eqref{eq:tilK}, \eqref{eq:DjDef}, and \eqref{eq:DD'2}, give that
\begin{align} \label{eq:kernel_computation}
	\tilde K_t \phi(x) &= \int_0^1 \tilde K_t(x,y) \phi(y)\, \rd y, \quad x\in [0,1], \;\; t\in \R,
\end{align}
where, for $t\in \R$, and $x,y\in \R$ with $x-y\not\in \Z$,
\begin{equation} \label{eq:ktall}
\tilde{K}_t(x,y) = \frac{1}{2\pi}\sum\limits_{j = -\infty}^{\infty} e^{ijt}\,\frac{p_j(x,y)}{1+q_j(x,y)^2}\left(\frac{1+f'(\alpha^{x})^2}{1+f'(\alpha^y)^2}\right)^{1/4}\alpha^{\frac{x+y+j}{2}}\abs{\log\alpha},
\end{equation}
with
\begin{equation} \label{eq:pjdef}
p_j(x,y) := \frac{(\alpha^y-\alpha^{x+j})f^\prime(\alpha^y) + f(\alpha^{x+j}) - f(\alpha^y)}{(\alpha^{x+j}-\alpha^y)^2}, \quad q_j(x,y) := \frac{f(\alpha^{x+j})-f(\alpha^y)}{\alpha^{x+j}-\alpha^y}.
\end{equation}
By Taylor's theorem applied to $F(t):= f((1-t)\alpha^y+t\alpha^{x+j})$, we see that, for the same range of $x$ and $y$,
\begin{align} \label{Tay1}
q_j(x,y) &= \int_0^1 f^\prime((1-t)\alpha^y+t\alpha^{x+j})\, \rd t,\\ \label{Tay2}
p_j(x,y) &= \int_0^1 f^{\prime\prime}((1-t)\alpha^y+t\alpha^{x+j})(1-t)\, \rd t.
\end{align}
Using \eqref{Tay1} and \eqref{Tay2} to extend the definitions of $q_j(x,y)$ and $p_j(x,y)$ to $\{(x,y)\in \R^2:x-y\in \Z\}$,\footnote{Precisely, for $j\in \Z$, $p_j(x,y)$ and $q_j(x,y)$ are given explicitly by \eqref{eq:pjdef} for $y-x\neq j$, while \eqref{Tay1} and \eqref{Tay2} imply that $p_j(x,y) =\half f^{\prime\prime}(\alpha^y)$, $q_j(x,y) = f^\prime(\alpha^y)$, for $y-x=j$.}
we see that, for each $t\in \R$, each term in the sum \eqref{eq:ktall} is continuous on $\R^2$. Further, using \eqref{eq:fpr} and the equivalence of i) and iii) in Lemma \ref{lem:f_anal}, it is easy to see that $p_j(x,y) = O(1)$ as $j\to \infty$, $=O(\alpha^{-j})$ as $j\to -\infty$, uniformly for $x$ and $y$ in compact subsets of $\R$, so that the series \eqref{eq:ktall} converges absolutely and uniformly on compact subsets, so that $\tilde K_t(\cdot,\cdot)\in C(\R^2)$. Further,
$f(\alpha x) = \alpha f(x)$ implies that $\tilde{K}_t(x+1,y) = e^{-it}\tilde{K}_t(x,y)$ and $\tilde{K}_t(x,y+1) = e^{it}\tilde{K}_t(x,y)$,  $x,y \in \R$. Note that $\tilde K_t(\cdot,\cdot)\in C(\R^2)$ implies that $\tilde K_t:L^2(0,1)\to C[0,1]$, that $\tilde K_t$ is compact as an operator on both $C[0,1]$ and $L^2(0,1)$, and that the spectrum of $\tilde K_t$ is the same on $C[0,1]$ as on $L^2(0,1)$, so that
\begin{equation}\label{eq:SpecSame}
\sigma(K_t;L^2(\Gamma_0)) = \sigma(\tilde K_t;L^2(0,1))=\sigma(\tilde K_t;C[0,1]), \quad t\in \R.
\end{equation}

We will use the above equivalence, and the Nystr\"om method results from \S\ref{sec:Ny}, to compute the spectrum and spectral radius of $K_t$, in particular by applying Theorem \ref{thm:GspecRMfinal}. As a step towards estimating the left hand side of \eqref{eq:keyboundfinal} in the case when $K=\tilde K_t$ and $K_N$ is its Nystr\"om method approximation,  we first show that, under our standing assumption \eqref{eq:standing}, $\tilde K_t(x,\cdot)$ and $\tilde K_t(\cdot,y)$ can be extended to bounded analytic functions on $\Sigma_c$ for some $c>0$. We will use the notation
 $\norm{\cdot}_c$ for the sup norm on $\Sigma_c$ and, for functions $h \from \Sigma_c \times \Sigma_d \to \C$, the notation
\[\norm{h}_{c,d} := \sup\limits_{\substack{x \in \Sigma_c \\ y \in \Sigma_d}} \abs{h(x,y)}.\]
In an extension of these notations, for  $h \from \Sigma_c \times \Sigma_c \to \C$ we define
\[\norm{h}_{c,0} := \sup\limits_{\substack{x \in \Sigma_c \\ y \in \R}} \abs{h(x,y)} \quad \text{and} \quad \norm{h}_{0,c} := \sup\limits_{\substack{x \in \R \\ y \in \Sigma_c}} \abs{h(x,y)},\]
and, for functions $h:\Sigma_c\to \C$, we set $\|h\|_0:=\|h\|_\infty:=\sup_{x\in \R}|h(x)|$, the ordinary sup norm on $\R$.

Let us now estimate the norms $\|\tilde{K}_t\|_{c,0}$ and $\|\tilde{K}_t\|_{0,c}$ under our standing assumption \eqref{eq:standing}. Since $g$ is real-valued on $\R$, it follows, using Lemma \ref{lem:equiv},   that \eqref{c:bound2} holds for all sufficiently small $c>0$.

\begin{prop} \label{prop:k_t_estimate general}
Given \eqref{eq:standing}, define $g$ by \eqref{eq:gdef} so that, by Lemma \ref{lem:f_anal},  $g$ has an analytic continuation to $\Sigma_c$, for some $c>0$, such that $g$, $g^\prime$, and $g^{\prime\prime}$ are bounded on $\Sigma_c$ and $g(z+1)=g(z)$, $z\in \Sigma_c$. Let $\fI_c := \|\Imag g\|_c + \|\Imag g^\prime\|_c/|\log \alpha|$, and set
\begin{align} \label{fFdef}
\fF_d := \|g\|_d + \|g^\prime\|_d/|\log \alpha| \quad \mbox{and} \quad \fG_d := \|g^\prime\|_d + \|g^{\prime\prime}\|_d/|\log \alpha|,   \quad \mbox{for } d=0,c.
\end{align}
If
\begin{equation} \label{c:bound2}
c \leq \arccos(\alpha)/\abs{\log\alpha} \quad \mbox{and} \quad \fI_c < 1,
\end{equation}
then $\tilde{K}_t(x,\cdot)$ and $\tilde{K}_t(\cdot,y)$ extend to bounded analytic functions on $\Sigma_c$ for all $x,y,t\in \R$, and
\begin{align}\label{ktb1}
\|\tilde{K}_t\|_{c,0} &\leq \frac{\left(1+\fF_c^2\right)^{1/4}}{\pi\left(1- \fI_c^2\right)}\Bigg[ \fG_c\, \frac{1+\alpha^{1/2}+\alpha^{-1/2}}{4\alpha^2} +   |\log\alpha|(\fF_c+\fF_0) \, \sum_{j=2}^\infty\frac{\alpha^{j/2}}{\alpha-\alpha^j}\Bigg],\\ \label{ktb2}
\|\tilde{K}_t\|_{0,c} &\leq \frac{\left(1+\fF_0^2\right)^{1/4}}{\pi\left(1- \fI_c^2\right)^{5/4}}\Bigg[ \fG_c\, \frac{1+\alpha^{1/2}+\alpha^{-1/2}}{4\alpha^2} +   2|\log\alpha|\,\fF_c \, \sum_{j=2}^\infty\frac{\alpha^{j/2}}{\alpha-\alpha^j}\Bigg].
\end{align}
Moreover, $\R \to C([0,1]\times [0,1])$, $t \mapsto \tilde{K}_t(\cdot,\cdot)$ is continuous.
\end{prop}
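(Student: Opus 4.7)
The plan has three main steps: (i) extend $f$ analytically from $\R_+$ to the complex sector $S_c:=\{w\in\C\setminus\{0\}:|\arg w|<c|\log\alpha|\}$ via $f(w):=wg(\log_\alpha w)$; (ii) show that every summand in the series \eqref{eq:ktall} extends analytically in $x$ to $\Sigma_c$ (for the $\|\cdot\|_{c,0}$ case) or in $y$ to $\Sigma_c$ (for the $\|\cdot\|_{0,c}$ case), with explicit pointwise bounds; (iii) sum these bounds by splitting the indices into local $j\in\{-1,0,1\}$ and distant $|j|\geq 2$, which require different techniques. The continuity of $t\mapsto\tilde K_t(\cdot,\cdot)$ in $C([0,1]^2)$ will then follow from the uniform-in-$t$ convergence of the series.

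Hypothesis \eqref{c:bound2} gives $c|\log\alpha|\leq\arccos\alpha<\pi/2$, which makes $S_c$ a convex sector containing $\R_+$. On $S_c$, formulas \eqref{eq:fpr} furnish the pointwise bounds $|f'(w)|\leq\fF_c$, $|\Imag f'(w)|\leq\fI_c$ (using that $1/\log\alpha\in\R$), and $|f''(w)|\leq\fG_c/(|w||\log\alpha|)$. For $x\in\Sigma_c$ and $y\in\R$, the segment joining $\alpha^y\in\R_+$ to $\alpha^{x+j}\in S_c$ lies entirely in $S_c$ by convexity, so the Taylor representations \eqref{Tay1}--\eqref{Tay2} define analytic extensions of $q_j$ and $p_j$ to $\Sigma_c\times\R$, with $|q_j(x,y)|\leq\fF_c$ and $|\Imag q_j(x,y)|\leq\fI_c$. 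Writing $q_j=u+iv$ with $|v|\leq\fI_c<1$, a direct expansion yields
\[
|1+q_j^2|^2=(1-v^2)^2+2u^2(1+v^2)+u^4\geq(1-v^2)^2,
\]
so the uniform-in-$j$ lower bound $|1+q_j^2|\geq 1-\fI_c^2$ holds. The prefactor $((1+f'(\alpha^x)^2)/(1+f'(\alpha^y)^2))^{1/4}$ is then bounded by $(1+\fF_c^2)^{1/4}$ in the case $x\in\Sigma_c$, $y\in\R$ (the denominator is real and $\geq 1$), and by $(1+\fF_0^2)^{1/4}/(1-\fI_c^2)^{1/4}$ in the symmetric case, which explains the extra exponent $5/4$ in \eqref{ktb2}.

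To control the series I split the indices. For distant $|j|\geq 2$ I use the closed form \eqref{eq:pjdef}: rewriting the numerator using $f(\alpha^{x+j})-f(\alpha^y)=\int_0^1 f'(c_j(t))(\alpha^{x+j}-\alpha^y)\,\rd t$ with $c_j(t):=(1-t)\alpha^y+t\alpha^{x+j}$, the numerator becomes $(\alpha^{x+j}-\alpha^y)\int_0^1[f'(c_j(t))-f'(\alpha^y)]\,\rd t$, bounded in modulus by $|\alpha^{x+j}-\alpha^y|(\fF_c+\fF_0)$ in the case $x\in\Sigma_c$, $y\in\R$ (respectively by $2|\alpha^{x+j}-\alpha^y|\fF_c$ in the symmetric case, since then $|f'(\alpha^y)|\leq\fF_c$). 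Using $1$-periodicity to restrict to $\Real(x),y\in[0,1]$, we have $|\alpha^{x+j}-\alpha^y|\geq\alpha-\alpha^j$ for $j\geq 2$, and the substitution $k=-j$ shows that the $j\leq-2$ tail contributes the same series $\sum_{k\geq 2}\alpha^{k/2}/(\alpha-\alpha^k)$; doubling gives the $1/\pi$ (from $2\cdot 1/(2\pi)$) in \eqref{ktb1}--\eqref{ktb2}. For local $j\in\{-1,0,1\}$ the closed form has a removable singularity, so I use the Taylor representation \eqref{Tay2} together with $|f''(w)|\leq\fG_c/(|w||\log\alpha|)$ and the sector lower bound $|c_j(t)|\geq(1-t)\alpha^y+t\alpha^{\Real(x)+j+1}$ (obtained from $\cos(c|\log\alpha|)\geq\alpha$); after integration and summation over $j\in\{-1,0,1\}$ the geometric factors collapse to $\fG_c(1+\alpha^{1/2}+\alpha^{-1/2})/(4\alpha^2)$.

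Continuity of $t\mapsto\tilde K_t(\cdot,\cdot)$ in $C([0,1]^2)$ is then immediate: each summand in \eqref{eq:ktall} depends continuously on $t$ through $e^{ijt}$ uniformly in $(x,y)\in[0,1]^2$, and the same estimates applied with $c=0$ establish uniform-in-$t$ convergence of the series. The main technical difficulty in this plan is the careful sector-geometry bookkeeping needed to produce the \emph{sharp} numerical constants in \eqref{ktb1}--\eqref{ktb2}: $c\leq\arccos\alpha/|\log\alpha|$ is precisely what makes $S_c$ convex so that all relevant segments remain inside $S_c$ (and what gives $\cos(c|\log\alpha|)\geq\alpha$ for the local lower bound), while $\fI_c<1$ is precisely what keeps $|1+q_j^2|$ bounded away from zero uniformly in $(x,y,j)$. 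Without either hypothesis the pointwise bounds above would fail.
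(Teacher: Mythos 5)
Your proposal follows the paper's proof almost step for step: the same analytic extension of $f$ to the sector $\{\alpha^z:z\in\Sigma_c\}$, the same lower bound $|1+q_j^2|\geq 1-\fI_c^2$ from $|\Imag q_j|\leq \fI_c<1$, the same split into near-diagonal terms $|j|\leq 1$ (via the Taylor form \eqref{Tay2}) and distant terms $|j|\geq 2$ (via the closed form \eqref{eq:pjdef}), the same accounting for the asymmetric exponents in \eqref{ktb1} versus \eqref{ktb2}, and the same reduction by quasi-periodicity to $\Real x,\Real y\in[0,1]$. The distant-term estimates, the prefactor bounds, and the continuity argument are all correct and match the paper.

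The gap is in the near-diagonal terms. To obtain the stated constant $\fG_c(1+\alpha^{1/2}+\alpha^{-1/2})/(4\alpha^2)$ you need $\int_0^1 (1-t)/|c_j(t)|\,\rd t\leq 1/(2\alpha^2)$, where $c_j(t)=(1-t)\alpha^y+t\alpha^{x+j}$. Your lower bound $|c_j(t)|\geq \Real c_j(t)\geq (1-t)\alpha^y+t\alpha^{\Real x+j+1}$ is too weak for this: at $t=1$ with $\Real x=1$ and $j=1$ it equals $\alpha^3$, not $\alpha^2$ (passing to the real part costs a factor $\cos(c|\log\alpha|)\geq\alpha$ relative to the modulus), so the claim that "after integration the geometric factors collapse to $\fG_c(1+\alpha^{1/2}+\alpha^{-1/2})/(4\alpha^2)$" does not follow without an explicit and rather delicate evaluation of $\int_0^1 s\,\rd s/(b+s(a-b))$ with $a\geq\alpha$, $b\geq\alpha^3$ (the required inequality is true, but its margin vanishes as $\alpha\to 1$, and the crude bounds $1/a$ or $1/(2\min(a,b))$ each fail on part of the range of $\alpha$). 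The paper avoids this entirely: the hypothesis $c\leq\arccos(\alpha)/|\log\alpha|$ is precisely the condition for the truncated sector $G_c^*=\{re^{i\theta}:\alpha^2\leq r\leq\alpha^{-1},\ |\theta|<c|\log\alpha|\}$ to be star-shaped with respect to every point of $[\alpha,1]$, whence the whole segment $c_j(t)$ lies in $G_c^*$ and $|c_j(t)|\geq\alpha^2$ \emph{pointwise}; then $|f''(c_j(t))|\leq \fG_c/(\alpha^2|\log\alpha|)$ and $\int_0^1(1-t)\,\rd t=1/2$ give the constant immediately. Since the whole point of this proposition is explicit constants that feed the verified algorithm of Theorem \ref{thm:spectral_radius}, you should replace the real-part bound by this star-shapedness argument, or else supply the missing integral estimate.
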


\begin{rem}[{\bf \em Bound on the sum in \eqref{ktb1} and \eqref{ktb2}}] \label{rem:series}
For fixed $\alpha\in (0,1)$, let $F(x):= \alpha^{x/2}/(\alpha-\alpha^x)$, for $x>0$, so that the $j$th term in the sum in \eqref{ktb1} and \eqref{ktb2} is $F(j)$.
Since $F$ is decreasing on $(1,\infty)$ we have that, for $n=2,3,\ldots$ and $0<\alpha<1$,
\begin{eqnarray} \label{eq:special_fun_error_est}
\sum_{j=n+1}^\infty\frac{\alpha^{j/2}}{\alpha-\alpha^j} &\leq &\mathcal{B}_n(\alpha)
\quad \mbox{so that} \\ \label{eq:twosided}
\sum_{j=2}^\infty\frac{\alpha^{j/2}}{\alpha-\alpha^j} &\leq &\mathcal{B}^*_n(\alpha) := \sum_{j=2}^n F(j) + \mathcal{B}_n(\alpha),
\end{eqnarray}
where
$$
\mathcal{B}_{n}(\alpha) := \int_{n}^\infty F(x) \,\rd x = \frac{1}{2\alpha^{1/2}}\int_{n}^\infty \frac{\rd x}{\sinh\left(|\log \alpha|(x-1)/2\right)}= \frac{\log(\tanh((n-1)|\log\alpha|/4))}{\alpha^{1/2}\log\alpha}.
$$
We note, for later reference, that
\begin{equation} \label{eq:BnAsy}
\mathcal{B}_{n}(\alpha) \sim 2\alpha^{n/2-1}/|\log\alpha|, \qquad \mbox{as} \qquad n\to\infty.
\end{equation}
\end{rem}

\begin{proof}[Proof of Proposition \ref{prop:k_t_estimate general}]
Using \eqref{eq:fpr} it follows that
\begin{equation} \label{eq:falpha}
f(\alpha^x) = \alpha^x g(x), \quad f^\prime(\alpha^x) = g(x) + \frac{g^\prime(x)}{\log \alpha}, \quad f^{\prime\prime}(\alpha^x) = \frac{g^\prime(x)}{\alpha^x\log \alpha}+\frac{g^{\prime\prime}(x)}{\alpha^x\log^2 \alpha}, \quad x\in \R.
\end{equation}
Note that the first of the bounds \eqref{c:bound2} implies that $c|\log\alpha| < \pi/2$, so that the assumptions we have made on $g$ mean that \eqref{eq:gdef} provides an analytic extension of $f$ from $\R_+$ to the sector of the complex plane
$$
G_c:=\{re^{i\theta}: r>0, |\theta| <c|\log\alpha|\} = \{\alpha^z:z\in \Sigma_c\},
$$
and the equations \eqref{eq:fpr} hold for all $x$ in this sector. Further, with this extension, \eqref{eq:falpha} holds for all $x\in \Sigma_c$.
Since \eqref{eq:fpr} holds for all $x\in G_c$, and noting that
$G_c$ is convex, we see that the integrals \eqref{Tay1} and \eqref{Tay2} are well-defined for all $x,y\in \Sigma_c$, $j\in \Z$, and provide analytic continuations of $p_j$ and $q_j$ to $\Sigma_c\times \Sigma_c$. Further, by the uniqueness of analytic continuation, the equations \eqref{eq:pjdef} hold for all $x,y\in \Sigma_c$, with $x+j\neq y$.

To complete the proof we will demonstrate that \eqref{eq:ktall} provides, for each $x,y\in \R$, analytic continuations of $\tilde{K}_t(\cdot,y)$ and $\tilde{K}_t(x,\cdot)$ from $\R$ to $\Sigma_c$ which satisfy the bounds \eqref{ktb1} and \eqref{ktb2}, by showing that each term in \eqref{eq:ktall} is well-defined (i.e., that $f^\prime(\alpha^y)^2\neq -1$, for $y\in \Sigma_c$, and $1+(q_j(x,y))^2\neq 0$ for $x,y\in \Sigma_c$), and that the series \eqref{eq:ktall} converges absolutely and uniformly for $(x,y) \in \R \times \Sigma_c$ and $(x,y)\in \Sigma_c \times \R$.

Using \eqref{eq:falpha}  we see that
$$
\sup_{z\in G_c}|\Imag f^\prime(z)| = \sup_{y\in \Sigma_c}|\Imag f^\prime(\alpha^y)| \leq \fI_c,
$$
so that
\begin{align} \label{eq:Icbound}
\inf\limits_{y \in \Sigma_c} \abs{1+f'(\alpha^y)^2} &\geq 1 - \sup\limits_{y \in \Sigma_c} \abs{\Imag f'(\alpha^y)}^2 \geq 1 - \fI_c^2,
\end{align}
and, using \eqref{Tay1},
\begin{equation} \label{eq:qbound}
\inf_{x,y\in \Sigma_c} \left|1+q_j(x,y)^2\right| \geq 1 - \sup_{z\in G_c}|\Imag f^\prime(z)|^2 \geq 1 - \fI_c^2.
\end{equation}
Thus, where $T_j(x,y)$ denotes the $j$-th term in the sum \eqref{eq:ktall}, we see that $T_j(x,y)$ is well-defined and analytic for $x,y\in \Sigma_c$, $j\in \Z$. Moreover, we have $T_j(x+1,y)=e^{-it}T_{j+1}(x,y)$ and it follows from $f(\alpha x) = \alpha f(x)$ that $T_j(x,y+1)=e^{it}T_{j-1}(x,y)$, for $x,y\in \Sigma_c$, $j \in \Z$. Thus, to prove absolute and uniform convergence of \eqref{eq:ktall} and the bounds \eqref{ktb1} and \eqref{ktb2}, it suffices to restrict consideration to $x,y \in \Xi_c := \{z \in \C :$ $\Real z \in [0,1],$ $\Imag z \in (-c,c)\}$.

We see that $\sup_{x\in \Xi_c}|\alpha^{x/2}|\leq 1$ and, using \eqref{eq:falpha}, that
\begin{align*}
\sup\limits_{x \in [0,1]} \abs{1+f'(\alpha^x)^2} \leq 1 + \fF_0^2,
\quad
\sup\limits_{x \in \Xi_c} \abs{1+f'(\alpha^x)^2} \leq 1 + \fF_c^2.
\end{align*}
To obtain a bound on $|T_j(x,y)|$ for $(x,y) \in [0,1] \times \Xi_c$, and for $(x,y) \in \Xi_c \times [0,1]$, it remains to bound $p_j(x,y)$. Let
$$
G_c^* := \{re^{i\theta}: \alpha^{2}\leq r\leq \alpha^{-1}, |\theta| <c|\log\alpha|\} = \{\alpha^z:\Real z\in [-1,2], \, \Imag z\in (-c,c)\}.
$$
It is clear that $(1-t) \alpha^y+t\alpha^{x+j}\in G_c^*$ for every $t\in [0,1]$, $|j|\leq 1$, $x\in \Xi_c$, and $y\in [0,1]$ if and only if $G_c^*$ is star-shaped with respect to every point in $[\alpha,1]$, which holds if and only if $c$ satisfies the first of the bounds \eqref{c:bound2}. Likewise, $(1-t) \alpha^y+t\alpha^{x+j}\in G_c^*$ for every  $t\in [0,1]$, $|j|\leq 1$, $x\in [0,1]$, and $y\in \Xi_c$ if and only if the first of \eqref{c:bound2} holds. Thus, if \eqref{c:bound2} holds, $|j|\leq 1$, and either  $(x,y) \in [0,1] \times \Xi_c$ or $(x,y) \in \Xi_c \times [0,1]$, it follows from \eqref{eq:fpr} and \eqref{Tay2} that
$$
|p_j(x,y)| \leq \frac{1}{2} \sup_{z\in G_c^*}|f^{\prime\prime}(z)| \leq \frac{\fG_c}{2\alpha^2|\log\alpha|}.
$$
On the other hand, if $|j|\geq 2$ and $x,y\in \Xi_c$, then it follows from \eqref{eq:pjdef} (which we have observed above holds for all $x,y\in \Sigma_c$ with $x+j\neq y$) that
$$
|p_j(x,y)| \leq \frac{|f^\prime(\alpha^y)| + |q_j(x,y)|}{|\alpha^{x+j}-\alpha^y|}.
$$
Further, for $j\in \Z$,
$$
|q_j(x,y)| \leq \sup_{z\in G_c} |f^{\prime}(z)| \leq \fF_c,
\quad x,y\in \Xi_c,
$$
while $|f^\prime(\alpha^y)| \leq \fF_0$, for $y\in [0,1]$, $\leq \fF_c$, for $y\in \Xi_c$.
Moreover, for $|j|\geq 2$, $x,y\in \Xi_c$,
$$
|\alpha^{x-j}-\alpha^y| \geq \left| \alpha^{\Real x - j} - \alpha^{\Real y} \right|\geq \left\{\begin{array}{cc}
                                   \alpha-\alpha^j, & j\geq 2, \\
                                   \alpha^{j+1}-1, & j\leq -2.
                                 \end{array}\right.
$$
Putting these bounds together we see that, for $|j|\leq 1$,
\begin{align*}
\sup\limits_{x\in\Xi_c, y\in [0,1]}|T_j(x,y)| \leq \frac{\fG_c \left(1+\fF_c^2\right)^{1/4}\alpha^{j/2}}{2\alpha^2\left(1- \fI_c^2\right)},\qquad
\sup\limits_{x\in[0,1], y\in \Xi_c}|T_j(x,y)| \leq \frac{\fG_c \left(1+\fF_0^2\right)^{1/4}\alpha^{j/2}}{2\alpha^2\left(1- \fI_c^2 \right)^{5/4}},
\end{align*}
while, for $|j|\geq 2$,
\begin{align*}
&\sup\limits_{x\in\Xi_c, y\in [0,1]}|T_j(x,y)| \leq \frac{|\log\alpha|(\fF_0+\fF_c)\left(1+\fF_c^2 \right)^{1/4}\alpha^{|j|/2}}{\left(1- \fI_c^2 \right)(\alpha-\alpha^{|j|})},\\
&\sup\limits_{x\in[0,1], y\in \Xi_c}|T_j(x,y)| \leq \frac{2|\log\alpha|\,\fF_c\left(1+\fF_c^2 \right)^{1/4}\alpha^{|j|/2}}{\left(1- \fI_c^2 \right)^{5/4}(\alpha-\alpha^{|j|})}.
\end{align*}
From these bounds on $|T_j(x,y)|$ it is clear that the series \eqref{eq:ktall}, with $p_j$ and $q_j$ given by \eqref{Tay1} and \eqref{Tay2}, converges absolutely and uniformly for $(x,y) \in [0,1] \times \Xi_c$ and $(x,y) \in \Xi_c \times [0,1]$, so that $\tilde{K}_t(x,\cdot)$ and $\tilde{K}_t(\cdot,y)$ are analytic in $\Sigma_c$, for $x,y\in \R$, as required. The convergence of the series is also uniform in $t$, which implies the continuity of $t \mapsto \tilde{K}_t(\cdot,\cdot)$. Further, the above bounds on $|T_j(x,y)|$ imply the bounds \eqref{ktb1} and \eqref{ktb2}.
\end{proof}

Recalling \eqref{eq:SpecSame} and that the kernel of $\tilde K_t$ is continuous, we will approximate the spectrum and spectral radius of $K_t$, for $t\in \R$, by approximating the spectrum and spectral radius of $\tilde K_t$, considered as an operator on $C[0,1]$, using the Nystr\"om method results of \S\ref{sec:Ny}. Define $\tilde{K}_{t,N} \from C[0,1]\to C[0,1]$, for $N\in \N$ and $t\in \R$, by (cf.~\eqref{eq:Nyst})
\begin{equation} \label{eq:KtnDef}
\tilde{K}_{t,N}\phi(x) := J_N(\tilde{K}_t(x,\cdot)\phi) = \frac{1}{N}\sum_{q=1}^N \tilde{K}_t(x,x_{q,N}) \phi(x_{q,N}), \quad x \in [0,1],\;\; \phi\in C[0,1],
\end{equation}
where
$$
J_N\psi(x) := \sum_{q=1}^N \omega_{q,N}\phi(x_{q,N}), \quad \psi\in C[0,1], \;\; N\in \N,
$$
with $x_{q,N} := \frac{1}{N}(q-\frac{1}{2})$, $\omega_{q,N} := N^{-1}$, for $q=1,\ldots,N$, $N\in \N$. Note that $J_N\psi$ is just the $N$-point midpoint rule approximation to $J\psi := \int_0^1\psi(x)\, \rd x$, and that this numerical quadrature rule sequence satisfies \eqref{eq:JNtoJ} and \eqref{eq:Xsum}, so that, for each $t\in \R$, $\tilde K_{t,N}\to \tilde K_t$ as $N\to\infty$ and $(\tilde K_{t,N})_{N\in \N}$ is collectively compact. This implies, from the general result \eqref{eq:pointwise}, that $\|(\tilde{K}_t - \tilde{K}_{t,N})\tilde{K}_t\|_{\infty}\to 0$ as $N\to \infty$. In our case this convergence is exponential (cf.~\cite[Ex.~12.11]{Kr:14}, \cite[\S19]{TrWe14}).

\begin{prop} \label{prop:norm_estimate}
Let $f$ and $c$ be as in Proposition \ref{prop:k_t_estimate general}. Then, for every $t \in \R$ and $N\in \N$,
\[
\norm{(\tilde{K}_t - \tilde{K}_{t,N})\tilde{K}_t}_{\infty}
\leq \frac{2 e^{2\pi c}\|\tilde{K}_t\|_{0,c}\|\tilde{K}_t\|_{c,0}}{e^{2\pi Nc}-1}.\]
\end{prop}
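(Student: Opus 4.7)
The strategy is to recognise that, for each $\phi\in C[0,1]$ and each $x\in[0,1]$, the quantity
\[
\big[(\tilde{K}_t - \tilde{K}_{t,N})\tilde{K}_t\phi\big](x) = J(G_{x,\phi}) - J_N(G_{x,\phi}),
\]
where $G_{x,\phi}(y):=\tilde{K}_t(x,y)\,\tilde{K}_t\phi(y)$ and $J$, $J_N$ denote the integration functional and its $N$-point midpoint rule on $[0,1]$. The plan is to verify that $G_{x,\phi}$ is a $1$-periodic function of $y$ that extends analytically to $\Sigma_c$ with $\|G_{x,\phi}\|_c \leq \|\tilde{K}_t\|_{0,c}\,\|\tilde{K}_t\|_{c,0}\,\|\phi\|_\infty$, and then invoke the quoted exponential-convergence bound for the midpoint rule on $1$-periodic functions analytic in a strip (Theorem \ref{thm:Kress}).

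The $1$-periodicity is the crucial observation, and hinges on the two quasi-periodicities of $\tilde{K}_t$ cancelling in the product. Indeed, $\tilde{K}_t(x,y+1) = e^{it}\tilde{K}_t(x,y)$, while integrating $\tilde{K}_t(y+1,z) = e^{-it}\tilde{K}_t(y,z)$ against $\phi(z)$ yields $\tilde{K}_t\phi(y+1) = e^{-it}\tilde{K}_t\phi(y)$, so $G_{x,\phi}(y+1)=G_{x,\phi}(y)$. Analyticity and the sup-norm bound follow from Proposition \ref{prop:k_t_estimate general}: the slice $y\mapsto \tilde{K}_t(x,y)$ extends analytically to $\Sigma_c$ with $|\tilde{K}_t(x,y)|\leq \|\tilde{K}_t\|_{0,c}$; and $y\mapsto \tilde{K}_t\phi(y)=\int_0^1 \tilde{K}_t(y,z)\phi(z)\,\rd z$ extends analytically to $\Sigma_c$ by a routine differentiation-under-the-integral argument, using the uniform pointwise bound $|\tilde{K}_t(y,z)|\leq \|\tilde{K}_t\|_{c,0}$ for $y\in\Sigma_c$, $z\in\R$ to justify it and to bound the result by $\|\tilde{K}_t\|_{c,0}\|\phi\|_\infty$.

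Multiplying the two bounds yields $\|G_{x,\phi}\|_c \leq \|\tilde{K}_t\|_{0,c}\|\tilde{K}_t\|_{c,0}\|\phi\|_\infty$. Applying Theorem \ref{thm:Kress} to the $1$-periodic analytic function $G_{x,\phi}$ produces
\[
|J(G_{x,\phi}) - J_N(G_{x,\phi})| \leq \frac{2 e^{2\pi c}\,\|G_{x,\phi}\|_c}{e^{2\pi N c}-1},
\]
and taking suprema over $x\in[0,1]$ and over $\phi\in C[0,1]$ with $\|\phi\|_\infty\leq 1$ delivers the operator-norm estimate. The only place where care is needed is in combining the two different analyticity statements of Proposition \ref{prop:k_t_estimate general} (one for each slice variable) into the required joint bound along the $y$-slice of the integrand $G_{x,\phi}$; everything else is a routine application of the cited quadrature result.
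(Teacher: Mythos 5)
Your proof is correct and follows essentially the same route as the paper: both rest on the observation that the $y$-slice of the integrand of $(\tilde K_t-\tilde K_{t,N})\tilde K_t\phi$ is $1$-periodic and analytic on $\Sigma_c$, followed by an application of Theorem \ref{thm:Kress} and the bounds of Proposition \ref{prop:k_t_estimate general}. The only (cosmetic) difference is that the paper first conjugates by the multiplier $e^{ixt}$ to make the kernel genuinely periodic in each variable separately—which is what introduces the factor $e^{2\pi c}$—whereas you exploit the cancellation of the two quasi-periodicities in the product directly; your argument therefore actually yields the sharper bound without the $e^{2\pi c}$ factor (note that Theorem \ref{thm:Kress} itself contains no such factor, so you need not insert it), which of course still implies the stated estimate.
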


\noindent To prove this proposition we will use the following classical result.

\begin{thm}[Theorem 9.28 in \cite{Kr:98}] \label{thm:Kress}
Let $\psi \from \R \to \C$ be a $1$-periodic function that can be extended to a bounded analytic function on $\Sigma_c$ for some $c > 0$. Then
\[\abs{J\psi-J_N\psi}=\abs{\int_0^1 \psi(x) \, \mathrm{d}x - \frac{1}{N}\sum\limits_{q = 1}^N \psi(x_{q,N})} \leq \frac{2\norm{\psi}_c}{e^{2\pi N c}-1}, \qquad N\in \N.\]
\end{thm}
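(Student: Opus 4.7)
The plan is to prove this via the Fourier expansion of $\psi$, exploiting exponential decay of the Fourier coefficients that follows from analyticity and boundedness on $\Sigma_c$. Since $\psi$ is $1$-periodic and continuous on $\R$, it has a Fourier series with coefficients
\[
\hat\psi(k) = \int_0^1 \psi(x) e^{-2\pi i k x}\, \rd x, \qquad k\in\Z.
\]
First I would show that for every $k\in\Z$,
\[
|\hat\psi(k)| \leq \|\psi\|_c\, e^{-2\pi|k|c}.
\]
For $k\neq 0$, fix $c'\in (0,c)$ and shift the contour of integration from the real segment $[0,1]$ to the translated segment $[0,1] - i\,\mathrm{sgn}(k) c'$. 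The vertical sides of the resulting rectangle cancel by $1$-periodicity of $\psi(z) e^{-2\pi i k z}$ (recall $e^{-2\pi i k}=1$), so the integral is unchanged; on the translated segment the integrand is bounded by $\|\psi\|_{c'} e^{-2\pi|k|c'}$, and letting $c'\nearrow c$ gives the claim. (This step is the one point requiring genuine care, since $\psi$ is only assumed bounded on the open strip.)

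Next, I would compute $J_N\psi$ by term-wise summation of the Fourier series (justified by uniform convergence since the Fourier coefficients decay exponentially). For the quadrature points $x_{q,N} = (q-\tfrac12)/N$,
\[
J_N\psi = \frac{1}{N}\sum_{q=1}^N \sum_{k\in\Z} \hat\psi(k)\, e^{2\pi i k (q-1/2)/N} = \sum_{k\in\Z} \hat\psi(k)\,e^{-\pi i k/N}\cdot \frac{1}{N}\sum_{q=1}^N e^{2\pi i k q/N}.
\]
The inner sum is $N$ when $k$ is an integer multiple of $N$ and $0$ otherwise, so writing $k = mN$ we obtain
\[
J_N\psi = \sum_{m\in\Z} (-1)^m \hat\psi(mN),
\]
while $J\psi = \hat\psi(0)$. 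Subtracting gives
\[
J\psi - J_N\psi = -\sum_{m\neq 0} (-1)^m \hat\psi(mN).
\]

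Finally, the exponential decay estimate for Fourier coefficients yields
\[
|J\psi - J_N\psi| \leq \sum_{m\neq 0} |\hat\psi(mN)| \leq 2\|\psi\|_c \sum_{m=1}^\infty e^{-2\pi m N c} = \frac{2\|\psi\|_c}{e^{2\pi N c}-1},
\]
summing the geometric series. The main obstacle, as noted above, is the Fourier coefficient estimate at the endpoint $c$ (rather than $c'<c$); this is handled by first working on a slightly smaller strip and then passing to the limit, using only that $\|\psi\|_{c'} \leq \|\psi\|_c$ for $c' \leq c$. All other steps are elementary manipulations of Fourier series and geometric sums.
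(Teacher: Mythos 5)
Your proof is correct. Note that the paper does not prove this statement at all---it is quoted as Theorem 9.28 of Kress's \emph{Numerical Analysis}---so there is no in-paper argument to compare against; your Fourier-aliasing argument is the standard proof of this classical result, correctly adapted to the midpoint nodes $x_{q,N}=(q-\tfrac12)/N$ (whence the factor $(-1)^m$), and the two delicate points---the contour shift to $\Imag z=\mp c'$ with $c'<c$ followed by $c'\nearrow c$, and the fact that the analytic extension is itself $1$-periodic on $\Sigma_c$ (immediate from the identity theorem)---are handled or trivially fillable.
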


\begin{proof}[Proof of Proposition \ref{prop:norm_estimate}]
It is clear from the definitions of $\tilde K_t$ and $\tilde K_{t,N}$ that it is enough to consider the case $t\in [-\pi,\pi]$. Assuming $t\in [-\pi,\pi]$, let $M_{g_t} \from C[0,1] \to C[0,1]$ be multiplication by $g_t(x) := e^{ixt}$. Then $M_{g_t}$ is a surjective isometry with $M_{g_t}^{-1} = M_{g_{-t}}$. Let $\tilde{L}_t := M_{g_t}\tilde{K}_tM_{g_t}^{-1}$ and $\tilde{L}_{t,N} := M_{g_t}\tilde{K}_{t,N}M_{g_t}^{-1}$.
 Then $\tilde{L}_t$ is an integral operator with kernel $\tilde{L}_t(x,y) = e^{i(x-y)t}\tilde{K}_t(x,y)$. As $\tilde{K}_t(x+1,y) = e^{-it}\tilde{K}_t(x,y)$ and $\tilde{K}_t(x,y+1) = e^{it}\tilde{K}_t(x,y)$, we get $\tilde{L}_t(x+1,y) = \tilde{L}_t(x,y)= \tilde{L}_t(x,y+1)$ for all $x,y \in \R$. Moreover, $\|\tilde{L}_t\|_{0,c} \leq e^{\pi c}\|\tilde{K}_t\|_{0,c}$ and $\|\tilde{L}_t\|_{c,0} \leq e^{\pi c}\|\tilde{K}_t\|_{c,0}$. For $\phi \in C[0,1]$,
\[\psi(x):=  \int_0^1 \tilde{L}_t(x,y)\phi(y) \, \mathrm{d}y, \qquad x\in \Sigma_c,\]
is an analytic and $1$-periodic extension of $\tilde L_t\phi$ from $[0,1]$ to $\Sigma_c$. Thus also $y \mapsto \tilde{L}_t(x,y)\psi(y)$ is analytic and $1$-periodic for every $x \in \R$. By Theorem \ref{thm:Kress} we obtain that, for $x\in \R$,
\[|(\tilde L_t-\tilde L_{t,N})\tilde L_t\phi(x)| =|\tilde{L}_t\psi(x) - \tilde{L}_{t,N}\psi(x)| = |J(\tilde L_t(x,\cdot) \psi))- J_N(\tilde L_t(x,\cdot) \psi))|\leq \frac{2 C_t(x)}{e^{2\pi Nc}-1},\]
where $C_t(x) := \sup_{y \in \Sigma_c} |\tilde{L}_t(x,y)\psi(y)|$. As $\abs{\psi(y)} \leq \sup_{z \in [0,1]} |\tilde{L}_t(y,z)|\norm{\phi}_{\infty}$, for $y\in \Sigma_c$, we get
\[C_t(x) \leq \|\tilde{L}_t\|_{0,c}\|\tilde{L}_t\|_{c,0}\norm{\phi}_{\infty} \leq e^{2\pi c}\|\tilde{K}_t\|_{0,c}\|\tilde{K}_t\|_{c,0}\norm{\phi}_{\infty},\]
for all $x \in [0,1]$ and $\phi \in C[0,1]$. Therefore,
\[\norm{(\tilde{K}_t - \tilde{K}_{t,N})\tilde{K}_t}_{\infty} = \norm{(\Tilde{L}_t - \tilde{L}_{t,N})\tilde{L}_t}_{\infty} \leq \frac{2 e^{2\pi c}\|\tilde{K}_t\|_{0,c}\|\tilde{K}_t\|_{c,0}}{e^{2\pi Nc}-1}. \qedhere\]
\end{proof}

Let $A_{t,N}$ be the $N \times N$-matrix defined by (cf.\,\eqref{eq:A_{t,N}})
\begin{equation} \label{eq:AtNdef}
A_{t,N}(p,q) := \omega_{q,N} \tilde{K}_t(x_{p,N},x_{q,N}) =  \frac{1}{N}\tilde{K}_t(x_{p,N},x_{q,N}), \quad p,q = 1,\ldots,N.
\end{equation}
$\tilde{K}_t(\cdot,\cdot)$ is defined by the series \eqref{eq:ktall} which we truncate to evaluate numerically. For $M \in \N$ let
\begin{equation} \label{eq:B_t^M}
\tilde K_t^M(x,y) := \frac{1}{2\pi}\sum\limits_{j = -M}^{M} e^{ijt}\,\frac{p_j(x,y)}{1+q_j(x,y)^2}\left(\frac{1+f'(\alpha^{x})^2}{1+f'(\alpha^y)^2}\right)^{1/4}\alpha^{\frac{x+y+j}{2}}\abs{\log\alpha}, \qquad x,y\in [0,1],
\end{equation}
where $p_j$ and $q_j$ are defined by \eqref{eq:pjdef},
and consider the matrices $A_{t,N}^M$ given by $A_{t,N}^M(p,q) := \frac{1}{N}\tilde K_t^M(x_{p,N},x_{q,N})$, $p,q = 1,\ldots,N$, so that $A_{t,N}^M$ is an approximation to $A_{t,N}$ obtained by using finitely many terms in the series defining $\tilde K_t(\cdot,\cdot)$.  Similarly, define $\tilde K^M_{t,N}$ by \eqref{eq:KtnDef} with $\tilde K_{t,N}$ and $\tilde K_t(\cdot,\cdot)$ replaced by $\tilde K^M_{t,N}$ and $\tilde K^M_t(\cdot,\cdot)$, respectively.
For $M,N\in \N$, with $M\geq 2$, and $t\in \R$, by \eqref{eq:NormEst},
\begin{equation} \label{eq:eMbound}
\norm{A_{t,N} - A_{t,N}^M}_{\infty} \leq \|\tilde K_{t,N}-\tilde K_{t,N}^M\|_\infty\leq \sup_{x,y\in [0,1]}|\tilde K_t(x,y)-\tilde K^M_t(x,y)| \leq C_1(M),
\end{equation}
where, using the notations of \eqref{fFdef} and Remark \ref{rem:series},
\begin{equation} \label{eq:C1def}
C_1(M):=\frac{2|\log \alpha|\fF_0}{\pi}\left(1+\fF_0^2\right)^{1/4} \, \mathcal{B}_M(\alpha);
\end{equation}
this bound \eqref{eq:eMbound}-\eqref{eq:C1def} (cf.~\eqref{ktb1}) is obtained as in Proposition \ref{prop:k_t_estimate general} (set $c = 0$, only take the terms with $|j| \geq M+1$, and note that $\mathfrak{I}_0=0$),  and by using \eqref{eq:special_fun_error_est}. Notice that, by \eqref{eq:BnAsy}, $C_1(M)=O(\alpha^{M/2})$ as $M\to \infty$.

Our aim now is to estimate $\rho(D_\Gamma;L^2(\Gamma))$ and $\sigma(D_\Gamma;L^2(\Gamma))$ by computing $\rho(A^M_{t,N})$ and $\sigma(A^M_{t,N})$ for only finitely many $t$. For this purpose bounds on the Lipschitz constants of $A^M_{t,N}$ and $\tilde K_{t,N}$ as  functions of $t$ will be helpful.
It follows from \eqref{eq:B_t^M} that, for $M,N\in \N$, $t\in \R$, and $p,q=1,\ldots,N$,
\begin{eqnarray} \label{eq:BNMdef}
\abs{\frac{\partial}{\partial t} A_{t,N}^M(p,q)} &\leq &B_N^M(p,q) \\ \nonumber
&:=& \frac{1}{2\pi N}\sum\limits_{j = -M}^{M} \frac{|j|\,\left|p_j(x_{p,N},x_{q,N})\right|}{1+q_j(x_{p,N},x_{q,N})^2}\left(\frac{1+f'(\alpha^{x_{p,N}})^2}{1+f'(\alpha^{x_{q,N}})^2}\right)^{1/4}\alpha^{\frac{x_{p,N}+x_{q,N}+j}{2}}\abs{\log\alpha},
\end{eqnarray}
so that, for $s,t\in \R$, $M,N\in \N$,
\begin{equation} \label{eq:C2def}
\norm{A^M_{t,N}-A^M_{s,N}}_\infty \leq |s-t| \norm{B^M_N}_\infty.
\end{equation}
Note that, for all $M$ and $N$,
\begin{equation} \label{eq:BNM}
\|B_N^M\|_\infty \leq C_2 :=  \frac{1}{2\pi}\sup_{x,y\in [0,1]}\sum\limits_{j = -\infty}^{\infty} |j|\,\left|p_j(x,y)\right| \left(1+\|f'\|_\infty^2\right)^{1/4}\alpha^{\frac{x+y+j}{2}}\abs{\log\alpha},
\end{equation}
which is finite by the bounds on $p_j$ in the proof of Proposition \ref{prop:k_t_estimate general}.
Similarly, from \eqref{eq:ktall}, for $t\in \R$, $N\in \N$, $x,y\in [0,1]$,
\begin{eqnarray} \label{eq:C2b2}
\abs{\frac{\partial}{\partial t} \tilde K_{t,N}(x,y)} \leq C_2 \quad \mbox{so that} \quad
\|\tilde K_{t,N}-\tilde K_{s,N}\|_\infty \leq  |s-t| C_2,
\end{eqnarray}
for $s,t\in \R$ and $N\in \N$, by \eqref{eq:NormEst}.
Further, by \eqref{eq:KNbound}, \eqref{ktb2} with $c=0$, and \eqref{eq:twosided} with\footnote{Computations indicate that $\mathcal{B}_{10}^*(\alpha)$ exceeds the left hand side of \eqref{eq:twosided} by not more than $1.4\%$ for $\alpha\in (0,1)$.} $n=10$,
\begin{equation} \label{eq:NormBound}
\|\tilde K_{t,N}\|_\infty
\leq C_3 := \frac{\left(1+\fF_0^2\right)^{1/4}}{\pi}\Bigg[ \fG_0\, \frac{1+\alpha^{1/2}+\alpha^{-1/2}}{4\alpha^2} +   2|\log\alpha|\,\fF_0 \, \mathcal{B}_{10}^*(\alpha)\Bigg],
\end{equation}
for $t\in \R$ and  $N\in \N$. Provided $c$ is such that the conditions of Lemma 4.7 iii) and \eqref{c:bound2} hold, we have also, by Propositions \ref{prop:k_t_estimate general} and \ref{prop:norm_estimate} and \eqref{eq:twosided}, that,
\begin{equation} \label{eq:boundcombined}
\norm{(\tilde{K}_t - \tilde{K}_{t,N})\tilde{K}_t}_{\infty}
\leq \frac{C_4}{e^{2\pi Nc}-1}, \qquad t\in \R, \;\; N\in \N,
\end{equation}
where $C_4 := 2e^{2\pi c}C_5C_6$, and $C_5$ and $C_6$ denote the right hand sides of \eqref{ktb1} and \eqref{ktb2}, respectively, with the sum replaced by its upper bound $\mathcal{B}^*_{10}(\alpha)$.

As noted above Proposition \ref{prop:norm_estimate},  $\{\tilde K_{t,N}:N\in \N\}$ is collectively compact for each $t\in \R$. We will need, in the proof of Theorem \ref{thm:Hausdorff_convergence}, the following stronger statement.
\begin{lem} \label{lem:ColCom}
The sets $\{\tilde K_{t,N}:t\in \R,\,N\in \N\}$ and $\{\tilde K^M_{t,N}:t\in \R,\,M,N\in \N\}$ are collectively compact.
\end{lem}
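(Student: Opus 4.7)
My plan is to apply the Arzelà--Ascoli theorem. Since each $\tilde K_{t,N}$ and $\tilde K^M_{t,N}$ maps $C[0,1]$ into $C[0,1]$, it suffices to show that the set
\[
\mathcal{U} := \bigl\{\tilde K_{t,N}\phi : t\in\R,\, N\in\N,\, \phi\in C[0,1],\, \|\phi\|_\infty\leq 1\bigr\}
\]
(and its analogue with $\tilde K^M_{t,N}$) is uniformly bounded and equicontinuous. Uniform boundedness is immediate: the bound \eqref{eq:NormBound} gives $\|\tilde K_{t,N}\|_\infty\leq C_3$ uniformly in $t,N$, and the same proof applied to the truncated kernel (replace the full sum by a partial sum before taking absolute values in the bounds from Proposition \ref{prop:k_t_estimate general}) yields the analogous uniform bound on $\|\tilde K^M_{t,N}\|_\infty$, independent of $M,t,N$.

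The main step is equicontinuity. Fix $c>0$ small enough that \eqref{c:bound2} holds, so Proposition \ref{prop:k_t_estimate general} applies. Then for every $t\in\R$ and every $y\in\R$, $\tilde K_t(\cdot,y)$ extends to a bounded analytic function on $\Sigma_c$ with $\sup_{y\in\R}\sup_{z\in\Sigma_c}|\tilde K_t(z,y)|\leq \|\tilde K_t\|_{c,0}$, and the right-hand side of \eqref{ktb1} bounds $\|\tilde K_t\|_{c,0}$ independently of $t$. Since for any $x\in[0,1]$ the closed disk $\overline{D(x,c/2)}$ lies in $\Sigma_c$, Cauchy's integral formula gives
\[
\left|\frac{\partial \tilde K_t}{\partial x}(x,y)\right| \leq \frac{2\|\tilde K_t\|_{c,0}}{c} \leq L,
\]
where $L$ is a constant independent of $t\in\R$ and $y\in\R$. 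The same argument applies to $\tilde K^M_t$, since each term $T_j(\cdot,y)$ in \eqref{eq:ktall} is analytic on $\Sigma_c$ (as shown in the proof of Proposition \ref{prop:k_t_estimate general}), the partial sum $\tilde K^M_t(\cdot,y)$ is therefore analytic on $\Sigma_c$, and $\|\tilde K^M_t\|_{c,0}$ is bounded by the same right-hand side of \eqref{ktb1}, independently of $M$ and $t$.

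It then follows that, for all $x_1,x_2\in[0,1]$, $t\in\R$, $N\in\N$, and $\phi\in C[0,1]$ with $\|\phi\|_\infty\leq 1$,
\[
|\tilde K_{t,N}\phi(x_1) - \tilde K_{t,N}\phi(x_2)| \leq \frac{1}{N}\sum_{q=1}^N |\tilde K_t(x_1,x_{q,N}) - \tilde K_t(x_2,x_{q,N})| \leq L\,|x_1-x_2|,
\]
and the identical bound holds for $\tilde K^M_{t,N}\phi$ (with possibly a different but still uniform constant). This uniform Lipschitz estimate delivers equicontinuity, so Arzelà--Ascoli yields collective compactness of both families. The only subtlety is ensuring that the truncation parameter $M$ does not spoil the uniform bounds, but this is automatic because the estimates in Proposition \ref{prop:k_t_estimate general} are obtained by bounding the series term by term in absolute value, so they dominate every partial sum.
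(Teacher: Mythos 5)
Your proof is correct and rests on exactly the same key facts as the paper's: the analytic continuation of $\tilde K_t(\cdot,y)$ (and of each partial sum $\tilde K_t^M(\cdot,y)$) to $\Sigma_c$ together with the $t$-, $M$- and $y$-uniform bound \eqref{ktb1}. The only cosmetic difference is that the paper deduces relative compactness by observing that the analytic continuations of the functions $\tilde K_{t,N}\phi$ form a uniformly bounded, hence normal, family on $\Sigma_c$, whereas you unpack that step into an explicit Cauchy-estimate derivative bound followed by Arzel\`a--Ascoli.
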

\begin{proof}
Where $c$ is such that the conditions of Lemma 4.7 iii) and \eqref{c:bound2} hold, Proposition \ref{prop:k_t_estimate general} implies that,  for $t\in \R$, $N\in \N$, and $\phi\in C[0,1]$ with $\|\phi\|_\infty \leq 1$, $\tilde K_{t,N}\phi$ has an analytic continuation from $[0,1]$ to $\Sigma_c$. Where $\mathcal{F}$ denotes this set of analytic continuations, it follows from \eqref{ktb1} that $\mathcal{F}$ is uniformly bounded on $\Sigma_c$, so that (e.g., \cite[Thm.~14.6]{Rud2}) $\mathcal{F}$ is normal, so that $\{\tilde K_{t,N}\phi:t\in \R, N\in \N, \|\phi\|_\infty \leq 1\}$ is relatively compact, i.e., $\{\tilde K_{t,N}:t\in \R,\,N\in \N\}$ is collectively compact. The same argument applies to the family $\tilde K^M_{t,N}$, on noting, by inspection of the proof of Proposition \ref{prop:k_t_estimate general}, that, under the same conditions on $c$, $\tilde K_t^M(\cdot,y)$ extends to an analytic function on $\Sigma_c$ bounded by the right hand side of \eqref{ktb1}, for all $t,y\in \R$, $M\in \N$.
\end{proof}

In the following result, which holds for every $\Gamma$ that satisfies our standing assumption \eqref{eq:standing},  we propose an approximation for the spectrum of $D_\Gamma$ as the union of the spectra of finitely many finite matrices, and show that this approximation converges in the Hausdorff metric. In this theorem $\sigma(\tilde{K}^M_{t,N})$ denotes the spectrum of $\tilde K^M_{t,N}$ either on $L^2(0,1)$ or on $C[0,1]$ (cf.~\eqref{eq:SpecSame}), which coincides with $\{0\}\cup \sigma(A^M_{t,N})$ by Lemma \ref{lem:reduction_to_matrices}(ii), and $\sigma(D_\Gamma)$ denotes the spectrum of $D_\Gamma$ on $L^2(\Gamma)$, which coincides with the essential spectrum by Corollary \ref{cor:ess_ne}.
\begin{thm} \label{thm:Hausdorff_convergence}
Choose sequences $(m_N)_{N\in \N}, (M_N)_{N\in \N}\subset \N$ such that $m_N, M_N\to\infty$ as $N\to\infty$, and for each $N$, let $T_N:=\{\pm (k-1/2)\pi/m_N:k=1,\ldots,m_N\}$. Then
\begin{equation} \label{eq:sigmaNDef}
\sigma^N(D_\Gamma):= \bigcup\limits_{t \in T_N} \sigma(\tilde{K}^{M_N}_{t,N})  = \{0\} \cup \bigcup\limits_{t \in T_N} \sigma(A^{M_N}_{t,N}) \;\toH \; \sigma(D_{\Gamma})=\sigma_{\ess}(D_\Gamma)
\end{equation}
as $N\to\infty$.
\end{thm}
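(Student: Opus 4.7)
The plan is to verify the three criteria for Hausdorff convergence recalled in \S\ref{sec:cc}: uniform boundedness of $(\sigma^N(D_\Gamma))_{N\in\N}$ as subsets of $\C$, together with $\liminf\sigma^N(D_\Gamma) = \limsup\sigma^N(D_\Gamma) = \sigma(D_\Gamma)$. Theorem~\ref{cor:norm_and_spec} combined with \eqref{eq:SpecSame} supplies $\sigma(D_\Gamma) = \sigma_{\ess}(D_\Gamma) = \bigcup_{t\in[-\pi,\pi]}\sigma(\tilde K_t;C[0,1])$, and Lemma~\ref{lem:reduction_to_matrices}(ii) gives $\sigma(\tilde K^{M_N}_{t,N}) = \{0\}\cup\sigma(A^{M_N}_{t,N})$, which also justifies the second equality in \eqref{eq:sigmaNDef}. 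Uniform boundedness then follows from the spectral radius bound $\rho(\tilde K^{M_N}_{t,N})\leq\|\tilde K^{M_N}_{t,N}\|_\infty \leq C_3 + C_1(M_N)$, combining \eqref{eq:NormBound} and \eqref{eq:eMbound}.

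The technical core is the following joint approximation claim, which I will prove first: \emph{whenever $t_N\to t_*$ in $[-\pi,\pi]$ and $M_N\to\infty$, we have $\tilde K^{M_N}_{t_N,N}\to\tilde K_{t_*}$ strongly on $C[0,1]$, and the sequence $(\tilde K^{M_N}_{t_N,N})_{N\in\N}$ is collectively compact}. Collective compactness is immediate from Lemma~\ref{lem:ColCom}. Strong convergence I would obtain via the splitting
\[
\tilde K^{M_N}_{t_N,N} - \tilde K_{t_*} = \bigl(\tilde K^{M_N}_{t_N,N} - \tilde K^{M_N}_{t_*,N}\bigr) + \bigl(\tilde K^{M_N}_{t_*,N} - \tilde K_{t_*,N}\bigr) + \bigl(\tilde K_{t_*,N} - \tilde K_{t_*}\bigr),
\]
in which the first summand has operator norm $\leq C_2|t_N - t_*|$ by \eqref{eq:C2def} (upgraded from the matrix bound to the operator bound via the row-sum formula of Lemma~\ref{lem:reduction_to_matrices}(i)), the second has operator norm $\leq C_1(M_N)$ by \eqref{eq:eMbound}, and the third converges to $0$ pointwise on $C[0,1]$ because the midpoint rule is convergent (as noted above Proposition~\ref{prop:norm_estimate}). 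Theorem~\ref{thm:convergence_estimate} is then applicable and yields $\sigma(\tilde K^{M_N}_{t_N,N})\toH\sigma(\tilde K_{t_*})$.

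With this claim, both inclusions follow symmetrically. For $\limsup\sigma^N(D_\Gamma)\subseteq\sigma(D_\Gamma)$: given $z_{N_k}\to z$ with $z_{N_k}\in\sigma(\tilde K^{M_{N_k}}_{t_{N_k},N_k})$ and $t_{N_k}\in T_{N_k}\subset[-\pi,\pi]$, compactness of $[-\pi,\pi]$ lets me extract a further subsequence along which $t_{N_k}\to t_*\in[-\pi,\pi]$, and the claim then places $z\in\sigma(\tilde K_{t_*})\subseteq\sigma(D_\Gamma)$. For $\sigma(D_\Gamma)\subseteq\liminf\sigma^N(D_\Gamma)$: any $z\in\sigma(D_\Gamma)$ lies in $\sigma(\tilde K_{t_*})$ for some $t_*\in[-\pi,\pi]$; because $m_N\to\infty$, the mesh of $T_N$ in $[-\pi,\pi]$ tends to $0$, so I may choose $t_N\in T_N$ with $t_N\to t_*$, and the Hausdorff convergence in the claim produces $z_N\in\sigma(\tilde K^{M_N}_{t_N,N})\subseteq\sigma^N(D_\Gamma)$ with $z_N\to z$.

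The main obstacle I expect is the joint approximation claim itself, because $t_N$, $M_N$, and $N$ all vary simultaneously. This prevents a direct appeal to Theorem~\ref{thm:convergence_estimate} for a fixed operator $\tilde K_{t_*}$ and a Nystr\"om sequence; the three-term splitting must first be used to upgrade the various separate modes of convergence into genuine strong convergence of a single moving sequence, after which the collectively compact spectral approximation theorem supplies the conclusion. Everything else amounts to bookkeeping.
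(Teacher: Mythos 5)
Your proposal is correct and follows essentially the same route as the paper: uniform boundedness from \eqref{eq:eMbound} and \eqref{eq:NormBound}, the joint approximation claim (collective compactness via Lemma~\ref{lem:ColCom} plus strong convergence of $\tilde K^{M_N}_{t_N,N}$ to $\tilde K_{t_*}$ obtained from exactly the decomposition you write, using the Lipschitz-in-$t$ bound, \eqref{eq:eMbound}, and convergence of the Nystr\"om approximations), then Theorem~\ref{thm:convergence_estimate} and the same $\liminf$/$\limsup$ bookkeeping. The only cosmetic difference is that the paper invokes the operator-level Lipschitz estimate \eqref{eq:C2b2} directly rather than passing through the matrix bound \eqref{eq:C2def}, but the mechanism is identical.
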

\begin{proof}

Set $\Lambda := \spec(D_{\Gamma})$, so that $\Lambda = \bigcup_{t \in [-\pi,\pi]} \spec(\tilde{K}_t)$ by Theorem \ref{cor:norm_and_spec}  and \eqref{eq:SpecSame}, and set $\Lambda_N := \sigma^N(D_\Gamma)$. We first observe that $(\Lambda_N)_{N \in \N}$ is uniformly bounded as $\{\|\tilde{K}^{M_N}_{t,N}\|_{\infty}:N\in \N, \,t\in T_N\}$ is bounded, by \eqref{eq:eMbound} and \eqref{eq:NormBound}. Thus, as  noted above Theorem \ref{thm:convergence_estimate}, to prove that $\Lambda_N\toH \Lambda$ it is enough to show that $\lim \inf \Lambda_N = \lim \sup \Lambda_N = \Lambda$.

Next, we note that if the sequences $(\mathfrak{M}_k)_{k\in \N}\subset \N$, $(N_k)_{k\in \N}\subset \N$, and $(t_k)_{k\in \N}\subset [-\pi,\pi]$ satisfy $N_k\to\infty$, $\mathfrak{M}_k\to\infty$, $t_k\to t\in [-\pi,\pi]$, as $k\to \infty$, then the sequence $(\tilde{K}^{\mathfrak{M}_k}_{t_k,N_k})_{k \in \N}$ is collectively compact by Lemma \ref{lem:ColCom}, and converges strongly to $\tilde{K}_t$ by \eqref{eq:C2b2} and \eqref{eq:eMbound}, and since $\tilde K_{t,N_k}\to \tilde K_t$, as noted above Proposition \ref{prop:norm_estimate}. Thus also $\sigma(\tilde{K}^{\mathfrak{M}_k}_{t_k,N_k})\toH\sigma(\tilde K_t)$, by Theorem \ref{thm:convergence_estimate}.

For every $t\in [-\pi,\pi]$ we can choose a sequence $(t_N)_{N\in \N}$ such that $t_N\in T_N$ for each $N$ and $t_N\to t$. Then, by the observation just made, $\Lambda_N\supset \sigma(\tilde{K}^{M_N}_{t_N,N})\toH \sigma(\tilde K_t)$ as $N\to\infty$, so that $\lim\inf \Lambda_N \supset \lim \inf \sigma(\tilde{K}^{M_N}_{t_N,N}) = \sigma(\tilde K_t)$. Thus $\lim \inf \Lambda_N \supset \Lambda$.

If $\lambda \in \lim\sup \Lambda_N$ then there exists a sequence $(N_k)_{k\in \N}\subset \N$ and a sequence $(\lambda_k)_{k\in \N}$ such that $\lambda_k\to\lambda$ and, for each $k$, $\lambda_k\in \Lambda_{N_k}$, so that $\lambda_k\in \sigma(\tilde{K}^{\mathfrak{M}_k}_{t_k,N_k})$,  for some $t_k\in T_{N_k}$, where $\mathfrak{M}_k:= M_{N_k}$. By passing to a subsequence if necessary, we may assume that $t_k\to t\in [-\pi,\pi]$, so that $\sigma(\tilde{K}^{\mathfrak{M}_k}_{t_k,N_k})\toH \sigma(\tilde K_t)$, in particular $\lim \inf \sigma(\tilde{K}^{\mathfrak{M}_k}_{t_k,N_k}) = \sigma(\tilde K_t)$, so that $\lambda \in \sigma(\tilde K_t)\subset \Lambda$. Thus $\lim \sup \Lambda_N\subset \Lambda \subset \lim \inf \Lambda_N$. But also $\lim \inf \Lambda_N \subset \lim \sup \Lambda_N$, so the proof is complete.
\end{proof}

\begin{rem}[\em \bf Reduced computation expression for $\sigma^N(D_\Gamma)$] \label{rem:symm2} Note that, by Remark \ref{rem:sym} and Lemma \ref{lem:reduction_to_matrices}(ii),  $\sigma^N(D_\Gamma)$, given by \eqref{eq:sigmaNDef}, can be written as
$$
\sigma^N(D_\Gamma) = \{0\} \cup  \{\lambda, \overline{\lambda}: \lambda \in \sigma(A^{M_N}_{t,N}),\, t\in T_N, \, t>0\}.
$$
\end{rem}

Our second main result, obtained by applying Theorem \ref{thm:GspecRMfinal}\footnote{To obtain \eqref{eq:estimate_to_check}, motivated by Remark \ref{rem:comparison} our starting point is \eqref{eq:keyboundfinalAlt} rather than \eqref{eq:keyboundfinal}, since our interest will be to apply Theorem \ref{thm:spectral_radius} in cases where $\rho_0<\|D_\Gamma\|$.}, provides a criterion, given $\rho_0>0$, for $\rho(D_\Gamma;L^2(\Gamma))<\rho_0$. Note that this criterion requires computation of spectral quantities only for the $N\times N$ matrices $A_{t,N}^M$ for finitely many $t\in [0,\pi]$.

\begin{thm} \label{thm:spectral_radius}
Under our standing assumption \eqref{eq:standing}, define $g$ by \eqref{eq:gdef} so that, by Lemma \ref{lem:f_anal},  $g$ has an analytic continuation to $\Sigma_c$, for some $c>0$, such that $g$, $g^\prime$, and $g^{\prime\prime}$ are bounded on $\Sigma_c$, and, without loss of generality, assume that \eqref{c:bound2} holds. Suppose that $\rho_0>0$, $m,M,N\in \N$, with $M\geq 2$. For $k=1,\ldots,m$, set $t_k:= (k-1/2)\pi/m$, suppose that $\rho(A^M_{t_k,N})<\rho_0$ for $k=1,\ldots,m$, and recursively define $\mu_{k,\ell}$, for $\ell=1,2,\ldots$, by $\mu_{k,1}:= \rho_0$, and by
\begin{equation} \label{eq:lambda_{k,l}}
\nu_{k,\ell} := \norm{(A_{t_k,N}^M - \mu_{k,\ell} I)^{-1}}_{\infty}^{-1} \quad \text{and} \quad \mu_{k,\ell+1} := \mu_{k,\ell}e^{i\frac{\nu_{k,\ell}}{2\rho_0}}, \quad \mbox{for } \ell\in \N.
\end{equation}
Further, for $k=1,\ldots,m$, let $n_k$ denote the smallest integer such that $\sum_{\ell=1}^{n_k} \nu_{k,\ell} \geq 4\pi \rho_0$, and let
$$
R_{m,M,N} := \min_{\stackrel{k=1,\ldots,m}{\ell=1,\ldots,n_k}}\left(\frac{\nu_{k,\ell}}{4}+\frac{\nu_{k,\ell+1}}{2}\right).
$$
If
\begin{equation} \label{eq:estimate_to_check}
\mathcal{L}_c(f,N) := \frac{C_4}{e^{2\pi Nc}-1} < \mathcal{R}_c(f,m,M,N) := r^2_0\left(1 + C_3\left(R_{m,M,N} - C_1(M)- \frac{\pi}{2m}\|B_N^M\|_\infty\right)^{-1} \right)^{-1},
\end{equation}
then $\rho(D_\Gamma;L^2(\Gamma)) < \rho_0$. Conversely, if $\rho(D_\Gamma;L^2(\Gamma)) < \rho_0$, then, provided $m$ and $M$ are sufficiently large, there exists $N_0\in \N$ such that \eqref{eq:estimate_to_check} holds and $\rho(A_{t,N}^M)<\rho_0$ for $t\in [0,\pi]$ and all $N\geq N_0$.
\end{thm}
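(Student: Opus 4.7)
By Theorem \ref{cor:norm_and_spec}, the unitary equivalence $\tilde{K}_t = UK_tU^{-1}$, and \eqref{eq:SpecSame}, one has $\rho(D_\Gamma) = \max_{t \in [-\pi,\pi]}\rho(\tilde{K}_t)$, and Remark \ref{rem:sym} reduces matters to $t\in [0,\pi]$. The plan is to apply Theorem \ref{thm:GspecRMfinal} in the form of hypothesis \eqref{eq:keyboundfinalAlt} separately for each such $t$, with $K = \tilde{K}_t$, $K_N = \tilde{K}_{t,N}$, $K^\dag_N = \tilde{K}^M_{t,N}$, so that $A_N = A_{t,N}$ and $A^\dag_N = A^M_{t,N}$. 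Three ingredients must be checked uniformly in $t$: (a) $\rho(A^M_{t,N}) < \rho_0$; (b) $\|A_{t,N} - A^M_{t,N}\|_\infty < R_N^{(t)}$, where $R_N^{(t)}$ denotes the adaptive quantity from \eqref{eq:RNdef} at parameter $t$; and (c) the inequality \eqref{eq:keyboundfinalAlt} itself. The non-trivial point is that the algorithm evaluates the adaptive grid only at the finitely many $t_k$, so one needs a uniform-in-$t$ lower bound on $R_N^{(t)}$.

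Fix $t \in [0,\pi]$ and let $k^\ast$ minimise $|t-t_{k^\ast}|$, so $|t-t_{k^\ast}| \le \pi/(2m)$. From $\rho(A^M_{t_{k^\ast},N}) < \rho_0$, Lemma \ref{lem:R*def} applied to $\widehat S = A^M_{t_{k^\ast},N}$ with the sampling $\{\mu_{k^\ast,\ell}\}$ gives $\|(A^M_{t_{k^\ast},N} - \mu I)^{-1}\|_\infty^{-1} \ge R_{m,M,N}$ for every $\mu \in \rho_0\mathbb{T}$, and the maximum principle applied to the resolvent of this finite matrix, analytic on $\{|\mu| > \rho(A^M_{t_{k^\ast},N})\} \supset \{|\mu| \ge \rho_0\}$, extends the bound to all $|\mu| \ge \rho_0$. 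The $t$-Lipschitz estimate $\|A^M_{t,N}-A^M_{t_{k^\ast},N}\|_\infty \le (\pi/(2m))\|B^M_N\|_\infty$ from \eqref{eq:C2def}, combined with \eqref{eq:PerBas}, then yields
\[
\|(A^M_{t,N}-\mu I)^{-1}\|_\infty^{-1} \;\ge\; R_{m,M,N} - \tfrac{\pi}{2m}\|B^M_N\|_\infty, \qquad |\mu| \ge \rho_0,
\]
and since $\mathcal{R}_c > \mathcal{L}_c \ge 0$ forces the right-hand side to be strictly positive, this gives both $\rho(A^M_{t,N}) < \rho_0$ (i.e.\ (a)) and $R_N^{(t)} \ge R_{m,M,N} - (\pi/(2m))\|B^M_N\|_\infty$. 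Estimate \eqref{eq:eMbound} supplies $\|A_{t,N}-A^M_{t,N}\|_\infty \le C_1(M) < R_N^{(t)}$, which is (b). Plugging these bounds, the norm bound $\|\tilde{K}_{t,N}\|_\infty \le C_3$ from \eqref{eq:NormBound}, and the exponential quadrature estimate $\|(\tilde{K}_t-\tilde{K}_{t,N})\tilde{K}_t\|_\infty \le \mathcal{L}_c(f,N)$ from \eqref{eq:boundcombined} into the right-hand side of \eqref{eq:keyboundfinalAlt} shows that hypothesis \eqref{eq:estimate_to_check} implies \eqref{eq:keyboundfinalAlt}, i.e.\ (c). Theorem \ref{thm:GspecRMfinal} then yields $\rho(\tilde{K}_t) < \rho_0$ for every $t \in [0,\pi]$, and hence $\rho(D_\Gamma) < \rho_0$.

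For the converse, if $\rho(D_\Gamma) < \rho_0$, then $\rho(\tilde{K}_t) < \rho_0$ uniformly in $t$ by continuity of $t \mapsto \tilde{K}_t$ (Proposition \ref{prop:k_t_estimate general}). Collective compactness of $\{\tilde{K}_{t,N}\}_{t,N}$ (Lemma \ref{lem:ColCom}) together with the Anselone-type uniform-resolvent-bound argument used in the proof of Theorem \ref{thm:GspecRMfinal} gives, for $N \ge N_0$, a uniform bound on $\|(\tilde{K}_{t,N}-\lambda I)^{-1}\|_\infty$ over $(t,\lambda) \in [0,\pi]\times \rho_0\mathbb{T}$. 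Lemma \ref{lem:reduction_to_matrices}(iii) transfers this bound to $\|(A_{t,N}-\lambda I)^{-1}\|_\infty$, and \eqref{eq:eMbound} with $C_1(M) = O(\alpha^{M/2})$ (via \eqref{eq:BnAsy}) further to $\|(A^M_{t,N}-\lambda I)^{-1}\|_\infty$ once $M$ is large; hence $\rho(A^M_{t_k,N}) < \rho_0$ for all $k$, and $R_{m,M,N}$ is bounded below by a positive constant (the adaptive recursion terminates in a uniformly bounded number of steps, as in Remark \ref{rem:nNbounded}). Since $\|B^M_N\|_\infty \le C_2$ uniformly by \eqref{eq:BNM}, choosing $m$ large makes $(\pi/(2m))\|B^M_N\|_\infty$ arbitrarily small, so $\mathcal{R}_c$ stays bounded below while $\mathcal{L}_c \to 0$ as $N \to \infty$, yielding \eqref{eq:estimate_to_check}.

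The main obstacle is the simultaneous handling of four distinct perturbations — from the finite sampling $\{\mu_{k^\ast,\ell}\}$ to the entire circle $\rho_0\mathbb{T}$ (done by Lemma \ref{lem:R*def} via its geometric grid design), from $t_{k^\ast}$ to general $t$ (by \eqref{eq:C2def}), from $A^M_{t,N}$ to the full Nyström matrix $A_{t,N}$ (by \eqref{eq:eMbound}), and from the finite-matrix resolvent back to the integral-operator resolvent (by Lemma \ref{lem:reduction_to_matrices}(iii)) — and doing so with \emph{explicit}, computable constants, since the whole point of the theorem is that the criterion \eqref{eq:estimate_to_check} must be verifiable in finitely many arithmetic operations.
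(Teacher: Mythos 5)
Your strategy is essentially the paper's: reduce to $\rho(\tilde K_t)<\rho_0$ for $t\in[0,\pi]$ via Theorem \ref{cor:norm_and_spec}, Remark \ref{rem:sym} and \eqref{eq:SpecSame}; pick the nearest grid point $t_{k^*}$ with $|t-t_{k^*}|\le\pi/(2m)$; and feed $\|\tilde K_{t,N}\|_\infty\le C_3$, \eqref{eq:eMbound}, \eqref{eq:C2def} and \eqref{eq:boundcombined} into Theorem \ref{thm:GspecRMfinal} through hypothesis \eqref{eq:keyboundfinalAlt}. Your converse argument also matches the paper's. The one place you diverge is the assignment of $A_N^{\dag}$, and it opens a genuine gap. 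The paper takes $A_N^{\dag}=A^M_{t_{k^*},N}$, the matrix at the \emph{grid point}, so that the quantity $R_N$ of Theorem \ref{thm:GspecRMfinal} is literally $\min_\ell(\nu_{k^*,\ell}/4+\nu_{k^*,\ell+1}/2)\ge R_{m,M,N}$ --- the number the algorithm computes --- and the whole $t$-perturbation $\frac{\pi}{2m}\|B_N^M\|_\infty$ is absorbed, together with $C_1(M)$, into $\|A_N-A_N^{\dag}\|_\infty$. You instead take $A_N^{\dag}=A^M_{t,N}$, and must therefore bound from below the adaptive quantity $R_N^{(t)}$ obtained by running the recursion \eqref{eq:mu_ell} on $A^M_{t,N}$ itself. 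Your perturbation correctly gives $\nu_\ell^{(t)}:=\|(A^M_{t,N}-\mu_\ell I)^{-1}\|_\infty^{-1}\ge\delta:=R_{m,M,N}-\frac{\pi}{2m}\|B_N^M\|_\infty$ at every node (indeed for every $|\mu|\ge\rho_0$), but from $\nu_\ell^{(t)}\ge\delta$ for all $\ell$ one can only conclude
\begin{equation*}
R_N^{(t)}=\min_\ell\left(\frac{\nu_\ell^{(t)}}{4}+\frac{\nu_{\ell+1}^{(t)}}{2}\right)\ \ge\ \frac{3}{4}\,\delta ,
\end{equation*}
not $R_N^{(t)}\ge\delta$ as you assert (consider $\nu_\ell^{(t)}\equiv\delta$). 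With only $R_N^{(t)}\ge\frac34\delta$ available, \eqref{eq:keyboundfinalAlt} is guaranteed only when $\mathcal{L}_c(f,N)<\rho_0^2\bigl(1+C_3(\tfrac34\delta-C_1(M))^{-1}\bigr)^{-1}$, which is strictly more demanding than \eqref{eq:estimate_to_check}; so your argument proves the theorem only with a smaller $\mathcal{R}_c$ than the one stated.

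The repair is short, and there are two equivalent routes. Either adopt the paper's choice $A_N^{\dag}=A^M_{t_{k^*},N}$, so that no lower bound on an off-grid adaptive quantity is ever needed; or keep your choice but bypass the adaptive machinery entirely: you have already established the full-circle bound $\|(A^M_{t,N}-\mu I)^{-1}\|_\infty^{-1}\ge\delta$ for all $|\mu|\ge\rho_0$, so perturb to $A_{t,N}$ by \eqref{eq:PerBas} and \eqref{eq:eMbound} to get $\|(A_{t,N}-\mu I)^{-1}\|_\infty^{-1}\ge\delta-C_1(M)$, lift to $\|(\tilde K_{t,N}-\mu I)^{-1}\|_\infty^{-1}\ge\rho_0\bigl(1+C_3(\delta-C_1(M))^{-1}\bigr)^{-1}$ by Lemma \ref{lem:reduction_to_matrices}(iii), and apply Lemma \ref{lem:GspecR} with $F=\rho_0\T$ and $\widehat S=S=\tilde K_{t,N}$. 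Either route recovers \eqref{eq:estimate_to_check} with the stated constants.
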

\begin{proof} By Theorem \ref{cor:norm_and_spec}, Remark \ref{rem:sym}, and \eqref{eq:SpecSame}, to show that $\rho(D_\Gamma;L^2(\Gamma)) < \rho_0$, it is enough to check that $\rho_{C[0,1]}(\tilde K_t)<\rho_0$ for every $t\in [0,\pi]$. So pick $t\in [0,\pi]$. Then $|t-t_k| \leq \pi/(2m)$, for some $k\in \{1,\ldots,m\}$. To conclude that $\rho_{C[0,1]}(\tilde K_t)<\rho_0$ we apply Theorem \ref{thm:GspecRMfinal} with $K=\tilde K_t$, $K_N=\tilde K_{t,N}$, $A_N=A_{t,N}$, and $A_N^{\dag}=A_{t_k,N}^M$, noting that, with these definitions, $\|K_N\|_\infty = \|\tilde K_{t,N}\|_\infty \leq C_3$,
$$
\|A_N-A_N^{\dag}\|_\infty \leq \|A_{t,N}-A_{t,N}^M\|_\infty + \| A^M_{t,N}-A_{t_k,N}^M\|_\infty \leq C_1(M) + \frac{\pi}{2m}\|B_N^M\|_\infty,
$$
by \eqref{eq:eMbound} and \eqref{eq:C2def}, and $\|(K-K_N)K\|_\infty \leq C_4/(e^{2\pi Nc}-1)$, by \eqref{eq:boundcombined}.

Conversely, if $\rho(D_\Gamma;L^2(\Gamma)) < \rho_0$ then, by Theorem \ref{cor:norm_and_spec} and \eqref{eq:SpecSame}, $\rho_{C[0,1]}(\tilde K_t)<\rho_0$ for $t\in [0,\pi]$. Arguing as in the proof of Theorem \ref{thm:GspecRMfinal}, this implies that for every $t\in [0,\pi]$ there exists $N_0(t)$ such that $\|(\tilde K_{t,N}-\lambda I)^{-1}\|$ is bounded uniformly in $\lambda$ and $N$ for $N\geq N_0(t)$ and $|\lambda|\geq \rho_0$. This, combined with the estimate \eqref{eq:C2b2} and a standard compactness argument, implies that, for some $N^*\in \N$, and $c^*>0$, $\|(\tilde K_{t,N}-\lambda I)^{-1}\|^{-1}\geq c^*$ for all $N\geq N^*$, $|\lambda|\geq \rho_0$, and $t\in [0,\pi]$. It follows, by \eqref{eq:eMbound} and \eqref{eq:PerBas}, and recalling Lemma \ref{lem:reduction_to_matrices}(ii), that, for all $t\in [0,\pi]$ and all sufficiently large $M$ and $N$, $\rho(A_{t,N}^M)=\rho_{C[0,1]}(\tilde K^M_{t,N})< \rho_0$ and  $\|(\tilde K^M_{t,N}-\lambda I)^{-1}\|^{-1}\geq c^*/2$, for $|\lambda|\geq \rho_0$. Thus, and by Lemma \ref{lem:reduction_to_matrices}(iii), $R_{m,N,M}\geq 3c^*/8$, for all $m\in \N$ and all sufficiently large $M$ and $N$. It follows that \eqref{eq:estimate_to_check} holds, for all sufficiently large $m$, $M$, and $N$, since \eqref{eq:C1def} and \eqref{eq:BnAsy}, and that $\|B_N^M\|_\infty$ is bounded independently of $M$ and $N$ by \eqref{eq:BNM}, imply that the right hand side of \eqref{eq:estimate_to_check} is positive and bounded away from zero.
\end{proof}

\subsection{The 2D case: two-sided infinite graphs} \label{sec:two-sided}

\begin{figure}[ht]
\centering
\includegraphics[scale=0.8, trim = 0.5cm 2.5cm 0cm 2.6cm, clip]{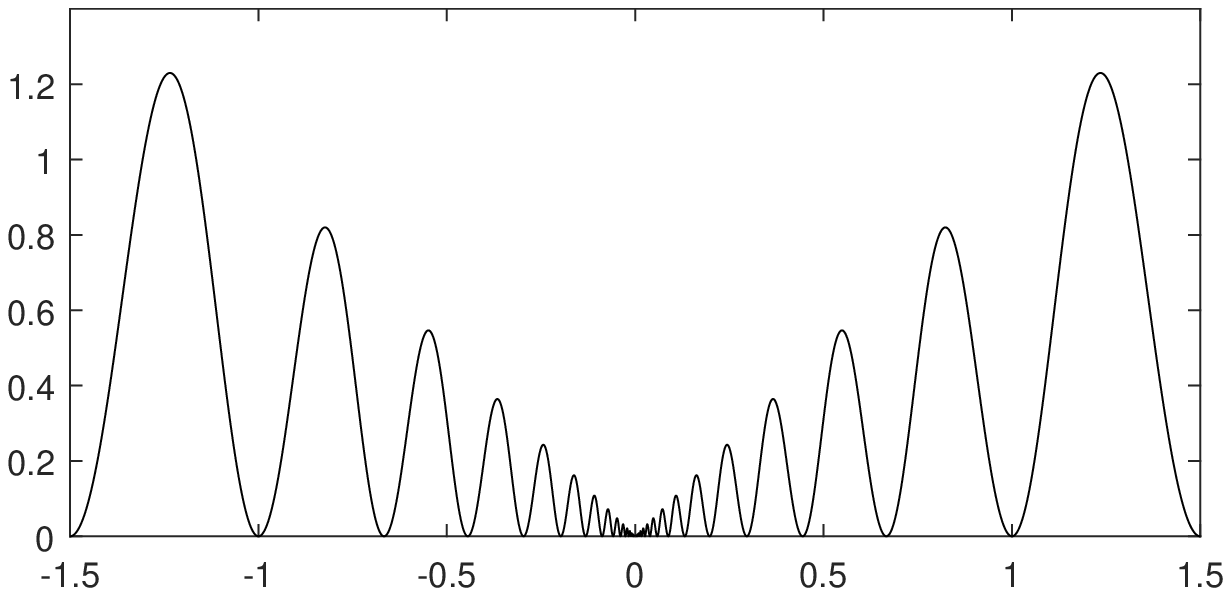}
\caption{Graph of $f \from \R \to \R$, $f(x) := \abs{x}\sin^2(\pi\log_{\alpha}\abs{x})$, in the case $\alpha = \frac{2}{3}$} \label{fig:ex2}
\end{figure}

We now extend the results of the previous subsection, for the case when $d=2$ and $A=\R_+$, to the more involved case\footnote{As discussed in the introduction and at the beginning of \S\ref{sec:dilation_invariant}, it is this case which is relevant to the spectral radius conjecture.} where $d=2$ and $A=\R$. Our goals and methods are those of \S\ref{sec:oneside}, but, where $\dot \R:= \R\setminus \{0\}$, we assume now that $f\in \cA(\dot \R)$, the space of functions $\R\to \R$ that are real analytic on $\dot \R$ with $f(0)=0$ (a prototypical example is Figure \ref{fig:ex2}). Thus our standing assumption through this subsection is that
\begin{equation} \label{eq:standing2}
\Gamma = \set{(x,f(x)): x \in \R} \;\mbox{ where }\; f\in \cA(\dot \R) \; \mbox{ and, for some $\alpha \in (0,1)$, } \; f(\alpha x) = \alpha f(x), \quad x\in \R.
\end{equation}

Define $f_\pm:\R_+\to \R$ by $f_\pm(x):= f(\pm x)$, $x\in \R_+$, and $g_\pm:\R\to \R$ (cf.~\eqref{eq:gdef}) by
\begin{equation} \label{eq:gdef2}
g_\pm(x) := \alpha^{-x}f_\pm(\alpha^x), \quad x\in \R, \quad \mbox{so that} \quad f_\pm (x) = xg_\pm(\log_\alpha x), \quad x\in \R_+.
\end{equation}
Note that the assumption $f\in \cA(\dot \R)$ is equivalent to the assumption that $f_\pm\in \cA(\R_+)$ and $f(0)=0$. Note also  that Lemma \ref{lem:f_anal} applies with $f$ and $g$ replaced by $f_\pm$ and $g_\pm$. One simple consequence of these observations (see the discussion below \eqref{eq:fpr}) is that \eqref{eq:standing2} implies $f\in C^{0,1}(\R)$.

As in the one-sided case, our starting point is the formula for $\sigma(D_\Gamma)$ in Corollary \ref{cor:norm_and_spec} in terms of $\sigma(K_t)$, for $t\in [-\pi,\pi]$. Again, $K_t$ is compact for every $t\in \R$ by Corollary \ref{cor:K_t_compact}, and, following the pattern of \S\ref{sec:oneside}, our goal is to compute $\sigma(K_t)$ and $\rho(K_t)$ by the results of \S\ref{sec:Ny}. As in \S\ref{sec:oneside}, to make use of these results it is convenient to work with integral operators on $[0,1]$ rather than $\Gamma_0$. Reflecting that $\Gamma_0$ has two components, $\Gamma_0^+ := \{(x,f(x))\in \Gamma_0:x>0\}$ and $\Gamma_0^-:= \{(x,f(x))\in \Gamma_0:x<0\}$, it is natural to work in this case with a unitary operator $U:L^2(\Gamma_0)\to (L^2(0,1))^2$, defined by $U\psi := (U_+P_+\psi, U_-P_-\psi)^T$, $\psi\in L^2(\Gamma_0)$, where $P_\pm$ is restriction to $\Gamma_0^\pm$, i.e.~$P_\pm \psi := \psi|_{\Gamma_0^\pm}$, $\psi\in L^2(\Gamma_0)$, and $U_\pm:L^2(\Gamma_0^\pm)\to L^2(0,1)$ is the unitary operator defined by (cf.~\eqref{eq:Udef})
$$
U_\pm \phi(x) := \phi(\pm \alpha^{x}, f_\pm(\alpha^x)) (1+f_\pm'(\alpha^x)^2)^{1/4} \alpha^{x/2}|\log \alpha|^{1/2}, \qquad x\in [0,1], \;\; \phi\in L^2(\Gamma_0^\pm).
$$
With $U$ as given above we define an operator $\tilde K_t$ on $(L^2(0,1))^2$, which is unitarily equivalent to $K_t$, by \eqref{eq:tilK}. It is easy to see (cf.~\eqref{eq:kernel_computation}) that
\begin{equation}\label{e:tilde K_t}
	\tilde K_t =
	\begin{pmatrix}
	\tilde K^-_t & \tilde L^-_t\\
	\tilde L^+_t & \tilde K^+_t
	\end{pmatrix}, \qquad t\in \R,
\end{equation}
where the entries of $\tilde K_t$ are integral operators on $L^2(0,1)$ with continuous kernels. The kernels of $\tilde K_t^\pm$ are $\tilde K^\pm_t(\cdot,\cdot)$, where (cf.~\eqref{eq:ktall})
\begin{equation} \label{eq:ktall2}
\tilde{K}^\pm_t(x,y) := \frac{1}{2\pi}\sum\limits_{j = -\infty}^{\infty} e^{ijt}\,\frac{p^\pm_j(x,y)}{1+q^\pm_j(x,y)^2}\left(\frac{1+f_\pm'(\alpha^{x})^2}{1+f_\pm'(\alpha^y)^2}\right)^{1/4}\alpha^{\frac{x+y+j}{2}}\abs{\log\alpha}, \quad x,y\in \R,
\end{equation}
and $p_j^\pm$ and $q_j^\pm$ are defined by \eqref{eq:pjdef}, \eqref{Tay1}, and \eqref{Tay2}, but with $f$ replaced by $f_\pm$. The kernels of $\tilde L_t^\pm$ are $\tilde L_t^\pm(\cdot,\cdot)$, where, for $x,y\in \R$,
\begin{equation} \label{eq:L+}
\tilde{L}^\pm_t(x,y) := \frac{1}{2\pi}\sum\limits_{j = -\infty}^{\infty} e^{ijt}\,\frac{\tilde p^\pm_j(x,y)}{1+\tilde q^\pm_j(x,y)^2}\left(\frac{1+f_\pm'(\alpha^{x})^2}{1+f_\mp'(\alpha^y)^2}\right)^{1/4}\alpha^{\frac{x+y+j}{2}}\abs{\log\alpha}
\end{equation}
and
\begin{equation} \label{eq:pjdef2}
\tilde p^\pm_j(x,y) := \frac{(\alpha^y+\alpha^{x+j})f_\mp^\prime(\alpha^y) + f_\pm(\alpha^{x+j}) - f_\mp(\alpha^y)}{(\alpha^{x+j}+\alpha^y)^2}, \quad \tilde q^\pm_j(x,y) := \frac{f_\pm(\alpha^{x+j})-f_\mp(\alpha^y)}{\alpha^{x+j}+\alpha^y}.
\end{equation}

Analogously to \eqref{eq:SpecSame}, we have that
\begin{equation}\label{eq:SpecSame2}
\sigma(K_t;L^2(\Gamma_0)) = \sigma(\tilde K_t;(L^2(0,1))^2)=\sigma(\tilde K_t;(C[0,1])^2), \quad t\in \R.
\end{equation}
As in the one-sided case (see the discussion around \eqref{eq:KtnDef}), to estimate spectral properties of $\tilde K_t$ as an operator on $(C[0,1])^2$ we approximate $\tilde K^\pm_t$ by finite rank operators $\tilde K^\pm_{t,N}$ given by
\[\tilde{K}^\pm_{t,N}\phi(x) := \frac{1}{N}\sum_{n=1}^N \tilde{K}^\pm_t(x,x_{n,N}) \phi(x_{n,N}), \quad \phi \in C[0,1], \; x \in [0,1], \; N\in \N.\]
Similarly, we approximate $\tilde{L}^\pm_{t}$ by $\tilde{L}^\pm_{t,N}$, leading to finite rank approximations $\tilde K_{t,N}$, $N\in \N$, to $\tilde K_t$, given by \eqref{e:tilde K_t} with $\tilde K^\pm_t$ and $\tilde L^\pm_t$ replaced by $\tilde K^\pm_{t,N}$ and $\tilde L^\pm_{t,N}$. Arguing as below \eqref{eq:KtnDef}, we have that $\tilde K_{t,N}\to \tilde K_t$ and $(\tilde K_{t,N})_{N\in \N}$ is collectively compact, so that $\|(\tilde K_t-\tilde K_{t,N})\tilde K_t\|_\infty \to 0$ as $N\to\infty$, where $\|\cdot\|_\infty$ here denotes the operator norm of an operator on $(C[0,1])^2$ equipped with the norm $\|\cdot\|_\infty$ defined by $\|\phi\|_\infty:=\max(\|\phi_+\|_\infty,\|\phi_-\|_\infty)$, for $\phi=(\phi_+,\phi_-)^T\in (C[0,1])^2$
Indeed, we have the following analogue of Proposition \ref{prop:norm_estimate}, in which our other norm notations are as defined above Proposition \ref{prop:k_t_estimate general}.
\begin{prop} \label{prop:norm_estimate2}
Let $t\in \R$ and $c>0$ and assume that, for every $x,y\in \R$, $\tilde K^{\pm}_t(x,\cdot)$, $\tilde L^{\pm}_t(x,\cdot)$, $\tilde K^{\pm}_t(\cdot,y)$ and $\tilde L^{\pm}_t(\cdot,y)$ have analytic continuations from $\R$ to $\Sigma_c$ that are bounded in $\Sigma_c$. Then $\|(\tilde{K}_t - \tilde{K}_{t,N})\tilde{K}_t\|_{\infty} \leq 2e^{2\pi c}C^*/(e^{2\pi N c}-1)$, where
\begin{align} \nonumber
C^* & :=	\max\bigg\{ \|\tilde K^-_t\|_{0,c} \|\tilde K^-_t\|_{c,0} + \|\tilde L^-_t\|_{0,c} \|\tilde L^+_t\|_{c,0} + \|\tilde K^-_t\|_{0,c} \|\tilde L^-_t\|_{c,0} + \|\tilde L^-_t\|_{0,c} \|\tilde K^+_t\|_{c,0},\\ \label{eq:C*def}
	&\qquad\qquad\quad
	\|\tilde L^+_t\|_{0,c} \|\tilde K^-_t\|_{c,0} + \|\tilde K^+_t\|_{0,c} \|\tilde L^+_t\|_{c,0} + \|\tilde L^+_t\|_{0,c} \|\tilde L^-_t\|_{c,0} + \|\tilde K^+_t\|_{0,c} \|\tilde K^+_t\|_{c,0} \bigg\}.
\end{align}
\end{prop}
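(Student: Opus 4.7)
The plan is to follow essentially the same scheme as in the proof of Proposition \ref{prop:norm_estimate}, adapted to the $2\times 2$ matrix-operator setting. As a preliminary reduction, I may assume $t\in[-\pi,\pi]$, since $\tilde K_t$ and $\tilde K_{t,N}$ are $2\pi$-periodic in $t$ by construction. Let $M_{g_t}\colon (C[0,1])^2 \to (C[0,1])^2$ denote the diagonal surjective isometry multiplying each component by $g_t(x):=e^{ixt}$, set $\widetilde{\mathcal{L}}_t := M_{g_t}\tilde K_t M_{g_t}^{-1}$ and $\widetilde{\mathcal{L}}_{t,N} := M_{g_t}\tilde K_{t,N} M_{g_t}^{-1}$, and write $A_{ij}$ for the $(i,j)$ block of $\widetilde{\mathcal{L}}_t$. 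The kernel of $A_{ij}$ equals $e^{i(x-y)t}B_{ij}(x,y)$, where $B_{ij}\in\{\tilde K^\pm_t,\tilde L^\pm_t\}$ is the corresponding block of $\tilde K_t$. The crucial observation is that every $B_{ij}$ satisfies the same quasi-periodicity $B_{ij}(x+1,y)=e^{-it}B_{ij}(x,y)$ and $B_{ij}(x,y+1)=e^{it}B_{ij}(x,y)$ as in the scalar case, by an index shift in the series \eqref{eq:ktall2} or \eqref{eq:L+} combined with $f_\pm(\alpha x)=\alpha f_\pm(x)$; hence all four kernels of $\widetilde{\mathcal{L}}_t$ are $1$-periodic in both variables. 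Moreover, $|e^{i(x-y)t}|\le e^{\pi c}$ on $\Sigma_c\times\R$ and on $\R\times\Sigma_c$ when $|t|\le\pi$, so $\|A_{ij}\|_{0,c}\le e^{\pi c}\|B_{ij}\|_{0,c}$ and $\|A_{ij}\|_{c,0}\le e^{\pi c}\|B_{ij}\|_{c,0}$.

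Next, I would fix $\phi=(\phi_1,\phi_2)^T\in (C[0,1])^2$ with $\|\phi\|_\infty\le 1$ and set $\psi:=\widetilde{\mathcal{L}}_t\phi$, so that $\psi_j=\sum_k A_{jk}\phi_k$. Since each $A_{jk}(\cdot,z)$ extends to a bounded analytic, $1$-periodic function on $\Sigma_c$ uniformly in $z\in[0,1]$, $\psi_j$ extends analytically and $1$-periodically to $\Sigma_c$, with $\|\psi_j\|_c \le \sum_k \|A_{jk}\|_{c,0}$. For each $x\in[0,1]$ and $i\in\{1,2\}$,
\[
\bigl[(\widetilde{\mathcal{L}}_t-\widetilde{\mathcal{L}}_{t,N})\psi\bigr]_i(x) = \sum_{j=1}^{2}\bigl(J-J_N\bigr)\bigl(A_{ij}(x,\cdot)\psi_j(\cdot)\bigr),
\]
and for each $j$ the integrand $z\mapsto A_{ij}(x,z)\psi_j(z)$ is $1$-periodic and analytic on $\Sigma_c$. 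Theorem \ref{thm:Kress} thus yields
\[
\bigl|\bigl[(\widetilde{\mathcal{L}}_t-\widetilde{\mathcal{L}}_{t,N})\psi\bigr]_i(x)\bigr| \le \frac{2}{e^{2\pi Nc}-1}\sum_{j=1}^2 \|A_{ij}\|_{0,c}\|\psi_j\|_c \le \frac{2}{e^{2\pi Nc}-1}\sum_{j,k=1}^2 \|A_{ij}\|_{0,c}\|A_{jk}\|_{c,0}.
\]

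Taking the supremum over $x\in[0,1]$ and $\|\phi\|_\infty\le 1$, and the maximum over $i\in\{1,2\}$, and then undoing the conjugation by $M_{g_t}$ (which replaces each $A_{ij}$ by $B_{ij}$ at the cost of a factor $e^{2\pi c}$ from the two norm comparisons), I obtain
\[
\|(\tilde K_t-\tilde K_{t,N})\tilde K_t\|_\infty \le \frac{2 e^{2\pi c}}{e^{2\pi Nc}-1}\,\max_{i=1,2}\sum_{j,k=1}^2 \|B_{ij}\|_{0,c}\|B_{jk}\|_{c,0}.
\]
Identifying $B_{11}=\tilde K^-_t$, $B_{12}=\tilde L^-_t$, $B_{21}=\tilde L^+_t$, $B_{22}=\tilde K^+_t$ and expanding the inner double sum for $i=1$ and $i=2$ produces exactly the two expressions whose maximum defines $C^*$ in \eqref{eq:C*def}. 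The only point requiring care is the simultaneous periodization of all four blocks by a single scalar multiplier, which is possible precisely because every block inherits the same $e^{\pm it}$ quasi-periodicity from the dilation invariance of $\Gamma$; everything else is essentially the scalar argument of Proposition \ref{prop:norm_estimate} together with routine $2\times 2$ matrix bookkeeping.
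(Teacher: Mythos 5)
Your proposal is correct and follows essentially the same route as the paper: the paper first expands the $2\times 2$ block structure of $(\tilde K_t-\tilde K_{t,N})\tilde K_t$ and applies the argument of Proposition \ref{prop:norm_estimate} (conjugation by $e^{ixt}$ plus Theorem \ref{thm:Kress}) to each of the eight resulting products, then combines them via the row-sum formula for the infinity norm, whereas you perform the single conjugation globally before applying Theorem \ref{thm:Kress} componentwise --- but the ingredients (quasi-periodicity of all four blocks, periodization, the Kress estimate, and the $\max_i\sum_{j,k}$ bookkeeping) and the resulting constant $C^*$ are identical.
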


\begin{proof}
Notice first that
$$
	(\tilde K_t-K^N_t)\tilde K_t
	=\begin{pmatrix}
	(\tilde K^-_t - K^-_{t,N})\tilde K^-_t+(\tilde L^-_t-L^-_{t,N})\tilde L^+_t &
	(\tilde K^-_t - K^-_{t,N})\tilde L^-_t+(\tilde L^-_t-L^-_{t,N})\tilde K^+_t\\
	(\tilde L^+_t-L^+_{t,N})\tilde K^-_t + (\tilde K^+_t-K^+_{t,N})\tilde L^+_t &
	(\tilde L^+_t-L^+_{t,N})\tilde L^-_t + (\tilde K^+_t-K^+_{t,N})\tilde K^+_t
	\end{pmatrix},
$$
and denote the entries of this matrix by $A_{j,k}$, $j,k=1,2$. For each of these terms we obtain an estimate similar to that in Proposition \ref{prop:norm_estimate}, by arguing as in the proof of that proposition. Since also
\[
	\|(\tilde K_t - K^N_t)\tilde K_t\|_{\infty} = \max_{j=1,2} \sum_{k=1}^2 \|A_{j,k}\|_{\infty},
\]
the result follows.
\end{proof}

Let us now estimate, under our standing assumption \eqref{eq:standing2}, the norms of the kernels that appear in Proposition \ref{prop:norm_estimate2} (cf.~Proposition \ref{prop:k_t_estimate general}).

\begin{prop} \label{prop:k_t_estimate general2}
Given \eqref{eq:standing2}, define $g_\pm$ by \eqref{eq:gdef2} so that, by Lemma \ref{lem:f_anal}, $g_\pm$ have analytic continuations to $\Sigma_c$, for some $c>0$, such that $g_\pm$, $g_\pm'$, and $g_\pm^{''}$ are bounded on $\Sigma_c$ and  $g_\pm(z+1)=g_\pm(z)$, $z\in \Sigma_c$.
Let
\begin{align} \label{fJdef}
\fI_{c,\pm} := \|\Imag g_\pm\|_c + \|\Imag g_\pm^\prime\|_c/|\log \alpha| \quad \mbox{and} \quad \fK_{c,\pm} := \alpha^{-1}|\log\alpha|\max(\|g_\pm\|_c,\|g_\mp\|_0),
\end{align}
 and set
\begin{align} \label{fFdef2}
\fF_{d,\pm} := \|g_\pm\|_d + \|g_\pm^\prime\|_d/|\log \alpha| \quad \mbox{and} \quad \fG_{d,\pm} := \|g_\pm^\prime\|_d + \|g_\pm^{\prime\prime}\|_d/|\log \alpha|,   \quad \mbox{for } d=0,c.
\end{align}
If
\begin{equation} \label{c:bound2_2}
c \leq \arccos(\alpha)/\abs{\log\alpha}, \quad \fI_{c,\pm} < 1,\quad \mbox{and} \quad \|\Imag g_\pm\|_c + \alpha c\fK_{c,\pm}  < \alpha^2,
\end{equation}
then $\tilde K^{\pm}_t(x,\cdot)$, $\tilde L^{\pm}_t(x,\cdot)$, $\tilde K^{\pm}_t(\cdot,y)$, and $\tilde L^{\pm}_t(\cdot,y)$ extend to bounded analytic functions on $\Sigma_c$ for all $x,y,t\in \R$, and
\begin{align}\nonumber
\|\tilde{K}^\pm_t\|_{c,0} &\leq \frac{\left(1+\fF_{c,\pm}^2\right)^{1/4}}{\pi\left(1- \fI_{c,\pm}^2\right)}\Bigg[ \fG_{c,\pm}\, \frac{1+\alpha^{1/2}+\alpha^{-1/2}}{4\alpha^2} +   |\log\alpha|(\fF_{c,\pm}+\fF_{0,\pm}) \, \sum_{j=2}^\infty\frac{\alpha^{j/2}}{\alpha-\alpha^j}\Bigg],\\ \nonumber
\|\tilde{K}^\pm_t\|_{0,c} &\leq \frac{\left(1+\fF_{0,\pm}^2\right)^{1/4}}{\pi\left(1- \fI_{c,\pm}^2\right)^{5/4}}\Bigg[ \fG_{c,\pm}\, \frac{1+\alpha^{1/2}+\alpha^{-1/2}}{4\alpha^2} +   2|\log\alpha|\,\fF_{c,\pm} \, \sum_{j=2}^\infty\frac{\alpha^{j/2}}{\alpha-\alpha^j}\Bigg],\\
	\label{ltb1}
	\|\tilde L^{\pm}_t\|_{c,0} &\leq \frac{1}{2\pi}\frac{\abs{\log\alpha}\fF_{0,\mp} + \fK_{c,\pm}}{1 - \alpha^{-4}\left(\norm{\Imag g_\pm}_c + \alpha c\fK_{c,\pm}\right)^2} \,\left(1 + \fF_{c,\pm}^2\right)^{1/4} \sum\limits_{j = -\infty}^{\infty} \frac{\alpha^{j/2}}{\alpha^{j+2}+\alpha^2},\\
	\label{ltb2}
	\|\tilde L^{\pm}_t\|_{0,c} &\leq \frac{1}{2\pi} \frac{\abs{\log\alpha}\fF_{c,\mp} + \fK_{c,\mp}}{1 - \alpha^{-4}\left(\norm{\Imag g_\mp}_c + \alpha c\fK_{c,\mp}\right)^2}\,\left(\frac{1 + \fF_{0,\pm}^2}{1 - \fI_{c,\mp}^2}\right)^{1/4}\sum\limits_{j = -\infty}^{\infty} \frac{\alpha^{j/2}}{\alpha^{j+2}+\alpha^2}.
\end{align}
Moreover, the mappings $\R \to C([0,1]\times [0,1])$, $t \mapsto \tilde{K}^{\pm}_t(\cdot,\cdot)$ and $t \mapsto \tilde{L}^{\pm}_t(\cdot,\cdot)$, are continuous.
\end{prop}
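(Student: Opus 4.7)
The bounds on the diagonal kernels $\tilde K^\pm_t$ are immediate from Proposition \ref{prop:k_t_estimate general} applied with $(f,g)$ replaced by $(f_\pm,g_\pm)$: the formulas \eqref{eq:ktall2}, together with $p_j^\pm$, $q_j^\pm$ defined exactly as $p_j$, $q_j$ but with $f\leftarrow f_\pm$, are of precisely the form treated there, and the hypotheses on $g_\pm$ in \eqref{c:bound2_2} specialise (via the first two inequalities) to those on $g$ in \eqref{c:bound2}. The continuity assertion for $t\mapsto \tilde K_t^\pm(\cdot,\cdot)$ is likewise inherited. Hence the real work is concentrated on the cross kernels $\tilde L_t^\pm$ of \eqref{eq:L+}.

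I would mimic the structure of the proof of Proposition \ref{prop:k_t_estimate general}. First, using Lemma \ref{lem:f_anal}, extend $f_\pm$ analytically to the sector $G_c = \{\alpha^z:z\in\Sigma_c\}$, and then, via Taylor integral representations analogous to \eqref{Tay1}--\eqref{Tay2}, extend $\tilde p_j^\pm$ and $\tilde q_j^\pm$ to jointly analytic functions on $\Sigma_c\times\Sigma_c$ that agree with \eqref{eq:pjdef2} wherever $\alpha^{x+j}+\alpha^y\neq 0$. The quasi-periodicities $\tilde L_t^\pm(x+1,y)=e^{-it}\tilde L_t^\pm(x,y)$ and $\tilde L_t^\pm(x,y+1)=e^{it}\tilde L_t^\pm(x,y)$, which follow from $f(\alpha x)=\alpha f(x)$, reduce the desired bounds on $\Sigma_c\times\R$ and $\R\times\Sigma_c$ to uniform estimates of each term $T_j^\pm(x,y)$ in the series \eqref{eq:L+} on the fundamental strips $\Xi_c\times[0,1]$ (for $\|\cdot\|_{c,0}$) and $[0,1]\times\Xi_c$ (for $\|\cdot\|_{0,c}$).

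The hard step, and the one responsible for the strengthened third condition in \eqref{c:bound2_2}, is bounding $|1+\tilde q_j^\pm(x,y)^2|$ from below. Setting $a:=\alpha^{x+j}$ and $b:=\alpha^y$, the first condition $c|\log\alpha|\leq \arccos\alpha$ gives $\cos(\arg a),\cos(\arg b)\geq \alpha$ throughout the relevant strips, so that
\begin{equation*}
|a+b|\;\geq\;\Real(a+b)\;\geq\;\alpha(|a|+b),
\end{equation*}
which is the source of the $\alpha^{-2}$ factor in \eqref{ltb1}--\eqref{ltb2}. Using the identity $\tilde q_j^\pm(x,y) = [a\,g_\pm(x) - b\,g_\mp(y)]/(a+b)$ (via the periodicity of $g_\pm$) and isolating real and imaginary parts, one then derives
\begin{equation*}
|\Imag \tilde q_j^\pm(x,y)| \;\leq\; \alpha^{-2}\bigl(\|\Imag g_\pm\|_c + \alpha c\,\fK_{c,\pm}\bigr)
\end{equation*}
for $(x,y)\in \Sigma_c\times\R$, and the symmetric bound on $\R\times\Sigma_c$. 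The third condition in \eqref{c:bound2_2} forces the right-hand side to be strictly less than $1$, and the elementary inequality $|1+z^2|\geq 1-(\Imag z)^2$ (valid when $|\Imag z|<1$, as exploited in Proposition \ref{prop:k_t_estimate general}) delivers exactly the denominator factor appearing in \eqref{ltb1}--\eqref{ltb2}.

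Once the denominator is controlled, the numerator estimate is routine: rewriting $\tilde p_j^\pm = [\tilde q_j^\pm + f_\mp'(\alpha^y)]/(a+b)$, bounding $|\tilde q_j^\pm|$ and $|f_\mp'|$ via Lemma \ref{lem:f_anal}(iii), and again invoking $|a+b|\geq \alpha(|a|+b)$, each term $T_j^\pm(x,y)$ in \eqref{eq:L+} is dominated by a constant multiple of $\alpha^{j/2}/(\alpha^{j+2}+\alpha^2)$. The resulting two-sided geometric-type series converges absolutely and uniformly on $\Xi_c\times[0,1]$ and $[0,1]\times\Xi_c$; summing yields \eqref{ltb1}. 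The bound \eqref{ltb2} follows by the symmetric argument with $x$ and $y$ interchanged, the extra factor $(1-\fI_{c,\mp}^2)^{-1/4}$ arising because the term $(1+f_\mp'(\alpha^y)^2)^{1/4}$ in the denominator is now evaluated at complex $y$, and is handled by the same lower bound on $|1+f_\mp'(\alpha^y)^2|$ as in \eqref{eq:Icbound}. Uniformity in $t$ of all the above estimates then gives continuity of $t\mapsto \tilde L_t^\pm(\cdot,\cdot)$ in $C([0,1]\times[0,1])$, exactly as in Proposition \ref{prop:k_t_estimate general}.
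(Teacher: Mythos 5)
Your proposal follows essentially the same route as the paper's proof: the diagonal kernels are dispatched by Proposition \ref{prop:k_t_estimate general} with $f$ replaced by $f_\pm$, and the cross kernels are handled term-by-term on the fundamental strips using the lower bound $|\alpha^{x+j}+\alpha^y|\geq \alpha^{j+2}+\alpha^2$ (from the first condition in \eqref{c:bound2_2}), the estimate $|\Imag \tilde q_j^\pm|\leq \alpha^{-2}(\|\Imag g_\pm\|_c+\alpha c\,\fK_{c,\pm})$ combined with $|1+z^2|\geq 1-(\Imag z)^2$ (which is exactly where the third condition in \eqref{c:bound2_2} enters), and the identity $\tilde p_j^\pm=[f_\mp'(\alpha^y)+\tilde q_j^\pm]/(\alpha^{x+j}+\alpha^y)$. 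One small correction: Taylor representations analogous to \eqref{Tay1}--\eqref{Tay2} are neither available for the cross terms (the segment joining $-\alpha^y$ and $\alpha^{x+j}$ passes through the origin, where $f$ need not be analytic) nor needed, since the denominator $\alpha^{x+j}+\alpha^y$ is bounded away from zero on the fundamental strips and the explicit formulas \eqref{eq:pjdef2} therefore define analytic functions there directly.
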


\begin{rem}[\it \bf Bound on the sum in \eqref{ltb1} and \eqref{ltb2}] \label{rem:series2} For fixed $\alpha \in (0,1)$, let $F(x) := (\alpha^{x/2} + \alpha^{-x/2})^{-1}$, for $x\in \R$, so that the $j$th term in the sum in \eqref{ltb1} and \eqref{ltb2} is $\alpha^{-2}F(j)$, and note that $F$ is even. Arguing as in Remark \ref{rem:series}, since $F$ is decreasing on $[0,\infty)$,
\begin{equation} \label{eq:Cn}
\sum_{j=n+1}^\infty \frac{\alpha^{j/2}}{\alpha^{j+2}+\alpha^2} = \alpha^{-2}\sum_{j=n+1}^\infty F(j) \leq \cC_n(\alpha) := \alpha^{-2}\int_n^\infty F(x)\, \rd x = \frac{2\arctan(\alpha^{n/2})}{\alpha^2\abs{\log\alpha}},
\end{equation}
for $0<\alpha < 1$ and $n\in \N_0:=\N\cup \{0\}$. Thus, for $0<\alpha < 1$ and $n\in \N_0$,
\begin{align} \label{eq:sum2bound}
\sum\limits_{j = -\infty}^{\infty} \frac{\alpha^{j/2}}{\alpha^{j+2}+\alpha^2} &\leq  \cC_n^*(\alpha) := \frac{1}{2\alpha^2} + 2\alpha^{-2}\sum_{j=1}^n F(j) + 2\cC_n(\alpha) \quad \mbox{and} \\ \label{eq:rembound}
\abs{\sum\limits_{j = -\infty}^{\infty} \frac{\alpha^{j/2}}{\alpha^{j+2}+\alpha^2} - \sum\limits_{j = -n}^n \frac{\alpha^{j/2}}{\alpha^{j+2}+\alpha^2}} &\leq 2 \cC_n(\alpha).
\end{align}
\end{rem}

\begin{proof}[Proof of Proposition \ref{prop:k_t_estimate general2}]
The results for $\tilde{K}_t^\pm(\cdot,\cdot)$ follow immediately from Proposition \ref{prop:k_t_estimate general}, applied with $f$ replaced by $f_\pm$.
It remains to consider the off-diagonal entries $\tilde L^{\pm}_t$. We give the detail for $\tilde{L}_t^+$; the results for $\tilde{L}_t^-$ follow by the same argument with the roles of $f_+$ and $f_-$ reversed.

We extend the definition of $\tilde L^+_t$ via~\eqref{eq:L+} to all $x,y\in \Sigma_c$. The upcoming computations will show that this is well-defined and the estimates \eqref{ltb1} and \eqref{ltb2} hold. Denote by $T_j(x,y)$ the $j$th term in the sum~\eqref{eq:L+}, that is,
\begin{equation} \label{eq:T_j}
T_j(x,y) := e^{ijt} \frac{\tilde p^+_j(x,y)}{1+\tilde q^+_j(x,y)^2}\left(\frac{1+f_+'(\alpha^{x})^2}{1+f_-'(\alpha^y)^2}\right)^{1/4}\alpha^{\frac{x+y+j}{2}}\abs{\log\alpha}, \quad x,y\in \Sigma_c.
\end{equation}
We clearly have $T_j(x+1,y)=e^{-it}T_{j+1}(x,y)$ and the dilation invariance, $f(\alpha x) = \alpha f(x)$, implies that $T_j(x,y+1)=e^{it}T_{j-1}(x,y)$ for $x,y\in \Sigma_c$, $j \in \Z$. Thus, to prove the uniform convergence of the series \eqref{eq:L+} and the bounds \eqref{ltb1} and \eqref{ltb2}, it suffices to restrict consideration to $x,y \in \Xi_c = \{z \in \C :$ $\Real z \in [0,1]$, $\Imag z \in (-c,c)\}$.
First notice that, for $x,y \in \Xi_c$,
\begin{equation} \label{eq:denominator_estimate}
\abs{\alpha^{x+j}+\alpha^y} \geq \alpha^{\Real x + j}\cos(\log\alpha \Imag x) + \alpha^{\Real y}\cos(\log\alpha \Imag y) \geq \alpha^{\Real x + j+1} + \alpha^{\Real y + 1}\geq \alpha^{j+2} + \alpha^2,
\end{equation}
by the first of \eqref{c:bound2_2}, which implies that $\cos(t\log \alpha )\geq \alpha$, for $-c\leq t\leq c$. Hence,  for $x,y\in \Xi_c$ and $j\in \Z$, recalling \eqref{eq:gdef2},
\begin{align*}
\abs{\tilde q^+_j(x,y)} =
\abs{\frac{\alpha^{x+j}g_+(x) - \alpha^yg_-(y)}{\alpha^{x+j}+\alpha^y}} \leq \frac{\alpha^{\Real x+j}|g_+(x)| + \alpha^{\Real y}|g_-(y)|}{\alpha^{\Real x + j+1}+\alpha^{\Real y + 1}}.
\end{align*}
Using the estimate
\begin{equation} \label{eq:Moebius_estimate}
(a+b)/(c+d) \leq \max\set{a/c,b/d},
\end{equation}
which holds for all $a,b\geq 0$, $c,d>0$, it follows that
\[\sup\limits_{x \in \Xi_c,y \in [0,1]} \abs{\tilde q^+_j(x,y)} \leq \fK_{c,+}/|\log \alpha|\quad \mbox{and} \quad \sup\limits_{x \in [0,1],y \in \Xi_c} \abs{\tilde q^+_j(x,y)}
\leq \fK_{c,-}/|\log\alpha|, \quad j\in \Z.\]
Noting that \eqref{eq:falpha} holds with $f$ and $g$ replaced with $f_\pm$ and $g_\pm$,
we have also that $|f_\pm^\prime(\alpha^y)|\leq \fF_{0,\pm}$, for $y\in [0,1]$, while $|f_\pm^\prime(\alpha^y)|\leq \fF_{c,\pm}$, for $y\in \Xi_c$.
Thus
\begin{align*}
\sup\limits_{x \in \Xi_c,y \in [0,1]} \abs{\tilde p^+_j(x,y)} &\leq \sup\limits_{x \in \Xi_c,y \in [0,1]} \frac{|f_-'(\alpha^y)| + \abs{\tilde q^+_j(x,y)}}{\abs{\alpha^{x+j}+\alpha^y}} \leq \frac{\fF_{0,-} + \fK_{c,+}/|\log\alpha|}{\alpha^{j+2}+\alpha^2} \quad \mbox{and}\\
\sup\limits_{x \in [0,1],y \in \Xi_c} \abs{\tilde p^+_j(x,y)} &\leq \frac{\fF_{c,-} + \fK_{c,-}/|\log\alpha|}{\alpha^{j+2}+\alpha^2}.
\end{align*}
Moreover, for $x,y\in \Xi_c$,
\begin{align*}
\abs{\Imag \tilde q^+_j(x,y)} &= \abs{\Imag \left(\frac{\alpha^{x+j}g_+(x) - \alpha^y g_-(y)}{\alpha^{x+j}+\alpha^y}\right)}\\
&= \frac{1}{\abs{\alpha^{x+j}+\alpha^y}^2} \abs{\Imag\left(\alpha^{2(\Real x + j)}g_+(x) - \alpha^{\bar{x}+j+y}g_-(y) + \alpha^{x+j+\bar{y}}g_+(x) - \alpha^{2\Re y}g_-(y)\right)}.
\end{align*}
Thus, using \eqref{eq:denominator_estimate},  noting that
$|\Imag(\alpha^{it}z)| \leq |\Imag(z)| + |\sin(t\log\alpha)||z| \leq |\Imag(z)| + |t\log\alpha||z|$, for $t\in \R$ and $z\in \C$,
and using \eqref{eq:Moebius_estimate} again to obtain the last two inequalities, we see that for $x,y\in \Xi_c$, where $r = \alpha^{\Real x+j}$ and $s= \alpha^{\Real y}$,
\begin{align*}
\abs{\Imag \tilde q^+_j(x,y)}
& \leq \frac{1}{\alpha^2(r+s)^2}\Big(r^2|\Imag g_+(x)| +s^2|\Imag g_-(y)| \,+\\
&  \hspace{4ex} rs\big(|\Imag g_-(y)| + |\Imag g_+(x)| + c|\log\alpha|(|g_-(y)|+|g_+(x)|)\big)\Big)\\
& \leq \frac{r|\Imag g_+(x)| +s|\Imag g_-(y)| + c|\log\alpha|\max\{s|g_-(y)|,r|g_+(x)|\}}{\alpha^2(r+s)}\\
& \leq \alpha^{-2}\left(\max\{|\Imag g_+(x)|,|\Imag g_-(y)|\} + c|\log\alpha|\max\{|g_-(y)|,|g_+(x)|\}\right).
\end{align*}
Thus
\begin{align*}
\sup\limits_{x\in \Xi_c, y\in [0,1]}\abs{\Imag \tilde q^+_j(x,y)}&\leq \alpha^{-2}\left(\|\Imag g_+\|_c + \alpha c \fK_{c,+}\right) \quad \mbox{and}\\
\sup\limits_{x\in [0,1], y\in \Xi_c}\abs{\Imag \tilde q^+_j(x,y)}&\leq \alpha^{-2}\left(\|\Imag g_-\|_c + \alpha c \fK_{c,-}\right)
\end{align*}
so that
\begin{align*}
\inf\limits_{x \in \Xi_c,y \in [0,1]}\left| 1 + \tilde q^+_j(x,y)^2\right| &\geq 1 - \sup\limits_{x \in \Xi_c,y \in [0,1]}\abs{\Imag \tilde q^+_j(x,y)}^2\\
&\geq 1 - \alpha^{-4}\left(\|\Imag g_+\|_c + \alpha c \fK_{c,+}\right)^2 \quad \mbox{and}\\
\inf\limits_{x \in [0,1],y \in \Xi_c}\left| 1 + \tilde q^+_j(x,y)^2\right| &\geq 1 - \alpha^{-4}\left(\|\Imag g_-\|_c + \alpha c \fK_{c,-}\right)^2.
\end{align*}
Combining these estimates, and using \eqref{eq:Icbound} with $f$ replaced by $f_-$, we get
\begin{align*}
\sup\limits_{x \in \Xi_c,y \in [0,1]} \abs{T_j(x,y)} &\leq   \frac{|\log\alpha|\fF_{0,-}+\fK_{c,+}}{1 - \alpha^{-4}\left( \norm{\Imag g_+}_c + \alpha c \fK_{c,+}\right)^2} \left(1 + \fF_{c,+}^2\right)^{1/4}\frac{\alpha^{j/2}}{\alpha^{j+2}+\alpha^2} \quad \mbox{and}\\
\sup\limits_{x \in [0,1],y \in \Xi_c} \abs{T_j(x,y)} &\leq   \frac{|\log\alpha|\fF_{c,-}+\fK_{c,-}}{1 - \alpha^{-4}\left( \norm{\Imag g_-}_c + \alpha c \fK_{c,-}\right)^2} \left(\frac{1 + \fF_{0,+}^2}{1-\fI_{c,-}^2}\right)^{1/4}\frac{\alpha^{j/2}}{\alpha^{j+2}+\alpha^2}.
\end{align*}
The estimates \eqref{ltb1} and \eqref{ltb2} and the other results for $\tilde{L}_t^+$ follow.
\end{proof}

Let $\tilde B_{t,N}^\pm$ and $\tilde C^\pm_{t,N}$ be the $N\times N$ matrices defined by the right hand side of \eqref{eq:AtNdef} with $\tilde K_t(\cdot,\cdot)$ replaced by $\tilde K_t^\pm(\cdot,\cdot)$ and  $\tilde L_t^\pm(\cdot,\cdot)$, respectively, and define the $2N\times 2N$ matrix $A_{t,N}$ by
\[A_{t,N} := \begin{pmatrix} B_{t,N}^{-} & C_{t,N}^{-} \\ C_{t,N}^{+} & B_{t,N}^{+} \end{pmatrix}, \qquad t\in \R, \;\; N\in \N.\]
Recalling Remark \ref{rem:matrix}, the matrix $A_{t,N}$ and the operator $K_{t,N}$ are related by Lemma \ref{lem:reduction_to_matrices}(ii) and (iii). As in \S\ref{sec:oneside}, our goal is to estimate the spectrum and spectral radius of $K_{t,N}$ via computing the spectra of approximations to $A_{t,N}$ for some fixed $N$ and finitely many $t\in [-\pi,\pi]$.

To define these approximations, proceeding analogously to \S\ref{sec:oneside},
for $M \in \N$ let
$\tilde{K}_t^{\pm,M}(\cdot,\cdot)$ and $\tilde{L}_t^{\pm,M}(\cdot,\cdot)$ be approximations to $\tilde{K}_t^{\pm}(\cdot,\cdot)$ and $\tilde{L}_t^{\pm}(\cdot,\cdot)$, respectively, given by \eqref{eq:ktall2} and \eqref{eq:L+} but  with the infinite series replaced by  finite sums from $j = -M$ to $M$ (cf.~\eqref{eq:B_t^M}).
Let $B_{t,N}^{\pm,M}$ and $C_{t,N}^{\pm,M}$ be the corresponding approximations to the matrices $B_{t,N}^{\pm}$ and $C_{t,N}^{\pm}$, so that
\[B_{t,N}^{\pm,M}(p,q) = \frac{1}{N}\tilde{K}_t^{\pm,M}(x_{p,N},x_{q,N}), \quad C_{t,N}^{\pm,M}(p,q) = \frac{1}{N}\tilde{L}_t^{\pm,M}(x_{p,N},x_{q,N}), \quad p,q=1,\ldots,N,\]
and let
\begin{equation} \label{eq:AtNMdef2}
A_{t,N}^M := \begin{pmatrix} B_{t,N}^{-,M} & C_{t,N}^{-,M} \\ C_{t,N}^{+,M} & B_{t,N}^{+,M} \end{pmatrix}, \qquad t\in \R, \;\; M,N\in \N.
\end{equation}
Similarly, define the operators $\tilde K^{\pm,M}_{t,N}$ and $\tilde L^{\pm,M}_{t,N}$ as we defined $\tilde K^\pm_{t,N}$ and $\tilde L^\pm_{t,N}$ above Proposition \ref{prop:norm_estimate2}, but replacing the kernels $\tilde{K}_t^{\pm}(\cdot,\cdot)$ and $\tilde{L}_t^{\pm}(\cdot,\cdot)$ in their definitions by the respective approximations $\tilde{K}_t^{\pm,M}(\cdot,\cdot)$ and $\tilde{L}_t^{\pm,M}(\cdot,\cdot)$, and let
\begin{equation}\label{e:tilde K_tNM}
	\tilde K_{t,N}^M =
	\begin{pmatrix}
	\tilde K^{-,M}_{t,N} & \tilde L^{-,M}_{t,N}\\
	\tilde L^{+,M}_{t,N} & \tilde K^{+,M}_{t,N}
	\end{pmatrix}, \qquad t\in \R,\;\; M,N\in \N.
\end{equation}
Then, similarly to \eqref{eq:eMbound}, we have that, for $t\in \R$ and $M,N\in \N$ with
$M \geq 2$,
\begin{equation}
\|A_{t,N}-A_{t,N}^M\|_\infty \leq \|\tilde K_{t,N}-\tilde K_{t,N}^M\|_\infty \leq C_1(M),
\end{equation}
 where, using the notations of Remarks \ref{rem:series} and \ref{rem:series2} and Proposition \ref{prop:k_t_estimate general2}, and setting $\fK_0:= \fK_{0,+}=\fK_{0,-}$,
 \begin{align} \nonumber
 C_1(M) &:= \frac{1}{\pi}\max\Big\{\left(1+\fF_{0,+}^2\right)^{1/4}\big(2|\log\alpha|\fF_{0,+}\mathcal{B}_M(\alpha) + (|\log\alpha|\fF_{0,-}+\fK_0)\cC_M(\alpha)\big),\\ \label{eq:C1M2}
 & \hspace{12.5ex} \left(1+\fF_{0,-}^2\right)^{1/4}\big(2|\log\alpha|\fF_{0,-}\mathcal{B}_M(\alpha) + (|\log\alpha|\fF_{0,+}+\fK_0)\cC_M(\alpha)\big)\Big\}.
 \end{align}
Note that by \eqref{eq:BnAsy} and \eqref{eq:Cn}, $C_1(M) = O(\alpha^{M/2})$ as $M\to\infty$.

Arguing as in \eqref{eq:BNMdef}, we have also that, for $M,N\in \N$, $t\in \R$, $p,q=1,\ldots,N$,
\begin{equation} \label{eq:ElementBounds}
\left|\frac{\partial}{\partial t} B_{t,N}^{\pm,M}(p,q)\right| \leq B_N^{\pm,M}(p,q) \quad \mbox{and} \quad \left|\frac{\partial}{\partial t} C_{t,N}^{\pm,M}(p,q)\right| \leq C_N^{\pm,M}(p,q),
\end{equation}
where $B_N^{\pm,M}(p,q)$ is defined by \eqref{eq:BNMdef} but with $p_j$, $q_j$, and $f$ replaced by $p_j^\pm$, $q_j^\pm$ and $f_\pm$, respectively (compare \eqref{eq:ktall} and \eqref{eq:ktall2}). Similarly, $C_N^{\pm,M}(p,q)$ is defined by the right hand side of \eqref{eq:BNMdef} with $p_j$ and $q_j$ replaced by $\tilde p_j^\pm$ and $\tilde q_j^\pm$, respectively, and with $f$ replaced by $f_\pm$ in the numerator, by $f_\mp$ in the denominator (compare \eqref{eq:ktall} and \eqref{eq:L+}). Thus, where
\begin{equation} \label{eq:BNMdef2}
B_{N}^M := \begin{pmatrix} B_{N}^{-,M} & C_{N}^{-,M} \\ C_{N}^{+,M} & B_{N}^{+,M} \end{pmatrix}, \qquad M,N\in \N,
\end{equation}
\eqref{eq:C2def} holds (with the above definitions of $A_{t,N}$, $A_{t,N}^M$ and $B_N^M$) for all $s,t\in \R$, $M,N\in \N$. Note also, arguing as in \eqref{eq:BNM} and \eqref{eq:C2b2}, that $\|B_N^M\|_\infty$ is bounded uniformly for $M,N\in \N$, and that $\|\tilde K_{t,N}-\tilde K_{s,N}\|_\infty = O(|s-t|)$ as $|s-t|\to 0$, uniformly for $s,t\in \R$, $N\in \N$. Further, similarly to \eqref{eq:NormBound} and \eqref{eq:C1M2}, $\|\tilde K_{t,N}\|_\infty \leq C_3$ for $t\in \R$ and $N\in \N$, where\footnote{Computations indicate that $\mathcal{C}_{10}^*(\alpha)$ exceeds the left hand side of \eqref{eq:sum2bound} by not more than $2.1\%$ for $\alpha\in (0,1)$.}
\begin{align} \nonumber
C_3 &:= \frac{1}{2\pi}\max\Big\{\left(1+\fF_{0,+}^2\right)^{1/4}\big(\fG_{0,+} \mathfrak{R}(\alpha)+4|\log \alpha|\fF_{0,+}\mathcal{B}^*_{10}(\alpha) + (|\log \alpha|\fF_{0,-}+\fK_0)\cC^*_{10}(\alpha)\big),\\ \label{eq:normbound2}
 & \hspace{11.5ex} \left(1+\fF_{0,-}^2\right)^{1/4}\big(\fG_{0,-} \mathfrak{R}(\alpha)+4|\log \alpha|\fF_{0,-}\mathcal{B}^*_{10}(\alpha) + (|\log \alpha|\fF_{0,+}+\fK_0)\cC^*_{10}(\alpha)\big)\Big\},
 \end{align}
$\mathfrak{R}(\alpha):=(1 + \alpha^{1/2} + \alpha^{-1/2})/(2\alpha^2)$, $\mathcal{B}^*_{10}$ and $\cC^*_{10}$ are defined by \eqref{eq:twosided} and \eqref{eq:sum2bound}, and the other notations are defined in Proposition \ref{prop:k_t_estimate general2}. Provided $c$ is such that the conditions of Proposition \ref{prop:k_t_estimate general2} hold, we have also (cf.~\eqref{eq:boundcombined}), by Propositions \ref{prop:norm_estimate2} and \ref{prop:k_t_estimate general2}, that \eqref{eq:boundcombined} holds with
\begin{equation} \label{eq:C4newdef}
C_4 := 2e^{2\pi c}\widetilde C^*,
\end{equation}
where $\widetilde C^*$ is defined by the right hand side of \eqref{eq:C*def}, but with the norms on that right hand side replaced by the upper bounds in Proposition \ref{prop:k_t_estimate general2}. Additionally, we replace the infinite sums in these upper bounds by the bounds $\mathcal{B}_{10}^*(\alpha)$ and $\cC_{10}^*(\alpha)$.

The following result, which holds for every $\Gamma$ that satisfies our standing assumption \eqref{eq:standing2}, is identical to Theorem \ref{thm:Hausdorff_convergence}, except that $\tilde K_{t,N}^{M_N}$ and $A_{t,N}^{M_N}$ are defined here by \eqref{e:tilde K_tNM} and \eqref{eq:AtNMdef2}, respectively.  In this theorem $\sigma(\tilde{K}^M_{t,N})$ denotes the spectrum of $\tilde K^M_{t,N}$ either on $(L^2(0,1))^2$ or on $(C[0,1])^2$ (cf.~\eqref{eq:SpecSame}), which coincides with $\{0\}\cup \sigma(A^M_{t,N})$ by Lemma \ref{lem:reduction_to_matrices}(ii) and Remark \ref{rem:matrix}, and $\sigma(D_\Gamma)$ denotes the spectrum of $D_\Gamma$ on $L^2(\Gamma)$, which coincides with the essential spectrum by Corollary \ref{cor:ess_ne}. The proof of this result follows that of Theorem \ref{thm:Hausdorff_convergence}, noting that the argument of Lemma \ref{lem:ColCom} applies to each of the operator families $\tilde K_{t,N}^{\pm,M}$ and $\tilde L_{t,N}^{\pm,M}$ so that (where $\tilde K_{t,N}^{M}$ is defined by \eqref{e:tilde K_tNM}) $\{\tilde K_{t,N}^M:t\in [-\pi,\pi],M,N\in \N\}$ is collectively compact.
\begin{thm} \label{thm:Hausdorff_convergence_2}
Choose sequences $(m_N)_{N\in \N}, (M_N)_{N\in \N}\subset \N$ such that $m_N, M_N\to\infty$ as $N\to\infty$, and for each $N$, let $T_N:=\{\pm (k-1/2)\pi/m_N:k=1,\ldots,m_N\}$. Then, as $N\to\infty$,
\begin{equation} \label{eq:sigmaNDef2}
\sigma^N(D_\Gamma):= \bigcup\limits_{t \in T_N} \sigma(\tilde{K}^{M_N}_{t,N})  = \{0\} \cup \bigcup\limits_{t \in T_N} \sigma(A^{M_N}_{t,N}) \;\toH \; \sigma(D_{\Gamma})=\sigma_{\ess}(D_\Gamma).
\end{equation}
\end{thm}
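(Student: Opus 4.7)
The plan is to mirror the proof of Theorem \ref{thm:Hausdorff_convergence}, replacing the one-sided machinery of \S\ref{sec:oneside} by the two-sided $2\times 2$ block analogue developed in this subsection. Setting $\Lambda := \sigma(D_\Gamma)$ and $\Lambda_N := \sigma^N(D_\Gamma)$, Theorem \ref{cor:norm_and_spec} combined with \eqref{eq:SpecSame2} gives $\Lambda = \bigcup_{t\in[-\pi,\pi]}\sigma(\tilde K_t)$, while Lemma \ref{lem:reduction_to_matrices}(ii), extended to the matrix setting via Remark \ref{rem:matrix}, gives $\Lambda_N = \{0\} \cup \bigcup_{t\in T_N}\sigma(A_{t,N}^{M_N}) = \bigcup_{t\in T_N}\sigma(\tilde K_{t,N}^{M_N})$. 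By the characterisation of Hausdorff convergence recalled above Theorem \ref{thm:convergence_estimate}, the task reduces to establishing uniform boundedness of $(\Lambda_N)_{N\in\N}$ together with $\liminf \Lambda_N = \limsup \Lambda_N = \Lambda$.

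Uniform boundedness is the easiest ingredient: I would combine $\|\tilde K_{t,N}\|_\infty \leq C_3$ (the discussion leading to \eqref{eq:normbound2}) with $\|\tilde K_{t,N}^M - \tilde K_{t,N}\|_\infty \leq C_1(M)\to 0$ as $M\to\infty$ (see \eqref{eq:C1M2}) to bound $\sup_{t\in\R,\,N\in\N}\|\tilde K_{t,N}^{M_N}\|_\infty$, and spectral radii are bounded by operator norms.

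The principal ingredient is collective compactness of $\{\tilde K_{t,N}^M : t\in\R,\, M,N\in\N\}$ on $(C[0,1])^2$. Here I would adapt Lemma \ref{lem:ColCom} entrywise. Choosing $c>0$ small enough that the hypotheses of Proposition \ref{prop:k_t_estimate general2} hold, inspection of its proof shows that for each fixed $y\in[0,1]$ the maps $\tilde K_t^{\pm,M}(\cdot,y)$ and $\tilde L_t^{\pm,M}(\cdot,y)$ are analytic on $\Sigma_c$, with the bounds \eqref{ktb1} and \eqref{ltb1} valid uniformly in $t\in\R$ and $M\in\N$, because those bounds are obtained as majorising sums of absolute values of the individual series terms. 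Consequently each of the four entry-operator families maps the unit ball of $C[0,1]$ into a uniformly bounded family of analytic functions on $\Sigma_c$, which is normal by Montel and hence relatively compact in $C[0,1]$; assembling the four blocks yields collective compactness on $(C[0,1])^2$.

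With these two ingredients in place, I would follow the proof of Theorem \ref{thm:Hausdorff_convergence} essentially verbatim. The key continuity claim is that for any sequences $N_k\to\infty$, $\mathfrak M_k\to\infty$ and $t_k\to t\in[-\pi,\pi]$, $\tilde K_{t_k,N_k}^{\mathfrak M_k}\to\tilde K_t$ strongly. This combines the Nystr\"om strong convergence $\tilde K_{t,N}\to\tilde K_t$ for each fixed $t$ (noted below Proposition \ref{prop:norm_estimate2}), the Lipschitz-in-$t$ estimate for $\tilde K_{t,N}$ coming from \eqref{eq:ElementBounds} and the $M,N$-uniform bound on $\|B_N^M\|_\infty$, and the kernel truncation bound $\|\tilde K_{t,N}^M - \tilde K_{t,N}\|_\infty\leq C_1(M)\to 0$. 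Collective compactness plus strong convergence then feeds into Theorem \ref{thm:convergence_estimate} to give $\sigma(\tilde K_{t_k,N_k}^{\mathfrak M_k})\toH\sigma(\tilde K_t)$. The two standard inclusions now follow exactly as in Theorem \ref{thm:Hausdorff_convergence}: $\Lambda\subset\liminf\Lambda_N$ by picking, for each prescribed $t\in[-\pi,\pi]$, a sequence $t_N\in T_N$ with $t_N\to t$; and $\limsup\Lambda_N\subset\Lambda$ by extracting from any $\lambda\in\limsup\Lambda_N$ an accompanying subsequence $t_k\in T_{N_k}$ converging in the compact set $[-\pi,\pi]$. The main obstacle I anticipate is precisely verifying that the bounds of Proposition \ref{prop:k_t_estimate general2} are inherited by the $M$-truncations uniformly in $M$; this is the step I would check most carefully, although the proof of that proposition is structured entirely via term-by-term majorisation, so the passage should be essentially automatic.
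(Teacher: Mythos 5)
Your proposal is correct and follows essentially the same route as the paper, which proves this theorem by transplanting the argument of Theorem \ref{thm:Hausdorff_convergence} and noting that the argument of Lemma \ref{lem:ColCom} applies entrywise to the families $\tilde K_{t,N}^{\pm,M}$ and $\tilde L_{t,N}^{\pm,M}$, giving collective compactness of $\{\tilde K_{t,N}^M : t\in[-\pi,\pi],\, M,N\in\N\}$. The uniform-in-$M$ inheritance of the bounds of Proposition \ref{prop:k_t_estimate general2} by the truncations, which you flag as the step to check, is indeed automatic from the term-by-term majorisation, exactly as you anticipate.
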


 \begin{rem} \label{rem:reduce2} Using Remark \ref{rem:matrix}, we see that Remark \ref{rem:symm2} applies also to $\sigma^N(D_\Gamma)$ given by \eqref{eq:sigmaNDef2}.
 \end{rem}

Our second main result of this subsection, obtained by applying Theorem \ref{thm:GspecRMfinal}, noting Remark \ref{rem:matrix}, is proved in the same way as the analogous result, Theorem \ref{thm:spectral_radius}, in the one-sided case. In this theorem $A_{t,N}^M$, $B_N^M$, $C_1(M)$, $C_3$, and $C_4$, are defined by \eqref{eq:AtNMdef2}, \eqref{eq:BNMdef2}, \eqref{eq:C1M2}, \eqref{eq:normbound2}, and \eqref{eq:C4newdef}, respectively.

\begin{thm} \label{thm:spectral_radius2}
Under our standing assumption \eqref{eq:standing2}, define $g_\pm$ by \eqref{eq:gdef2} so that, by Lemma \ref{lem:f_anal},  $g_\pm$ have  analytic continuations to $\Sigma_c$, for some $c>0$, such that $g_\pm$, $g_\pm^\prime$, and $g_\pm^{\prime\prime}$ are bounded on $\Sigma_c$, and, without loss of generality, assume that \eqref{c:bound2_2} holds. Suppose that $\rho_0>0$, $m,M,N\in \N$, with $M\geq 2$. For $k=1,\ldots,m$, set $t_k:= (k-1/2)\pi/m$, suppose that $\rho(A^M_{t_k,N})<\rho_0$ for $k=1,\ldots,m$, and recursively define $\mu_{k,\ell}$, for $\ell=1,2,\ldots$, by $\mu_{k,1}:= \rho_0$, and by \eqref{eq:lambda_{k,l}}.
Further, for $k=1,\ldots,m$, let $n_k$ denote the smallest integer such that $\sum_{\ell=1}^{n_k} \nu_{k,\ell} \geq 4\pi \rho_0$, and let
$$
R_{m,M,N} := \min_{\stackrel{k=1,\ldots,m}{\ell=1,\ldots,n_k}}\left(\frac{\nu_{k,\ell}}{4}+\frac{\nu_{k,\ell+1}}{2}\right).
$$
If
\begin{equation} \label{eq:estimate_to_check2}
\mathcal{L}_c(f,N) := \frac{C_4}{e^{2\pi Nc}-1} < \mathcal{R}_c(f,m,M,N) := r^2_0\left(1 + C_3\left(R_{m,M,N} - C_1(M)- \frac{\pi}{2m}\|B_N^M\|_\infty\right)^{-1} \right)^{-1},
\end{equation}
then $\rho(D_\Gamma;L^2(\Gamma)) < \rho_0$. Conversely, if $\rho(D_\Gamma;L^2(\Gamma)) < \rho_0$, then, provided $m$ and $M$ are sufficiently large, there exists $N_0\in \N$ such that \eqref{eq:estimate_to_check2} holds and $\rho(A_{t,N}^M)<\rho_0$ for $t\in [0,\pi]$ and all $N\geq N_0$.
\end{thm}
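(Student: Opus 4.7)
The plan is to mirror the one-sided proof (Theorem \ref{thm:spectral_radius}), transporting everything into the matrix-valued setting of Remark \ref{rem:matrix}. First, by Theorem \ref{cor:norm_and_spec}, Remark \ref{rem:sym}, and \eqref{eq:SpecSame2}, it suffices to show $\rho(\tilde K_t;(C[0,1])^2) < \rho_0$ for every $t \in [0,\pi]$. Given such a $t$, pick $k \in \{1,\ldots,m\}$ with $|t - t_k| \leq \pi/(2m)$, and apply Theorem \ref{thm:GspecRMfinal} (in its matrix form) with $K = \tilde K_t$, $K_N = \tilde K_{t,N}$, $A_N = A_{t,N}$, and $A_N^{\dag} = A^M_{t_k,N}$. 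The sequence $(\mu_{k,\ell}, \nu_{k,\ell})$ and the resulting lower bound $R_{m,M,N}$ of the statement are exactly those produced by Lemma \ref{lem:R*def} applied to $A^M_{t_k,N}$ (taking a minimum over $k$ so that the bound remains valid for whichever $k$ is nearest to $t$).

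For the forward implication, I need to verify the three inputs of Theorem \ref{thm:GspecRMfinal}: (i) $\|K_N\|_\infty \leq C_3$, by the bound \eqref{eq:normbound2}; (ii) $\|A_N - A_N^{\dag}\|_\infty \leq \|A_{t,N} - A^M_{t,N}\|_\infty + \|A^M_{t,N} - A^M_{t_k,N}\|_\infty \leq C_1(M) + \frac{\pi}{2m}\|B_N^M\|_\infty$, using the truncation bound \eqref{eq:C1M2} and the $t$-Lipschitz estimate from \eqref{eq:ElementBounds} and \eqref{eq:C2def}; (iii) $\|(K - K_N)K\|_\infty \leq C_4/(e^{2\pi Nc}-1)$, coming from Propositions \ref{prop:norm_estimate2} and \ref{prop:k_t_estimate general2} together with \eqref{eq:C4newdef}. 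Substituting these into the criterion \eqref{eq:keyboundfinalAlt} of Theorem \ref{thm:GspecRMfinal} (rather than \eqref{eq:keyboundfinal}, as in Theorem \ref{thm:spectral_radius}) produces exactly the hypothesis \eqref{eq:estimate_to_check2}, so that $\rho(\tilde K_t) < \rho_0$ follows.

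For the converse, suppose $\rho(D_\Gamma;L^2(\Gamma)) < \rho_0$, so that $\rho(\tilde K_t) < \rho_0$ for each $t \in [0,\pi]$. As in the proof of Theorem \ref{thm:GspecRMfinal}, for each fixed $t$ the resolvent $\|(\tilde K_{t,N} - \lambda I)^{-1}\|_\infty$ is bounded in $N \geq N_0(t)$ uniformly for $|\lambda| \geq \rho_0$. Combined with the global Lipschitz estimate $\|\tilde K_{t,N} - \tilde K_{s,N}\|_\infty = O(|s-t|)$ (uniform in $N$, the two-sided analog of \eqref{eq:C2b2}) and a compactness argument on $[0,\pi]$, this promotes to a bound uniform in $t$. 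The perturbation estimate \eqref{eq:PerBas} together with the truncation bound on $\|\tilde K_{t,N} - \tilde K^M_{t,N}\|_\infty$ then transfers the uniform bound to $\tilde K^M_{t,N}$, and Lemma \ref{lem:reduction_to_matrices}(iii) read through Remark \ref{rem:matrix} passes it to $A^M_{t,N}$; this gives $\rho(A^M_{t,N}) < \rho_0$ for $t \in [0,\pi]$ and $R_{m,M,N}$ bounded below by a positive constant, for all sufficiently large $M, N$. Since $C_1(M) \to 0$ as $M \to \infty$ and $\|B_N^M\|_\infty$ is bounded uniformly in $M,N$, the right-hand side of \eqref{eq:estimate_to_check2} stays bounded away from zero while the left-hand side decays like $e^{-2\pi N c}$, so \eqref{eq:estimate_to_check2} holds eventually. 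The only real obstacle is the bookkeeping of checking that every constant from the one-sided case has a two-sided analog of the same form — which the off-diagonal bounds \eqref{ltb1}--\eqref{ltb2} and the definitions \eqref{eq:C1M2}, \eqref{eq:normbound2}, \eqref{eq:C4newdef} have already been arranged to provide, together with the matrix-operator version of the Nystr\"om theory supplied by Remark \ref{rem:matrix}.
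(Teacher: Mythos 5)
Your proposal is correct and is essentially the paper's own proof: the paper proves this theorem in one line by declaring it "proved in the same way as Theorem \ref{thm:spectral_radius}, noting Remark \ref{rem:matrix}," and your write-up is exactly that adaptation, including the correct choice of criterion \eqref{eq:keyboundfinalAlt} over \eqref{eq:keyboundfinal} and the same compactness-in-$t$ argument for the converse.
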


\begin{rem} \label{rem:final}
The condition \eqref{eq:estimate_to_check2} can be written as
$$
S_c(\Gamma,m,M,N)<0,
$$
where $S_c(\Gamma,m,M,N):= \cL_c(f,N)-\cR_c(f,m,M,N)$.
For any $c>0$ such that the conditions of the first sentence of the above theorem are satisfied (and these conditions are necessarily satisfied for all sufficiently small $c>0$), we note that:

i) The proof of the above theorem, which follows that of Theorem \ref{thm:spectral_radius}, shows that, provided $\rho(D_\Gamma;L^2(\Gamma)) < \rho_0$, $R_{m,M,N}$ is positive and bounded away from zero for all sufficiently large $m,M,N\in \N$, so that the same holds for $\cR_c(f,m,M,N)$. Since also $\cL_c(f,N)\to 0$ as $N\to\infty$,  $S_c(\Gamma,m,M,N)<0$ for all sufficiently large $m$, $M$, and $N$, if $\rho(D_\Gamma;L^2(\Gamma)) < \rho_0$.

ii) For fixed $m,M,N\in \N$, evaluation of the functionals $\cL(f,N)$ and $\cR_c(f,m,M,N)$ requires inputs relating to the functions $f_\pm$ defined by \eqref{eq:gdef2}, namely: the constant $\alpha \in (0,1)$ and the bounds $\|g_\pm\|_0$, $\|g_\pm^\prime\|_0$, and $\|g_\pm^{\prime\prime}\|_0$ on $g$ defined by \eqref{eq:gdef2} (to evaluate $C_1(M)$ and $C_3$); the bounds $\|g_\pm\|_c$, $\|g_\pm^\prime\|_c$, and $\|g_\pm^{\prime\prime}\|_c$ (to evaluate $C_4$); the values $f_\pm(\alpha^{x_{p,N}+j})$, $f_\pm^\prime(\alpha^{x_{p,N}})$, and $f_\pm^{\prime\prime}(\alpha^{x_{p,N}})$, for $p=1,\ldots, N$ and $j=-M,\ldots,M$ (to compute the entries of the matrices $A_{t_k,N}^M$, for $k=1,\ldots,m$).

iii) If exact values of $\|g_\pm\|_0$, $\|g_\pm^\prime\|_0$, $\|g_\pm^{\prime\prime}\|_0$, $\|g_\pm\|_c$, $\|g_\pm^\prime\|_c$, and $\|g_\pm^{\prime\prime}\|_c$ are not available, it is enough to replace them with upper bounds which leads to upper bounds $\widehat C_1(M)$, $\widehat C_3$ and $\widehat C_4$ for $C_1(M)$, $C_3$ and $C_4$, respectively (with $\widehat C_1(M)\to 0$ as $M\to\infty$ at the same rate as $C_1(M)$). Let $\widehat \cL_c(f,N)$ and $\widehat \cR_c(f,m,M,N)$ be defined by \eqref{eq:estimate_to_check} with $C_1(M)$, $C_3$, $C_4$, replaced by their upper bounds, so that $\cL_c(f,N)\leq \widehat \cL_c(f,N)$ and $\widehat \cR_c(f,m,M,N)\leq \cR_c(f,m,M,N)$. Then the conclusions of Theorem \ref{thm:spectral_radius2} hold with  $\cL_c(f,N)$ and $\cR_c(f,m,M,N)$ replaced by $\widehat \cL_c(f,N)$ and $\widehat \cR_c(f,m,M,N)$, respectively; in particular $S_c(\Gamma,m,M,N)<0$ if $\widehat \cL_c(f,N) < \widehat \cR_c(f,m,M,N)$ and, provided $\rho(D_\Gamma;L^2(\Gamma)) < \rho_0$, $\widehat \cL_c(f,N) < \widehat \cR_c(f,m,M,N)$ for all sufficiently large $m$, $M$, and $N$.
\end{rem}

\subsection{The 2D case: lower bounds for the numerical range} \label{sec:num_range}

This paper is motivated by the question at the end of  \S\ref{sec:src_main} which notes that there exist (e.g., \cite[Fig.~3]{ChaSpe:21}) 2D examples of $\Gamma\in \mathcal{D}$ with $w_{\ess}(D_\Gamma)\gg \half$ and asks: is $\rho_{L^2(\Gamma),\ess}\geq \half$ for any of these examples? We will explore this
in \S\ref{sec:num_ex} where we will see that $\rho_{L^2(\Gamma),\ess)}< \half$ for each example we treat, supporting Conjecture \ref{con:kenigmod}. We will also see that $w_{\ess}(D_\Gamma)>\half$ for at least some of these examples, indeed that $W_{\ess}(D_\Gamma)\supset B_R:=\{z\in \C:|z|<R\}$ for $R$ substantially larger than $\half$.

As the route to obtain these estimates for $W_{\ess}(D_\Gamma)$, in this subsection we obtain lower bounds\footnote{By a {\em lower bound} for $W(D_\Gamma)$ we mean, simply, a set $S\subset \C$ such that $S\subset W(D_\Gamma)$.} for $W(D_\Gamma)=W_{\ess}(D_\Gamma)$ in the case when $\Gamma$ is a dilation invariant graph, precisely in the 2D case when either \eqref{eq:standing} or \eqref{eq:standing2} applies. It is enough to restrict attention to the one-sided case \eqref{eq:standing} as one easily sees (e.g., \cite[\S2.3]{ChaSpe:21}) that if $\Gamma$ satisfies \eqref{eq:standing2} and $\widetilde \Gamma := \{(x,f(x)):x>0\}$, then $\widetilde \Gamma$ satisfies \eqref{eq:standing} and
\begin{equation} \label{eq:inclusion}
W(D_{\widetilde \Gamma})\subset W(D_\Gamma).
\end{equation}

We obtain lower bounds for $W(D_\Gamma)$ in the case that \eqref{eq:standing} applies via the Nystr\"om method that we used in \S\ref{sec:oneside} to approximate $\sigma(D_\Gamma)$.
Firstly, by Corollary \ref{cor:norm_and_spec}, and since, for $t\in [-\pi,\pi]$, $\tilde L_t$, defined in the proof of Proposition \ref{prop:norm_estimate}, is unitarily equivalent to $\tilde K_{t}$ defined by \eqref{eq:tilK}, which is unitarily equivalent to $K_{t}$, we have that $W(\tilde L_t)\subset W(D_\Gamma)$.  For $p\in \N_{0}$, let $\Tc_p\subset L^2(0,1)$ denote the set of trigonometric polynomials of degree at most $p$. An orthonormal basis of $\Tc_p$ is given by $\set{e_{-p},\ldots,e_p}$, where $e_j(x) := e^{2\pi ijx}$. Consider the restricted numerical range
\[W_p(\tilde L_t) := \set{\langle\tilde L_t\phi,\phi\rangle : \phi \in \Tc_p,\norm{\phi}_{L^2(0,1)} = 1}.\]
It is clear that $W_p(\tilde L_t) \subset W(\tilde L_t)$ for $t\in [-\pi,\pi]$, and that (e.g., \cite[\S2.3]{ChaSpe:21}) $W_p(\tilde L_t)=W(T^{p,t})$, where $T^{p,t} = (T^{p,t}_{jk})_{j,k = -p}^p$ is the $(2p+1) \times (2p+1)$ matrix with $T^{p,t}_{jk} := \langle\tilde L_t e_k,e_j\rangle$, $j,k=-p,\dots,p$. Recalling from the proof of Proposition \ref{prop:norm_estimate} that $\tilde L_t$ has kernel $\tilde{L}_t(x,y) = e^{i(x-y)t}\tilde{K}_t(x,y)$, we will approximate $T^{p,t}$ by $T^{p,t,N}$, where
\begin{equation} \label{eq:TpNdef}
T^{p,t,N}_{jk} := \frac{1}{N^2}\sum\limits_{m,n = 1}^N e^{i(x_{m,N}-x_{n,N})t}\tilde K_t(x_{m,N},x_{n,N})e_k(x_{n,N})\overline{e_j(x_{m,N})}, \qquad j,k=-p,\ldots,p,
\end{equation}
recalling that $x_{m,N} := \tfrac{1}{N}\left(m-\tfrac{1}{2}\right)$, $m=1,\ldots,N$, and then approximate $T^{p,t,N}$ further by $T^{p,t,N,M}$, defined by \eqref{eq:TpNdef} but with $\tilde K_{t}$, given by the infinite sum \eqref{eq:ktall}, replaced by $\tilde K_{t}^M$, given by the finite sum \eqref{eq:B_t^M}. The following result, which uses the notation \eqref{eq:C1def}, will enable us to estimate the difference between $W_p(\tilde L_t)=W(T^{p,t})$ and $W(T^{p,t,N,M})$.

\begin{prop} \label{prop:NRerror}
Let $f$ and $c>0$ be as in Proposition \ref{prop:k_t_estimate general}, in particular satisfying \eqref{c:bound2}, so that $\tilde L_t(x,\cdot)$ and $\tilde L_t(\cdot,y)$ are analytic and bounded in $\Sigma_c$ for all $x,y,t\in \R$,  let $\tilde L^M_t(x,y) := e^{i(x-y)}\tilde K^M(x,y)$, for $x,y\in \R$ and $M\in \N$, and suppose that
$g$ is a bounded, analytic, $1$-periodic function on $\Sigma_c$.
Then, for $M,N\in \N$ and $t\in [-\pi,\pi]$,
\[\abs{\langle\tilde L_t g,g\rangle - \frac{1}{N^2}\sum\limits_{m,n = 1}^N \tilde{L}^{M}_{t}(x_{m,N},x_{n,N})g(x_{n,N})\overline{g(x_{m,N})}} \leq 2\norm{g}_0\norm{g}_c e^{\pi c}\frac{\|\tilde K_t\|_{0,c} + \|\tilde K_t\|_{c,0}}{e^{2\pi Nc}-1} + C_1(M).\]
\end{prop}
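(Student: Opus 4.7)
The plan is to decompose the total error into a pure quadrature error
\[E_1 := \langle \tilde L_t g, g \rangle - \tfrac{1}{N^2}\sum_{m,n = 1}^N \tilde L_t(x_{m,N}, x_{n,N})\, g(x_{n,N})\overline{g(x_{m,N})}\]
plus a truncation error $E_2$ obtained by replacing $\tilde L_t$ by $\tilde L_t^M$ in the finite sum. Since $\tilde L_t^M(x,y) - \tilde L_t(x,y) = e^{i(x-y)t}(\tilde K_t^M(x,y) - \tilde K_t(x,y))$ and $|\tilde K_t - \tilde K_t^M| \leq C_1(M)$ uniformly on $[0,1]^2$ by \eqref{eq:eMbound}, the truncation error satisfies $|E_2| \leq C_1(M)\|g\|_0^2$ in a single line; this yields the $C_1(M)$ term on the right-hand side (with the $\|g\|_0^2$ factor absorbed under the natural normalization $\|g\|_0 \leq 1$, as arises in the intended application to matrix entries with $g$ a Fourier basis element).

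For $E_1$, the key structural observation, taken from the proof of Proposition \ref{prop:norm_estimate}, is that $\tilde L_t(x,y)$ is $1$-periodic in each variable separately, and the slices $\tilde L_t(x,\cdot)$ and $\tilde L_t(\cdot,y)$ extend analytically to $\Sigma_c$. Since $|e^{i(x-y)t}| \leq e^{\pi c}$ whenever $x$ or $y$ lies in $\Sigma_c$ and $t\in[-\pi,\pi]$, one has $\|\tilde L_t\|_{c,0} \leq e^{\pi c}\|\tilde K_t\|_{c,0}$ and $\|\tilde L_t\|_{0,c} \leq e^{\pi c}\|\tilde K_t\|_{0,c}$. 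I would then iterate Theorem \ref{thm:Kress} by setting $\phi(x) := \overline{g(x)}\int_0^1 \tilde L_t(x,y)\,g(y)\,\rd y$ and its inner-rule analogue $\phi_N(x) := \overline{g(x)}\cdot\tfrac{1}{N}\sum_{n=1}^N \tilde L_t(x,x_{n,N})\,g(x_{n,N})$, and splitting
\[E_1 = \Big(\int_0^1 \phi(x)\,\rd x - \tfrac{1}{N}\sum_{m=1}^N \phi(x_{m,N})\Big) + \tfrac{1}{N}\sum_{m=1}^N (\phi - \phi_N)(x_{m,N}).\]

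Each piece is handled by a single invocation of Theorem \ref{thm:Kress}. For the first, $\phi$ is $1$-periodic (using that $\tilde L_t(\cdot,y)$ and $g$ both are) and analytic on $\Sigma_c$ with $\|\phi\|_c \leq e^{\pi c}\|\tilde K_t\|_{c,0}\|g\|_0\|g\|_c$, whence Theorem \ref{thm:Kress} delivers the $\|\tilde K_t\|_{c,0}$ contribution. For the second, at each fixed $m$ I apply Theorem \ref{thm:Kress} to $y\mapsto \tilde L_t(x_{m,N},y)g(y)$, whose $\Sigma_c$-sup-norm is at most $e^{\pi c}\|\tilde K_t\|_{0,c}\|g\|_c$, multiply through by $|g(x_{m,N})|\leq \|g\|_0$, and average over $m$; this yields the $\|\tilde K_t\|_{0,c}$ contribution. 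Summing the two pieces produces the stated first term $2e^{\pi c}\|g\|_0\|g\|_c(\|\tilde K_t\|_{0,c}+\|\tilde K_t\|_{c,0})/(e^{2\pi Nc}-1)$.

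The main subtlety is not any individual estimate but the careful bookkeeping: correctly pairing the two kernel norms $\|\tilde K_t\|_{c,0}$ and $\|\tilde K_t\|_{0,c}$ with the matching one-dimensional quadrature step, and assigning the factors $\|g\|_0$ versus $\|g\|_c$ according to whether the corresponding variable is sampled on the real axis (as a quadrature node, contributing a sup-norm over $[0,1]$) or is the complex argument inside the strip for Theorem \ref{thm:Kress} (contributing a sup-norm over $\Sigma_c$). Once this pairing is set out cleanly, the proof reduces to two applications of Theorem \ref{thm:Kress} plus the trivial uniform truncation bound.
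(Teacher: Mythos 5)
Your proof is correct and is essentially the paper's argument: the same splitting of the error into two iterated one-dimensional midpoint-rule errors (each handled by a single application of Theorem \ref{thm:Kress}, using the $1$-periodicity and analyticity of $\tilde L_t$ in each variable together with $\|\tilde L_t\|_{c,0}\le e^{\pi c}\|\tilde K_t\|_{c,0}$ and $\|\tilde L_t\|_{0,c}\le e^{\pi c}\|\tilde K_t\|_{0,c}$) plus the uniform truncation bound \eqref{eq:eMbound}, the only cosmetic difference being which variable is semi-discretized first. Your remark that the truncation term is really $C_1(M)\|g\|_0^2$ is accurate --- the paper's proof elides the same factor --- so on that point your write-up is, if anything, slightly more careful than the original.
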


\begin{proof}
\begin{align*}
&\abs{\langle\tilde L_t g,g\rangle - \frac{1}{N^2}\sum\limits_{m,n = 1}^N \tilde L_t(x_{m,N},x_{n,N})g(x_{n,N})\overline{g(x_{m,N})}}\\
\leq &\int_0^1 \abs{g(x)}\abs{\int_0^1 \tilde L_t (x,y)g(y) \, \mathrm{d}y - \frac{1}{N}\sum\limits_{n = 1}^N \tilde L_t (x,x_{n,N})g(x_{n,N})} \, \mathrm{d}x\\
&+ \frac{1}{N} \sum\limits_{n = 1}^N \abs{\int_0^1 \overline{g(x)}\tilde L_t (x,x_{n,N}) \, \mathrm{d}x - \frac{1}{N}\sum\limits_{m = 1}^N \overline{g(x_{m,N})}\tilde L_t (x_{m,N},x_{n,N})}\abs{g(x_{n,N})}\\
\leq &\norm{g}_0\frac{2\|\tilde L_t \|_{0,c}\norm{g}_c}{e^{2\pi Nc}-1} + \frac{2\norm{g}_c\|\tilde L_t \|_{c,0}}{e^{2\pi Nc}-1}\norm{g}_0,
\end{align*}
by Theorem \ref{thm:Kress}, noting that $g$ and, from the proof of Proposition \ref{prop:norm_estimate}, $\tilde L_t (x,\cdot)$, for $x\in \R$, and $\tilde L_t (\cdot,y)$, for $y\in \R$, are all $1$-periodic. The claimed result now follows immediately from \eqref{eq:eMbound} and by  noting that, from the proof of Proposition \ref{prop:norm_estimate}, $\|\tilde L_t\|_{0,c}\leq e^{\pi c} \|\tilde K_t\|_{0,c}$ and $\|\tilde L_t\|_{c,0}\leq e^{\pi c} \|\tilde K_t\|_{c,0}$.
\end{proof}

Clearly, $g \in \Tc^p$ with $\norm{g}_{L^2(0,1)} = 1$ if and only if
\begin{equation} \label{eq:grep}
g = \sum\limits_{j = -p}^p c_je_j \quad \text{with} \quad \sum\limits_{j = -p}^p \abs{c_j}^2 = 1,
\end{equation}
in which case
\[\norm{g}_c \leq \sum\limits_{j = -p}^p \abs{c_j}\norm{e_j}_c \leq \sum\limits_{j = -p}^p \abs{c_j}\norm{e_j}_c \leq \sum\limits_{j = -p}^p \abs{c_j}e^{2\pi p c} \leq \sqrt{2p+1} \; e^{2\pi p c}.\]
The following corollary follows immediately from this observation and Proposition \ref{prop:NRerror}, noting that, if $g$ and $c_{-p},\ldots,c_p$ are related by \eqref{eq:grep}, then the summation in Proposition \ref{prop:NRerror} coincides with $\sum_{j,k=-p}^p T_{j,k}^{p,t,N,M}c_k\overline{c_j}\in W(T^{p,t,N,M})$.

\begin{cor} \label{cor:num_range}
For $p\in \N_0$, $M,N\in \N$, $t\in [-\pi,\pi]$, where $c>0$ is as in Proposition \ref{prop:NRerror},
\[d_H(W_p(\tilde L_t ),W(T^{p,t,N,M})) \leq C_7(p,N,M) := 2(2p+1)e^{\pi c(2p+1)}\frac{\|\tilde K_{t}\|_{0,c} + \|\tilde K_{t}\|_{c,0}}{e^{2\pi Nc}-1} + C_1(M).\]
\end{cor}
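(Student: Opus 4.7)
The plan is to deduce the bound directly from Proposition \ref{prop:NRerror} by exploiting the bijection between unit vectors in $\Tc_p$ and the unit sphere in $\C^{2p+1}$ given by $g = \sum_{j=-p}^p c_j e_j \leftrightarrow (c_{-p},\ldots,c_p)$. For such $g$, I would first observe that
\[
\langle \tilde L_t g, g\rangle \in W_p(\tilde L_t)
\quad\text{while}\quad
\frac{1}{N^2}\sum_{m,n=1}^N \tilde L_t^M(x_{m,N},x_{n,N}) g(x_{n,N}) \overline{g(x_{m,N})} \in W(T^{p,t,N,M}),
\]
where the second membership follows by expanding each factor $g$ in the $e_k$ basis and using the definition \eqref{eq:TpNdef} (with $\tilde K_t$ replaced by $\tilde K_t^M$) to recognise the double sum as $\sum_{j,k=-p}^p T_{jk}^{p,t,N,M} c_k \overline{c_j}$, which belongs to $W(T^{p,t,N,M})$ because $\sum|c_j|^2=1$.

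Next I would estimate the two relevant norms of $g$ needed in Proposition \ref{prop:NRerror}. Since $|e_j(x)|=1$ for $x\in\R$, Cauchy--Schwarz gives
\[
\|g\|_0 \leq \sum_{j=-p}^{p} |c_j| \leq \sqrt{2p+1},
\]
and since $|e_j(z)|= e^{-2\pi j \Imag z}\leq e^{2\pi pc}$ for $z\in\Sigma_c$ and $|j|\leq p$, the same argument yields $\|g\|_c \leq \sqrt{2p+1}\,e^{2\pi pc}$. Plugging these into the right-hand side of Proposition \ref{prop:NRerror} produces the prefactor $2(2p+1)e^{\pi c(2p+1)}$, so that
\[
\bigl|\langle \tilde L_t g, g\rangle - \sum_{j,k=-p}^p T_{jk}^{p,t,N,M} c_k \overline{c_j}\bigr| \leq C_7(p,N,M).
\]

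Finally, since the correspondence $g\leftrightarrow (c_j)$ is a bijection between the unit spheres, every element of $W_p(\tilde L_t)$ lies within $C_7(p,N,M)$ of a corresponding element of $W(T^{p,t,N,M})$ and vice versa, which is exactly the Hausdorff distance bound claimed. There is no genuine obstacle here; the proof is essentially a translation of Proposition \ref{prop:NRerror} through the natural identification of $\Tc_p$ with $\C^{2p+1}$, and the only ``calculation'' is the elementary Cauchy--Schwarz-type estimate of $\|g\|_0$ and $\|g\|_c$.
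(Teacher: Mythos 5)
Your proposal is correct and is essentially the paper's own argument: the paper likewise bounds $\|g\|_0\leq\sqrt{2p+1}$ and $\|g\|_c\leq\sqrt{2p+1}\,e^{2\pi pc}$ for a unit vector $g=\sum_j c_je_j\in\Tc_p$, identifies the discrete double sum with the quadratic form $\sum_{j,k}T^{p,t,N,M}_{jk}c_k\overline{c_j}\in W(T^{p,t,N,M})$, and then reads off the Hausdorff bound from Proposition \ref{prop:NRerror} via the unit-sphere bijection. The prefactor $2(2p+1)e^{\pi c(2p+1)}$ comes out exactly as you compute it.
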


We can approximate $W(T^{p,t,N,M})$ by a standard method that dates back to \cite{Jo78}.
Choose $n\in \N$ and, for $\ell=0,\ldots,n$, let $\theta_\ell:= 2\pi \ell/n$, let $\lambda_\ell$ denote the largest eigenvalue of $\Real(e^{-i\theta_\ell}T^{p,t,N,M})$ and $x_\ell$ an associated unit eigenvector, and let $z_\ell:= \langle T^{p,t,N,M}x_\ell,x_\ell\rangle$, for $\ell=0,1,\ldots,n-1$, and $z_n:= z_0$. Then $W_n:= \conv(\{z_0,\ldots,z_n\})\subset W(T^{p,t,N,M})$
 and \cite{Jo78} $W_n\toH W(T^{p,t,N,M})$ as $n\to\infty$. The following simple corollary of the above results, in which we use the notations just introduced,
 will enable us to show that $B_R \subset W(\tilde L_t )\subset W(D_\Gamma)$ for concrete values of $R>0$ in the examples we treat in \S\ref{sec:num_ex}.

\begin{cor} \label{cor:num_range2}
Suppose that $p\in \N_0$, $M,N,n\in \N$, $t\in [-\pi,\pi]$,
 that $c>0$ is as in Proposition \ref{prop:NRerror}, and that $0$ is an interior point of $W_n$, in which case $z_\ell = \exp(i\gamma_\ell)|z_\ell|$, $\ell=0,\ldots,n$, with $\gamma_0\leq \gamma_1\leq \ldots \leq \gamma_n=\gamma_0+2\pi$.
Then, where $\theta_{\max} := \max_{\ell=1,\ldots,n} (\gamma_\ell-\gamma_{\ell-1})\in (0,2\pi]$ and $R_{\min}:= \min_{\ell=0,\ldots,n}|z_\ell|>0$, if $R^*:= R_{\min}\cos(\theta_{\max}/2)-C_7(p,t,N,M) > 0$, then $\overline{B_{R^*}}\subset W(D_\Gamma)$.
\end{cor}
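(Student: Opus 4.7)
The plan is to prove the inclusion in three steps. Note first that $R^{*}>0$ forces $\cos(\theta_{\max}/2)>0$, hence $\theta_{\max}<\pi$, which we use throughout. (1) Establish the purely geometric inclusion $\overline{B_R}\subset W_n$, where $R:=R_{\min}\cos(\theta_{\max}/2)$. (2) Combine this with the Hausdorff estimate of Corollary~\ref{cor:num_range} to deduce $\overline{B_{R^{*}}}\subset W_p(\tilde L_t)$. (3) Invoke the chain $W_p(\tilde L_t)\subset W(\tilde L_t)\subset W(D_\Gamma)$ recalled at the beginning of this subsection.

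The heart of the argument is (1). For an arbitrary direction $\phi\in[\gamma_0,\gamma_0+2\pi]$, let $\ell\in\{1,\ldots,n\}$ be such that $\gamma_{\ell-1}\leq\phi\leq\gamma_\ell$, so that $\theta:=\gamma_\ell-\gamma_{\ell-1}\leq\theta_{\max}<\pi$. The segment $[z_{\ell-1},z_\ell]\subset W_n$ avoids $0$ and subtends angle $\theta$ at the origin, so its argument varies monotonically from $\gamma_{\ell-1}$ to $\gamma_\ell$ and the ray from $0$ in direction $\phi$ meets it at a unique point $q(\phi)$. Parametrising by $z(t)=(1-t)z_{\ell-1}+tz_\ell$ with $r_1=|z_{\ell-1}|$, $r_2=|z_\ell|$, one has $|z(t)|^2=(1-t)^2r_1^2+t^2r_2^2+2t(1-t)r_1 r_2\cos\theta$. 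Minimising over $t\in[0,1]$ (with a case split on whether the critical point $t^{*}=r_1(r_1-r_2\cos\theta)/(r_1^2+r_2^2-2r_1 r_2\cos\theta)$ lies in $[0,1]$, equivalently whether $\cos\theta\leq r_{\min}/r_{\max}$), and using the identity $1-\cos\theta=2\sin^2(\theta/2)$ to reduce the interior case to the elementary inequality $r_1+r_2\geq 2r_{\max}\cos\theta$, one obtains $\min_{t\in[0,1]}|z(t)|\geq\min(r_1,r_2)\cos(\theta/2)\geq R$; in the boundary case the minimum is simply $\min(r_1,r_2)$ and the bound is immediate. Hence $|q(\phi)|\geq R$, and convexity of $W_n$ gives $[0,q(\phi)]\subset W_n$; varying $\phi$ yields $\overline{B_R}\subset W_n\subset W(T^{p,t,N,M})$.

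Step (2) is a soft convex-geometric fact: if $A,B\subset\C$ are compact convex with $d_H(A,B)\leq\epsilon$ and $\overline{B_r}\subset A$, then $\overline{B_{r-\epsilon}}\subset B$. Indeed, if some $p\in\overline{B_{r-\epsilon}}$ lay outside the closed convex set $B$, a separating hyperplane would produce $v\in\C$ with $|v|=1$ and $c\in\R$ such that $\Real(\bar v p)>c\geq\Real(\bar v b)$ for every $b\in B$; but the point $rv\in\overline{B_r}\subset A$ is within $\epsilon$ of some $b^{*}\in B$, whence $\Real(\bar v b^{*})\geq r-\epsilon\geq\Real(\bar v p)$, a contradiction. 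Applying this with $A=W(T^{p,t,N,M})$, $B=W_p(\tilde L_t)$, $r=R$, and $\epsilon=C_7(p,N,M)$ from Corollary~\ref{cor:num_range} gives $\overline{B_{R^{*}}}\subset W_p(\tilde L_t)$, and step (3) finishes the proof. The only part requiring care is the geometric estimate in (1): the algebraic case split on the location of $t^{*}$, while elementary, is the main obstacle; everything else is soft.
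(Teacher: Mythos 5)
Your proof is correct and follows essentially the same route as the paper's: establish $\overline{B_R}\subset W_n$ with $R=R_{\min}\cos(\theta_{\max}/2)$, transfer the ball inclusion across the Hausdorff bound of Corollary \ref{cor:num_range}, and conclude via $W_p(\tilde L_t)\subset W(\tilde L_t)\subset W(D_\Gamma)$. The paper simply asserts the first two steps without detail, whereas you supply the (correct) elementary geometry and the separating-hyperplane argument explicitly.
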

\begin{proof} Under the above assumptions, $\overline{B_R}\subset W_n$, where $R:= R_{\min}\cos(\theta_{\max}/2)$. Since $W_n\subset W(T^{p,t,N,M})$ and $W_p(\tilde L_t )\subset W(\tilde L_t )\subset W(D_\Gamma)$, it follows that $\overline{B_{R^*}}\subset W(D_\Gamma)$ by Corollary \ref{cor:num_range}.
\end{proof}

\section{Localization and deformation} \label{sec:LD}
Let $\Omega_- \subset \mathbb{R}^d$, $d \geq 2$, be a Lipschitz domain with boundary $\Gamma = \partial \Omega_-$ and outward-pointing unit normal vector $n_y = n^\Gamma_y$ at almost every $y \in \Gamma$. If $\psi \colon \mathbb{R}^d \to \mathbb{R}^d$ is a $C^1$-diffeomorphism, then $\psi(\Omega_-)$ is a Lipschitz domain with boundary $\psi(\Gamma)$ by \cite[Theorem 4.1]{HMT07}. Note that this is no longer true if $\psi$ is only assumed to be a bi-Lipschitz map (see \cite[Lemma 1.2.1.4]{Gr:85}, and the discussion in \cite{HMT07}). As before, $D_\Gamma$ will be the double-layer operator on $\Gamma$. To simplify notation we will use $\sim$ to denote equality up to compact operators, and abbreviate $D_\Gamma-\lambda I$ as $D_\Gamma-\lambda$. For $x\in \R^d$ and $r>0$, $B(x,r):= \{y\in \R^d:|y-x|<r\}$.

\subsection{Localization without deformation} \label{sec:localwithout}
To start, we will consider domains with locally-dilation-invariant boundaries $\Gamma \in \mathscr{D}$; recall Definition \ref{defn:ldi}. Note that being locally dilation invariant at $x$ is equivalent to the existence of an isometry $\psi_x : \R^d \to \R^d$ with $\psi_x(x) = 0$ and $\psi_x\big(B(x,\delta(x)) \cap \Gamma\big) = B(0,\delta(x)) \cap \Gamma_x$, for some $\delta(x)>0$, where $\Gamma_x$ is the graph of a Lipschitz continuous function with $\alpha\Gamma_x = \Gamma_x$. We also assume that $\psi_x$ preserves the orientation of the outer normal vector field, that is, the outward-pointing normal vector field on $B(x,\delta(x)) \cap \Gamma$ is mapped to the upwards-pointing normal vector field on $B(0,\delta(x)) \cap \Gamma_x$. Similarly, $\Gamma$ is locally $C^1$ at $x$ if there is a $\delta(x) > 0$ and an Euclidean map $\psi_x : \R^d \to \R^d$ with $\psi_x(x) = 0$ and $\psi_x(B(x,\delta(x)) \cap \Gamma) = B(0,\delta(x)) \cap \Gamma_x$, where $\Gamma_x$ is the graph of a $C^1$-function with compact support.  Again, we assume that $\psi_x$ preserves the orientation of the outer normal vector field.  We will use extensively below the notation $\delta(x)$, for $x\in \Gamma$, which will have one of the above meanings, depending on whether $\Gamma$ is locally dilation invariant at $x$ or is $C^1$ at $x$.

Note that $D_{\Gamma}$ is compact for every $C^1$ domain \cite{FaJoRi:78}, as we recalled in \S\ref{sec:main_sig}; this implies that also $D_{\widetilde \Gamma}$ is compact if $\widetilde \Gamma$ is the graph of a $C^1$-function with compact support. Consequently, if $\Gamma$ consists of different parts that are separated by $C^1$ areas, we are able to localize using sharp cut-off functions. In the following we equip  $\Gamma$ with the topology induced from $\R^d$ and the  standard surface measure. In particular, $\chi_E$ denotes the characteristic function of a subset $E \subset \Gamma$ that is measurable with respect to the surface measure on $\Gamma$.

\begin{lem} \label{lem:sharp_cutoff}
	Let $E \subset  \Gamma$ be a measurable subset and assume that $\Gamma$ is locally $C^1$ at every $x \in \partial E$. Then $D_{\Gamma}$ essentially commutes with $\chi_E$, i.e., the commutator $[D_\Gamma,\chi_E]:= D_\Gamma\chi_E-\chi_ED_\Gamma$ is compact.
\end{lem}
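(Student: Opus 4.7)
The kernel of $[D_\Gamma,\chi_E]$ is $K(x,y)=k(x,y)(\chi_E(y)-\chi_E(x))$, where $k(x,y)=\partial\Phi(x,y)/\partial n(y)$, and this kernel vanishes unless $x$ and $y$ lie on opposite sides of $\partial E$. My plan is to localise near $\partial E$---which is compact, being closed in the compact surface $\Gamma$---using the $C^1$ hypothesis there, and then to reduce the main local term to the compactness of the double-layer operator on a $C^1$-graph, as recorded just before the lemma. Writing
\[
[D_\Gamma,\chi_E]=\chi_{E^c}D_\Gamma\chi_E-\chi_E D_\Gamma\chi_{E^c},
\]
I will treat the two off-diagonal pieces symmetrically.

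For each $x_k\in\partial E$, let $\delta_k:=\delta(x_k)>0$ and $\psi_{x_k}$ be as in the definition of $\Gamma$ being locally $C^1$ at $x_k$, so that $\Gamma^{(k)}:=\psi_{x_k}^{-1}(\Gamma_{x_k})$ is a $C^1$-surface agreeing with $\Gamma$ on $B(x_k,\delta_k)$. I would pick a finite subcover $\partial E\subset\bigcup_{k=1}^N B(x_k,\delta_k/8)$, choose $\eta_k\in C_c^\infty(B(x_k,\delta_k/4))$ forming a partition of unity with $\eta:=\sum_k\eta_k\equiv 1$ on $\bigcup_k B(x_k,\delta_k/8)$, and insert $1=\eta+(1-\eta)$ on both sides of $D_\Gamma$ in $\chi_{E^c}D_\Gamma\chi_E$, producing four terms. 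The crucial geometric fact, proved by working in a single Lipschitz graph chart of $\Gamma$ and using that $\partial E$ separates $E$ from $E^c$ within that chart, is that there exist $r_0,C>0$ such that $|x-y|<r_0$ together with $\chi_E(x)\neq\chi_E(y)$ forces $\dist(x,\partial E),\,\dist(y,\partial E)\leq C|x-y|$. Taking $r_0$ so small that $Cr_0$ is less than the distance from $\bigcup_k B(x_k,\delta_k/8)$ to the complement of $\bigcup_k B(x_k,\delta_k/4)$, the three cross-terms containing a factor of $1-\eta$ have kernels supported in $\{|x-y|\geq r_0\}$, where $k$ is uniformly bounded; each is therefore Hilbert--Schmidt on the compact $\Gamma$, hence compact.

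What remains is the main term $\chi_{E^c}\eta D_\Gamma\eta\chi_E=\sum_{i,j}\chi_{E^c}\eta_i D_\Gamma\eta_j\chi_E$. Pairs $(i,j)$ with $\supp\eta_i\cap\supp\eta_j=\emptyset$ give operators with bounded kernels, hence compact. For the remaining pairs, the choice of radii forces $\supp\eta_i\cup\supp\eta_j\subset B(x_k,\delta_k)$ for some $k$; since the kernels of $D_\Gamma$ and $D_{\Gamma^{(k)}}$ coincide on $(\Gamma\cap B(x_k,\delta_k))^2$, one has $\eta_i D_\Gamma\eta_j=\eta_i D_{\Gamma^{(k)}}\eta_j$. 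As $D_{\Gamma^{(k)}}$ is compact on $L^2(\Gamma^{(k)})$ by \cite{FaJoRi:78}, so is this term, and multiplication by the bounded $\chi_{E^c}$ and $\chi_E$ preserves compactness. The same argument applied to $\chi_E D_\Gamma \chi_{E^c}$ then gives the lemma. The principal obstacle will be establishing the geometric claim above uniformly at Lipschitz scales---ensuring a uniform Lipschitz radius for $\Gamma$ exceeding $r_0$ and ruling out the possibility (irrelevant at small scales but conceivable globally) that two parts of $\Gamma$ on opposite sides of $\partial E$ are Euclidean-close while intrinsically far.
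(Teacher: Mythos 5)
Your argument is correct and reaches the same two endpoints as the paper's proof --- Hilbert--Schmidt compactness for pieces whose supports are separated, and \cite{FaJoRi:78}-compactness on $C^1$ graphs for the piece near $\partial E$ --- but the mechanism for discarding the far-from-$\partial E$ contributions is genuinely different. The paper introduces a sharp collar $E_o := (\partial E + B(0,\tfrac{\epsilon}{2}))\cap\Gamma$ and splits $E$ and $E^c$ into their intersections with $E_o$ and its complement; it then observes, by a purely soft compactness argument, that among the resulting four sets the only cross-pair without positive mutual distance is $(E\cap E_o,\,E^c\cap E_o)$, since the closures of any other cross-pair could only meet on $\overline{E}\cap\overline{E^c}=\partial E$, which one member of the pair avoids by a margin of $\epsilon/2$. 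This entirely sidesteps the quantitative chord-arc lemma ($\chi_E(x)\neq\chi_E(y)$ and $|x-y|<r_0$ imply $\dist(x,\partial E),\dist(y,\partial E)\leq C|x-y|$) that you rightly flag as your principal obstacle; your lemma is true and provable by the local-graph lifting you sketch, but it is precisely the Lipschitz-scale bookkeeping the paper's route avoids (and note one small slip in that bookkeeping: the threshold you need is $Cr_0<\dist(\partial E,\,\Gamma\setminus\set{\eta=1})$, not the distance between the two unions of balls). In the other direction, your explicit partition-of-unity reduction of the main term to single $C^1$ charts --- absorbing each overlapping pair $\supp\eta_i\cup\supp\eta_j$ into one ball $B(x_k,\delta_k)$ on which $\Gamma$ coincides with a $C^1$ graph --- supplies in detail what the paper compresses into the single assertion that $\chi_{E_o}D_\Gamma\chi_{E_o}$ is compact because $\overline{E}_o$ is $C^1$ everywhere. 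Each version buys something: the paper's is shorter and softer; yours is more self-contained at the $C^1$ step but pays for it with the quantitative geometry.
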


\begin{proof}
	Since $\partial E$ is a compact subset of $\Gamma$, there is an $\epsilon > 0$ such that
	\[(\partial E + B(0,\epsilon)) \cap \Gamma \subset  \bigcup\limits_{x \in \partial E} (B(x,\delta(x)) \cap \Gamma).\]
	Set $E_o := (\partial E + B(0,\tfrac{\epsilon}{2})) \cap \Gamma$. We know that $\chi_AD_{\Gamma}\chi_B$ is compact, even Hilbert--Schmidt, whenever $A$ and $B$ have positive distance from each other. We have
	\begin{align*}
		E = (E \cap E_o) \cup (E \setminus E_o) \quad \mbox{and} \quad
		E^c = (E^c \cap E_o) \cup (E^c \setminus E_o),
	\end{align*}
where $E^c := \Gamma \setminus E$. By inspection, we see that the four sets on the right of these equalities, if non-empty, have pairwise positive distance from each other except for $E \cap E_o$ and $E^c \cap E_o$. We thus have
	\begin{align*}
		\chi_ED_{\Gamma}\chi_{E^c} &\sim \chi_{E \cap E_o}D_{\Gamma}\chi_{E^c \cap E_o} = \chi_E\chi_{E_o}D_{\Gamma}\chi_{E_o}\chi_{E^c},\\
		\chi_{E^c}D_{\Gamma}\chi_E &\sim \chi_{E^c \cap E_o}D_{\Gamma}\chi_{E \cap E_o} = \chi_{E^c}\chi_{E_o}D_{\Gamma}\chi_{E_o}\chi_E.
	\end{align*}
	The operator $\chi_{E_o}D_{\Gamma}\chi_{E_o}$ is compact because $\overline{E}_o$ is $C^1$ everywhere by construction. We conclude that $[D_{\Gamma},\chi_E] = \chi_{E^c}D_{\Gamma}\chi_E - \chi_ED_{\Gamma}\chi_{E^c}$ is compact as well.
\end{proof}

In particular, this shows that if $\Gamma$ is locally $C^1$ at some point $x \in \Gamma$, then $D_\Gamma$ is not Fredholm, since if $E = B(x,\tfrac{\delta(x)}{2})$, then $\chi_E D_\Gamma \chi_E$ is compact, and Lemma~\ref{lem:sharp_cutoff} implies that $D_\Gamma \sim \chi_E D_\Gamma \chi_E + \chi_{E^c} D_\Gamma \chi_{E^c}$. Hence
\begin{equation} \label{eq:notFredholm}
0\in \sigma_{\ess}(\chi_E D_\Gamma \chi_E) \subset  \sigma_{\ess}(D_{\Gamma}).
\end{equation}
More generally, if $E_1,\ldots,E_N$ are measurable subsets of $\Gamma$ such that $\Gamma = \cup_{j = 1}^N E_j$, $E_j \cap E_k = \emptyset$ for $j \neq k$ and $\Gamma$ is locally $C^1$ at every $x \in \cup_{j = 1}^N \partial E_j$, then
$D_\Gamma \sim \sum_{j=1}^N \chi_{E_j} D_\Gamma \chi_{E_j}$
and thus
\begin{equation} \label{eq:ess_decomposition}
\sigma_{\ess}(D_{\Gamma}) = \bigcup\limits_{j = 1}^N \sigma_{\ess}(\chi_{E_j}D_{\Gamma}\chi_{E_j}).
\end{equation}

Equation \eqref{eq:ess_decomposition} is a localisation result. We have also the following, more substantial localisation result (cf.~\cite{El:92,Mi:99,LeCoPer:22,ChaSpe:21}) for the case where $\Gamma\in \mathscr{D}$.

\begin{thm} \label{thm:localization}
Let $\Omega_- \subset \R^d$ be a bounded Lipschitz domain such that $\Gamma = \partial\Omega_- \in \mathscr{D}$, and pick $x_1,\ldots,x_N \in \Gamma$ for which $\Gamma \subset  \bigcup\limits_{j = 1}^N B(x_j, \delta(x_j))$. Then
	\[\sigma_{\ess}(D_\Gamma) = \bigcup\limits_{x \in \Gamma} \sigma_{\ess}(D_{\Gamma_x}) = \bigcup\limits_{j = 1}^N \sigma_{\ess}(D_{\Gamma_{x_j}}).\]
\end{thm}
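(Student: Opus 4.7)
The proof proceeds by localization: I write $D_\Gamma$, modulo compact operators, as a sum of pieces that are locally isomorphic to the models $D_{\Gamma_{x_j}}$, and then match essential spectra. To begin, observe that every point $y \in \Gamma$ at which $\Gamma$ is locally dilation invariant but not locally $C^1$ (a \emph{singular} point) must coincide with some $x_j$: by assumption $y \in B(x_j, \delta(x_j))$ for some $j$, and under $\psi_{x_j}$ the set $\Gamma \cap B(x_j, \delta(x_j))$ maps to a portion of $\Gamma_{x_j}$, which has at most one singular point (none if $\Gamma_{x_j}$ is a compactly supported $C^1$ graph; only the origin if $\Gamma_{x_j}$ is dilation invariant). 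Hence the singular set of $\Gamma$ is finite and contained in $\{x_1, \ldots, x_N\}$. I then choose pairwise disjoint measurable sets $E_1, \ldots, E_N$ partitioning $\Gamma$, with $E_j \subset B(x_j, \delta(x_j))$ and $\bigcup_j \partial E_j$ (boundaries in $\Gamma$) lying in the $C^1$ part of $\Gamma$; such a partition exists because the finitely many singular points are interior to the balls and can be avoided by the boundaries. Iterating Lemma \ref{lem:sharp_cutoff} then gives $D_\Gamma \sim \sum_{j=1}^N \chi_{E_j} D_\Gamma \chi_{E_j}$ and so
\[\sigma_{\ess}(D_\Gamma) = \bigcup_{j=1}^N \sigma_{\ess}(\chi_{E_j} D_\Gamma \chi_{E_j}).\]

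\textbf{Identification with local compressions.} Because the double-layer kernel $(x-y)\cdot n(y)/|x-y|^d$ is invariant under orientation-preserving rigid motions of $\R^d$, the isometry $\psi_{x_j}$ induces an isometric isomorphism $L^2(E_j) \to L^2(F_j)$ identifying $\chi_{E_j} D_\Gamma \chi_{E_j}$ (on its essential support) with $\chi_{F_j} D_{\Gamma_{x_j}} \chi_{F_j}$, where $F_j := \psi_{x_j}(E_j) \subset B(0, \delta(x_j))$ is a bounded subset of $\Gamma_{x_j}$ containing a neighborhood of $0$. Consequently $\sigma_{\ess}(\chi_{E_j} D_\Gamma \chi_{E_j}) = \sigma_{\ess}(\chi_{F_j} D_{\Gamma_{x_j}} \chi_{F_j})$.

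\textbf{Compression equals full model.} The main step, and the principal technical obstacle, is to prove
\[\sigma_{\ess}(\chi_{F_j} D_{\Gamma_{x_j}} \chi_{F_j}) = \sigma_{\ess}(D_{\Gamma_{x_j}}).\]
If $\Gamma_{x_j}$ is a compactly supported $C^1$ graph, $D_{\Gamma_{x_j}}$ is compact by \cite{FaJoRi:78} and both sides reduce to $\{0\}$. If $\Gamma_{x_j}$ is dilation invariant with ratio $\alpha_{x_j}$, the relation $V_{\alpha_{x_j}} D_{\Gamma_{x_j}} = D_{\Gamma_{x_j}} V_{\alpha_{x_j}}$ from \S\ref{sec:FB} yields the unitary equivalence
\[V_{\alpha_{x_j}}^k \, \chi_{F_j} D_{\Gamma_{x_j}} \chi_{F_j} \, V_{\alpha_{x_j}}^{-k} = \chi_{\alpha_{x_j}^{-k} F_j} D_{\Gamma_{x_j}} \chi_{\alpha_{x_j}^{-k} F_j}, \qquad k \in \Z,\]
so the compression is the same --- unitarily --- at every scale. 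Combining this with a second application of Lemma \ref{lem:sharp_cutoff} to $D_{\Gamma_{x_j}}$ on $\Gamma_{x_j}$, split along a $C^1$ cross-section $\partial F_j$, decomposes $D_{\Gamma_{x_j}}$ modulo compacts as a near-$0$ compression plus a far-field compression, and the dilation symmetry forces the far-field piece to contribute nothing beyond the near-$0$ piece in essential spectrum. Rigorously executing this transport-by-dilation step --- showing that the scale-free essential spectrum of $D_{\Gamma_{x_j}}$ is captured in full by any single neighborhood of $0$ --- is the heart of the proof.

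\textbf{Cross equality.} The first equality in the theorem then follows from the second: any $x \in \Gamma \setminus \{x_1, \ldots, x_N\}$ is a $C^1$ point of $\Gamma$ by the initial observation, so $D_{\Gamma_x}$ is compact and $\sigma_{\ess}(D_{\Gamma_x}) = \{0\}$, and this is contained in every $\sigma_{\ess}(D_{\Gamma_{x_j}})$ because each such operator is either compact on an infinite-dimensional space (in the $C^1$ case) or fails to be Fredholm at any of its own $C^1$ points by the argument following \eqref{eq:notFredholm}.
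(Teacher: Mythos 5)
Your proposal correctly identifies the structure of the problem, but it leaves the decisive step unproved. The claim $\sigma_{\ess}(\chi_{F_j}D_{\Gamma_{x_j}}\chi_{F_j})=\sigma_{\ess}(D_{\Gamma_{x_j}})$, which you yourself describe as ``the heart of the proof'', is only asserted. Dilation symmetry gives the unitary equivalence of the single-scale compressions $\chi_{\alpha^{-k}F_j}D_{\Gamma_{x_j}}\chi_{\alpha^{-k}F_j}$, but the far-field compression $\chi_{F_j^c}D_{\Gamma_{x_j}}\chi_{F_j^c}$ lives on an unbounded set containing dilates of $F_j$ at \emph{all} large scales; it is not compact, and ``the dilation symmetry forces the far-field piece to contribute nothing beyond the near-$0$ piece'' is precisely the statement that needs an argument (a limit-operator or strong-convergence argument), not a consequence of the unitary equivalences. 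Moreover, splitting $D_{\Gamma_{x_j}}$ along $\partial F_j$ via Lemma \ref{lem:sharp_cutoff} is not licensed as stated: that lemma is proved for the boundary of a bounded Lipschitz domain, and its proof uses Hilbert--Schmidt bounds on $\chi_AD\chi_B$ for separated sets, which fail when $A$ and $B$ are unbounded. The paper never proves your compression identity. Instead, for the inclusion $\sigma_{\ess}(D_{\Gamma_x})\subset\sigma_{\ess}(D_\Gamma)$ it establishes the strong convergence $V_\alpha^n\Psi_x'\eta D_\Gamma\eta\Psi_xV_\alpha^{-n}\to D_{\Gamma_x}$ (see \eqref{eq:thm5.2}) and then, given a Fredholm regularizer $A$ of $D_\Gamma-\lambda$, uses the fact that $V_\alpha^nKV_\alpha^{-n}\to0$ strongly for compact $K$ to derive the lower bounds $\|\phi\|\le\|A\|\,\|(D_{\Gamma_x}-\lambda)\phi\|$ and the analogous bound for the adjoint, so that $D_{\Gamma_x}-\lambda$ is invertible; for the converse it glues the local regularizers $A_j$ using a Lipschitz partition of unity $\{\eta_j^2\}$ and compactness of the commutators $[\eta_j,D_\Gamma]$.

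A second, independent error is your preliminary claim that the singular set of $\Gamma$ is finite and contained in $\{x_1,\ldots,x_N\}$. Definition \ref{defn:ldi} does not require a dilation-invariant graph $\Gamma_x$ to be locally $C^1$ away from the origin: $\Gamma_x$ may have corners at points $\alpha^kz$, $k\in\Z$, accumulating at $0$, so $\Gamma$ may have infinitely many non-$C^1$ points. This undermines both your sharp-cutoff partition (you cannot in general arrange $\bigcup_j\partial E_j$ to lie in the locally $C^1$ part, which is the hypothesis of Lemma \ref{lem:sharp_cutoff}) and your derivation of the first equality of the theorem from the second, which relies on every $x\notin\{x_1,\ldots,x_N\}$ being a $C^1$ point. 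The paper sidesteps this by working with Lipschitz cut-offs throughout the main argument and proving the inclusion $\sigma_{\ess}(D_{\Gamma_x})\subset\sigma_{\ess}(D_\Gamma)$ directly for \emph{every} $x\in\Gamma$, using \eqref{eq:notFredholm} only for the locally $C^1$ case.
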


\begin{proof}
Let $x \in \Gamma$. If $\Gamma$ is locally $C^1$ at $x$, then, by \eqref{eq:notFredholm} and since $D_{\Gamma_x}$ is compact, $\sigma_{\ess}(D_{\Gamma_x}) = \set{0} \subset  \sigma_{\ess}(D_{\Gamma})$. So assume that $\Gamma$ is locally dilation invariant at $x$. Then there is an $\alpha \in (0,1)$ such that $\alpha\Gamma_x = \Gamma_x$. Let $\eta \from \Gamma \to [0,1]$ be a Lipschitz continuous function with support in $B(x,\delta(x)) \cap \Gamma$ that is equal to $1$ in a neighbourhood of $x$. Let $\psi_x \from \R^d \to \R^d$ be an isometry with $\psi_x(x) = 0$ and $\psi_x(B(x,\delta(x)) \cap \Gamma) = B(0,\delta(x)) \cap \Gamma_x$. Define $\Psi_x \from L^2(\Gamma_x) \to L^2(\Gamma)$ by
\[\Psi_x\phi(y) = \begin{cases} \phi(\psi_x(y)) & \text{if } y \in B(x,\delta(x)) \cap \Gamma, \\ 0 & \text{otherwise,} \end{cases}\]
which has adjoint $\Psi_x^\prime:L^2(\Gamma)\to L^2(\Gamma_x)$ given by
\[\Psi_x^\prime\phi(y) = \begin{cases} \phi(\psi_x^{-1}(y)) & \text{if } y \in B(0,\delta(x)) \cap \Gamma_x, \\ 0 & \text{otherwise}. \end{cases}\]
$\Psi_x$ and $\Psi_x^\prime$ are partial isometries with initial spaces $L^2\big(B(0,\delta(x)) \cap \Gamma_x\big)$ and $L^2\big(B(x,\delta(x)) \cap \Gamma\big)$, respectively. For $m\in \N$ the  composition $\Psi_x^\prime\eta^{m}\Psi_x:L^2(\Gamma_x)\to L^2(\Gamma_x)$ is the operation of multiplication by $(\eta')^m$, where $\eta'(y) := \eta(\psi^{-1}_x(y))$, for $y\in B(x,\delta(x)) \cap \Gamma_x$, $\eta'(y) := 0$, $y\in \Gamma_x\setminus B(x,\delta(x))$.

Let $V_{\alpha}$ be the unitary dilation as defined in \eqref{eq:dilation} but with $\Gamma$ replaced by $\Gamma_x$. Then
\[V_{\alpha}^n\eta'V_{\alpha}^{-n}\phi(y) = \begin{cases} \eta(\psi_x^{-1}(\alpha^n y))\phi(y) & \text{if } \alpha^n y \in B(0,\delta(x)) \cap \Gamma_x, \\ 0 & \text{otherwise}, \end{cases}\]
for $n \in \N$, $y \in \Gamma_x$ and $\phi \in L^2(\Gamma_x)$. By dominated convergence, it follows that
$V_{\alpha}^n\eta'V_{\alpha}^{-n}\phi \to \phi$ as $n\to\infty$
for all $\phi \in L^2(\Gamma_x)$. (We can see, similarly, that, for every $m\in \N$, $V_{\alpha}^n(\eta')^mV_{\alpha}^{-n}\phi \to \phi$ as $n\to\infty$
for all $\phi\in L^2(\Gamma_x)$.) Using that $\psi_x$ is an isometry, we further observe that $\Psi_x^\prime\eta D_{\Gamma}\eta\Psi_x = \eta'D_{\Gamma_x}\eta'$. This implies, since also $D_{\Gamma_x}V_\alpha = V_\alpha D_{\Gamma_x}$, that
\begin{equation} \label{eq:thm5.2}
V_{\alpha}^n\Psi_x^\prime\eta D_{\Gamma}\eta \Psi_xV_{\alpha}^{-n} = V_{\alpha}^n \eta' D_{\Gamma_x} \eta' V_{\alpha}^{-n} = (V_{\alpha}^n \eta' V_{\alpha}^{-n})D_{\Gamma_x}(V_{\alpha}^n \eta' V_{\alpha}^{-n}) \to D_{\Gamma_x}
\end{equation}
in the strong operator topology.

Now assume that $D_{\Gamma} - \lambda$ is Fredholm and let $A \from L^2(\Gamma) \to L^2(\Gamma)$ be a Fredholm regularizer of $D_{\Gamma} - \lambda$, so that the products $A(D_{\Gamma} - \lambda)$ and $(D_{\Gamma} - \lambda)A$ are  compact perturbations of the identity. Then, for any $\phi \in L^2(\Gamma_x)$,
\begin{equation} \label{eq:thm5.2_2}
\norm{\phi} \leq \norm{V_{\alpha}^n\Psi_x^\prime \eta A \eta (D_{\Gamma} - \lambda) \eta \Psi_xV_{\alpha}^{-n}\phi} + \norm{V_{\alpha}^n(I - \Psi_x^\prime \eta A \eta (D_{\Gamma} - \lambda) \eta \Psi_x)V_{\alpha}^{-n}\phi}.
\end{equation}
The first term can be estimated as
\begin{align*}
\norm{V_{\alpha}^n\Psi_x^\prime \eta A \eta (D_{\Gamma} - \lambda) \eta \Psi_xV_{\alpha}^{-n}\phi} &\leq \norm{A}\norm{\eta (D_{\Gamma} - \lambda) \eta \Psi_xV_{\alpha}^{-n}\phi}\\
&= \norm{A}\norm{V_{\alpha}^n\Psi_x^\prime \eta (D_{\Gamma} - \lambda) \eta\Psi_xV_{\alpha}^{-n}\phi},
\end{align*}
where we used that $V_{\alpha}$ is an isometry, $\Psi_x^\prime$ is an isometry on the range of the multiplication operator $\eta$, and $\norm{\eta} \leq 1$. By \eqref{eq:thm5.2},
\[\norm{V_{\alpha}^n\Psi_x^\prime \eta (D_{\Gamma} - \lambda) \eta\Psi_xV_{\alpha}^{-n}\phi} \to \norm{(D_{\Gamma_x} - \lambda)\phi},\]
as $n\to\infty$. The second term in \eqref{eq:thm5.2_2} can be estimated as
\begin{align*}
&\norm{V_{\alpha}^n(I - \Psi_x^\prime \eta A \eta (D_{\Gamma} - \lambda) \eta \Psi_x)V_{\alpha}^{-n}\phi}\\
&\quad \leq \norm{V_{\alpha}^n(I - \Psi_x^\prime \eta A(D_{\Gamma} - \lambda) \eta^2\Psi_x)V_{\alpha}^{-n}\phi} + \norm{V_{\alpha}^n\Psi_x^\prime \eta A [\eta,D_{\Gamma}] \eta \Psi_xV_{\alpha}^{-n}\phi}\\
&\quad \leq \norm{V_{\alpha}^n(I - \eta'^3)V_{\alpha}^{-n}\phi} + \norm{V_{\alpha}^n\Psi_x^\prime \eta (A(D_{\Gamma} - \lambda) - I) \eta^2\Psi_xV_{\alpha}^{-n}\phi} + \norm{V_{\alpha}^n\Psi_x^\prime \eta A [\eta,D_{\Gamma}] \eta\Psi_xV_{\alpha}^{-n}\phi}.
\end{align*}
Note that both $A(D_{\Gamma} - \lambda) - I$ and $[\eta,D_{\Gamma}]$ are compact. We also observe that for any compact operator $K \from L^2(\Gamma_x) \to L^2(\Gamma_x)$ we have $V_{\alpha}^nKV_{\alpha}^{-n} \to 0$ in the strong operator topology,
since $V_{\alpha}^{-n} \to 0$ in the weak operator topology.
Therefore, and using that $V_{\alpha}^n\eta'^{3} V_{\alpha}^{-n} \to I$, we obtain that the second term in \eqref{eq:thm5.2_2} tends to zero as $n\to\infty$. We conclude that $\norm{\phi} \leq \norm{A}\norm{(D_{\Gamma_x} - \lambda)\phi}$ for every $\phi\in L^2(\Gamma_x)$. The same argument also shows that $\norm{\phi} \leq \norm{A}\norm{(D_{\Gamma_x} - \lambda)^\prime\phi}$ for every $\phi\in L^2(\Gamma_x)$. It follows that $D_{\Gamma_x} - \lambda$ is invertible and thus Fredholm.

Conversely,  choose $x_1,\ldots,x_N \in \Gamma$ such that
$\Gamma \subset  \cup_{i=1}^N B(x_i, \delta(x_i))$, and suppose that $D_{\Gamma_{x_j}} - \lambda$ is Fredholm for $j=1,\ldots,N$, and let $A_{j}$ be a Fredholm regularizer of $D_{\Gamma_{x_j}} - \lambda$.
For every $j$ let $\psi_{x_j} \from \R^d \to \R^d$ be an isometry with $\psi_{x_j}(x_j) = 0$ and
$\psi_{x_j}(B(x_j,\delta(x_j)) \cap \Gamma) = B(0,\delta(x_j)) \cap \Gamma_{x_j}$.
Choose Lipschitz continuous functions $\eta_1,\ldots,\eta_N$ such that $\set{\eta_j^2 : j = 1,\ldots,N}$ is a partition of unity of $\Gamma$ subordinate to the sets $B(x_j,\delta(x_j))$. Now note that, since $D_{\Gamma_{x_j}}$ essentially commutes with $\eta_j'$, $A_j$ essentially commutes with $\eta_j'$ as well. This implies that
\begin{align*}
\sum\limits_{j = 1}^N \Psi_{x_j}\eta_j'A_{j}\Psi_{x_j}^\prime\eta_j(D_{\Gamma} - \lambda) &\sim \sum\limits_{j = 1}^N \Psi_{x_j} A_{j}\eta_j'\Psi_{x_j}^\prime(D_{\Gamma} - \lambda)\eta_j = \sum\limits_{j = 1}^N \Psi_{x_j} A_{j}\Psi_{x_j}^\prime\eta_j(D_{\Gamma} - \lambda)\eta_j\\
&= \sum\limits_{j = 1}^N \Psi_{x_j} A_{j}\eta_j'(D_{\Gamma_{x_j}} - \lambda)\eta_j'\Psi_{x_j}^\prime \sim \sum\limits_{j = 1}^N \Psi_{x_j}\eta_j'A_{j}(D_{\Gamma_{x_j}} - \lambda)\eta_j'\Psi_{x_j}^\prime\\
&\sim \sum\limits_{j = 1}^N \Psi_{x_j}\eta_j'^2\Psi_{x_j}^\prime = \sum\limits_{j = 1}^N \eta_j^2 = I.
\end{align*}
Similarly, $(D_{\Gamma} - \lambda)\sum\limits_{j = 1}^N \eta_j\Psi_{x_j}A_{j}\eta_j'\Psi_{x_j}^\prime \sim I$. Thus $D_{\Gamma} - \lambda$ is Fredholm.
\end{proof}

\subsection{Deformations} \label{sec:deform}

Let $\beta \in (0,1)$. We will now consider the situation where a Lipschitz domain $\Omega_-$ with boundary $\Gamma$ is deformed by a $C^{1,\beta}$-diffeomorphism
$\psi \from \R^d \to \R^d$ that is conformal or anti-conformal at a point $x \in \Gamma$, meaning that the Jacobian $J\psi(x)$ of $\psi$ at $x$ lies in $\mathbb{R} O(d)$, where $\mathbb{R} O(d) := \set{\lambda A \in \R^{d \times d} : \lambda \in \R, A \text{ orthogonal}}$.
 Due to the invariance of the double-layer potential under transformations from $\mathbb{R}O(d)$, we will for our purposes be able to assume that $J\psi(x) = I$. Let $D_\Gamma(\cdot,\cdot)$ denote the kernel of $D_{\Gamma}$. Likewise, $V_j$ will be the integral operator with kernel $V_j(\cdot,\cdot)$, $|D_{\Gamma}|$ will be the integral operator with kernel $|D_{\Gamma}(\cdot,\cdot)|$, and so on. We write
\begin{align} \label{eq:V3}
 D_{\psi(\Gamma)}(\psi(z), \psi(y))
 &= D_\Gamma(z,y) \notag\\
  &+ \underbrace{D_{\psi(\Gamma)}(\psi(z), \psi(y)) - \frac{(z-y) \cdot n^\Gamma(y)}{c_d|\psi(z) - \psi(y)|^d}  n^{\psi(\Gamma)}(\psi(y)) \cdot J\psi(y) n^\Gamma(y)}_{V_1(z,y)} \notag\\
  &+ \underbrace{D_\Gamma(z,y) (n^{\psi(\Gamma)}(\psi(y)) \cdot J\psi(y) n^\Gamma(y) - 1)}_{V_2(z,y)} \notag\\
  &+ \underbrace{D_\Gamma(z,y) n^{\psi(\Gamma)}(\psi(y)) \cdot J\psi(y) n^\Gamma(y) \left[\frac{|z-y|^d}{|\psi(z) - \psi(y)|^d} - 1 \right]}_{V_3(z,y)}.
 \end{align}
The term $V_1$ is weakly singular, satisfying an estimate of the form $|V_1(z,y)| \leq C_{\beta} |z-y|^{1+\beta-d}$ for $\beta \in (0,1)$ and almost every $z,y \in \Gamma$, see \cite[Lemma~6]{Med92}. (In order to apply Medkov\'a's results from \cite{Med92}, note that the reduced boundary $\partial_\ast \Omega_- \subset  \Gamma$ has full surface measure for a Lipschitz domain $\Omega_-$, cf. \cite[p. 209]{EvansGariepy92}.) Therefore the operator $V_1 \colon L^2(\Gamma) \to L^2(\Gamma)$ is compact.

We view $V_2$ as the composition of $D_\Gamma$ with a multiplication operator, noting that by \cite[Lemma~3]{Med92} we have
\[\lim_{\rho \to 0^+} \esssup\limits_{y \in \Gamma \cap B(x, \rho)} |n^{\psi(\Gamma)}(\psi(y)) \cdot J\psi(y) n^\Gamma(y) - 1| = 0.\]
In particular, for every $\epsilon > 0$, there is $\rho > 0$ such that the operator norm of
\[V_2 \colon L^2(\Gamma \cap B(x,\rho)) \to L^2(\Gamma)\]
is smaller than $\epsilon$.

For a bounded Lipschitz domain with boundary $\Gamma$, let $\mathscr{A}_2 = \mathscr{A}_2(\Gamma)$ denote the algebra of integral operators $K$ such that $|K| \colon L^2(\Gamma) \to L^2(\Gamma)$ is bounded.
We will work within the class of domains such that $D_\Gamma \in \mathscr{A}_2$ in this subsection.  The preceding considerations show that $D_{\psi(\Gamma)} \in \mathscr{A}_2(\psi(\Gamma))$ if $D_\Gamma \in \mathscr{A}_2(\Gamma)$. Since the relationship between $\Gamma$ and $\psi(\Gamma)$ is symmetric, we see that
\begin{equation} \label{eq:A2}
D_{\psi(\Gamma)} \in \mathscr{A}_2(\psi(\Gamma)) \textrm{ if and only if } D_\Gamma \in \mathscr{A}_2(\Gamma).
\end{equation}

Note that $D_\Gamma \in \mathscr{A}_2$ for any $C^{1,\beta}$-boundary $\Gamma$, since $D_\Gamma$ has a weakly singular kernel in this case.
It is also known that $|D_{\widetilde{\Gamma}}| \colon L^2(\widetilde{\Gamma}) \to L^2(\widetilde{\Gamma})$ is bounded for any of the following \textit{graphs} $\widetilde{\Gamma}$:
\begin{itemize}	
	\item[(i)] a 2D wedge (modelling a polygonal corner) \cite{Mi:02};
	\item[(ii)] a 3D wedge (modelling a polyhedral edge) \cite{Pe:19};
	\item[(iii)] a 3D polyhedral cone \cite{El:92};
	\item[(iv)] a 3D smooth cone (such as a circular cone) \cite{QiNi:12}.
\end{itemize}
Furthermore, this statement may be extended to dilation-invariant domains which are built from such graphs.

\begin{lem}\label{lem:A2graphs}
	Suppose that $\widetilde{\Gamma}$ is a dilation-invariant graph, i.e., $\widetilde{\Gamma} = \alpha \widetilde{\Gamma}$, for some $\alpha \in (0,1)$,  and suppose that, at every $x \in \widetilde{\Gamma} \setminus \set{0}$, $\widetilde{\Gamma}$ is either locally $C^{1,\beta}$ for some $\beta \in (0,1)$, or coincides locally with a graph of type (i), (ii), (iii), or (iv). Then $|D_{\widetilde{\Gamma}}|$ is bounded on $L^2(\widetilde{\Gamma})$.
\end{lem}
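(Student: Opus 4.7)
The strategy is to apply the discrete convolution framework of \S\ref{sec:FB} to $|D_{\widetilde\Gamma}|$ rather than to $D_{\widetilde\Gamma}$ itself. What makes this possible is that the kernel of $|D_{\widetilde\Gamma}|$ inherits the scaling law $|D_{\widetilde\Gamma}(\alpha x,\alpha y)|=\alpha^{-(d-1)}|D_{\widetilde\Gamma}(x,y)|$ of the kernel of $D_{\widetilde\Gamma}$, so $|D_{\widetilde\Gamma}|$ also commutes with the unitary dilation $V_\alpha$. Fixing a bounded Lipschitz fundamental piece $\Gamma_0\subset\widetilde\Gamma\setminus\{0\}$ with $\widetilde\Gamma=\bigcup_{j\in\Z}\alpha^j\Gamma_0$ (disjoint up to measure zero) and $\Gamma_j=\alpha^j\Gamma_0$, the argument following \eqref{eq:DjDef}--\eqref{eq:conv} then expresses $|D_{\widetilde\Gamma}|$ as unitarily equivalent to the discrete convolution on $\ell^2(\Z,L^2(\Gamma_0))$ with operator-valued coefficients $|D|_j:=V_\alpha^j P_j\,|D_{\widetilde\Gamma}|_{L^2(\Gamma_0)}$. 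By the standard Wiener-algebra estimate $\||D_{\widetilde\Gamma}|\|\leq\sum_{j\in\Z}\||D|_j\|_{\mathcal{L}(L^2(\Gamma_0))}$, it suffices to show that this sum is finite.

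For $|j|\geq 2$ this is immediate: the Hilbert--Schmidt estimate of Proposition \ref{prop:convergence} depends only on pointwise bounds for the kernel and so \eqref{eq:HS_estimates} holds verbatim for $\||D|_j\|_{\mathrm{HS}}$, yielding geometric decay and hence absolute summability of the tail.

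The main work is the finitely many central terms $j\in\{-1,0,1\}$, where it is enough to show $|D_{\widetilde\Gamma}|\colon L^2(\Gamma_0)\to L^2(V)$ is bounded with $V:=\Gamma_{-1}\cup\Gamma_0\cup\Gamma_1$, a bounded subset of $\widetilde\Gamma$ whose closure is bounded away from $0$. I would split the kernel as
\[
|D_{\widetilde\Gamma}(x,y)| = K_{\mathrm{near}}(x,y) + K_{\mathrm{far}}(x,y), \qquad K_{\mathrm{near}}(x,y):=|D_{\widetilde\Gamma}(x,y)|\chi_{\{|x-y|<\delta\}},
\]
for some small $\delta>0$. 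Since $\overline V$ is compact and bounded away from $0$, $K_{\mathrm{far}}$ is uniformly bounded on $V\times\Gamma_0$, giving a Hilbert--Schmidt contribution. By hypothesis, each point of $\overline V$ has a neighborhood on which $\widetilde\Gamma$ coincides, up to a rigid motion, either with a $C^{1,\beta}$-graph or with a graph of one of the types (i)--(iv); cover $\overline V$ by finitely many such neighborhoods $U_1,\dots,U_N$ with associated reference graphs $\Gamma^{(1)},\dots,\Gamma^{(N)}$, choose $\delta$ smaller than the Lebesgue number of the cover, and let $\{\eta_k\}_{k=1}^N$ be a Lipschitz partition of unity on $\overline V$ with $\mathrm{supp}\,\eta_k\subset U_k$. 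For $x\in\mathrm{supp}\,\eta_k$ and $|x-y|<\delta$, both $x$ and $y$ lie in $U_k$, so the local geometry near $y$ matches that of $\Gamma^{(k)}$ and $|D_{\widetilde\Gamma}(x,y)|=|D_{\Gamma^{(k)}}(x,y)|$. Consequently the operator with kernel $\eta_k(x)K_{\mathrm{near}}(x,y)$ is dominated in kernel (pointwise, as a nonnegative integral operator) by $\|\eta_k\|_\infty\,|D_{\Gamma^{(k)}}|$, which is bounded on $L^2(\Gamma^{(k)})$ by assumption. Summing over $k$ and combining with $K_{\mathrm{far}}$ gives the required bound.

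The principal obstacle is the kernel-matching step: one must arrange the cover and the cutoff $\delta$ so that, in the near-diagonal region, \emph{both} arguments of the kernel lie simultaneously in a single $U_k$, because only then does $|D_{\widetilde\Gamma}(x,y)|$ agree with $|D_{\Gamma^{(k)}}(x,y)|$ and the local-graph hypothesis apply. The Lebesgue-number trick handles this cleanly, but crucially relies on $\overline V$ being compact and bounded away from the singular point $0$, which is precisely where dilation invariance plays its role by reducing the globally-unbounded problem to a bounded neighborhood.
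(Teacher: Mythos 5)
Your proposal is correct and follows essentially the same route as the paper: the paper's proof likewise exploits the convolution structure \eqref{eq:conv}, invokes Proposition \ref{prop:convergence} for the terms with $|j|\geq 2$, and simply asserts that the hypothesis yields boundedness of $|D_j|$ for $j=-1,0,1$. Your covering/partition-of-unity argument fills in the detail of that last assertion (with the standard minor fix that the supports of the $\eta_k$ should sit at distance greater than $\delta$ inside the corresponding $U_k$, so that both arguments of the near-diagonal kernel genuinely land in the same chart).
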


\begin{proof}
By hypothesis, we know that $\abs{D_{j}} \colon L^2(\widetilde{\Gamma}_0) \to L^2(\widetilde{\Gamma}_0)$ is bounded for $j=-1,0,1$, where $D_j$ and $\widetilde{\Gamma}_0$ are as in \S\ref{sec:FB}. The lemma therefore follows from Proposition~\ref{prop:convergence} and \eqref{eq:conv}.
\end{proof}
A simple localization argument now demonstrates that the condition $D_\Gamma \in \mathscr{A}_2$ is quite general.
\begin{thm} \label{thm:locals}
	Suppose that at every $x \in \Gamma$, $\Gamma$ is locally $C^{1,\beta}$, for some $\beta\in (0,1)$, or coincides locally
with a graph satisfying the hypothesis of Lemma~\ref{lem:A2graphs}. Then $D_\Gamma \in \mathscr{A}_2(\Gamma)$.
\end{thm}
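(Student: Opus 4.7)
I want to establish $L^2(\Gamma)$-boundedness of the positive-kernel integral operator $|D_\Gamma|$. I would exploit the fact that every kernel in sight is non-negative, so the standard comparison principle (if $0 \leq K_1(x,y) \leq K_2(x,y)$ pointwise and $K_2$ acts boundedly on $L^2$, then so does $K_1$, via testing on $|\phi|$) reduces the task to pointwise-dominating $|D_\Gamma(x,y)|$ by a finite sum of non-negative kernels whose operators I can bound explicitly using the two structural hypotheses.

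\textbf{Setup and localization.} First I would pick $x_1,\ldots,x_N \in \Gamma$ with $\Gamma \subset \bigcup_k B(x_k,\delta(x_k)/2)$, where $\delta(x_k)$ is the local radius from the hypothesis, and set $r^* := \tfrac12\min_k \delta(x_k) > 0$. Splitting
\[
|D_\Gamma(x,y)| = |D_\Gamma(x,y)|\,\mathbf{1}_{|x-y|\geq r^*} + |D_\Gamma(x,y)|\,\mathbf{1}_{|x-y|< r^*},
\]
the first summand has bounded kernel on the compact set $\Gamma\times\Gamma$, so is Hilbert--Schmidt. For the second, the key observation is that if $|x-y| < r^*$ and $x \in B(x_k,\delta(x_k)/2)$, the triangle inequality forces $y \in B_k := B(x_k,\delta(x_k))$ as well. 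Hence
\[
|D_\Gamma(x,y)|\,\mathbf{1}_{|x-y|<r^*} \;\leq\; \sum_{k=1}^N \chi_{B_k}(x)\,|D_\Gamma(x,y)|\,\chi_{B_k}(y),
\]
and by comparison it suffices to show each single-ball piece defines a bounded operator on $L^2(\Gamma)$.

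\textbf{The two local cases.} At each $x_k$ I would invoke the appropriate structural hypothesis. In the $C^{1,\beta}$ case, parametrizing $\Gamma$ near $x_k$ as a $C^{1,\beta}$ graph and using the Taylor-remainder bound yields $|(x-y)\cdot n(y)| \leq C|x-y|^{1+\beta}$, hence $|D_\Gamma(x,y)| \leq C|x-y|^{\beta-(d-1)}$ on $B_k \cap \Gamma$. Since $\beta>0$, this is a weakly singular kernel with uniformly bounded row and column sums on the compact piece $B_k \cap \Gamma$, and Schur's test then delivers $L^2$-boundedness (indeed compactness) of the associated operator. In the locally-dilation-invariant case, the orientation-preserving isometry $\psi_{x_k}$ of \S\ref{sec:localwithout} carries $B_k\cap\Gamma$ onto $\psi_{x_k}(B_k)\cap \widetilde\Gamma_{x_k}$ while preserving distances and the outward normal, so $D_\Gamma(x,y) = D_{\widetilde\Gamma_{x_k}}(\psi_{x_k}(x),\psi_{x_k}(y))$ there. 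Pulling back, the localized operator is unitarily equivalent to a cutoff of $|D_{\widetilde\Gamma_{x_k}}|$, which is bounded on $L^2(\widetilde\Gamma_{x_k})$ by Lemma~\ref{lem:A2graphs}.

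\textbf{Main obstacle.} The substantive new input is Lemma~\ref{lem:A2graphs}, which is already in hand; the far-diagonal Hilbert--Schmidt bound and the Schur estimate in the $C^{1,\beta}$ case are routine. The only real subtlety is arranging the cover and threshold $r^*$ so that every near-diagonal pair $(x,y)$ lies in a \emph{common} local ball $B_k$ -- a cleaner formulation than, say, decomposing into products of overlapping balls from distinct charts, where the sign and orientation conventions of $D_\Gamma$ across different local models could become awkward even though the final statement only concerns $|D_\Gamma|$.
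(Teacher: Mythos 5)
Your argument is correct and is precisely the "simple localization argument" the paper alludes to but does not write out: a finite cover by the local balls, a Hilbert--Schmidt far-diagonal piece, a Schur-test bound on the weakly singular kernel over the $C^{1,\beta}$ patches, and the isometric identification with the model graphs handled by Lemma~\ref{lem:A2graphs}, all glued by the positive-kernel comparison principle. The only detail worth keeping explicit in a final write-up is the one you already flagged — choosing $r^*$ at most half the minimal covering radius so that every near-diagonal pair lands in a single chart — and with that your proof is complete.
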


Returning to the decomposition of  $D_{\psi(\Gamma)}(\psi(z), \psi(y))$, we note, for the third term $V_3$, the following consequence of the fact that $J\psi(x) = I$.
\begin{lem} \label{lem:kernelmult}
Suppose that $D_\Gamma \in \mathscr{A}_2$. Then
$|V_3|(z,y) = |D_\Gamma|(z,y) Q(z,y)$,
where $Q \in L^\infty(\Gamma \times \Gamma)$ is non-negative and
$\lim_{\rho \to 0^+} \esssup_{y,z \in \Gamma \cap B(x, \rho) } Q(z,y) = 0$,
where the essential supremum is taken with respect to the product measure on $\Gamma \times \Gamma$.
\end{lem}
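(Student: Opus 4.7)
The plan is to read off $Q$ directly from the definition of $V_3$ given in \eqref{eq:V3}: setting
\[
Q(z,y) := \bigl|n^{\psi(\Gamma)}(\psi(y)) \cdot J\psi(y)\, n^\Gamma(y)\bigr|\,\left|\frac{|z-y|^d}{|\psi(z) - \psi(y)|^d} - 1\right|,
\]
we have $|V_3|(z,y) = |D_\Gamma|(z,y)\,Q(z,y)$ with $Q \geq 0$ by construction. The hypothesis $D_\Gamma \in \mathscr{A}_2$ enters only to make the factorisation useful for the subsequent operator-theoretic arguments in the paper; for the present lemma it plays no role in the derivation of $Q$ itself.

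Next I would verify that $Q \in L^\infty(\Gamma\times\Gamma)$. Since $\Gamma$ is compact and $\psi$ is a $C^1$-diffeomorphism of $\R^d$, $\|J\psi\|_{L^\infty(\Gamma)} < \infty$ bounds the first factor of $Q$. Furthermore, on any bounded open set containing $\Gamma$, $\psi$ is bi-Lipschitz, so there exist $c, C>0$ with $c|z-y| \leq |\psi(z)-\psi(y)| \leq C|z-y|$ for all $y,z \in \Gamma$, which bounds the second factor.

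The main step, and the only delicate one, is the uniform vanishing as $y,z \to x$. The key inputs are that $J\psi(x) = I$ and that $J\psi \in C^{0,\beta}$, giving constants $C_0>0$ and $\rho_0>0$ such that $|J\psi(w) - I| \leq C_0|w-x|^\beta$ for all $w \in B(x,\rho_0)$. For $y,z \in B(x,\rho)$ with $\rho \leq \rho_0$, the segment $w_t := y + t(z-y)$, $t\in [0,1]$, stays in $B(x,\rho)$ by convexity (in $\R^d$, not in $\Gamma$, but this is enough since $\psi$ is defined on all of $\R^d$), so that
\[
\psi(z) - \psi(y) - (z-y) = \int_0^1 \bigl[J\psi(w_t) - I\bigr](z-y)\,\rd t
\]
yields $|\psi(z)-\psi(y) - (z-y)| \leq C_0 \rho^\beta\,|z-y|$ and hence $\bigl||\psi(z)-\psi(y)| - |z-y|\bigr| \leq C_0 \rho^\beta\,|z-y|$.

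Finally I would translate this into a bound on the ratio of $d$-th powers. Writing $a := |z-y|$, $b := |\psi(z)-\psi(y)|$, the previous display reads $|a-b|\leq C_0\rho^\beta a$, so for $\rho$ small enough that $C_0\rho^\beta \leq 1/2$, one has $b \in [a/2, 3a/2]$, and the elementary inequality $|a^d - b^d| \leq d\max(a,b)^{d-1}|a-b|$ gives $|a^d/b^d - 1| \leq \tilde C \rho^\beta$ for some $\tilde C$ independent of $y,z \in B(x,\rho)\cap\Gamma$. Combining with the uniform bound on the first factor of $Q$ delivers $\esssup_{y,z\in\Gamma\cap B(x,\rho)} Q(z,y)\to 0$ as $\rho\to 0^+$, completing the proof. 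The only subtlety to watch for is not accidentally needing $y,z$ to lie on $\Gamma$ in the integral representation — working with $\psi$ as a map on all of $\R^d$ sidesteps this.
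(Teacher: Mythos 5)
Your proof is correct and is precisely the argument the paper leaves implicit (the lemma is stated there without proof, merely as ``a consequence of the fact that $J\psi(x)=I$''): one reads $Q$ off from \eqref{eq:V3}, bounds it using the boundedness of $J\psi$ and the bi-Lipschitz property of $\psi$ on a neighbourhood of the compact set $\Gamma$, and obtains the vanishing near $x$ from the $C^{0,\beta}$ modulus of continuity of $J\psi$ together with $J\psi(x)=I$, exactly as you do. Your remark that the hypothesis $D_\Gamma\in\mathscr{A}_2$ plays no role in the kernel factorisation itself is also accurate; it is needed only afterwards, in the passage from this factorisation to the operator-norm statement \eqref{eq:locdiff_limit}.
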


Let $\eta_\rho \from \Gamma \to [0,1]$ be a Lipschitz continuous function with $\supp \eta_{\rho} \subset B(x,\rho)$ such that $\eta_\rho$ is equal to $1$ in a neighbourhood of $x$. Furthermore, denote the operator of composition with $\psi$ by
\[C_{\psi} \colon L^2(\psi(\Gamma)) \to L^2(\Gamma), \quad C_\psi \phi(z) = \phi(\psi(z)),\]
and let $\eta_{\rho}' = \eta_{\rho} \circ \psi^{-1}$. Note also that we have the change of variables formula
\begin{equation} \label{eq:chofvar}
\int_{\psi(\Gamma)} \phi(y) \, \mathrm{d}s_{\psi(\Gamma)}(y) = \int_\Gamma (1+T(y))\phi(\psi(y)) \, \mathrm{d}s_{\Gamma}(y),
\end{equation}
where $T \in L^\infty(\Gamma)$ and $\lim\limits_{y \to x} T(y) = 0$. The operator
\[\eta_{\rho}' D_{\psi(\Gamma)} \eta_{\rho}' \colon L^2(\psi(\Gamma)) \to L^2(\psi(\Gamma))\]
is then similar to
\[C_{\psi} \eta_{\rho}' D_{\psi(\Gamma)} \eta_{\rho}' C_{\psi^{-1}} \colon L^2(\Gamma) \to L^2(\Gamma),\]
which, by \eqref{eq:chofvar}, is an integral operator with kernel
$(1+T(y)) \eta_{\rho}(z) D_{\psi(\Gamma)}(\psi(z), \psi(y)) \eta_{\rho}(y)$.
By the previous calculations, if $D_\Gamma \in \mathscr{A}_2$,  we therefore find that
\begin{equation} \label{eq:tildeV}
C_{\psi} \eta_{\rho}' D_{\psi(\Gamma)} \eta_{\rho}' C_{\psi^{-1}} \sim \eta_{\rho} D_\Gamma \eta_{\rho} + \eta_{\rho}\tilde{V}\eta_{\rho},
\end{equation}
where $\tilde{V}$ is an integral operator induced by a kernel of the form $\tilde{V}(z,y) = D_{\Gamma}(z,y)\tilde{Q}(z,y)$ with $\tilde{Q} \in L^\infty(\Gamma \times \Gamma)$ such that
\begin{equation} \label{eq:tildeQvan}
\lim_{\rho \to 0^+} \esssup\limits_{y,z \in \Gamma \cap B(x, \rho)} |\tilde{Q}(z,y)| = 0.
\end{equation}
For $\phi_1,\phi_2 \in L^2(\Gamma)$ with support in $B(x,\rho)$ we have
\[\abs{\sp{\tilde{V}\phi_1}{\phi_2}} \leq \int_{\Gamma} \int_{\Gamma} \abs{\phi_2(z)\tilde{V}(z,y)\phi_1(y)} \, \mathrm{d}s(y) \, \mathrm{d}s(z) \leq \esssup\limits_{y,z \in \Gamma \cap B(x, \rho)} |\tilde{Q}(z,y)|\norm{\abs{D_{\Gamma}}}\norm{\phi_1}\norm{\phi_2}.\]
Since $D_\Gamma \in \mathscr{A}_2$, \eqref{eq:tildeQvan} implies that
\begin{equation} \label{eq:locdiff_limit}
\lim_{\rho \to 0^+} \|\eta_{\rho} \tilde{V}\eta_{\rho}\| = 0.
\end{equation}
Taking powers, we conclude that
\begin{equation} \label{eq:locdiffn}
\lim_{\rho \to 0^+} \|(\eta_{\rho} D_\Gamma \eta_{\rho})^n - C_{\psi} (\eta_{\rho}' D_{\psi(\Gamma)} \eta_{\rho}')^nC_{\psi^{-1}}\|_{\ess} = 0
\end{equation}
for every $n \geq 1$.

We now prove the main theorem of this section. When $R = 1/2$, it says the following: if, for every point $x$, $\Gamma$ is locally obtained by deforming a domain that satisfies the spectral radius conjecture, then $\Gamma$ satisfies the spectral radius conjecture as well.

\begin{thm} \label{thm:essradlocal}
Let $R > 0$ and $\beta > 0$. Suppose that for every $x \in \Gamma$ there exists a $C^{1,\beta}$-diffeomorphism $\psi_x \colon \mathbb{R}^d \to \mathbb{R}^d$, conformal or anti-conformal at $x$, and a Lipschitz cut-off function $\eta_x \from \R^d \to [0,1]$ such that $\eta_x$ is equal to $1$ in a neighbourhood of $x$, $D_{\psi_x(\Gamma)} \in \mathscr{A}_2$, and
\[\sigma_{\ess}(\eta_x' D_{\psi_x(\Gamma)}\eta_x') \subset B(0,R),\]
where $\eta_x' = \eta_x \circ \psi_x^{-1}$. Then $D_\Gamma \in \mathscr{A}_2$ and
$\sigma_{\ess}(D_{\Gamma}) \subset B(0,R)$.
\end{thm}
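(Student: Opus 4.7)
The plan is to prove the two assertions in order. The first, $D_\Gamma \in \mathscr{A}_2(\Gamma)$, is immediate from the hypothesis: fix any single $x \in \Gamma$; the $C^{1,\beta}$-diffeomorphism $\psi_x$ together with $D_{\psi_x(\Gamma)} \in \mathscr{A}_2(\psi_x(\Gamma))$ yields $D_\Gamma \in \mathscr{A}_2(\Gamma)$ directly from the equivalence \eqref{eq:A2}. The substantive content is therefore the inclusion $\sigma_{\ess}(D_\Gamma) \subset B(0,R)$, and the strategy is to show that for every $\lambda \in \C$ with $|\lambda| \geq R$ the operator $D_\Gamma - \lambda I$ is Fredholm on $L^2(\Gamma)$, by gluing local Fredholm regularizers through a Lipschitz partition of unity, in the spirit of the proof of Theorem~\ref{thm:localization}.

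For the local step, by compactness of $\Gamma$ I would choose finitely many points $x_1,\ldots,x_N \in \Gamma$ and, for a small parameter $\rho > 0$ to be chosen, Lipschitz cut-off functions $\{\eta_j\}_{j=1}^N$ such that $\{\eta_j^2\}$ is a partition of unity on $\Gamma$ subordinate to balls $B(x_j,\rho)$ with the property that $\eta_{x_j} \equiv 1$ on $\supp \eta_j$. Writing $\eta_j' := \eta_j \circ \psi_{x_j}^{-1}$, the identity $\eta_{x_j}' \eta_j' = \eta_j'$ gives $\eta_j' D_{\psi_{x_j}(\Gamma)} \eta_j' = \eta_j'(\eta_{x_j}' D_{\psi_{x_j}(\Gamma)} \eta_{x_j}')\eta_j'$, from which, together with the hypothesis, one deduces $\sigma_{\ess}(\eta_j' D_{\psi_{x_j}(\Gamma)} \eta_j') \subset B(0,R)$. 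Consequently $\eta_j' D_{\psi_{x_j}(\Gamma)} \eta_j' - \lambda I$ is Fredholm on $L^2(\psi_{x_j}(\Gamma))$ for every $|\lambda| \geq R$, and admits a Fredholm regularizer $A_j'$. Transferring through the similarity $C_{\psi_{x_j}}, C_{\psi_{x_j}^{-1}}$ and invoking the $n=1$ form of the decomposition \eqref{eq:tildeV}--\eqref{eq:locdiff_limit}, provided $\rho$ is chosen sufficiently small the operator $\eta_j D_\Gamma \eta_j - \lambda I$ also admits a Fredholm regularizer $A_j$ on $L^2(\Gamma)$.

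The gluing step proceeds by defining $A := \sum_j \eta_j A_j \eta_j$ and verifying, modulo compacts, that
\[
A(D_\Gamma - \lambda I) \sim \sum_j \eta_j A_j (\eta_j D_\Gamma \eta_j - \lambda I) \eta_j \sim \sum_j \eta_j^2 I = I,
\]
and symmetrically for $(D_\Gamma - \lambda I)A$. The first $\sim$ rearranges the cutoffs past $D_\Gamma$, using the commutator $[\eta_j, D_\Gamma]$; the second $\sim$ uses the Fredholm regularizer property of $A_j$. Taken together with the local reductions above, this shows $\lambda \notin \sigma_{\ess}(D_\Gamma)$, as required.

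The main obstacle is precisely the control of the commutators $[\eta_j, D_\Gamma]$, which are not compact for merely Lipschitz $\Gamma$ and in general not even of small norm. The $\mathscr{A}_2$ hypothesis is the critical input: as in \eqref{eq:locdiff_limit}, the kernel $(\eta_j(z) - \eta_j(y))D_\Gamma(z,y)$ is pointwise dominated by the $\mathscr{A}_2$-bounded kernel $|D_\Gamma(z,y)|$ times a factor that is bounded in modulus by the Lipschitz constant of $\eta_j$ times $|z-y|$, and whose essential size on $\supp \eta_j \times \supp \eta_j$ can be driven to zero by shrinking $\rho$. This allows me to absorb $\|[\eta_j, D_\Gamma]\|$ into the perturbation analysis for the regularizer $A_j$, at the cost of shrinking $\rho$ further. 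The delicate point is coordinating the choice of $\rho$: it must be simultaneously small enough to validate \eqref{eq:locdiffn} for each $j$, small enough for the commutator-norm bound to control the Fredholm perturbation for each of the finitely many $A_j$, and compatible with the fixed cutoffs $\eta_{x_j}$ of the hypothesis. Finite-ness of the cover and uniform control through the $\mathscr{A}_2$ hypothesis make this book-keeping tractable and close the argument.
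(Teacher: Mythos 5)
Your outer strategy --- fix $\lambda$ with $|\lambda|\geq R$ and glue local Fredholm regularizers for $\eta_j D_\Gamma\eta_j-\lambda$ --- is genuinely different from the paper's, which never localises the resolvent: it bounds $\norm{D_\Gamma^m}_{\ess}\leq N r^m$ with $r<R$ along a suitable sequence of exponents $m$ and concludes via the spectral radius formula in the Calkin algebra. The difficulty is that the two steps you present as routine are exactly where that powers-of-the-operator idea is unavoidable, and you have not supplied it. First, the deduction $\sigma_{\ess}(\eta_{x_j}' D_{\psi_{x_j}(\Gamma)}\eta_{x_j}')\subset B(0,R)\Rightarrow \sigma_{\ess}(\eta_j' D_{\psi_{x_j}(\Gamma)}\eta_j')\subset B(0,R)$ is not a formal consequence of writing $\eta_j' D\eta_j'=\eta_j' T\eta_j'$ with $T=\eta_{x_j}' D\eta_{x_j}'$: compressing a non-normal operator by a contraction does not preserve its essential spectrum. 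What is true is $\rho_{\ess}(\eta_j'T\eta_j')\leq\rho_{\ess}(T)$, and the proof goes through $\norm{(\eta_j'T\eta_j')^n}_{\ess}=\norm{\eta_j'^{\,n}T^n\eta_j'^{\,n}}_{\ess}\leq\norm{T^n}_{\ess}$ together with $\lim_n\norm{T^n}_{\ess}^{1/n}<R$; the disc structure of the target set and high powers are essential, and your pointwise-in-$\lambda$ framing obscures that this is the heart of the matter. Second, "choose $\rho$ small enough that $\norm{\eta_\rho\tilde V\eta_\rho}$ beats the regularizer" is circular as stated: the threshold depends on the essential norm of $A_j'$, which depends on $\eta_\rho$ and on $\lambda$. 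One needs $\sup_{|\lambda|\geq R}\norm{(\eta_\rho' D\eta_\rho'-\lambda)^{-1}}_{\ess}$ bounded uniformly in $\rho$; this holds, but only via the Neumann series $-\lambda^{-1}\sum_k\lambda^{-k}\eta_\rho'^{\,2k}T^k$, whose convergence again rests on $\norm{T^k}_{\ess}^{1/k}\to\rho_{\ess}(T)<R$. The $\mathscr{A}_2$ hypothesis controls $\norm{\eta_\rho\tilde V\eta_\rho}$ (via \eqref{eq:tildeV}--\eqref{eq:locdiff_limit}); it does not control the regularizers.

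Two further corrections. The commutator $[\eta_j,D_\Gamma]$ \emph{is} compact for Lipschitz $\eta_j$ and Lipschitz $\Gamma$ --- its kernel is $O(|z-y|^{2-d})$, hence weakly singular --- and the paper relies on this in the proofs of both Theorem \ref{thm:localization} and Theorem \ref{thm:essradlocal}. Your claim that it is not compact, and your plan to make its norm small by shrinking $\rho$ (which would fail anyway, since the Lipschitz constant of $\eta_j$ grows like $\rho^{-1}$), are both off the mark, though harmless once compactness is invoked instead. Finally, the displayed gluing identity is incorrect as written: $\sum_j\eta_jA_j\eta_j(D_\Gamma-\lambda)$ and $\sum_j\eta_jA_j(\eta_jD_\Gamma\eta_j-\lambda)\eta_j$ differ, modulo compacts, by $\sum_j\eta_jA_j\eta_jD_\Gamma(1-\eta_j^2)$, which is not compact. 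A correct gluing needs a second, more concentrated partition of unity $\{\tau_j\}$ with $\eta_j\equiv1$ on $\supp\tau_j$, as in the paper's proof of Theorem \ref{thm:localization}, or regularizers known to essentially commute with the cut-offs.
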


\begin{proof}
As noted previously, the condition on $\Gamma$, even for just one $x$, implies that $D_\Gamma \in \mathscr{A}_2$.

Fix $x \in \Gamma$. Since the commutator $[D_{\psi_x(\Gamma)}, \eta_x']$ is compact, we see that $(\eta_x' D_{\psi_x(\Gamma)}\eta_x')^n - \eta_x'^n D_{\psi_x(\Gamma)}^n \eta_x'^n$ is compact for every $n \geq 1$. By the hypothesis, choose $n = n(x)$ sufficiently large so that
\[\|(\eta_x' D_{\psi_x(\Gamma)}\eta_x')^n\|_{\ess}^{\frac{1}{n}} = \|\eta_x'^n D_{\psi_x(\Gamma)}^n\eta_x'^n\|_{\ess}^{\frac{1}{n}} < R\|C_{\psi_x}\|^{-\frac{1}{n}}\|C_{\psi_x^{-1}}\|^{-\frac{1}{n}}.\]
If we choose a new cut-off function $\tilde{\eta}_x \from \R^d \to [0,1]$ such that $\supp \tilde{\eta}_x \subset  \set{y \in \R^d : \eta_x(y) = 1}$, note that, since $(\eta_x' D_{\psi_x(\Gamma)}\eta_x')^n - \eta_x'^n D_{\psi_x(\Gamma)}^n \eta_x'^n$ is compact, we have
\[\|(\tilde{\eta}_x' D_{\psi_x(\Gamma)}\tilde{\eta}_x')^n\|_{\ess}^{\frac{1}{n}} = \|\tilde{\eta}_x'^n D_{\psi_x(\Gamma)}^n\tilde{\eta}_x'^n\|_{\ess}^{\frac{1}{n}} \leq \|\eta_x'^n D_{\psi_x(\Gamma)}^n\eta_x'^n\|_{\ess}^{\frac{1}{n}} < R\|C_{\psi_x}\|^{-\frac{1}{n}}\|C_{\psi_x^{-1}}\|^{-\frac{1}{n}},\]
so that
\[\|C_{\psi_x} (\tilde{\eta}_x' D_{\psi_x(\Gamma)} \tilde{\eta}_x' )^nC_{\psi_x^{-1}}\|_{\ess}^{\frac{1}{n}} \leq \|\eta_x'^n D_{\psi_x(\Gamma)}^n\eta_x'^n\|_{\ess}^{\frac{1}{n}}\|C_{\psi_x}\|^{\frac{1}{n}}\|C_{\psi_x^{-1}}\|^{\frac{1}{n}} < R.\]
Hence, by making the support of $\eta_x$ sufficiently small and applying \eqref{eq:locdiffn}, we obtain that
\[\|(\eta_x D_\Gamma \eta_x)^n\|_{\ess}^{\frac{1}{n}} =  \|\eta_x^n D_\Gamma^n \eta_x^n\|_{\ess}^{\frac{1}{n}} < R.\]
Now choose $\delta(x)$ sufficiently small so that $\eta_x \equiv 1$ in a neighbourhood of $B(x, \delta(x)) \cap \Gamma$. By compactness, we may pick points $x_1, \ldots, x_N$ so that
$\Gamma \subset  \cup_{j=1}^N B(x_j, \delta(x_j))$.
Let
\[r := \max_{j = 1,\ldots, N} \|(\eta_{x_j} D_\Gamma \eta_{x_j})^{n(x_j)}\|_{\ess}^{\frac{1}{n(x_j)}} < R\]
and let $\set{\tau_j : j = 1,\ldots, N}$ be a Lipschitz partition of unity of $\Gamma$ subordinate to the sets $B(x_j, \delta(x_j))$. Choose $m$ to be a large integer such that $n(x_j)$ divides $m$ for all $j$. Then
\[D^m_\Gamma = \sum\limits_{j = 1}^N \tau_j D_\Gamma^m = \sum \tau_j \eta_{x_j}^{2m} D_\Gamma^m\]
and therefore, recalling that $[D_\Gamma, \eta_{x_j}]$ is compact,
\begin{align*}
\|D_\Gamma^m\|_{\ess} &= \Bigg\|\sum\limits_{j = 1}^N \tau_j (\eta_{x_j} D_\Gamma \eta_{x_j})^m\Bigg\|_{\ess} \leq \sum\limits_{j = 1}^N \norm{(\eta_{x_j} D_\Gamma \eta_{x_j})^m}_{\ess} \leq \sum\limits_{j = 1}^N \norm{(\eta_{x_j} D_\Gamma \eta_{x_j})^{n(x_j)}}^{\frac{m}{n(x_j)}}_{\ess}\\
&\leq Nr^m < N R^m.
\end{align*}
Therefore $\sigma_{\ess}(D_{\Gamma}) \subset B(0,R)$.
\end{proof}

Referring back to Lemma~\ref{lem:A2graphs}, all graphs $\widetilde{\Gamma}$ of type (i)-(iii) satisfy (the graph version of) the spectral radius conjecture, results which can also be found in the corresponding references.  Graphs of type (iv) that are convex also satisfy the spectral radius conjecture, by \cite{FaSaSe:92}.  If $\eta \colon \mathbb{R}^3 \to [0,1]$ is a compactly supported Lipschitz function, we thus have that
	$$\| (\eta D_{\widetilde{\Gamma}} \eta)^n \|_{\ess} = \| \eta^n D_{\widetilde{\Gamma}}^n \eta^n \|_{\ess} \leq \|D_{\widetilde{\Gamma}}^n\|_{\ess} < 2^{-n}$$
	for $n$ sufficiently large. Applying Theorem~\ref{thm:essradlocal} we obtain the following.
\begin{cor} \label{cor:CP}
	Let $d = 3$ and let $\Gamma$ be the boundary of a Lipschitz domain. Assume that for every $x \in \Gamma$ there exists a $C^{1,\beta}$-diffeomorphism $\psi_x \colon \mathbb{R}^3 \to \mathbb{R}^3$, conformal or anti-conformal at $x$, such that $\psi_x(\Gamma)$ is locally a subset of the boundary of a polyhedral cone or a convex smooth cone. Then $D_\Gamma$ satisfies the spectral radius conjecture.
\end{cor}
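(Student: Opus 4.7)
The plan is to apply Theorem~\ref{thm:essradlocal} with $R = 1/2$, whose conclusion $\sigma_{\ess}(D_\Gamma) \subset B(0,1/2)$ is precisely Conjecture~\ref{con:kenigmod} for $\Gamma$. Its hypotheses to verify at each $x \in \Gamma$ are: (i) a suitable diffeomorphism $\psi_x$ (given), (ii) $D_{\psi_x(\Gamma)} \in \mathscr{A}_2$, and (iii) $\sigma_{\ess}(\eta_x' D_{\psi_x(\Gamma)} \eta_x') \subset B(0,1/2)$ for some Lipschitz cutoff $\eta_x$ equal to $1$ near $x$.

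For (iii), fix $x$ and let $\widetilde\Gamma_x := \partial C_x$, where $C_x$ is the polyhedral or convex smooth cone satisfying $\psi_x(\Gamma) \cap U_x = \widetilde\Gamma_x \cap U_x$ for some open neighborhood $U_x$ of $\psi_x(x)$. Both types of cone are dilation-invariant and fall under class (iii) or (iv) in the list preceding Lemma~\ref{lem:A2graphs}, so $D_{\widetilde\Gamma_x} \in \mathscr{A}_2(\widetilde\Gamma_x)$; furthermore, the graph form of the spectral radius conjecture is known for them -- the polyhedral case by \cite{El:92}, the convex smooth case by \cite{FaSaSe:92} -- yielding $\rho_{\ess}(D_{\widetilde\Gamma_x}) < 1/2$. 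The chain of inequalities displayed immediately before the corollary then gives $\rho_{\ess}(\eta D_{\widetilde\Gamma_x} \eta) < 1/2$ for every compactly supported Lipschitz $\eta$. I choose $\eta_x$ with support so small that $\operatorname{supp}(\eta_x') \subset U_x$, where $\eta_x' := \eta_x \circ \psi_x^{-1}$. Relative to the splittings $L^2(\psi_x(\Gamma)) = L^2(U_x \cap \psi_x(\Gamma)) \oplus L^2(\psi_x(\Gamma) \setminus U_x)$ and the analogous decomposition of $L^2(\widetilde\Gamma_x)$, both $\eta_x' D_{\psi_x(\Gamma)} \eta_x'$ and $\eta_x' D_{\widetilde\Gamma_x} \eta_x'$ are block-diagonal with identical nonzero blocks (the integral kernels agree on $U_x \cap \psi_x(\Gamma) = U_x \cap \widetilde\Gamma_x$), so their essential spectra differ at most by $\{0\}$; (iii) follows.

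For (ii), the diffeomorphism invariance~\eqref{eq:A2} reduces the claim to $D_\Gamma \in \mathscr{A}_2(\Gamma)$. Applying the kernel decomposition~\eqref{eq:V3} in the chart given by $\psi_x$ and using $L^2$-boundedness of $|D_{\widetilde\Gamma_x}|$ on $U_x \cap \widetilde\Gamma_x$, one obtains local $L^2$-boundedness of $|D_\Gamma|$ near each $x$ modulo a weakly singular remainder; a partition-of-unity argument over a finite cover of the compact surface $\Gamma$ then gives $D_\Gamma \in \mathscr{A}_2(\Gamma)$, the off-diagonal cross-terms being Hilbert--Schmidt. Theorem~\ref{thm:essradlocal} applied at every $x \in \Gamma$ now completes the proof. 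The main obstacle is the verification of (ii): although \eqref{eq:A2} is clean at the global level, our hypothesis only provides local diffeomorphic models, so the local-to-global passage must be carried out by hand; one also uses the freedom to compose each $\psi_x$ with an orthogonal transformation from $\R O(3)$ in order to arrange $J\psi_x(x) = I$ as needed for Lemma~\ref{lem:kernelmult}.
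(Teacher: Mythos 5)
Your proposal is correct and follows the same route as the paper: invoke the known spectral-radius results for polyhedral cones and convex smooth cones, pass to the bound $\|(\eta D_{\widetilde{\Gamma}}\eta)^n\|_{\ess}=\|\eta^n D_{\widetilde{\Gamma}}^n\eta^n\|_{\ess}\leq\|D_{\widetilde{\Gamma}}^n\|_{\ess}<2^{-n}$, and apply Theorem~\ref{thm:essradlocal} with $R=1/2$. The paper leaves implicit both the identification of $\sigma_{\ess}(\eta_x' D_{\psi_x(\Gamma)}\eta_x')$ with $\sigma_{\ess}(\eta_x' D_{\widetilde{\Gamma}_x}\eta_x')$ and the verification that $D_{\psi_x(\Gamma)}\in\mathscr{A}_2$, whereas you spell these out (via the block-diagonal observation and the localisation of Theorem~\ref{thm:locals} through \eqref{eq:A2} and \eqref{eq:V3}); both of your arguments for these points are sound.
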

The class of domains considered in Corollary~\ref{cor:CP} encompasses all domains in 3D that may reasonably be referred to as a Lipschitz curvilinear polyhedra. From Theorem~\ref{thm:essradlocal} we of course also obtain the analogous corollary for 2D domains, a well known result;  the corresponding class of curves precisely describes the $C^{1,\beta}$-curvilinear polygons in 2D.

\subsection{Possible extensions and further questions}

Theorem~\ref{thm:essradlocal} in particular shows that the spectral radius conjecture only depends on the local behaviour of $\Gamma$, under the extraneous assumption that $D_\Gamma \in \mathscr{A}_2$. To achieve the same result for a general Lipschitz domain, we need a different tool to treat the term $V_3$ which does not rely on the introduction of absolute values. Since $V_3(z,y) = \widetilde{Q}(z,y) D_\Gamma(z,y)$, for a specific kernel $\widetilde{Q}(z,y)$, we would like an answer to the following question.
\begin{quest} \label{q:1}
Describe suitable classes of kernels $B$ such that the kernel $B(z,y) D_\Gamma(z,y)$ defines a bounded operator on $L^2(\Gamma)$, and estimate the norm of the corresponding operator.
\end{quest}
When $B$ is of the form $B(z,y) = a(z) - a(y)$, for some function $a$ on $\Gamma$, the resulting operator is the commutator $[a, D_\Gamma]$. Commutators of singular integral operators are very well studied, but we have been unable to identify or correctly apply existing results to the kernel multiplier of interest to us, namely,
\[B(z,y) = \eta_\rho(z) \left(\frac{|z-y|^d}{|\psi(z) - \psi(y)|^d} - 1\right) \eta_\rho(y).\]
Other versions of Question~\ref{q:1} also seem interesting. For example, one could ask for the stronger property that $B(z,y)$ be a kernel multiplier of each of the Riesz transforms of $\Gamma$. We also note the similarity between the term $V_3$ and the kernel of the Clifford--Cauchy integral operator, as presented in \cite[Consequence 3.6]{AxKeMc:06}.

Of course, one would like to know that not only the spectral radius conjecture is local, but that the entire essential spectrum is as well.
\begin{quest} \label{q:2}
For a domain $\Omega_-$ with boundary $\Gamma$, is the essential spectrum local and invariant under locally conformal deformations?
\end{quest}

We can give a positive answer to this question if we, in addition to the hypotheses of Theorem~\ref{thm:essradlocal}, assume that we do not have too many singular points, by which we mean points where the boundary is not $C^1$. In the statement, we let $\chi_{x,\rho} := \chi_{B(x,\rho)}$ and $\chi_{x,\rho}' := \chi_{x,\rho} \circ \psi_x^{-1}$ for $x \in \Gamma$ and $\rho > 0$.

\begin{thm}
Let $\beta > 0$ and assume that $\Gamma$ has at most countably many singular points. Further suppose that for every $x \in \Gamma$ there exists a $C^{1,\beta}$-diffeomorphism $\psi_x \colon \mathbb{R}^d \to \mathbb{R}^d$, conformal or anti-conformal at $x$, such that $D_{\psi_x(\Gamma)} \in \mathscr{A}_2$. Then $D_\Gamma \in \mathscr{A}_2$ and
\[\sigma_{\ess}(D_{\Gamma}) \subset  \bigcup\limits_{x \in \Gamma} \sigma_{\ess}(\chi_{x,\rho_x}'D_{\psi_x(\Gamma)}\chi_{x,\rho_x}')\]
for arbitrary $\rho_x > 0$. Moreover, there exist $\rho_x > 0$ such that
\[\sigma_{\ess}(D_{\Gamma}) = \bigcup\limits_{x \in \Gamma} \sigma_{\ess}(\chi_{x,\rho_x}'D_{\psi_x(\Gamma)}\chi_{x,\rho_x}').\]
\end{thm}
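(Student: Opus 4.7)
My strategy combines Weyl-sequence localisation with the deformation identities of \S\ref{sec:deform}, exploiting the countability of the singular set $S$ of $\Gamma$. Membership $D_\Gamma\in\mathscr{A}_2$ is immediate: fix any $x\in\Gamma$; by hypothesis $D_{\psi_x(\Gamma)}\in\mathscr{A}_2$, and \eqref{eq:A2} transfers this to $\Gamma$.

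For the forward inclusion with \emph{arbitrary} $\rho_x>0$, take $\lambda\in\sigma_{\ess}(D_\Gamma)$ and a singular Weyl sequence $(\phi_n)$ with $\|\phi_n\|=1$, $\phi_n\rightharpoonup 0$ and $(D_\Gamma-\lambda)\phi_n\to 0$. By Banach--Alaoglu, a subsequence of the probability measures $|\phi_n|^2\,\mathrm{d}s_\Gamma$ converges weakly-$*$ to some probability measure $\mu$ on the compact set $\Gamma$; pick $x\in\mathrm{supp}\,\mu$. Since $S$ is countable and $\mu$ charges only countably many concentric spheres, there exist arbitrarily small $\rho>0$ with $\mu(B(x,\rho))>0$ and $\partial B(x,\rho)\cap\Gamma\subset\Gamma\setminus S$. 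For such $\rho$, Lemma~\ref{lem:sharp_cutoff} makes $[\chi_{x,\rho},D_\Gamma]$ compact, and a direct computation (using $(D_\Gamma-\lambda)\phi_n\to 0$, compactness of the commutator, and $\phi_n\rightharpoonup 0$) shows $\chi_{x,\rho}\phi_n$ is a Weyl sequence for $\chi_{x,\rho}D_\Gamma\chi_{x,\rho}-\lambda$, hence $\lambda\in\sigma_{\ess}(\chi_{x,\rho}D_\Gamma\chi_{x,\rho})$ for every such $\rho$. Invoking \eqref{eq:tildeV} together with the vanishing $\|\eta_\rho\tilde V\eta_\rho\|\to 0$ from \eqref{eq:locdiff_limit} (which uses conformality of $\psi_x$ at $x$), the two families $\{\sigma_{\ess}(\chi_{x,\rho}D_\Gamma\chi_{x,\rho})\}_\rho$ and $\{\sigma_{\ess}(\chi_{x,\rho}'D_{\psi_x(\Gamma)}\chi_{x,\rho}')\}_\rho$ have the same intersection as $\rho\to 0^+$. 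A monotonicity lemma (if $\rho\le\rho'$, any Weyl sequence supported in the smaller ball extends to one for the larger compression, via the same commutator argument) then gives $\lambda\in\sigma_{\ess}(\chi_{x,\rho_x}'D_{\psi_x(\Gamma)}\chi_{x,\rho_x}')$ for the prescribed $\rho_x$.

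For the reverse inclusion needed for equality, specify $\rho_x$ thus: at each $x\in\Gamma\setminus S$ pick $\rho_x$ so small that $B(x,\rho_x)\cap\Gamma\subset\Gamma\setminus S$; the compressed operator is then compact and contributes only $\{0\}\subset\sigma_{\ess}(D_\Gamma)$ (since $0\in\sigma_{\ess}(D_\Gamma)$ by \eqref{eq:notFredholm}, $\Gamma$ having $C^1$ points because $S$ is countable). At the countably many $x\in S$, recursively pick $\rho_x$ using the Cantor--Bendixson scatteredness of $S$: at isolated points of $S$ choose $\rho_x$ small enough that $\psi_x(S\setminus\{x\})\cap B(0,\rho_x)=\emptyset$, and at accumulation points of $S$ proceed by induction on the Cantor--Bendixson rank. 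Then, for any $\lambda\ne 0$ in the right-hand union, run Step~2 on the $\psi_x(\Gamma)$-side: localise a Weyl sequence of the compressed operator at some $y_0\in B(0,\rho_x)\cap\psi_x(\Gamma)$. If $y_0=0$, Step~2 backwards (via conformality of $\psi_x$ at $x$) yields $\lambda\in\sigma_{\ess}(D_\Gamma)$; otherwise $y':=\psi_x^{-1}(y_0)\in S$ and Step~2 applied at $y'$ using the conformal deformation $\psi_{y'}$ supplies $\lambda\in\sigma_{\ess}(D_\Gamma)$.

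The main obstacle is precisely this non-conformal case: when $\psi_x$ fails to be conformal at $y'\ne x$, the perturbation $\tilde V$ in \eqref{eq:tildeV} no longer has vanishing norm as the localisation radius about $y_0$ shrinks, so $\psi_x$ alone does not identify the local essential spectrum of $D_{\psi_x(\Gamma)}$ at $y_0$ with that of $D_\Gamma$ at $y'$. The resolution is to bypass $\psi_x$ at $y_0$ and use instead the conformal deformation $\psi_{y'}$ supplied by the hypothesis at $y'$, characterising the two local essential spectra intrinsically (as the common intersection of cut-off compressed essential spectra) and chaining through the third domain $\psi_{y'}(\Gamma)$; the countability of $S$ ensures that the inductive selection of radii $\rho_x$ can be completed.
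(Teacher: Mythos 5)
Your opening step ($D_\Gamma\in\mathscr{A}_2$ via \eqref{eq:A2}) and your forward inclusion are broadly viable, and the Weyl-sequence/concentration route is genuinely different from the paper's, which argues contrapositively: assuming $\lambda$ lies outside the right-hand union it takes Fredholm regularizers of the compressions $\chi'_{x,\rho_x}D_{\psi_x(\Gamma)}\chi'_{x,\rho_x}-\lambda$, shrinks radii while keeping the essential norms of these regularizers under control, transfers Fredholmness to $\chi_{x,\tilde{\rho}_x}D_\Gamma\chi_{x,\tilde{\rho}_x}-\lambda$ via \eqref{eq:tildeV} and \eqref{eq:locdiff_limit}, and patches with \eqref{eq:ess_decomposition}. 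One caveat for your version: $\sigma_{\ess}$ here is the Fredholm essential spectrum, so $\lambda\in\sigma_{\ess}(D_\Gamma)$ need not admit a singular Weyl sequence for $D_\Gamma-\lambda$ itself (the obstruction may sit entirely in the cokernel); you must also run the argument for the adjoint, as is done explicitly in the proof of Theorem \ref{thm:localization}.

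The reverse inclusion, however, has a genuine gap, and it is exactly the step you flag as ``the main obstacle''. When $x$ is an accumulation point of the singular set, \emph{every} ball $B(0,\rho_x)$ contains points $y_0=\psi_x(y')$ with $y'$ singular and $y'\neq x$, and the geometry of $\psi_x(\Gamma)$ near $y_0$ is that of $\Gamma$ near $y'$ deformed by $\psi_x$, which is not assumed conformal at $y'$. Non-conformal $C^{1,\beta}$ deformations are precisely what the theorem does not control, so the local essential spectrum of $D_{\psi_x(\Gamma)}$ at $y_0$ cannot be identified with that of $D_\Gamma$ at $y'$; indeed nothing in the hypotheses constrains $\psi_x(\Gamma)$ away from $0$. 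Your proposed fix --- chaining through $\psi_{y'}(\Gamma)$ --- does not repair this: $\psi_{y'}$ conformally relates $\Gamma$ near $y'$ to $\psi_{y'}(\Gamma)$, but the only available map from $\psi_x(\Gamma)$ near $y_0$ back to either of these involves $\psi_x^{-1}$, and the Jacobian of $\psi_{y'}\circ\psi_x^{-1}$ at $y_0$ is $J\psi_{y'}(y')\,(J\psi_x(y'))^{-1}$, which lies in $\R O(d)$ only if $\psi_x$ happens to be conformal at $y'$. The Cantor--Bendixson recursion organises the singular points but supplies no new conformal maps, so it cannot close this gap. The mechanism that does work is the one in \eqref{eq:locdiff_limit}: it is a \emph{norm} estimate over the whole ball $B(x,\rho)$, so for $\rho$ small the compression of $D_{\psi_x(\Gamma)}$ is, modulo compacts and the similarity $C_{\psi_x}$, a perturbation of the compression of $D_\Gamma$ whose norm is less than the reciprocal of the regularizer's essential norm --- uniformly over all singular points inside the ball --- and Fredholmness transfers in one stroke, with no point-by-point analysis at the interior singularities.
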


\begin{proof}
By \eqref{eq:A2}, $D_{\psi_x(\Gamma)} \in \mathscr{A}_2$ for just one $x \in \Gamma$ already implies $D_\Gamma \in \mathscr{A}_2$.

Let $x \in \Gamma$, $\rho_x > 0$ and $\lambda \in \C$. If $\lambda = 0$, then $0 \in \sigma_{\ess}(\chi_{x,\rho_x}'D_{\psi_x(\Gamma)}\chi_{x,\rho_x}')$ either by \eqref{eq:notFredholm} (if $\chi_{x,\rho_x}' \equiv 1$) or by construction (otherwise). So assume that $\lambda \neq 0$ and suppose that $\chi_{x,\rho_x}'D_{\psi_x(\Gamma)}\chi_{x,\rho_x}' - \lambda$ is Fredholm for every $x \in \Gamma$.
	
Fix $x$ for a moment. As $\Gamma$ has at most countably many singular points and $\psi_x$ is a diffeomorphism, we know that $\chi_{x,\tilde{\rho}_x}'D_{\psi_x(\Gamma)}\chi_{x,\tilde{\rho}_x}' - \lambda$ is Fredholm for all but at most countably many $\tilde{\rho}_x \in (0,\rho_x]$. In this case the essential norm of the Fredholm regularizer $A_{x,\tilde{\rho}_x}$ of $\chi_{x,\tilde{\rho}_x}'D_{\psi_x(\Gamma)}\chi_{x,\tilde{\rho}_x}' - \lambda $ is bounded by
\[\norm{A_{x,\tilde{\rho_x}}}_{\ess} \leq \max\set{\norm{A_{x,\rho_x}}_{\ess},\abs{\lambda}^{-1}}.\]
Therefore, by \eqref{eq:tildeV} and \eqref{eq:locdiff_limit}, for every $x \in \Gamma$ we can choose $\tilde{\rho}_x$ sufficiently small such that $\chi_{x,\tilde{\rho}_x}D_{\Gamma}\chi_{x,\tilde{\rho}_x} - \lambda$ is also Fredholm, and such that $\partial B(x, \tilde{\rho}_x)$ contains none of the singularities of $\Gamma$.

By compactness, we may choose finitely many points $x_j$ and corresponding radii $\tilde{\rho}_{x_j}$ such that the balls $B(x_j,\tilde{\rho}_{x_j})$ cover $\Gamma$. Define recursively
\[E_1 := B(x_1,\tilde{\rho}_{x_1}) \cap \Gamma, \quad E_{j+1} = \big(B(x_{j+1},\tilde{\rho}_{x_{j+1}}) \cap \Gamma\big) \setminus \bigcup\limits_{k = 1}^j E_k.\]
Now we can apply \eqref{eq:ess_decomposition} to obtain that $D_{\Gamma} - \lambda$ is Fredholm.

Conversely, assume that $D_{\Gamma} - \lambda$ is Fredholm and fix $x \in \Gamma$. By \eqref{eq:notFredholm}, we must have $\lambda \neq 0$. As we only have countably many singularities, $\chi_{x,\rho}D_{\Gamma}\chi_{x,\rho} - \lambda$ is also Fredholm for all but at most countably many $\rho > 0$ by \eqref{eq:ess_decomposition}. Now note that the situation between $D_{\Gamma}$ and $D_{\psi_x(\Gamma)}$ is symmetric. Hence, the same argument as above shows that $\chi_{x,\rho_x}'D_{\psi_x(\Gamma)}\chi_{x,\rho_x}' - \lambda$ is also Fredholm for a sufficiently small $\rho_x > 0$.
\end{proof}

In particular, we may extend Theorem \ref{thm:localization} to domains that are only \emph{approximately} locally dilation invariant in the following sense.

\begin{cor} \label{cor:final}
Let $\beta > 0$ and assume that $\Gamma$ has at most countably many singular points. Denote the set of singular points by $\mathcal{S}$. Suppose that for every $x \in \mathcal{S}$ there exists a $C^{1,\beta}$-diffeomorphism $\psi_x \colon \mathbb{R}^d \to \mathbb{R}^d$ such that
\begin{itemize}
	\item[(i)] $\psi_x$ is conformal or anti-conformal at $x$,
	\item[(ii)] $\psi_x(\Gamma)$ is locally $C^{1,\beta}$ or locally dilation invariant at $\psi_x(x)$,
	\item[(iii)] $D_{\psi_x(\Gamma)} \in \mathscr{A}_2$ (e.g., a domain with local behaviour as in Theorem \ref{thm:locals}).
\end{itemize}
Then there exist $\rho_x > 0$ such that
$\sigma_{\ess}(D_{\Gamma}) = \set{0} \cup \bigcup\limits_{x \in \mathcal{S}} \sigma_{\ess}(\chi_{x,\rho_x}'D_{\psi_x(\Gamma)}\chi_{x,\rho_x}')$.
\end{cor}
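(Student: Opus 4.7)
The plan is to reduce the corollary directly to the preceding theorem by extending the family $\{\psi_x\}_{x \in \mathcal{S}}$ to all of $\Gamma$ via $\psi_x := \mathrm{id}$ for each $x \in \Gamma \setminus \mathcal{S}$. The identity is trivially a $C^{1,\beta}$-diffeomorphism, conformal at every point, and the requirement $D_{\psi_x(\Gamma)} \in \mathscr{A}_2$ at non-singular $x$ collapses to $D_\Gamma \in \mathscr{A}_2$, which follows from condition (iii) at any $y \in \mathcal{S}$ together with \eqref{eq:A2}. (The degenerate case $\mathcal{S} = \emptyset$ is immediate: then $\Gamma$ is globally $C^1$, $D_\Gamma$ is compact by \cite{FaJoRi:78}, and both sides of the desired identity reduce to $\{0\}$.)

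With this extension, the preceding theorem will furnish radii $\rho_x > 0$ for each $x \in \Gamma$ such that
$$\sigma_{\ess}(D_\Gamma) = \bigcup_{x \in \Gamma} \sigma_{\ess}\bigl(\chi_{x,\rho_x}' D_{\psi_x(\Gamma)} \chi_{x,\rho_x}'\bigr).$$
For non-singular $x$, $\psi_x = \mathrm{id}$, so the corresponding term is $\sigma_{\ess}(\chi_{x,\rho_x} D_\Gamma \chi_{x,\rho_x})$. I will shrink $\rho_x$ further so that $\Gamma \cap \overline{B(x, \rho_x)}$ coincides with (the graph of) a $C^1$ function of compact support; this shrinking is permissible by the same mechanism used in the $\supset$-direction of the preceding theorem's proof, i.e.\ via \eqref{eq:ess_decomposition}, which allows one to replace $\rho_x$ by any of a dense set of admissible radii avoiding the (at most countable) set of values at which $\partial B(x, \rho_x) \cap \mathcal{S} \neq \emptyset$. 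For such $\rho_x$, the operator $\chi_{x,\rho_x} D_\Gamma \chi_{x,\rho_x}$ agrees, via the canonical identification of $L^2(\Gamma \cap B(x,\rho_x))$ with a subspace of $L^2$ on the extended $C^1$ graph, with a restriction of the Fabes--Jodeit--Rivi\`ere compact double-layer; hence $\sigma_{\ess}(\chi_{x,\rho_x} D_\Gamma \chi_{x,\rho_x}) = \{0\}$, exactly as in the treatment of locally-$C^1$ points in the proof of Theorem \ref{thm:localization}.

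Collecting the contributions, all non-singular points yield only $\{0\}$, which appears as an explicit term on the right-hand side of the claimed identity (and is, in any case, automatically contained in $\sigma_{\ess}(\chi_{y,\rho_y}' D_{\psi_y(\Gamma)} \chi_{y,\rho_y}')$ for every $y \in \mathcal{S}$, since the operator is cut off to a proper subset). The principal technical obstacle will be the simultaneous refinement of the $\rho_x$: one must confirm that shrinking radii at non-singular points preserves the preceding theorem's equality. The inclusion $\subset$ holds for any $\rho_x > 0$, while $\supset$ rests on a finite subcover of $\Gamma$ by balls whose boundary spheres avoid $\mathcal{S}$, and the radii in this subcover may be chosen freely from dense admissible sets. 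Note that condition (ii) of the corollary plays no role in this localisation argument; it is present to equip the right-hand side with the concrete structural meaning needed for subsequent analysis (for instance via Theorem \ref{thm:localization} or the results of \S\ref{sec:dilation_invariant}).
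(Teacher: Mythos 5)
Your argument is correct and is essentially the paper's (implicit) proof: the corollary is presented as an immediate consequence of the preceding theorem, obtained exactly as you do by taking $\psi_x=\mathrm{id}$ at the non-singular points (noting $D_\Gamma\in\mathscr{A}_2$ via \eqref{eq:A2}), shrinking the radii there so that the localized operators are compact with essential spectrum $\set{0}$, and using \eqref{eq:notFredholm} for the reverse inclusion of the $\set{0}$ term. One cosmetic slip in your closing discussion: in the preceding theorem it is the inclusion $\sigma_{\ess}(D_\Gamma)\subset\bigcup(\cdots)$ that rests on the finite subcover and \eqref{eq:ess_decomposition} and holds for arbitrary radii, whereas the reverse inclusion is the one requiring the careful (sufficiently small, countably-many-exceptions) choice of $\rho_x$; this does not affect your proof, since the shrunk non-singular terms contribute only $\set{0}$, which lies in $\sigma_{\ess}(D_\Gamma)$ directly.
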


\section{Synthesis and numerical examples} \label{sec:num_ex}
In this final section we bring earlier results together to study the case where $\Gamma\in \cD_\cA$ (recall Definition \ref{defn:ldiA}), meaning that $\Gamma$ is the boundary of a bounded Lipschitz domain $\Omega_-\subset \R^2$ that is locally dilation invariant and is piecewise analytic. We then illustrate the various results of the paper by several examples. Regarding our first aim, the following result is immediate from Theorems \ref{thm:localization}, \ref{thm:Hausdorff_convergence_2}, and \ref{thm:spectral_radius2}.

\begin{thm} \label{thm:synth} Suppose that $\Gamma \in \cD_\cA$ and let $F\subset \Gamma$ be the finite set of points at which $\Gamma$ is not locally analytic but {\em is} locally dilation invariant, so that $\Gamma$ coincides locally near $x\in F$ with $\Gamma_x$, a dilation invariant graph (in some rotated coordinate system centred at $x$). Then, where $\sigma^N$ is as defined in Theorem \ref{thm:Hausdorff_convergence_2}, $\Sigma^N(D_\Gamma) := \cup_{x\in F} \sigma^N(D_{\Gamma_x}) \toH \sigma_{\ess}(D_\Gamma;L^2(\Gamma))$ as $N\to\infty$. Further, suppose that $c>0$ is small enough such that the conditions of the first sentence of Theorem \ref{thm:spectral_radius2} are satisfied for $\Gamma= \Gamma_x$, $x\in F$. Then, for every $\rho_0>0$, where $R_N(D_\Gamma)$ is defined by \eqref{eq:RN}, $\rho_{\ess}(D_\Gamma;L^2(\Gamma))< \rho_0$ if: i) $R_N(D_\Gamma)< \rho_0$; and ii) for  some $m,M,N\in \N$, with $M\geq 2$,
\begin{equation} \label{eq:cScGenDef}
\cS_c(\Gamma,m,M,N) := \max_{x\in F} S_c(\Gamma_x,m,M,N) < 0,
\end{equation}
where $S_c(\cdot,\cdot,\cdot,\cdot)$ is as defined in Remark \ref{rem:final}. Conversely, if  $\rho_{\ess}(D_\Gamma;L^2(\Gamma))< \rho_0$, then, for all sufficiently large $N$, $R_N(D_\Gamma)<\rho_0$, and, if $m, M\in \N$ are also sufficiently large,  then  $\cS_c(\Gamma,m,M,N)<0$.
\end{thm}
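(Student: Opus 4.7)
The plan is to bundle together the three cited theorems; the only substantive observation is that, for $\Gamma \in \cD_\cA$, the general localisation of Theorem \ref{thm:localization} collapses so that only the finite set $F$ contributes non-trivially. Indeed, at every $x \in \Gamma \setminus F$, either $\Gamma$ is locally analytic at $x$ (hence locally $C^1$), or $\Gamma$ is locally $C^1$ at $x$ without being analytic; in either case $D_{\Gamma_x}$ is compact by \cite{FaJoRi:78} so $\sigma_{\ess}(D_{\Gamma_x}) = \{0\}$. Choosing a finite cover of $\Gamma$ by balls $B(x_j,\delta(x_j))$ whose centres include every element of $F$, Theorem \ref{thm:localization} then specialises to
\begin{equation*}
\sigma_{\ess}(D_\Gamma;L^2(\Gamma)) = \{0\} \cup \bigcup_{x \in F} \sigma_{\ess}(D_{\Gamma_x};L^2(\Gamma_x)).
\end{equation*}

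For the first claim, I would apply Theorem \ref{thm:Hausdorff_convergence_2} separately to each $\Gamma_x$, $x\in F$; up to a rotation (under which $D_{\Gamma_x}$ is unitarily invariant), each $\Gamma_x$ satisfies the standing assumption \eqref{eq:standing2} by Definition \ref{defn:ldiA}, so $\sigma^N(D_{\Gamma_x}) \toH \sigma_{\ess}(D_{\Gamma_x})$ as $N \to \infty$. Taking a finite union preserves Hausdorff convergence, and since each $\sigma^N(D_{\Gamma_x})$ contains $0$ by construction, one obtains $\Sigma^N(D_\Gamma) \toH \{0\} \cup \bigcup_{x\in F} \sigma_{\ess}(D_{\Gamma_x}) = \sigma_{\ess}(D_\Gamma)$.

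For the second claim I would apply Theorem \ref{thm:spectral_radius2} once per $x \in F$, with the same parameters $c, \rho_0, m, M, N$ throughout (and with $m=m_N$, $M=M_N$ in the sequences defining $\Sigma^N$). Condition (i), $R_N(D_\Gamma) < \rho_0$, forces $\rho(A^M_{t,N}) < \rho_0$ for each $x\in F$ and each $t\in T_N$, which via Remark \ref{rem:sym} yields the corresponding bound on the positive nodes $t_k$ of Theorem \ref{thm:spectral_radius2}. Condition (ii), $S_c(\Gamma_x,m,M,N) \leq \cS_c(\Gamma,m,M,N) < 0$, is precisely \eqref{eq:estimate_to_check2} for each $\Gamma_x$. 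Theorem \ref{thm:spectral_radius2} then delivers $\rho(D_{\Gamma_x};L^2(\Gamma_x)) < \rho_0$ for each $x\in F$, which, combined with the essential-spectrum decomposition displayed above and Corollary \ref{cor:ess_ne}, gives $\rho_{\ess}(D_\Gamma;L^2(\Gamma)) < \rho_0$. The converse direction reverses this: if $\rho_{\ess}(D_\Gamma) < \rho_0$, the decomposition gives $\rho_{\ess}(D_{\Gamma_x}) < \rho_0$ for each $x\in F$; the Hausdorff-convergence half of the first claim then forces $R_N(D_\Gamma) < \rho_0$ for $N$ large, and the converse half of Theorem \ref{thm:spectral_radius2} applied to each $\Gamma_x$, followed by maxima over the finite set $F$, forces $\cS_c(\Gamma,m,M,N) < 0$ for $m,M,N$ large. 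No step is a genuine obstacle; the work is essentially bookkeeping once one has noticed that the locally analytic and locally $C^1$ points contribute only $\{0\}$, collapsing Theorem \ref{thm:localization} to the finite index set $F$.
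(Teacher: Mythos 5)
Your proposal is correct and follows essentially the same route as the paper's (very terse) proof: specialise Theorem \ref{thm:localization} to $\Gamma\in\cD_\cA$ by noting that every $x\in\Gamma\setminus F$ is a locally-$C^1$ point contributing only $\{0\}$, then apply Theorems \ref{thm:Hausdorff_convergence_2} and \ref{thm:spectral_radius2} to each $\Gamma_x$, $x\in F$, and take finite unions/maxima. The only cosmetic difference is that the paper absorbs the $\{0\}$ contribution by observing $0\in\sigma_{\ess}(D_{\Gamma_x})$ for $x\in F$ (via Theorem \ref{cor:norm_and_spec} and Corollary \ref{cor:K_t_compact}), whereas you keep $\{0\}$ explicit and note that each $\sigma^N(D_{\Gamma_x})$ contains $0$ by construction; both are fine.
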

\begin{proof} Since $\Gamma\in \cD_\cA\subset \cD$, $\Gamma$ is locally $C^1$ at $x$ if $x\in \Gamma\setminus F$, while, if $x\in F$, $\Gamma_x$ satisfies \eqref{eq:standing2} (in some local coordinate system centred at $x$). Thus, and by Theorem \ref{thm:localization}, $\sigma_{\ess}(D_\Gamma)= \cup_{x\in \Gamma} \sigma_{\ess}(D_{\Gamma_x}) = \cup_{x\in F}\sigma_{\ess}(D_{\Gamma_x})$, since  $\sigma_{\ess}(D_{\Gamma_x})=\{0\}$ if $x\in \Gamma\setminus F$,  and $0\in \sigma_{\ess}(D_{\Gamma_x})$ if $x\in F$, by Theorem \ref{cor:norm_and_spec} and Corollary \ref{cor:K_t_compact}. The result thus follows from Theorems  \ref{thm:Hausdorff_convergence_2} and \ref{thm:spectral_radius2}.
\end{proof}

The following examples illustrate the above result and the results of \S\ref{sec:dilation_invariant} and \S\ref{sec:LD}. In each example, whether $\Gamma$ is a dilation invariant graph or $\Gamma\in \cD_\cA$, we demonstrate that $\rho_{\ess}(D_\Gamma;L^2(\Gamma))<\half$, providing new evidence in support of the spectral radius conjecture.\\

\noindent {\bf Example 1.} \label{ex1} We first consider an example  where $\Gamma$ satisfies \eqref{eq:standing}, with $f:\R_+\to\R$ given by
\begin{equation} \label{eq:fdef}
f(x) := x\sin^2(\pi\log_{\alpha}(x)), \quad x>0,
\end{equation}
for some $\alpha\in (0,1)$, so that (recall \eqref{eq:gdef}) $g(x) = \sin^2(\pi x) = (1-\cos(2\pi x))/2$, for $x\in \R$. As $\alpha$ increases in $(0,1)$ the graph of $f$ (see Figure \ref{fig:ex1}) becomes increasingly oscillatory and its Lipschitz character increases; elementary calculations give that the maximum and minimum of $f^\prime$ are $f^\prime_{\max}=\cos^2(\theta_\alpha/2) +\pi\sin(\theta_\alpha)/|\log\alpha|>1$ and $f^\prime_{\min}=1-f^\prime_{\max}<0$, where $\theta_\alpha := \arctan(2\pi/|\log\alpha|)$; note that $f^\prime_{\max} = \pi/|\log \alpha| + 1/2 + O(1-\alpha)$ as $\alpha\to 1^-$.
To apply Theorem \ref{thm:spectral_radius} we need to compute $C_1(M)$, $C_3$, and $C_4$; see \eqref{eq:C1def}, \eqref{eq:NormBound}, and the definition below \eqref{eq:boundcombined}. This requires computation of quantities that are defined in Proposition \ref{prop:k_t_estimate general} in terms of\footnote{In this example, and the other examples below, we are able to compute these norms exactly. We can, instead, just compute upper bounds; the theory and algorithm apply essentially unchanged -- see Remark \ref{rem:final} iii).}
\begin{gather*}
\|\Im g^{(j)}\|_c = \frac{(2\pi)^j}{2} \sinh(2\pi c), \;\;j = 0,1, \;\; c\geq 0,\\
 \|g\|_c = \cosh^2(\pi c), \;\; \|g^{(j)}\|_c = \frac{(2\pi)^j}{2} \cosh(2\pi c), \;\; j = 1,2,\;\; c\geq 0.
\end{gather*}
Theorem \ref{thm:spectral_radius} applies for all $c>0$ such that \eqref{c:bound2} applies, i.e.\ provided
\begin{equation} \label{eq:c_conds}
c \leq \frac{\arccos(\alpha)}{\abs{\log\alpha}} \quad \text{and} \quad c < \frac{1}{2\pi}\arsinh\left(\frac{2|\log\alpha|}{2\pi + |\log \alpha|}\right).
\end{equation}

 \begin{figure}[ht]
\centering
\begin{subfigure}[t]{.42\textwidth}
  \centering
  \includegraphics[scale = 0.48, trim = 2cm 0cm 0cm 0cm]{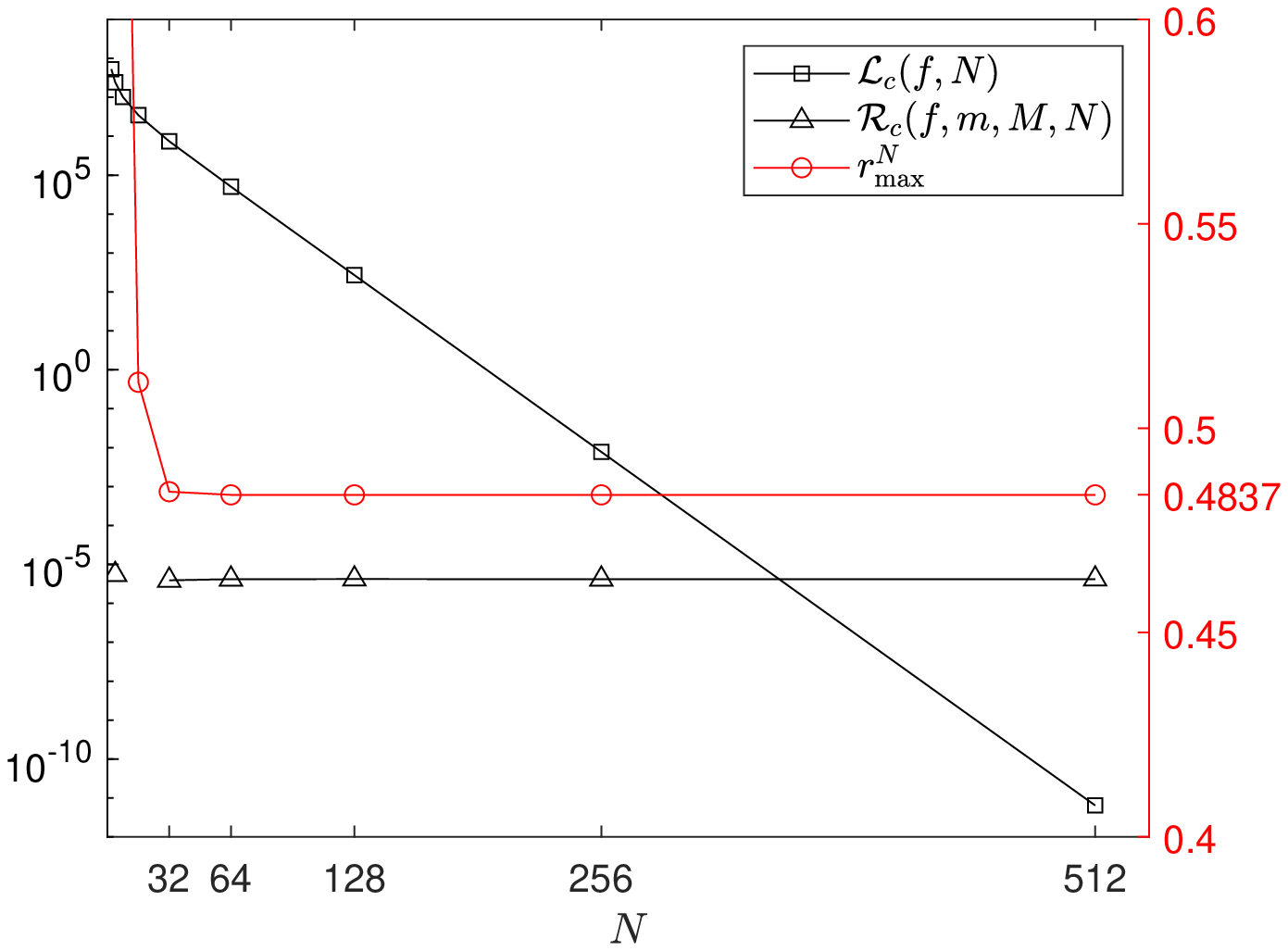}
  \caption{$\cL_c(f,N)$, $\cR_c(f,m,M,N)$, for $\rho_0=\half$, and the maximum of the spectral radii of $A_{t_k,N}^M$, $k=1,\ldots,m$, plotted against $N$ for $m=16,000$ and $M=100$. The right-hand scale (in red) is to be used for $r_{\max}^N$ (plotted in red). The $\cR_c(f,m,M,N)$ value is not shown for $N=16$ because it is negative.}
\end{subfigure}
\hspace{0.5cm}
\begin{subfigure}[t]{.53\textwidth}
  \centering
  \includegraphics[scale = 0.53, trim = 2cm 0cm 0cm 0cm]{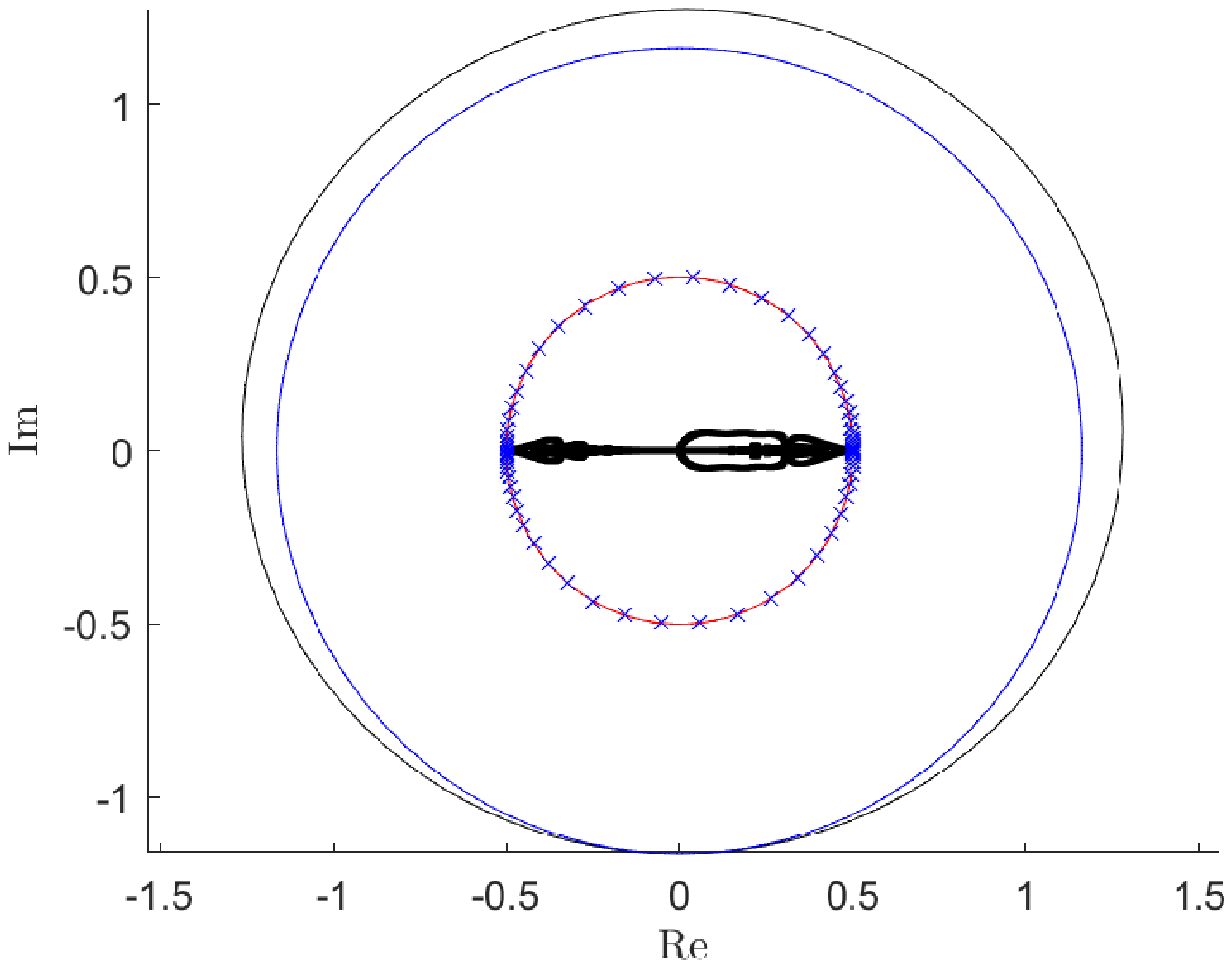}
  \caption{$\sigma^N(D_\Gamma)=\{0\}\cup \bigcup_{t\in t_N} \spec(A_{t,N}^{M_N})$, for $N = 512$, $m_N = 16,000$, $M_N=100$, an approximation to $\sigma(D_\Gamma)=\sigma_{\ess}(D_\Gamma)$ by Theorem \ref{thm:Hausdorff_convergence}. Also shown are the boundary of $W_{100}$ (for $p = 10$, $N = 512$, $M = 100$ and $t = \pi/18$) in black, $R^*\T \subset W_{100}\subset W(D_{\Gamma})$ in blue (see Corollary \ref{cor:num_range2} for definitions), $\half\T$ in red. The crosses on $\half\T$ are $\mu_{k,\ell}$ for $k = 16,000$ and $\ell = 1,\ldots,n_k = 67$ (see Theorem \ref{thm:spectral_radius})}
\end{subfigure}
\caption{Numerical results for Example 1; $f$ given by \eqref{eq:fdef}, $\alpha=\frac{3}{4}$} \label{fig:plot2}
\end{figure}

In Figure \ref{fig:plot2} we plot results for the case $\alpha=\frac{3}{4}$ (see Figure \ref{fig:ex1}), when $f^\prime_{\max} \approx 11.43$ and the above conditions reduce to $c \leq \arccos(3/4)/\log(4/3) \approx 2.51$ and  $c < \arsinh\left(2 \log(4/3)/(2\pi + \log(4/3))\right)/(2\pi)$ $\approx 0.0139$. Choosing $c=0.013$ we plot in Figure \ref{fig:plot2}(a),  for the case $\rho_0=\half$, $\cL_c(f,N)$ and $\cR_c(f,m,M,N)$, given by \eqref{eq:estimate_to_check}, against $N$, for $N=2^j$, $j=3,4,\ldots,9$, choosing $m=16,000$ and $M=100$. We see that with these choices $\cR_c(f,m,M,N)$ is positive and bounded away from zero for sufficiently large $N$, while, as is clear from its definition, $\cL_c(f,N)$ decreases exponentially with $N$; note that \eqref{eq:estimate_to_check} is satisfied for $N=512$. To apply Theorem \ref{thm:spectral_radius} to conclude that $\rho(D_\Gamma)=\rho_{\ess}(D_\Gamma)<\half$ we also need to check that $\rho(A^M_{t,N})<\half$, for $M=100$, N = 512, and $t = t_k=(k-1/2)/m$, $k=1,\ldots, m$, with $m=16,000$; equivalently, that $r^N_{\max} := \max_{z\in \sigma^N(D_\Gamma)}|z|<\half$, where $\sigma^N(D_\Gamma)$ is as defined by \eqref{eq:sigmaNDef} with $N=512$, $m_N=16,000$, $M_N=100$.  For these parameter values  $\sigma^N(D_\Gamma)$, an approximation to $\sigma(D_\Gamma)=\sigma_{\ess}(D_\Gamma)$  by Theorem \ref{thm:Hausdorff_convergence}, is plotted in
Figure \ref{fig:plot2}(b), and  $r^{N}_{\max} \approx 0.4837 < \half$. Our calculations are in standard double-precision floating-point arithmetic rather than exact arithmetic\footnote{In all these examples the spectral radii of the matrices $A_{t,N}^M$ as well as the norms of the $(A_{t,N}^M - \lambda_l I)^{-1}$ are computed using standard Matlab routines; our codes are available at \url{https://github.com/Raffael-Hagger/DLP}}, but this is convincing evidence, by application of Theorem \ref{thm:spectral_radius}, that $\rho(D_\Gamma)<\half$.

 In Figure \ref{fig:plot2}(b) we also plot, for the parameter values $p=10$, $N=512$, $M=100$, $n=100$, and $t=\pi/18$, the bounded domain $W_n$, which\footnote{In our computations of $W_n$ we neglect the factor $e^{i(x_{m,N}-x_{n,N})t}$ when using \eqref{eq:TpNdef} with $\tilde K_t$ replaced by $\tilde K_t^M$. The resulting matrix is unitarily equivalent to $T^{p,t,N,M}$ as defined in \S\ref{sec:num_range}, so has the same numerical range.} is contained in and is an approximation to $W(T^{p,t,N,M})$ (these notations defined in \S\ref{sec:num_range}).
 Where $R^*$ is as defined in Corollary \ref{cor:num_range2}, we also plot the circle $R^* \T$ of radius $R^*\approx 1.163$; by Corollary \ref{cor:num_range2} it is guaranteed that $\overline{B_{R^*}}\subset W(D_\Gamma)=W_{\ess}(D_\Gamma)$, so that $w(D_\Gamma)=w_{\ess}(D_\Gamma)\geq R^*$ is significantly larger than $\half$ for this example. We note that, for these parameter values, $C_7(p,N,M) \leq 3.964 \times 10^{-5}$ (see Corollaries \ref{cor:num_range} and \ref{cor:num_range2}).

 In Figure \ref{fig:plot2}(b) we additionally plot, to illustrate the application of Theorem \ref{thm:spectral_radius} and the adaptive definition of the parameters defined by \eqref{eq:lambda_{k,l}}, the points $\mu_{k,\ell}$ at which the resolvent of $A_{t_k,N}^M$ is calculated when $\rho_0=\half$, $N=512$, $M=100$, $m=16,000$, for the case $k=16,000$ (so that $t_k\approx \pi$) and $\ell=1,\ldots,n_k=67$. This value of $k$ is fairly typical; $n_k$ varies in the range  $[67,110]$ as a function of $k$. It is clear that the adaptive algorithm of Theorem \ref{thm:spectral_radius} (and see Corollary \ref{cor:GspecRMfinal}) is significantly more efficient than the uniform grid on $\rho_0 \T$ of Corollary \ref{cor:uni_space} (recall the discussion above Lemma \ref{lem:R*def}).   For $k = 16,000$  we have $\min_\ell \nu_{k,\ell} \approx 0.0101$ and $\max_\ell \nu_{k,\ell} \approx 0.2216$ (see \eqref{eq:lambda_{k,l}} for this notation). The ratio of these maximum and minimum values, which is approximately the ratio of the maximum to the minimum spacing of the points $\mu_{k,\ell}$, is about $20.2$.\\

\noindent{\bf Example 2.} We now turn to examples where we can apply the theory of \S\ref{sec:two-sided}. First we consider the case of a cone, that is $\Gamma=\{(x,f(x)):x\in \R\}$ where
\begin{equation} \label{eq:fcone}
f(x) := \mu\abs{x},  \quad x\in \R,
\end{equation}
for some $\mu \in \R$. This clearly satisfies \eqref{eq:standing2}, for any $\alpha\in (0,1)$, and, where $f_\pm$ and $g_\pm$ are defined by \eqref{eq:gdef2}, $f_\pm(x) = \mu x$, $g_\pm(x)=\mu$, for $x>0$.
For this example the spectrum and spectral radius are known, viz.
\begin{equation} \label{eq:SpecCone}
\sigma_{\ess}(\Gamma)=\sigma(D_\Gamma) = \{0\}\cup \left\{\pm \frac{\sin(\arctan(|\mu|)(1-iy))}{2\sin(\pi(1-iy)/2)}: y\in \R\right\} \;\; \mbox{so that} \;\; \rho(D_\Gamma) = \frac{|\mu|}{2\sqrt{1+\mu^2}}
\end{equation}
 (see, e.g., \cite{Mi:02})
 and, since $D_\Gamma$ is diagonalised by the Mellin transform when $\Gamma$ is a 2D cone, $D_\Gamma$ is normal so that $W_{\ess}(D_\Gamma)=\overline{W(D_\Gamma)}=\conv(\sigma(D_\Gamma))$ and $\|D_\Gamma\|=w(D_\Gamma)=\rho(D_\Gamma)$.

 To make comparison of these known results with the methods of \S\ref{sec:two-sided} we can choose, in principle, any $\alpha\in (0,1)$, but the choice of $\alpha$ affects the choice of $c$ via \eqref{c:bound2_2}, and thus the rate of decrease with $N$ of $\cL_c(f,N)$, defined by \eqref{eq:estimate_to_check2}.
The operators $\tilde{L}^{\pm}_t$, given by \eqref{eq:L+}, and thus the  matrices $A_{t_k,N}^M$, given by \eqref{eq:AtNMdef2},  also depend on $\alpha$, while the operators $\tilde{K}^{\pm}_t$, given by \eqref{eq:ktall2}, vanish in this case. It is worth noting that the blocks of $A_{t_k,N}^M$ corresponding to $\tilde L_t^\pm$ are Toeplitz matrices in this example.
The conditions \eqref{c:bound2_2} reduce to $c\leq \arccos(\alpha)/|\log \alpha|$ and $|\mu|c\leq \alpha^2/|\log \alpha|$. As the right hand sides of these inequalities are increasing on $(0,1)$, it is beneficial to choose $\alpha$ closer to $1$ in order to be able to choose a larger $c$. We select $\alpha = \frac{7}{8}$, so that
$\arccos(\alpha)/|\log\alpha| \approx 3.78$ and $\alpha^2/|\log\alpha| \approx 5.73$, and then take $c=0.57$,
so that \eqref{c:bound2_2} is satisfied for $|\mu|\leq 10$. For $\mu =10$ we see from Figure \ref{fig:plot3}(a) that, with $M = 200$, $m = 2,000$ and $N = 16$, the inequality \eqref{eq:estimate_to_check2} is satisfied and $\max_{k=1,\ldots,m} \rho(A_{t_k,N}^M) \approx 0.4975 <\half$, so that $\rho(D_\Gamma)<\half$ by  Theorem \ref{thm:spectral_radius2}, in agreement with \eqref{eq:SpecCone} which gives $\rho(D_\Gamma) =5/\sqrt{101}\approx 0.4975$. We see in Figure \ref{fig:plot3}(b) that the approximations $\sigma^N(D_\Gamma)$ to $\sigma(D_\Gamma)$, given by Theorem \ref{thm:Hausdorff_convergence_2}, agree closely,
for the given parameter values, with the expected lemniscates given by \eqref{eq:SpecCone} for different values of $\mu$.\\

\begin{figure}[ht]
\centering
\begin{subfigure}[t]{.46\textwidth}
  \centering
  \includegraphics[scale = 0.5, trim = 2cm 0cm 0cm 0cm]{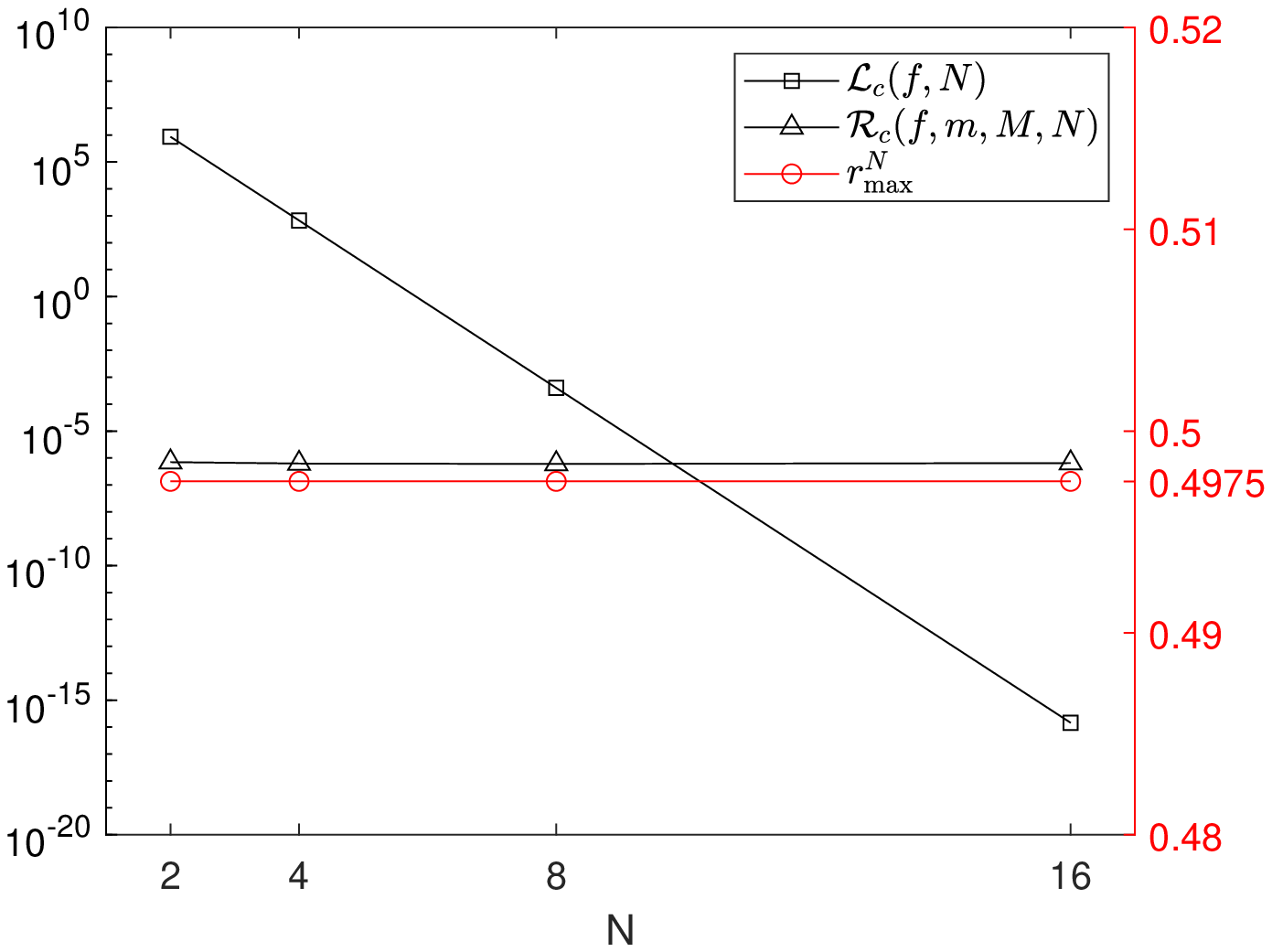}
  \subcaption{$\cL_c(f,N)$, $\cR_c(f,m,M,N)$, for $\rho_0=\half$, and the maximum of the spectral radii of $A_{t_k,N}^M$, $k=1,\ldots,m$, plotted against $N$ for $m=2,000$ and $M=200$ in the case $\mu = 10$.}
\end{subfigure}
\hspace{0.5cm}
\begin{subfigure}[t]{.49\textwidth}
  \centering
  \includegraphics[scale = 0.52, trim = 2cm 0cm 0cm 0cm]{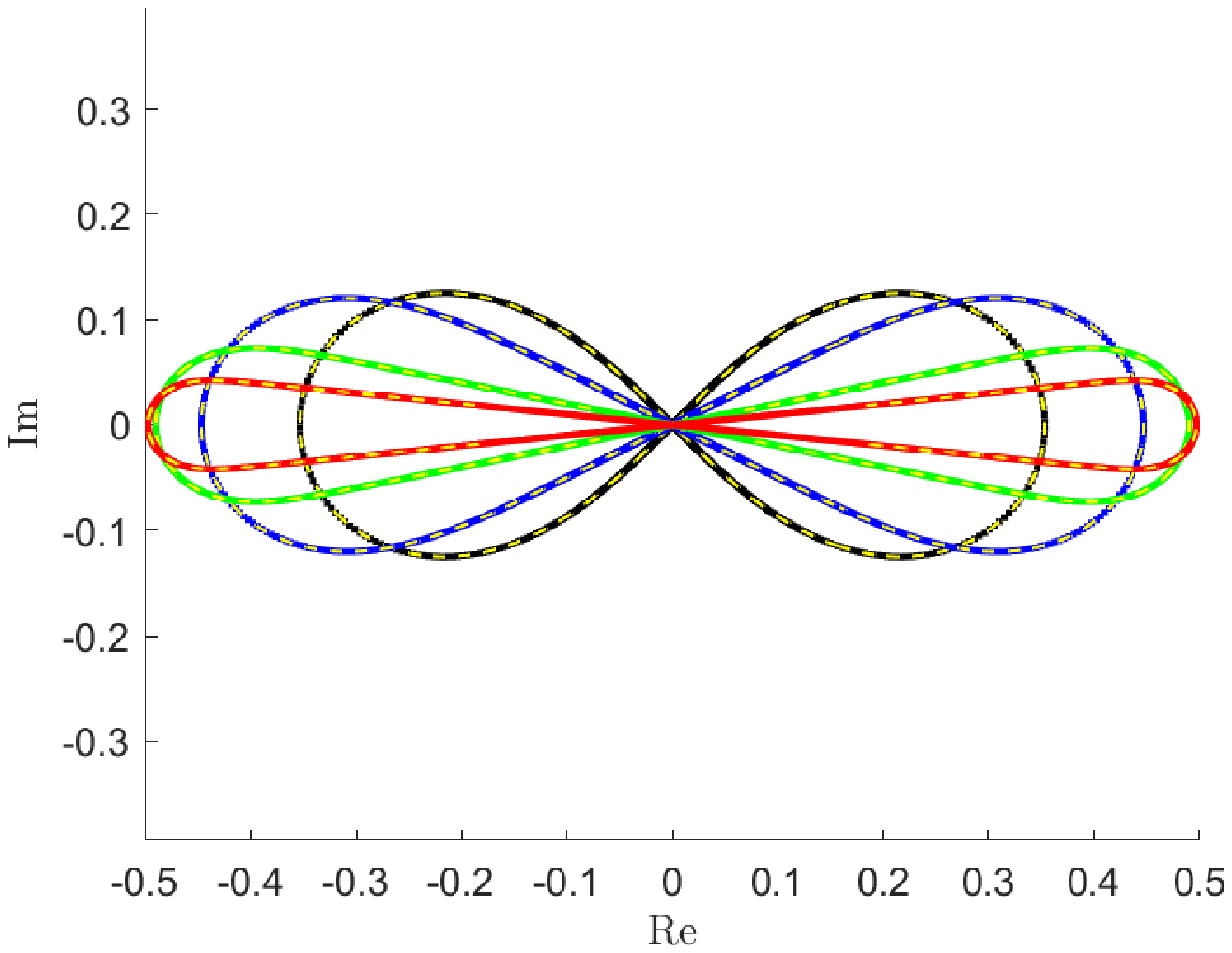}
  \subcaption{$\sigma^N(D_\Gamma) = \{0\}\cup \bigcup_{t\in T_N} \spec(A_{t,N}^M)$ for $N = 16$, $m_N = 2,000$, $M_N = 200$, and $\mu = 1$ (black), $\mu = 2$ (blue), $\mu = 5$ (green), $\mu = 10$ (red). The yellow dashed lines indicate the lemniscates \eqref{eq:SpecCone}, i.e. the (exact) spectrum of $D_{\Gamma}$ in each case.}
\end{subfigure}
\caption{Numerical results for Example 2; $f$ given by \eqref{eq:fcone}, $\alpha=\frac{7}{8}$} \label{fig:plot3}
\end{figure}

\noindent{\bf Example 3.} In this example we take
\begin{equation} \label{eq:ftwoside}
f(x) := \abs{x}\sin^2(\pi\log_{\alpha}\abs{x}), \quad x\in \R,
\end{equation}
and $\Gamma$ satisfies \eqref{eq:standing2}, but now just for one $\alpha\in (0,1)$. Note that $f_\pm$ and $g_\pm$, given by \eqref{eq:gdef2}, are the same as $f$ and $g$ in Example 1, so that $g_\pm$ satisfy the same bounds as $g$ in Example 1, and $\Gamma$ has Lipschitz character $f^\prime_{\max}$ as defined in that example. Thus the conditions \eqref{c:bound2_2} reduce in this example to \eqref{eq:c_conds} plus the condition that $\sinh(2\pi c)/2 + c|\log\alpha| \cosh^2(\pi c) < \alpha^2$. We choose $\alpha=\frac{2}{3}$ (see Figure \ref{fig:ex2}), for which $c=0.019$ satisfies this condition and \eqref{c:bound2_2}, and $f_{\max}^\prime \approx 8.26$. We see from Figure \ref{fig:plot5}(a) that, with $M = 60$, $m = 10,000$ and $N = 256$, the inequality \eqref{eq:estimate_to_check2} is satisfied and $\max_{k=1,\ldots,m} \rho(A_{t_k,N}^M)<\half$, so that $\rho(D_\Gamma)<\half$ by  Theorem \ref{thm:spectral_radius2}. Figure \ref{fig:plot5}(b) plots an approximation $\sigma^N(D_\Gamma)$ to $\sigma(D_\Gamma)$ given by Theorem \ref{thm:Hausdorff_convergence_2}, which is contained in the circle of radius $\half$ (in red). By contrast, by \eqref{eq:inclusion}, and arguing as in Example 1, the numerical range $W(D_\Gamma)$ contains at least the closed disc of radius $R^*\approx 0.8179$ (shown in blue in Figure \ref{fig:plot5}(b)), where $R^*$ is as given in Corollary \ref{cor:num_range2}.

\begin{figure}[h!]
\centering
\begin{subfigure}[t]{.44\textwidth}
  \centering
  \includegraphics[scale = 0.49, trim = 2cm 0cm 0cm 0cm]{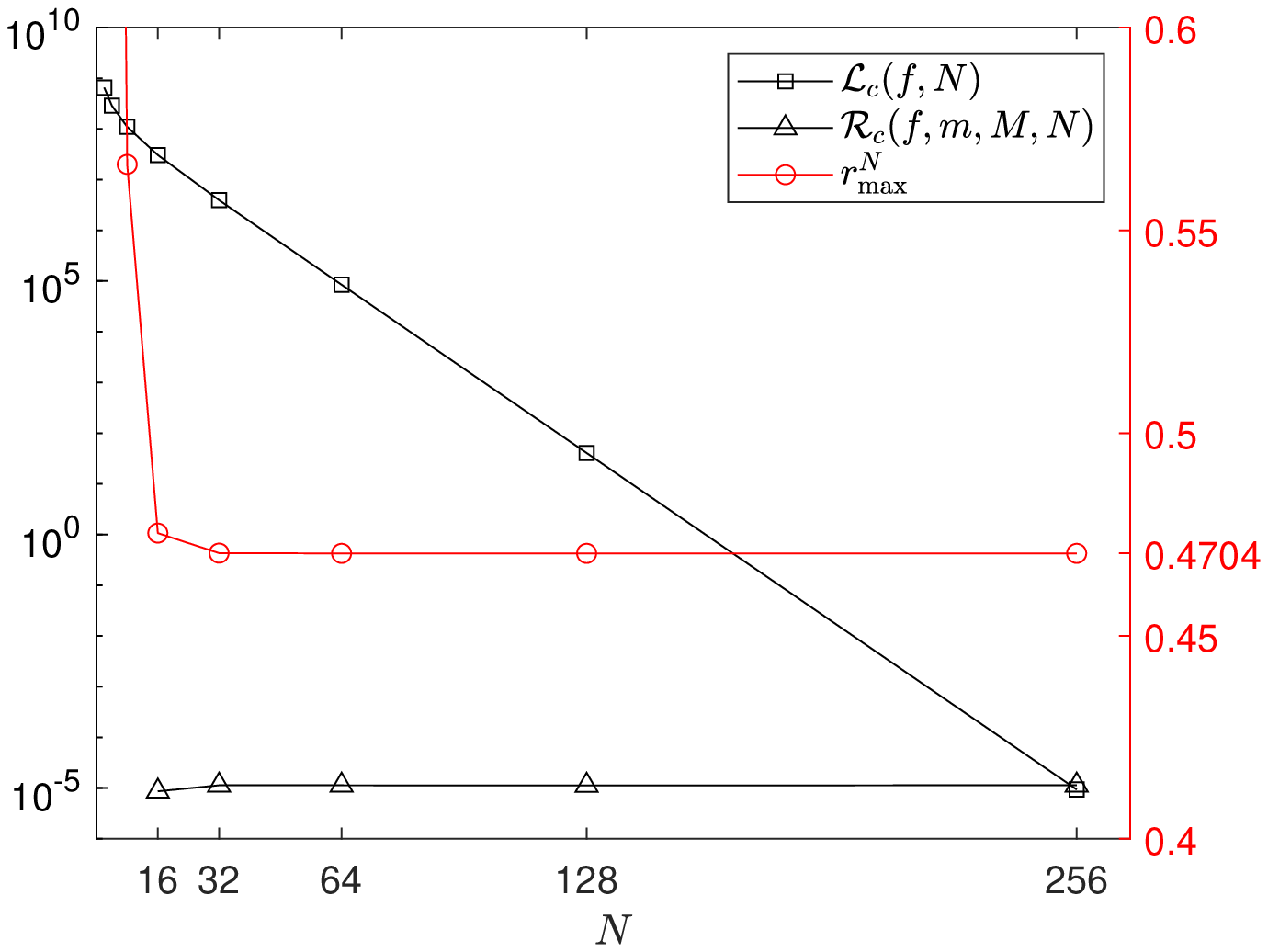}
  \subcaption{$\cL_c(f,N)$, $\cR_c(f,m,M,N)$, for $\rho_0=\half$, and the maximum of the spectral radii of $A_{t_k,N}^M$, $k=1,\ldots,m$, plotted against $N$ for $m=10,000$ and $M=60$. $\cR_c(f,m,M,N)$ values are not shown for $N=2,4,$ and $8$ because they are negative.}
\end{subfigure}
\hspace{0.5cm}
\begin{subfigure}[t]{.51\textwidth}
  \centering
  \includegraphics[scale = 0.53, trim = 2cm 0cm 0cm 0cm]{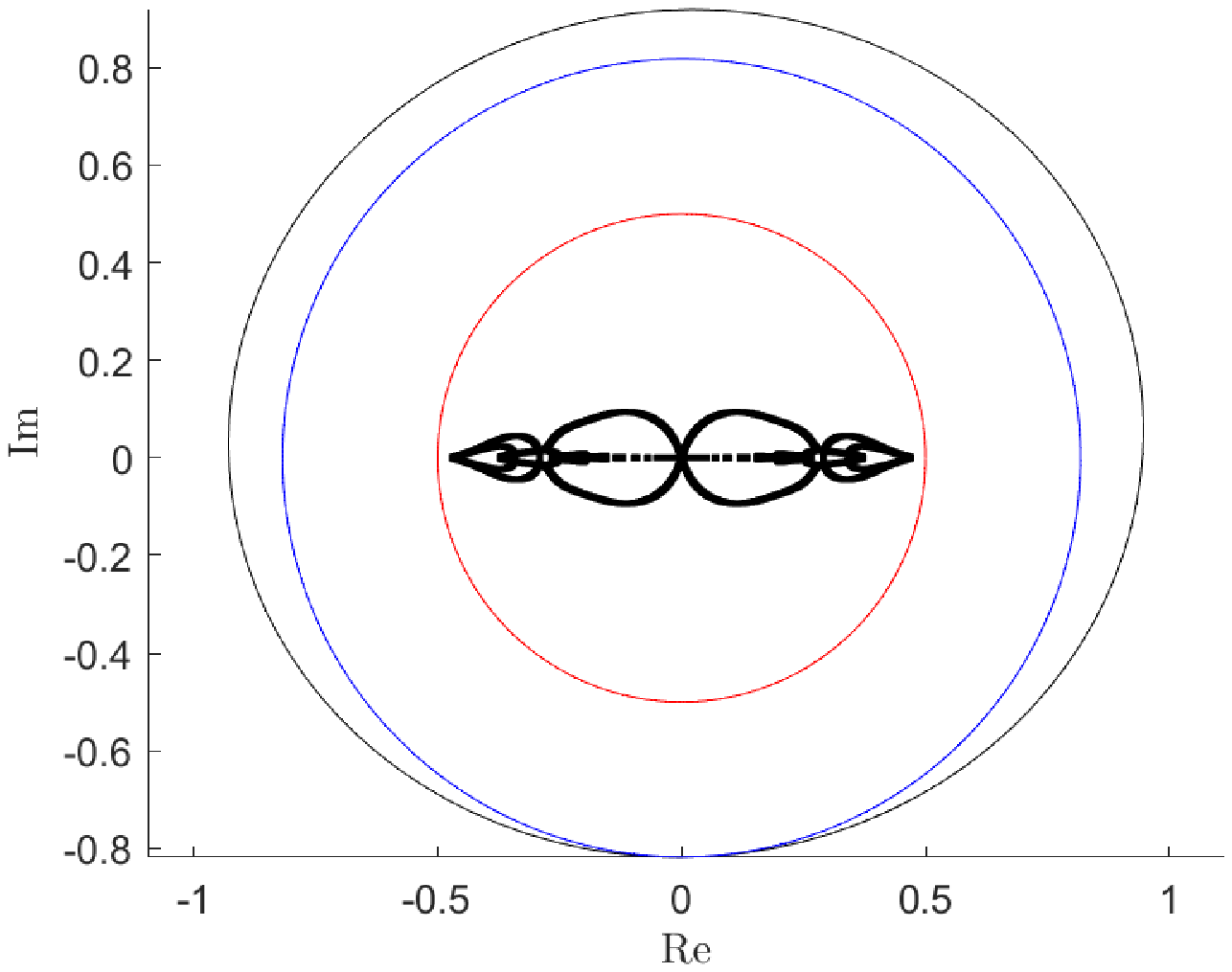}
  \subcaption{$\sigma^N(D_\Gamma)=\{0\}\cup \bigcup_{t\in t_N} \spec(A_{t,N}^{M_N})$, for $N = 256$, $m_N = 10,000$, $M_N=60$, an approximation to $\sigma(D_\Gamma)=\sigma_{\ess}(D_\Gamma)$ by Theorem \ref{thm:Hausdorff_convergence_2}. By Theorem \ref{thm:synth} this is also an approximation to $\sigma_{\ess}(D_\Gamma)$ for $\Gamma\in \cD_\cA$ as in Figure \ref{fig:exLDI}(b).  Also shown are the boundary of $W_{100}$ (for $p = 10$, $N = 256$, $M = 60$ and $t = \pi/13$) in black, $R^*\T \subset W_{100}\subset W(D_{\Gamma})$ in blue (see Corollary 4.25), $\half\T$ in red.
}
\end{subfigure}
\caption{Numerical results for Example 3; $f$ given by \eqref{eq:ftwoside}, $\alpha = \frac{2}{3}$. \label{fig:plot5}}
\end{figure}

Figure \ref{fig:plot5} is also, by Theorem \ref{thm:synth}, relevant to the bounded Lipschitz domain $\Omega_-$ shown in Figure \ref{fig:exLDI}(b). This has boundary $\Gamma\in \cD_\cA$ which is $C^1$ except at the single point $x=0$ where it coincides locally with the graph of $f$ given by \eqref{eq:ftwoside}. It follows by the above calculations and Theorem \ref{thm:synth} that $\rho_{\ess}(D_\Gamma)<\half$ for $\Gamma=\partial \Omega_-$ and that Figure  \ref{fig:plot5}(b) is also a plot of the approximation $\Sigma^N(D_\Gamma)$, defined in Theorem \ref{thm:synth}, to the essential spectrum of $D_\Gamma$ for $\Gamma=\partial \Omega_-$. While $\rho_{\ess}(D_\Gamma)<\half$,  $W_{\ess}(D_\Gamma)$, for $\Gamma=\partial \Omega_-$, contains the closed disc of radius $R^*\approx 0.8179$ shown in Figure  \ref{fig:plot5}(b), by the above result for the graph $\Gamma$ given by \eqref{eq:ftwoside}, and a localisation result \cite[Theorem 3.2]{ChaSpe:21} (and see \cite[Theorem 3.17]{ChaSpe:21}) for the essential numerical range, analogous to Theorem \ref{thm:localization}. Thus also, by \eqref{eq:NormBounds}, $\|D_\Gamma\|_{\ess}\geq w_{\ess}(D_\Gamma)\geq R^*$.\\

\noindent {\bf Example 4.} In this example we define $f$ by \eqref{eq:fdef} for $x>0$ and set $f(x):=0$ for $x\leq 0$, so that $\Gamma$ and $f$ satisfy \eqref{eq:standing2}. Recalling \eqref{eq:gdef2}, we see that $f_+$ and $g_+$ are the same as $f$ and $g$ in Example 1,  while $f_-=0$ and $g_-=0$. The graph $\Gamma$ has Lipschitz character $L= (f_{\max}^\prime-f_{\min}^\prime)/2 = f_{\max}^\prime - 1/2$, where $f_{\max}^\prime$ and $f_{\min}^\prime$ are as defined in Example 1. The conditions \eqref{c:bound2_2} reduce in this example to the same conditions as in Example 3, and again we choose  $\alpha=\frac{2}{3}$ and $c=0.019$, so that $\Gamma$ has Lipschitz character  $L=f_{\max}^\prime -1/2\approx 7.76$. Similarly to the previous example, we see from Figure \ref{fig:plot6}(a) that, with $M = 50$, $m = 5,000$, and $N = 256$, the inequality \eqref{eq:estimate_to_check2} is satisfied and $\max_{k=1,\ldots,m} \rho(A_{t_k,N}^M)<\half$, so that $\rho(D_\Gamma)<\half$ by  Theorem \ref{thm:spectral_radius2}. The approximation $\sigma^N(D_\Gamma)$ to $\sigma(D_\Gamma)$, given by Theorem \ref{thm:Hausdorff_convergence_2} and plotted in Figure \ref{fig:plot6}(b), is contained in the circle of radius $\half$, while the numerical range $W(D_\Gamma)$ contains at least the closed disc of radius $R^*\approx 0.8179$ shown in Figure \ref{fig:plot6}(b).

Figure \ref{fig:plot6} is also, by Theorem \ref{thm:synth}, relevant to the bounded Lipschitz domain $\Omega_-$ shown in Figure \ref{fig:exLDI}(a). This has boundary $\Gamma\in \cD_\cA$ which is $C^1$ except at the single point $x=0$ where it coincides locally with the graph of the function $f$ described above. It follows by the above calculations and Theorem \ref{thm:synth} that $\rho_{\ess}(D_\Gamma)<\half$ for $\Gamma=\partial \Omega_-$ and that Figure  \ref{fig:plot5}(b) is also a plot of the approximation $\Sigma^N(D_\Gamma)$, defined in Theorem \ref{thm:synth}, to the essential spectrum of $D_\Gamma$ for $\Gamma=\partial \Omega_-$. Arguing as in the previous example, while $\rho_{\ess}(D_\Gamma)<\half$,  $W_{\ess}(D_\Gamma)$, for $\Gamma=\partial \Omega_-$, contains the closed disc of radius $R^*\approx 0.8179$ plotted in Figure  \ref{fig:plot6}(b), and $\|D_\Gamma\|_{\ess}\geq w_{\ess}(D_\Gamma)\geq R^*$. \\

\begin{rem}[\bf Symmetry of the spectrum and essential spectrum] For Example 2 it is immediate from \eqref{eq:SpecCone} that $\sigma(D_\Gamma)$ is symmetric with respect to the origin; if $z\in \sigma(D_\Gamma)$ then $-z\in \sigma(D_\Gamma)$. We conjecture, based on the numerical results for Examples 3 and 4 and similar calculations, that this same symmetry holds in 2D whenever $\Gamma$ is a dilation invariant graph satisfying \eqref{eq:standing2}. If this conjecture is true, then, for all $\Gamma\in \cD_\cA$, $\sigma_{\ess}(D_\Gamma)$ is symmetric with respect to the origin by Theorem \ref{thm:synth}. We note that symmetry results in the 2D case of this sort are proved for $D_\Gamma:L^2(\Gamma)\to L^2(\Gamma)$ when $\Gamma$ is the boundary of a bounded $C^2$ domain $\Omega_-$ in \cite[Proposition 6]{KhPuSh07}, and for $D^\prime_\Gamma$ as an operator on the natural energy space for general Lipschitz $\Omega_-$ \cite[Theorem 2.1]{HeKaLi17} (in both these cases the relevant spectrum lies in $[-1,1]$). In 3D, even when $\Gamma$ is a polyhedron, the spectrum and essential spectrum of $D_\Gamma$ need not be symmetric with respect to the origin (see, e.g., \cite{LeCoPer:22}).
\end{rem}
\begin{figure}[h!]
\centering
\begin{subfigure}[t]{.44\textwidth}
  \centering
  \includegraphics[scale = 0.49, trim = 2cm 0cm 0cm 0cm]{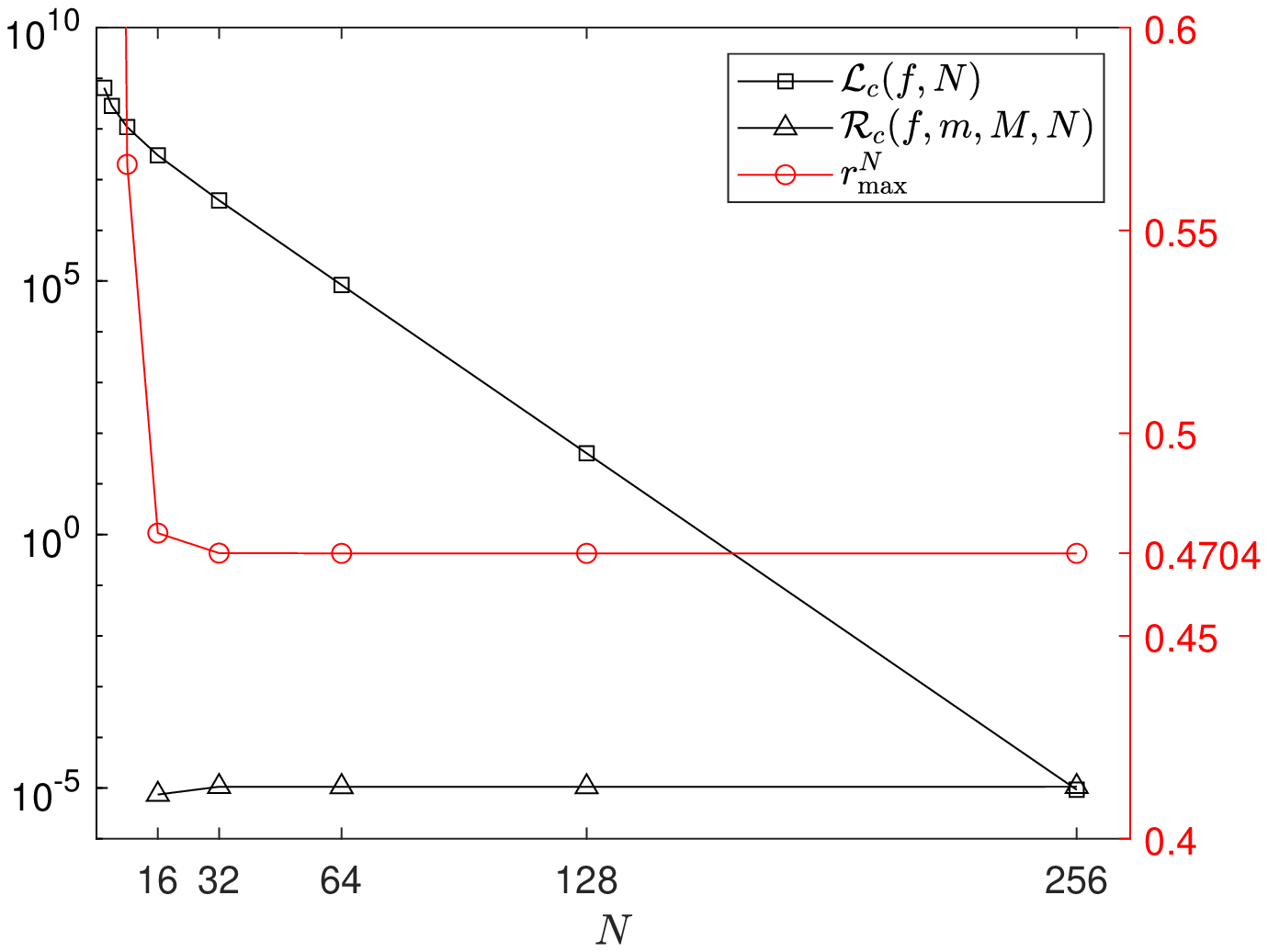}
  \subcaption{$\cL_c(f,N)$, $\cR_c(f,m,M,N)$, for $\rho_0=\half$, and the maximum of the spectral radii of $A_{t_k,N}^M$, $k=1,\ldots,m$, plotted against $N$ for $m=5000$ and $M=50$. $\cR_c(f,m,M,N)$ values are not shown for $N=2,4$ and $8$ because they are negative.}
\end{subfigure}
\hspace{0.5cm}
\begin{subfigure}[t]{.51\textwidth}
  \centering
  \includegraphics[scale = 0.53, trim = 2cm 0cm 0cm 0cm]{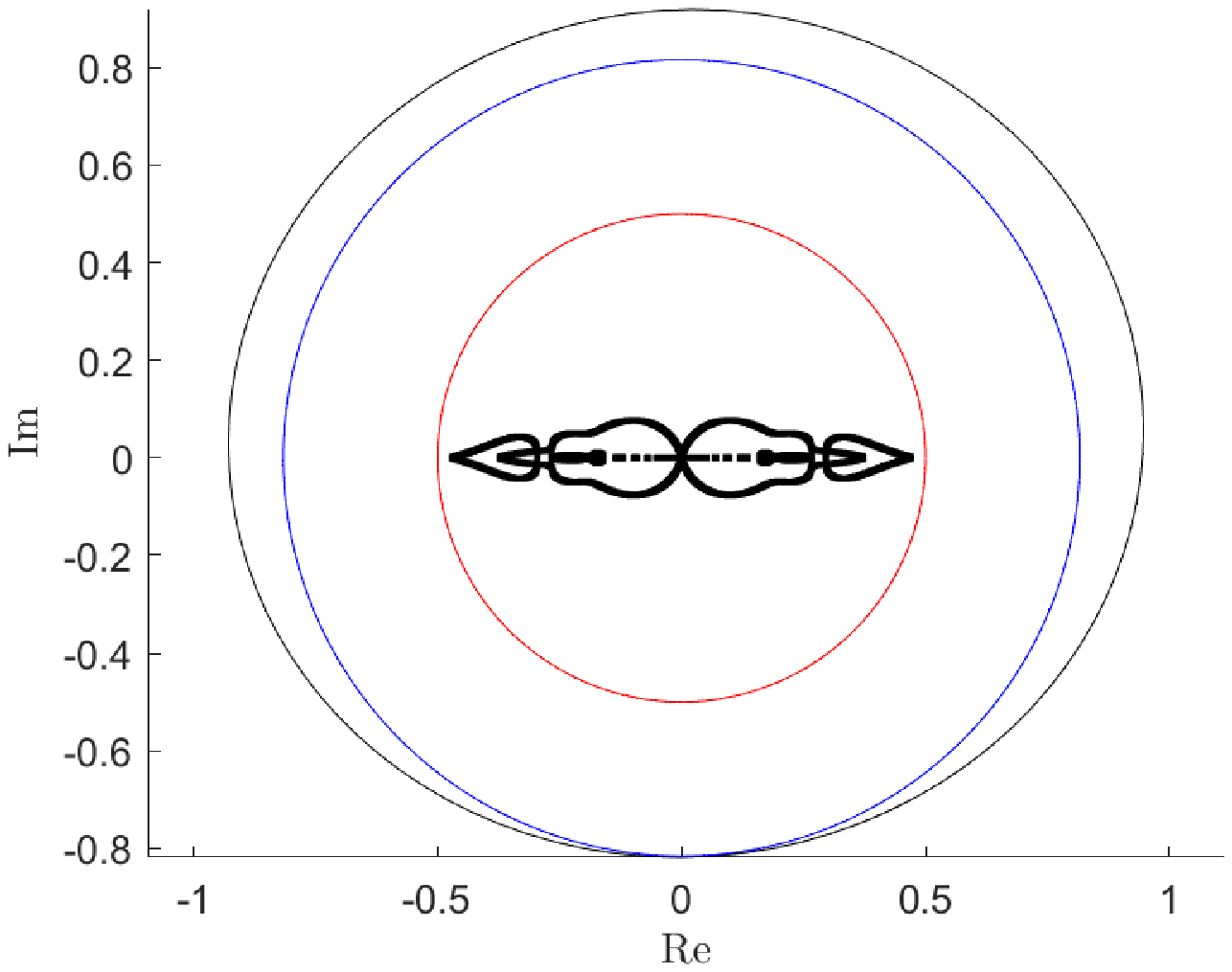}
  \subcaption{$\sigma^N(D_\Gamma)=\{0\}\cup \bigcup_{t\in t_N} \spec(A_{t,N}^{M_N})$, for $N = 256$, $m_N = 5000$, $M_N=50$, an approximation to $\sigma(D_\Gamma)=\sigma_{\ess}(D_\Gamma)$ by Theorem \ref{thm:Hausdorff_convergence_2}. By Theorem \ref{thm:synth} this is also an approximation to $\sigma_{\ess}(D_\Gamma)$ for $\Gamma\in \cD_\cA$ as in Figure \ref{fig:exLDI}(a).  Also shown are the boundary of $W_{100}$ (for $p = 10$, $N = 256$, $M = 50$, and $t = \pi/13$) in black, $R^*\T \subset W_{100}\subset W(D_{\Gamma})$ in blue (see Corollary 4.25), $\half\T$ in red.
}
\end{subfigure}
\caption{Numerical results for Example 4; $f_+=f$ given by \eqref{eq:fdef}, $f_- = 0$, $\alpha = \frac{2}{3}$ \label{fig:plot6}}
\end{figure}

\small

\noindent {\bf Acknowledgements.}
We are grateful to Marko Lindner (Hamburg University of Technology), Marco Marletta (Cardiff), and Euan Spence (Bath) for discussions regarding this project, and to the anonymous referee for their careful reading of the manuscript and associated feedback. The 2nd and 4th authors were supported by the European Union's Horizon 2020 research and innovation programme under the Marie Sklodowska-Curie grant agreement No 844451 and by Engineering and Physical Sciences Research Council (EPSRC) Grant EP/T008636/1.


\begin{thebibliography}{10}

\bibitem{AmDeMi:16}
{\sc H.~Ammari, Y.~Deng, and P.~Millien}, {\em Surface plasmon resonance of
  nanoparticles and applications in imaging}, Arch. Rational Mech. Anal., 220
  (2016), pp.~109--153.

\bibitem{An:71}
{\sc P.~M. Anselone}, {\em Collectively compact operator approximation theory
  and applications to integral equations}, Prentice-Hall Englewood Cliffs, NJ,
  1971.

\bibitem{At:75}
{\sc K.~Atkinson}, {\em Convergence rates for approximate eigenvalues of
  compact integral operators}, SIAM J. Numer. Anal., 12 (1975), pp.~213--222.

\bibitem{AxKeMc:06}
{\sc A.~Axelsson, S.~Keith, and A.~McIntosh}, {\em Quadratic estimates and
  functional calculi of perturbed {D}irac operators}, Invent. Math., 163
  (2006), pp.~455--497.

\bibitem{ArMaRo22}
{\sc J.~Ben-Artzi, M.~Marletta, and F.~R\"osler}, {\em Universal algorithms for
  computing spectra of periodic operators}, Numer. Math., 150 (2022),
  pp.~719--767.

\bibitem{bonsall1973numerical}
{\sc F.~F. Bonsall and J.~Duncan}, {\em Numerical Ranges II}, Cambridge
  University Press, 1973.

\bibitem{BGS66}
{\sc J.~D. Burago, V.~G. Maz'ja, and V.~D. Sapo\v{z}nikova}, {\em On the theory
  of potentials of a double and a simple layer for regions with irregular
  boundaries}, in Problems {M}ath. {A}nal. {B}oundary {V}alue {P}roblems
  {I}ntegr. {E}quations ({R}ussian), Izdat. Leningrad. Univ., Leningrad, 1966,
  pp.~3--34.

\bibitem{BG67}
{\sc J.~S. Burago and V.~G. Maz'ja}, {\em Certain questions of potential theory
  and function theory for regions with irregular boundaries}, Zap. Nau\v{c}n.
  Sem. Leningrad. Otdel. Mat. Inst. Steklov. (LOMI), 3 (1967), p.~152.

\bibitem{Cald77}
{\sc A.-P. Calder\'{o}n}, {\em Cauchy integrals on {L}ipschitz curves and
  related operators}, Proc. Nat. Acad. Sci. U.S.A., 74 (1977), pp.~1324--1327.

\bibitem{Ch:84}
{\sc G.~A. Chandler}, {\em Galerkin's method for boundary integral equations on
  polygonal domains}, The ANZIAM Journal, 26 (1984), pp.~1--13.

\bibitem{ChaSpe:21}
{\sc S.~N. Chandler-Wilde and E.~A. Spence}, {\em {Coercivity, essential norms,
  and the Galerkin method for second-kind integral equations on polyhedral and
  Lipschitz domains}}, Numer. Math., 150 (2022), pp.~299--371. See also the correction submitted to Numer. Math., at \href{https://www.personal.reading.ac.uk/~sms03snc/GCC.pdf}{https://www.personal.reading.ac.uk/$\sim$sms03snc/GCC.pdf}

\bibitem{ChLe08}
{\sc T.~Chang and K.~Lee}, {\em Spectral properties of the layer potentials on
  {L}ipschitz domains}, Illinois Journal of Mathematics, 52 (2008),
  pp.~463--472.

\bibitem{CMM82}
{\sc R.~R. Coifman, A.~McIntosh, and Y.~Meyer}, {\em L'int\'{e}grale de
  {C}auchy d\'{e}finit un op\'{e}rateur born\'{e} sur {$L^{2}$} pour les
  courbes lipschitziennes}, Ann. of Math. (2), 116 (1982), pp.~361--387.

\bibitem{BDav}
{\sc E.~B. Davies}, {\em Linear operators and their spectra}, Cambridge
  University Press, 2007.

\bibitem{LeCoPer:22}
{\sc M.~{d}e Le\'on-Contreras and K.-M. Perfekt}, {\em The quasi-static
  plasmonic problem for polyhedra}, Math. Ann.,  (2022).
\newblock https://doi.org/10.1007/s00208-022-02481-x.

\bibitem{El:92}
{\sc J.~Elschner}, {\em {The double layer potential operator over polyhedral
  domains I: Solvability in weighted Sobolev spaces}}, Applicable Analysis, 45
  (1992), pp.~117--134.

\bibitem{EvansGariepy92}
{\sc L.~C. Evans and R.~F. Gariepy}, {\em Measure theory and fine properties of
  functions}, Studies in Advanced Mathematics, CRC Press, Boca Raton, FL, 1992.

\bibitem{FaSaSe:92}
{\sc E.~Fabes, M.~Sand, and J.~K. Seo}, {\em The spectral radius of the
  classical layer potentials on convex domains}, in Partial differential
  equations with minimal smoothness and applications, Springer, 1992,
  pp.~129--137.

\bibitem{FaJoRi:78}
{\sc E.~B. Fabes, M.~Jodeit, and N.~M. Riviere}, {\em {Potential techniques for
  boundary value problems on $C^1$ domains}}, Acta Mathematica, 141 (1978),
  pp.~165--186.



\bibitem{GM13}
{\sc N.~V. Grachev and V.~G. Maz'ya}, {\em Solvability of a boundary integral
  equation on a polyhedron}, Journal of Mathematical Sciences, 191 (2013),
  pp.~193--213.

\bibitem{Gr:85}
{\sc P.~Grisvard}, {\em Elliptic problems in nonsmooth domains}, Pitman,
  Boston, 1985.

\bibitem{GuRa:97}
{\sc K.~E. Gustafson and D.~K.~M. Rao}, {\em Numerical range; The field of
  values of linear operators and matrices}, Universitext, Springer-Verlag, New
  York, 1997.

\bibitem{Ha:95}
{\sc W.~Hackbusch}, {\em Integral Equations: Theory and Numerical Treatment},
  Birkh\"auser
  Verlag, 1995.

\bibitem{HaRoSi01}
{\sc R.~Hagen, S.~Roch, and B.~Silbermann}, {\em $C^*$-Algebras and Numerical
  Analysis}, Marcel Dekker, 2001.

\bibitem{HeKaLi17}
{\sc J.~Helsing, H.~Kang, and M.~Lim}, {\em Classification of spectra of the
  {N}eumann-{P}oincar\'e operator on planar domains with corners by resonance},
  Ann. I. H. Poincar\'e C, 34 (2017), pp.~991--1011.

\bibitem{HP13}
{\sc J.~Helsing and K.-M. Perfekt}, {\em On the polarizability and capacitance
  of the cube}, Appl. Comput. Harmon. Anal., 34 (2013), pp.~445--468.

\bibitem{HePe:17}
\leavevmode\vrule height 2pt depth -1.6pt width 23pt, {\em The spectra of
  harmonic layer potential operators on domains with rotationally symmetric
  conical points}, J. Math. Pures Appl. (9), 118 (2018), pp.~235--287.

\bibitem{Hof94}
{\sc S.~Hofmann}, {\em On singular integrals of {C}alder\'{o}n-type in {${\bf
  R}^n$}, and {BMO}}, Rev. Mat. Iberoamericana, 10 (1994), pp.~467--505.

\bibitem{HMT07}
{\sc S.~Hofmann, M.~Mitrea, and M.~Taylor}, {\em Geometric and transformational
  properties of {L}ipschitz domains, {S}emmes-{K}enig-{T}oro domains, and other
  classes of finite perimeter domains}, J. Geom. Anal., 17 (2007),
  pp.~593--647.

\bibitem{HMT10}
\leavevmode\vrule height 2pt depth -1.6pt width 23pt, {\em Singular integrals
  and elliptic boundary problems on regular {S}emmes-{K}enig-{T}oro domains},
  Int. Math. Res. Not. IMRN,  (2010), pp.~2567--2865.

\bibitem{Jo78}
{\sc C.~R. Johnson}, {\em Numerical determination of the field of values of a
  general complex matrix}, SIAM J. Numer. Anal., 15 (1978), pp.~595--602.

\bibitem{Jo:82}
{\sc K.~J{\"o}rgens}, {\em Linear integral operators}, Pitman Advanced Pub.
  Program, 1982.

\bibitem{Kellogg29}
{\sc O.~D. Kellogg}, {\em Foundations of Potential Theory}, Springer, 1929.

\bibitem{Ken}
{\sc C.~E. Kenig}, {\em {Harmonic analysis techniques for second order elliptic
  boundary value problems}}, American Mathematical Society, 1994.

\bibitem{KhPuSh07}
{\sc D.~Khavinson, M.~Putinar, and H.~S. Shapiro}, {\em Poincar\'e's
  variational problem in potential theory}, Arch. Rational Mech. Anal., 185
  (2007), pp.~143--184.

\bibitem{Kral62}
{\sc J.~Kr\'{a}l}, {\em On the logarithmic potential}, Comment. Math. Univ.
  Carolinae, 3 (1962), pp.~3--10.

\bibitem{Kral64}
\leavevmode\vrule height 2pt depth -1.6pt width 23pt, {\em On the logarithmic
  potential of the double distribution}, Czechoslovak Math. J., 14(89) (1964),
  pp.~306--321.

\bibitem{Kral65}
\leavevmode\vrule height 2pt depth -1.6pt width 23pt, {\em The {F}redholm
  radius of an operator in potential theory}, Czechoslovak Math. J., 15(90)
  (1965), pp.~454--473; ibid. 15 (90), (1965), 565--588.

\bibitem{Kr:98}
{\sc R.~Kress}, {\em Numerical Analysis}, Springer, 1998.

\bibitem{Kr:14}
\leavevmode\vrule height 2pt depth -1.6pt width 23pt, {\em Linear Integral
  Equations}, Springer-Verlag, 3rd~ed., 2014.

\bibitem{Li:06}
{\sc M.~Lindner}, {\em Infinite matrices and their finite sections: an
  introduction to the limit operator method}, Birkh\"auser, 2006.

\bibitem{Med90}
{\sc D.~Medkov\'{a}}, {\em Invariance of the Fredholm radius of the Neumann
  operator}, \v{C}asopis pro p\v{e}stov\'{a}n\'{i} matematiky, 115 (1990),
  pp.~147--164.

\bibitem{Med92}
\leavevmode\vrule height 2pt depth -1.6pt width 23pt, {\em On essential norm of
  the {N}eumann operator}, Math. Bohem., 117 (1992), pp.~393--408.

\bibitem{Med97}
\leavevmode\vrule height 2pt depth -1.6pt width 23pt, {\em The third boundary
  value problem in potential theory for domains with a piecewise smooth
  boundary}, Czechoslovak Math. J., 47(122) (1997), pp.~651--679.

\bibitem{Me:18}
\leavevmode\vrule height 2pt depth -1.6pt width 23pt, {\em The {L}aplace
  Equation: Boundary Value Problems on Bounded and Unbounded Lipschitz
  Domains}, Springer, 2018.

\bibitem{MitreaD:97}
{\sc D.~Mitrea}, {\em The method of layer potentials for non-smooth domains
  with arbitrary topology}, Integr. Equ. Oper. Theory, 29 (1997), pp.~320--338.

\bibitem{Mi:99}
{\sc I.~Mitrea}, {\em Spectral radius properties for layer potentials
  associated with the elastostatics and hydrostatics equations in nonsmooth
  domains}, J. Fourier Anal. Appl., 5 (1999),
  pp.~385--408.

\bibitem{Mi:02}
\leavevmode\vrule height 2pt depth -1.6pt width 23pt, {\em On the spectra of
  elastostatic and hydrostatic layer potentials on curvilinear polygons},
  J. Fourier Anal. Appl., 8 (2002), pp.~443--488.

\bibitem{MiOtTu:17}
{\sc I.~Mitrea, K.~Ott, and W.~Tucker}, {\em Invertibility properties of
  singular integral operators associated with the {L}am\'e and {S}tokes systems
  on infinite sectors in two dimensions}, Integral Equ. Oper. Theory, 89
  (2017), pp.~151--207.

\bibitem{Net74}
{\sc I.~Netuka}, {\em Double layer potentials and the {D}irichlet problem},
  Czechoslovak Math. J., 24(99) (1974), pp.~59--73.

\bibitem{Pe:19}
{\sc K.-M. Perfekt}, {\em The transmission problem on a three-dimensional
  wedge}, Arch. Ration. Mech. Anal., 231 (2019), pp.~1745--1780.

\bibitem{QiNi:12}
{\sc Y.~Qiao and V.~Nistor}, {\em Single and double layer potentials on domains
  with conical points {I}: {S}traight cones}, Integral Equations Operator
  Theory, 72 (2012), pp.~419--448.

\bibitem{RaRoSi04}
{\sc V.~S. Rabinovich, S.~Roch, and B.~Silbermann}, {\em Limit operators and
  their applications in operator theory}, Birkh\"auser, 2004.

\bibitem{Rad19}
{\sc J.~Radon}, {\em {\"U}ber die randwertaufgaben beim logarithmischen
  potential}, Sitzber. Akad. Wiss. Wien, 128 (1919), pp.~1123--1167.

\bibitem{Ra:92}
{\sc A.~Rathsfeld}, {\em The invertibility of the double layer potential
  operator in the space of continuous functions defined on a polyhedron: The
  panel method}, Applicable Analysis, 45 (1992), pp.~135--177.

\bibitem{Ra:95}
\leavevmode\vrule height 2pt depth -1.6pt width 23pt, {\em The invertibility of
  the double layer potential operator in the space of continuous functions
  defined over a polyhedron. {T}he panel method. {E}rratum}, Applicable
  Analysis, 56 (1995), pp.~109--115.

\bibitem{Rud2}
{\sc W.~Rudin}, {\em Real and Complex Analysis}, McGraw-Hill New York, 3rd~ed.,
  1987.

\bibitem{Schnitzer:20}
{\sc O.~Schnitzer}, {\em Asymptotic approximations for the plasmon resonances
  of nearly touching spheres}, European Journal of Applied Mathematics, 31
  (2020), pp.~246--276.

\bibitem{Sh:69}
{\sc V.~Y. Shelepov}, {\em On the index of an integral operator of potential
  type in the space {$L_p$}}, Soviet Math. Dokl., 10 (1969), pp.~754--757.

\bibitem{Sh:91}
\leavevmode\vrule height 2pt depth -1.6pt width 23pt, {\em On the index and
  spectrum of integral operators of potential type along radon curves},
  Mathematics of the USSR-Sbornik, 70 (1991), pp.~175--203.

\bibitem{St:08}
{\sc O.~Steinbach}, {\em Numerical Approximation Methods for Elliptic Boundary
  Value Problems: Finite and Boundary Elements}, Springer, New York, 2008.

\bibitem{TrWe14}
{\sc L.~N. Trefethen and J.~A.~C. Weideman}, {\em The exponentially convergent
  trapezoidal rule}, SIAM Rev., 56 (2014), pp.~385--458.

\bibitem{Ver84}
{\sc G.~Verchota}, {\em {Layer potentials and regularity for the Dirichlet
  problem for Laplace's equation in Lipschitz domains}}, Journal of Functional
  Analysis, 59 (1984), pp.~572--611.

\bibitem{We:09}
{\sc W.~L. Wendland}, {\em {On the Double Layer Potential}}, in Analysis,
  Partial Differential Equations and Applications, A.~Cialdea, P.~E. Ricci, and
  F.~Lanzara, eds., Springer, 2009, pp.~319--334.

\end{thebibliography}

\end{document}